\def\theequation{\thesection.\arabic{equation}}
\newcommand{\eqnum}{\refstepcounter{equation}\textup{\tagform@{\theequation}}}
\newcounter{copy}
\renewcommand{\thecopy}{\ifnum0=\c@section\arabic{copy}\else\thesection.\arabic{copy}'\fi}
\newcommand{\copynum}[1][]{\refstepcounter{copy}#1{\thecopy}} 
\theoremstyle{definition}
\newtheorem{defn}[equation]{Definition}
\newtheorem{notn}[equation]{Notation}
\theoremstyle{plain}
\newtheorem{thm}[equation]{Theorem}
\newtheorem{prp}[equation]{Proposition}
\newtheorem{lem}[equation]{Lemma}
\newtheorem{cor}[equation]{Corollary}
\theoremstyle{remark}
\newtheorem{rmk}[equation]{Remark}
\newcommand{\bB}{\mathbb{B}}
\newcommand{\bC}{\mathbb{C}}
\newcommand{\bD}{\mathbb{D}}
\newcommand{\bK}{\mathbb{K}}
\newcommand{\bM}{\mathbb{M}}
\newcommand{\bN}{\mathbb{N}}
\newcommand{\bQ}{\mathbb{Q}}
\newcommand{\bR}{\mathbb{R}}
\newcommand{\bZ}{\mathbb{Z}}
\newcommand{\cB}{\mathcal{B}}
\newcommand{\cC}{\mathcal{C}}
\newcommand{\cE}{\mathcal{E}}
\newcommand{\cG}{\mathcal{G}}
\newcommand{\cI}{\mathcal{I}}
\newcommand{\cM}{\mathcal{M}}
\newcommand{\cP}{\mathcal{P}}
\newcommand{\cQ}{\mathcal{Q}}
\newcommand{\cS}{\mathcal{S}}
\newcommand{\cU}{\mathcal{U}}
\newcommand{\cV}{\mathcal{V}}
\newcommand{\cW}{\mathcal{W}}
\newcommand{\cX}{\mathcal{X}}
\newcommand{\cZ}{\mathcal{Z}}
\newcommand{\fB}{\mathfrak{B}}
\newcommand{\fC}{\mathfrak{C}}
\newcommand{\fD}{\mathfrak{D}}
\newcommand{\fT}{\mathfrak{T}}
\newcommand{\fv}{\mathfrak{v}}
\newcommand{\bu}{\mathbf{u}}
\newcommand{\bv}{\mathbf{v}}
\newcommand{\sA}{\mathscr{A}}
\newcommand{\sE}{\mathscr{E}}
\newcommand{\sH}{\mathscr{H}}
\newcommand{\sK}{\mathscr{K}}
\newcommand{\sP}{\mathscr{P}}
\newcommand{\op}{\mathrm{op}}
\renewcommand{\Im}{\mathrm{Im} \hspace{0.1em}}
\newcommand{\pt}{\mathrm{pt}}
\newcommand{\id}{\mathrm{id}}
\newcommand{\ev}{\mathrm{ev}}
\DeclareMathOperator{\hotimes}{\hat{\otimes }}
\DeclareMathOperator{\ch}{\mathrm{ch}} 
\DeclareMathOperator{\K}{\mathrm{K}}
\DeclareMathOperator{\KR}{\mathrm{KR}}
\DeclareMathOperator{\KK}{\mathrm{KK}}
\DeclareMathOperator{\Hom}{\mathrm{Hom}}
\DeclareMathOperator{\ind}{\mathrm{ind}}
\DeclareMathOperator{\Ad}{\mathrm{Ad}}
\DeclareMathOperator{\diag}{\mathrm{diag}}
\newcommand{\Bdl}{\mathrm{Bdl}}
\newcommand{\qRep}{\mathrm{qRep}}
\newcommand{\pr}{\mathrm{pr}}
\newcommand{\Td}{\mathrm{Td}}
\title[The relative higher index and almost flat bundles II]{The relative Mishchenko--Fomenko higher index and almost flat bundles II: \\ Almost flat index pairing}
\author{Yosuke KUBOTA}
\address{iTHEMS Research Program, RIKEN, 2-1 Hirosawa, Wako, Saitama 351-0198, Japan}
\email{yosuke.kubota@riken.jp}
\date{}
\subjclass[2010]{Primary 19K56; Secondary 19K35, 46L80, 58J32.}
\keywords{Chang--Weinberger--Yu relative higher index, positive scalar curvature metric, almost flat bundle, $\KK$-theory.}
\begin{document}
\begin{abstract}
This is the second part of a series of papers which bridges the Chang--Weinberger--Yu relative higher index and geometry of almost flat hermitian vector bundles on manifolds with boundary. In this paper we apply the description of the relative higher index given in Part I to provide the relative version of the Hanke--Schick theorem, which relates the relative higher index with index pairing of a K-homology cycle with almost flat relative vector bundles. We also deal with the quantitative version and the dual problem of this theorem.
\end{abstract}

\maketitle
\tableofcontents

\section{Introduction}
This paper is a sequel of \cite{Kubota1}. In this part II we apply the Mishchenko--Fomenko description of the Chang--Weinberger--Yu relative higher index developed in the part I to the index pairing with almost flat bundles on manifolds with boundary. Here we also make use of the foundations of almost flat (stably) relative bundles prepared in \cite{Kubota3}.

The notion of almost flat bundle is introduced as a geometric counterpart of the higher index theory by Connes--Gromov--Moscovici~\cite{MR1042862} for the purpose of proving the Novikov conjecture for a large class of groups. 
It also plays a fundamental role in the study of positive scalar curvature metrics in \cite{MR720933,MR1389019}. 
Its central concept is the almost monodromy correspondence, that is, the rough one-to-one correspondence between almost flat bundles and quasi-representations of the fundamental group. 
In \cite{Kubota3}, the author consider the relative and stably relative vector bundles on a pair of topological spaces as a representative of the relative $\K^0$-group and introduce the notion of almost flatness for them. Moreover, the almost monodromy correspondence is generalized to this relative setting.

The relation between the role of almost flat index pairing in geometry and the C*-algebraic higher index theory is clearly understood in the work of Hanke and Schick. 
In \cite{MR2259056,MR2353861}, they proved that the higher index $\alpha_\Gamma ([M])$ of the K-homology fundamental class $[M] \in \K_*(M)$ of an enlargeable closed spin manifold $M$ with $\pi_1(M)=\Gamma$ does not vanish without any assumption on the fundamental group concerned with the Baum--Connes conjecture. 
As is reorganized in \cite{MR3289846}, this is essentially a consequence of the fact that $\alpha_\Gamma ([M]) \neq 0$ if $M$ admits an almost flat bundle with non-trivial index pairing. 
The idea of Hanke and Schick relies on the fact that the dual higher index is related to the monodromy correspondence of flat bundles of Hilbert C*-modules. 

The Mishchenko--Fomenko higher index map $\alpha_\Gamma$ is given by the Kasparov product with the KK-class $\ell_\Gamma \in \KK(\bC, C(M) \otimes C^*\Gamma)$ represented by the Mishchenko line bundle $\tilde{M} \times_\Gamma C^*\Gamma$. For a unitary representation $\pi \colon \Gamma \to \bB(P)$ to a finitely generated projective Hilbert $A$-module, the Kasparov product with $\ell_\Gamma$ over $C^*\Gamma$ (which is called the dual higher index map in this paper) maps the element $[\pi] \in \KK(C^*\Gamma, A)$ to the associated bundle $\cP:=\tilde{M} \times_\Gamma P$. Hence the associativity of the Kasparov product relates the index pairing $[\cP] \otimes _{C(M)}[M]$ with the higher index as
\begin{align}\alpha_\Gamma ([M]) \otimes_{C^*\Gamma} [\pi] = \ell_\Gamma \otimes _{C(M)}[M] \otimes_{C^*\Gamma}[\pi] = [\cP] \otimes _{C(M)}[M]. \label{form:indexpair}\end{align} 
The idea of Hanke--Schick is to construct a nice flat Hilbert C*-module bundle from a family of almost flat bundles.

The first purpose of this paper, studied in Section \ref{section:3}, is to establish a relative version of the result of Hanke--Schick. There are two ingredients of it. One is the basic theory of almost flat bundles on manifolds with boundary (particularly the almost monodromy correspondence) developed in \cite{Kubota3}, and the other is the relative version of index pairing (\ref{form:indexpair}). 
Here the higher index is replaced with the Chang--Weinberger--Yu relative higher index map \cite{mathKT150603859}, which is a homomorphism
\[ \alpha_{\Gamma, \Lambda} \colon \K_*(X,Y) \to \K_*(C^*(\Gamma, \Lambda)), \]
defined for a pair of CW-complexes $(X,Y)$ with $\pi_1(X)=\Gamma$ and $\pi_1(Y)=\Lambda$ (for more details, see Subsection \ref{section:2.1}). 
It is proved in \cite{Kubota1} that this map is given by the Kasparov product with an element $\ell_{\Gamma , \Lambda} \in \KK (\bC , C_0(X^\circ) \otimes C^*(\Gamma, \Lambda))$.
The key observation is Theorem \ref{prp:bundle}, corresponding to the fact $\ell _\Gamma \otimes _{C^*\Gamma}[\pi]=[\cP]$ in the above paragraph. 
Roughly speaking, here we show that the Kasparov product with $\ell_{\Gamma, \Lambda} $ maps a relative representation of $(\Gamma, \Lambda)$, i.e.\ a pair of representations of $\Gamma $ which is identified on $\Lambda$, to the associated relative bundle.
To realize the concept in full generality, we employ the equivalence relation generated by unitary equivalence, stabilization and homotopy as the `identification' of a pair of representation. 
Then we get the result corresponding to (\ref{form:indexpair}) by the same argument using the associativity of the Kasparov product. 
The relative Hanke--Schick theorem (Theorem \ref{thm:Karea}) is now obtained in the same way as \cite{MR2259056,MR2353861} with the help of the relative almost monodromy correspondence. 

In addition, there is another application of Theorem \ref{prp:bundle} to the index theoretic refinement of the Hanke--Pape--Schick codimension $2$ index obstruction for the existence of positive scalar curvature metric \cite{MR3449594}, which is discussed in Subsection \ref{section:3.2}. Here, the higher index of a codimension 2 submanifold $W$ of $M$ (with a condition on homotopy groups) is related to the relative higher index of the manifold $M \setminus U$, where $U$ is a tubular neighborhood of $W$.

In the rest part of the paper we discuss in-depth problems related to relative index theory of almost flat bundles. 
In Section \ref{section:5}, we study the quantitative version of Theorem \ref{thm:Karea}. A key idea of \cite{MR2259056} is to treat an infinite family of almost flat bundles simultaneously and relate the asymptotics of the index pairings with the higher index. On the other hand, the $\ell^1(\Gamma)$-valued higher index, instead of the usual $C^*(\Gamma)$-valued one, is mapped to a projection up to a small correction by a single quasi-representation. This map is studied in \cite{MR1042862} and compared with the index pairing with the associated almost flat bundle.
 In \cite{MR2982445}, Dadarlat gives an alternative approach using Lafforgue's Banach KK-theory.  Here, we reformulate the result of \cite{MR2982445} in terms of the quantitative K-theory introduced in Oyono-Oyono--Yu~\cite{MR3449163} instead of Banach KK-theory. After that, we generalize the result of Connes--Gromov--Moscovici to the relative setting.

In Section \ref{section:8}, we study the dual problem of Theorem \ref{thm:Karea}, in other words, the relative version of the problem proposed by Gromov in \cite[Section $4\frac{2}{3}$]{MR1389019}. It is a consequence of the almost monodromy correspondence that any almost flat bundle is obtained by pull-back from the classifying space $B\Gamma$. Then it is a natural question whether any element of $\K^0(B\Gamma)$ (or $\K_0(B\Gamma) \otimes \bQ$) is represented by an almost flat bundle.
This question is first considered in \cite[Section $8\frac{14}{15}$]{MR1389019} geometrically for the fundamental group of a Riemannian manifold with non-positive sectional curvature. After that, Dadarlat gives a KK-theoretic approach to this problem in \cite{MR3275029}. 
Here we follow this idea to study the almost flat K-theory class of the pair $(B\Gamma , B\Lambda)$. 
The celebrated Tikuisis--White--Winter theorem \cite{MR3583354} in the theory of C*-algebras enables us to include a large class of residually amenable groups to the range of our discussion. 
We show that any element of the range of the dual higher index map 
\[ \beta_{\Gamma, \Lambda} \colon \K^0(C^*(\Gamma, \Lambda)) \to \K^0(X,Y),\]
i.e.\ the Kasparov product with $\ell_{\Gamma, \Lambda} $ over $C^*(\Gamma, \Lambda)$, is represented by an almost flat stably relative vector bundle. Moreover, we also show that such elements are represented by an almost flat relative vector bundle if $\phi \colon \Lambda \to \Gamma$ is injective.

\begin{notn}\label{notation}
Throughout this paper we use the following notations.
\begin{itemize}
\item For a C*-algebra $A$, let $A^+$ denote its unitization $A + \bC \cdot 1$. 
\item For a C*-algebra $A$, let $\cM(A)$ denote its multiplier C*-algebra and $\cQ(A):=\cM(A)/A$.
\item For a C*-algebra $A$ and $a<b \in \bR \cup \{ \pm \infty \}$,let $A(a,b):=A \otimes C_0(a,b)$. Similarly we define $A[a,b)$ and $A[a,b]$. For a Hilbert $A$-module $E$, let $E(a,b)$ denote the Hilbert $A(a,b)$-module $E \otimes C_0(a,b)$.
\item For a $\ast$-homomorphism $\phi \colon A \to B$, let $C\phi$ denote the mapping cone C*-algebra defined as
\[ C\phi = \{(a, b_s) \in A \oplus B[0,1) \mid \phi(a)=b_0 \}.\] 
\item For a Hilbert $A$-module $E$, let $\bB (E)$ and $\bK(E)$ denote the C*-algebra of bounded adjointable and compact operators on $E$ respectively. Let $\mathrm{U} (E)$ denote the unitary group of $\bB(E)$.
\item For a compact space $X$ and a Hilbert $A$-module $P$, let $\underline{P}_X$ denote the trivial bundle $X \times P$ on $X$.
\end{itemize}
\end{notn}

\subsection*{Acknowledgment}
The author would like to Martin Nitsche and Thomas Schick for their careful reading and helpful comments on a previous version of this paper. This work was supported by RIKEN iTHEMS Program.

\section{Prelimilaries}\label{section:2}
In this section we summarize the results of \cite{Kubota1} and \cite{Kubota3} which will be used in this paper. Throughout this paper we focus on the complex coefficient $\K$-theory, C*-algebra, vector bundle and so on.

\subsection{Relative Mishchenko--Fomenko higher index}\label{section:2.1}
Let $(\Gamma , \Lambda)$ be a pair of discrete groups with a homomorphism $\phi \colon \Lambda \to \Gamma$. Note that $\phi$ induces $B\phi \colon B\Lambda \to B\Gamma$ (we map assume that $B\phi$ is an inclusion by replacing $B\Gamma$ with the mapping cylinder $B\Gamma \sqcup_{B\phi} B\Lambda \times [0,1]$).
Let $(X,Y)$ be a pair of finite CW-complexes with a reference map $f \colon (X,Y) \to (B\Gamma , B\Lambda)$, which associates a $\Gamma$-covering $\tilde{X} \to X$ and a $\Lambda$-covering $\tilde{Y} \to Y$. 
\begin{notn}\label{not:Xr}
We write as
\begin{align*}
Y_r&:= \left\{ \begin{array}{ll} Y \times [0,r] & \text{for $r \in [0,\infty) $}, \\ Y \times [0,\infty) & \text{for $r = \infty$},  \end{array} \right. \\
X_r &:= X \sqcup _Y Y_r.
\end{align*}
For $r \in [1,\infty)$, let $Y_r':= Y \times [1, r] \subset X_\infty$. We write $X_r^\circ$, $Y_r^\circ$ and $(Y_r')^\circ$ for the interiors of $X_r$, $Y_r$ and $Y_r'$ as subspaces of $X_\infty$.
\end{notn}

The \emph{Chang--Weinberger--Yu relative higher index} is a group homomorphism
\[ \mu ^{\Gamma , \Lambda} _* \colon \K_*(X,Y) \to \K_*(C^*(\Gamma , \Lambda)), \]
where $C^*(\Gamma, \Lambda) $ is the relative (maximal) group C*-algebra defined as
\[ C^*(\Gamma , \Lambda):=S C(\phi \colon C^*\Lambda \to C^*\Gamma). \]

In \cite[Section 3]{Kubota1}, the author gives a definition of $\mu^{\Gamma, \Lambda}_*$ inspired from the Mishchenko--Fomenko index pairing. Let us write the Mishchenko line bundles on $X$ and $Y$ as $\cV  := \tilde{X}\times _\Gamma C^*\Gamma$ and $\cW :=\tilde{Y} \times _\Lambda C^*\Lambda$ respectively, and let $\cX := \tilde{Y} \times _\Lambda C\phi$.  We define
\begin{align*}
\begin{split}
\sE_2 &:= SC(X, \cV) \oplus _{C(Y, \cX)}C_0(Y \times [0,2) , \cX ) \\
&=\{ (\xi ,\eta ) \in C(X, S\cV) \oplus C_0(Y \times [0,2) , \cX ) \mid \psi_Y(\xi |_{Y}) = \eta|_{Y \times \{ 0 \} }  \},
\end{split}
\end{align*}
where $\psi _Y \colon S\cV|_Y \to \cX$ is the bundle map induced from $\psi \colon SC^*\Gamma \to C\phi$, and 
\begin{align*} 
\rho (r,s) = \rho_s(r) := \min \{ 1 , 2s+2r-3 \} \in C([1,2] \times [0,1]). \label{form:rho}\end{align*}
We regard this $\rho$ as a continuous function on $X_2 \times [0,1]$ by $\rho(x,s):=2s-1$ for $x \in X_1$, $\rho(y,r,s):=\rho(r,s)$ for $(r,y) \in Y_2' $, which acts on $\sE_2$ by multiplication. 
\begin{figure}[t]
\centering 
\begin{tikzpicture}[scale=0.8]
\shade [top color = black!90!white, bottom color = black!90!white, middle color = white, shading angle = 135] (-2,-2) rectangle (4,4);
\shade [top color = black!42!white, bottom color = black!42!white, middle color = white] (-2,0) rectangle (0,4);
\fill [color = black!42!white] (4,0) -- (4,4) -- (0,4) -- (4,0);
\fill [color = white] (-2.1,-2.1) rectangle (4.1,0);
\draw [->] (-3,-0.3) -- (-3,4.5);
\draw [->] (-1,-1) -- (4.5,-1);
\draw [thick,dashed] (-2,4) -- (4,4);
\draw [thick, dashed] (4,4) -- (4,0);
\draw [thick] (4,0) -- (0,0);
\draw (0,0) -- (0,4);
\draw [thick,dashed] (0,0) -- (-2,0);
\node at (4.3,-0.7) {$r$};
\node at (-2.2,4.3) {$s$};
\node at (-1,4.5) {$X_1$};
\node at (2,4.5) {$Y_2'$};
\fill (4,-1) coordinate (O) circle[radius=1pt] node[below=2pt]  {$2$};
\fill (0,-1) coordinate (O) circle[radius=1pt] node[below=2pt]  {$1$};
\fill (-3,4) coordinate (O) circle[radius=1pt] node[left=2pt]  {$1$};
\fill (-3,0) coordinate (O) circle[radius=1pt] node[left=2pt]  {$0$};
\end{tikzpicture}
\caption{The shading shows the value of $|\rho (r,s)|$.}
\end{figure}
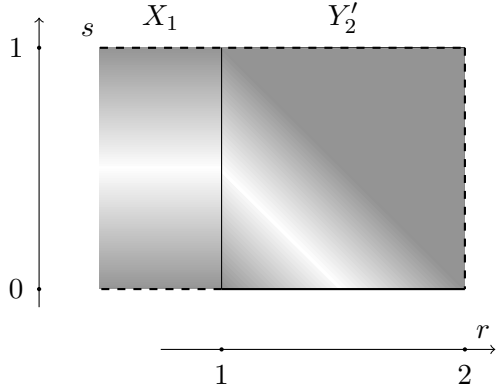
The \emph{relative Mishchenko line bundle} $\ell_{\Gamma, \Lambda}$ is defined as 
\[ \ell_{\Gamma, \Lambda}=\ell_{\Gamma, \Lambda} :=[ \sE_2 , 1, \rho ] \in \KK_{-1}(\bC , C_0(X_2 ) \otimes C\phi).\]

\begin{defn}[{\cite[Definition 3.3]{Kubota1}}]
The \emph{relative Mishchenko--Fomenko higher index} $\mu^{\Gamma, \Lambda}_*$ is defined by the Kasparov product
\[ \ell_{\Gamma, \Lambda } \hotimes_{C_0(X^\circ)} { \cdot }\  \colon \KK_*(C_0(X^\circ ), \bC ) \to \K_*(C^*(\Gamma, \Lambda )). \] 
\end{defn}
We also use the symbol $\alpha_{\Gamma, \Lambda}$ for this homomorphism.

\begin{prp}[{\cite[Proposition 3.6]{Kubota1}}]\label{prp:dual}
The dual relative higher index map $\beta_{\Gamma, \Lambda} \colon \KK(C^*(\Gamma, \Lambda) , \bC ) \to \K^*(X,Y)$ defined as the Kasparov product $\ell_{\Gamma, \Lambda} \hotimes _{C^*(\Gamma, \Lambda)} {\cdot }$ satisfies
\[  \langle \alpha_{\Gamma, \Lambda} (x) , \xi \rangle = \langle x, \beta_{\Gamma, \Lambda} (\xi) \rangle \in \KK(\bC, \bC) \cong \bZ  , \]
where the bracket $\langle {\cdot } , {\cdot } \rangle$ means the index pairing, i.e., the Kasparov product of the K-homology and K-cohomology groups of a C*-algebra. 
\end{prp}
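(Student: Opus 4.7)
The plan is to derive the identity directly from associativity of the Kasparov product over disjoint tensor factors. Writing $A := C_0(X^\circ)$ and $B := C^*(\Gamma,\Lambda)$, the relative Mishchenko line bundle gives a class $\ell_{\Gamma,\Lambda} \in \KK(\bC, A \hotimes B)$ (after the standard suspension identification $C^*(\Gamma,\Lambda) = SC\phi$, which only affects the $B$-factor). For $x \in \KK(A, \bC)$ and $\xi \in \KK(B, \bC)$, the definitions of $\alpha_{\Gamma,\Lambda}$ and $\beta_{\Gamma,\Lambda}$ read
\[ \alpha_{\Gamma,\Lambda}(x) = \ell_{\Gamma,\Lambda} \hotimes_A x \in \KK(\bC, B),\qquad \beta_{\Gamma,\Lambda}(\xi) = \ell_{\Gamma,\Lambda} \hotimes_B \xi \in \KK(\bC, A). \]
Unpacking the two index pairings thus yields
\[ \langle \alpha_{\Gamma,\Lambda}(x), \xi\rangle = (\ell_{\Gamma,\Lambda} \hotimes_A x) \hotimes_B \xi,\qquad \langle x, \beta_{\Gamma,\Lambda}(\xi)\rangle = (\ell_{\Gamma,\Lambda} \hotimes_B \xi) \hotimes_A x. \]

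The main step is then to invoke the standard ``disjoint commutativity'' of Kasparov products: for any $z \in \KK(\bC, A \hotimes B)$, $a \in \KK(A, \bC)$ and $b \in \KK(B, \bC)$, one has
\[ (z \hotimes_A a) \hotimes_B b \;=\; z \hotimes_{A \hotimes B}(a \hotimes b) \;=\; (z \hotimes_B b) \hotimes_A a, \]
where $a \hotimes b \in \KK(A \hotimes B, \bC)$ is the external Kasparov product. This is a direct consequence of Kasparov's associativity theorem combined with the symmetry of the external product (one rewrites $z \hotimes_A a$ as $z \hotimes_{A \hotimes B}(a \hotimes 1_B)$ and similarly for the $B$-factor, so both iterated products collapse onto $z \hotimes_{A \hotimes B}(a \hotimes b)$). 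Specializing to $z = \ell_{\Gamma,\Lambda}$, $a = x$, $b = \xi$ concludes the argument, producing the same element of $\KK(\bC,\bC) \cong \bZ$ from either side.

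The only item that requires care is the degree bookkeeping that comes from writing $\ell_{\Gamma,\Lambda}$ as a $\KK_{-1}$-cycle over $A \hotimes C\phi$ rather than as a $\KK$-cycle over $A \hotimes B$. Since the suspension acts only on the $B$-factor, it slides through the internal product over $A$ and is compatible with Bott periodicity on either order of multiplication, so no extra signs arise. I do not foresee any substantive obstacle beyond this formal bookkeeping; the entire argument is a direct formal manipulation given the definition of $\ell_{\Gamma,\Lambda}$.
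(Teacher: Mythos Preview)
Your argument is correct. The identity is indeed a formal consequence of the associativity of the Kasparov product applied to the class $\ell_{\Gamma,\Lambda} \in \KK(\bC, C_0(X^\circ) \otimes C^*(\Gamma,\Lambda))$: since $\alpha_{\Gamma,\Lambda}$ and $\beta_{\Gamma,\Lambda}$ are both defined as Kasparov product with $\ell_{\Gamma,\Lambda}$ over the two respective tensor factors, the two iterated pairings agree via $(\ell_{\Gamma,\Lambda} \hotimes_A x)\hotimes_B \xi = \ell_{\Gamma,\Lambda}\hotimes_{A\otimes B}(x\hotimes\xi) = (\ell_{\Gamma,\Lambda}\hotimes_B \xi)\hotimes_A x$.

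Note, however, that the present paper does not give its own proof of this proposition: it is quoted in the preliminaries from \cite[Proposition~3.6]{Kubota1}. The argument there is the same as yours---the proposition is stated precisely so that it follows immediately from the associativity of the Kasparov product once both $\alpha_{\Gamma,\Lambda}$ and $\beta_{\Gamma,\Lambda}$ are defined as cap products with the single class $\ell_{\Gamma,\Lambda}$. Your remark about the degree shift (the suspension in $C^*(\Gamma,\Lambda)=SC\phi$) is well taken and is handled exactly as you describe; no sign issues arise.
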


Here we give a remark on a realization the relative Mishchenko line bundle $\ell_{\Gamma, \Lambda}$, which is an element of the $\K_1$-group $\K_1(C_0(X_2^\circ) \otimes C\phi)$, as a unitary of $C_0(X_2^\circ) \otimes C\phi$.
Let $\cU:= \{ U_\mu\}_{\mu \in I}$ be a finite open cover of $X$ such that the restriction of $\tilde{X}$ to each $U_\mu$ is a trivial bundle. We choose a local trivialization $\theta_\mu \colon \tilde{X}|_{U_\mu} \cong U_\mu \times \Gamma $ and let $\gamma _{\mu \nu}$ denote the transformation function $\theta_\nu(x) \theta_\mu^*(x)$ (which is independent of $x \in U_\mu \cap U_\nu$). 

Let $\{ \eta_\mu \}_{\mu \in I}$ be a family of continuous functions such that $\mathrm{supp} (\eta_\mu) \subset U_\mu$, $0 \leq \eta_\mu(x) \leq 1$ and $\sum \eta_\mu ^2 =1$. We write $\bM_I$ for the matrix algebra on $\bC^I$ and let $\{ e_{\mu \nu}\}_{\mu , \nu \in I}$ denote the matrix unit. Then, 
\begin{align}
P_\cV := \sum_{\mu , \nu \in I} \eta_\mu \eta_\nu \otimes u_{\gamma_{\mu \nu}} \otimes e_{\mu \nu} \in C(X) \otimes C^*(\Gamma) \otimes \bM_I \label{form:projMF}
\end{align}
is a projection whose support is isomorphic to $\cV$ as Hilbert $C^*\Gamma$-module bundles on $X$. This means that $\ell_\Gamma = [P_\cV]$.

\begin{lem}\label{lem:uniMF}
The element $ \ell_{\Gamma, \Lambda} \in \KR_{-1}(C_0(X_1^\circ) \otimes C\phi )$ is represented by the transpose-invariant unitary $(U_\cW,V_{\cV,s}) \in (C_0(X_1^\circ) \otimes C\phi )^+$, where
\begin{align*}
\begin{split}
U_\cW &:= -e^{-\pi i \rho_0}P_\cW + 1-P_\cW \in (C_0(Y_1^\circ) \otimes C^*\Lambda \otimes \bM_I)^{+}\\
V_{\cV,s} &:= -e^{-\pi i \rho_s} P_\cV + 1-P_\cV \in (C_0(X^\circ_1) \otimes C^*\Gamma \otimes \bM_I)^{+}. 
\end{split}
\end{align*}
\end{lem}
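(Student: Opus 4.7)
The plan is to apply the standard conversion from the Cuntz picture of $\K_1$ to the unitary picture, then read off the components in the mapping cone decomposition of $C\phi$. Recall that an odd Kasparov cycle $(E,1,F)$ with $F$ self-adjoint and $F^2-1$ acting as a compact endomorphism represents the $\K_1$-class of the unitary $-e^{-i\pi F} \in \cM(\bK(E))^+$, a compact perturbation of the identity. In the case at hand, $F = \rho$ acts by scalar multiplication on $\sE_2$, so the functional calculus is literal pointwise.

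First I would use the projection presentation (\ref{form:projMF}) to embed $\sE_2$ into a trivial Hilbert module. The ranges of $P_\cV$ and $P_\cW$ recover $\cV$ and $\cW$, and functoriality of the Mishchenko construction yields $\phi(P_\cW) = P_\cV|_Y$, so the bundle $\cX = \tilde Y \times_\Lambda C\phi$ is the range of a mapping-cone projection in $C(Y) \otimes C\phi \otimes \bM_I$ built from this matching pair. Under these identifications, the pullback $\sE_2 = SC(X,\cV) \oplus_{C(Y,\cX)} C_0(Y\times[0,2),\cX)$ becomes the range of a single projection $P$ in a trivial module over $C_0(X_2^\circ)\otimes C\phi$, and $\rho$ extends to the ambient module by scalar multiplication (trivially on $1-P$).

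Applying the exponential then gives the unitary $u := -e^{-i\pi\rho} P + (1-P)$ in the unitization of the ambient matrix algebra. The decomposition $C\phi \subset C^*\Lambda \oplus C^*\Gamma[0,1)$ splits $u$ into exactly the two pieces of the statement: the $C^*\Lambda$-component (where $\rho$ is specialized to its $s=0$ value and $P$ reduces to $P_\cW$) yields $U_\cW$, and the $C^*\Gamma[0,1)$-component (parametrized by $s$ with projection $P_\cV$) yields $V_{\cV,s}$. The mapping-cone compatibility $V_{\cV,0}|_Y = \phi(U_\cW)$ reduces to $\phi(P_\cW) = P_\cV|_Y$, and transpose invariance follows from the transpose invariance of the Mishchenko projections (since $\gamma_{\mu\nu}^{-1} = \gamma_{\nu\mu}$) together with the scalar nature of $\rho$, which commutes with the transpose involution.

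Finally, to cut the support from $X_2^\circ$ down to $X_1^\circ$, observe that $\rho_s(r) = 1$ on the corner $\{r=2\} \cup \{s=1\}$ of $Y_2' \times [0,1]$, while the cylinder module $C_0(Y\times[0,2),\cX)$ vanishes as $r\to 2$; a linear homotopy of $\rho$ on $Y_2' \setminus Y_1'$ to the constant $1$ contracts the nontrivial support to $X_1^\circ$ without changing the $\K_1$-class. The main obstacle I anticipate is the bookkeeping of the fiber product structure: one must verify carefully that the single projection $P$ really matches the pullback defining $\sE_2$, so that splitting $u$ along the two components of $C\phi$ produces the claimed formulas with the correct boundary compatibility at the glue $Y\times\{0\}$. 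Everything else is essentially formal functional calculus.
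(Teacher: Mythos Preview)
Your proposal is correct and follows essentially the same route as the paper: both convert the odd Kasparov module $[\sE_2,1,\rho]$ to a unitary via the exponential boundary map, arriving at the identical formula $-e^{-\pi i\rho}P_\cE + (1-P_\cE)$, and then split into the two components of $C\phi$. The only difference is packaging: where you invoke the ``standard conversion'' and then worry about the fiber-product bookkeeping, the paper introduces the ideal $\cC := \ker\bigl(C_0(X_2^\circ)\otimes C\phi \to C(X)\otimes C^*\Lambda\bigr)$, observes that $P_\cE=(P_\cW,P_\cV)$ lands in $\cM(\bM_I(\cC))$, and reads the class off from $\K_0(\cQ(\cC))$ via the projection $P_\cE\cdot\frac{\rho+1}{2}+(1-P_\cE)$ before applying $\partial\colon \K_0(\cQ(\cC))\to\K_1(\cC)$. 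This framework absorbs exactly the bookkeeping you flagged as the main obstacle, but the underlying computation is the same as yours.
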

\begin{proof}
Let $\cC$ denote the kernel of the $\ast$-homomorphism $C_0(X_2^\circ) \otimes C\phi \to C(X) \otimes C^*\Lambda$ and let $\iota \colon \cC \to C_0(X_2^\circ) \otimes C\phi$ denote the inclusion. Then $P_\cE:=(P_\cW, P_\cV) $ determines an element of the multiplier C*-algebra $\cM(\bM_I(\cC))$ such that the exterior tensor product $(P_\cE \cC^{ \oplus |I|}) \otimes _\iota (C_0(X) \otimes C\phi)$ is isomorphic to $\sE$ as a Hilbert $C(X) \otimes C\phi$-module. Hence the composition
\[ \K_0(\cQ(\cC )) \to \KK_1(\bC, \cC ) \xrightarrow{\iota} \KK_1(\bC, C_0(X) \otimes C\phi ) \] 
maps the projection $P_\cE \cdot (\frac{\rho +1}{2}) + (1-P_\cE) $ to $[\sE,1, \rho]=\ell_{\Gamma, \Lambda}$. This element is mapped by the boundary map $\partial \colon \K_0(\cQ(\cC)) \to \K_1(\cC)$ to the element represented by the unitary 
\[ e^{-2 \pi i (P_\cE \cdot \frac{\rho +1}{2} + (1-P_\cE))} = -e^{-\pi i \rho}P_\cE  + 1-P_\cE = (U_\cV , V_{\cV, s}) . \] 
This finishes the proof.
\end{proof}

\subsection{Rational surjectivity of the dual relative higher index map}
The rational injectivity of the relative higher index map and the rational surjectivity of its dual are studied in \cite[Section 6]{Kubota1}. In Section \ref{section:8} we apply the latter for the existence of almost flat relative vector bundle representing an arbitrary element of relative $K^0$-group of the pair $(B\Gamma, B\Lambda)$. 

We consider the following assumptions for $(\Gamma, \Lambda)$.
\begin{itemize}
\item[\eqnum \label{cond:BC1}] The group $\Gamma$ has the $\gamma$-element $\gamma _\Gamma $. \setcounter{copy}{\value{equation}} 
\item[\eqnum \label{cond:BC2}] For any finite subgroup $K \subset \Gamma$, the subgroup $\phi^{-1}(K) \leq \Lambda$ satisfies $\gamma=1$. 
\item[\eqnum \label{cond:BC3}] The subgroup $\ker \phi$ is torsion-free. 
\end{itemize}
For example, the condition (\ref{cond:BC1}) is satisfied if $\Gamma$ is coarsely embeddable into a separable Hilbert space \cite{MR1905840} and the condition (\ref{cond:BC2}) is satisfied if $\ker \phi$ has the Haagerup property \cite{MR1821144}. 

We also consider a stronger alternative of (\ref{cond:BC2}).   
\begin{itemize}
\item[{(\copynum[\label{cond:BC2'}])}] The subgroup $\ker \phi$ of $\Lambda$ is amenable.
\end{itemize}
If (\ref{cond:BC2'}) is satisfied, the reduced relative group C*-algebra 
\begin{align} C^*_r(\Gamma, \Lambda) := SC(\phi_r \colon C^*_r \Lambda \to C^*_r\Gamma ) \label{form:reduced} \end{align}
makes sense. We write $\epsilon _{\Gamma, \Lambda } \colon C^*(\Gamma, \Lambda ) \to C^*_r(\Gamma, \Lambda )$ for the quotient.

We write $j_\phi$ for the functor from the category of $\Gamma$-C*-algebras to the category of C*-algebras mapping $A$ to the relative crossed product defined as $A \rtimes (\Gamma, \Lambda):= SC( \id_A \rtimes \phi \colon A \rtimes \Lambda \to A \rtimes \Gamma)$. By the universality of the Kasparov category, it induces the functor $j_\phi \colon \mathfrak{KK}^\Gamma \to \mathfrak{KK}$, which maps the $\gamma$-element of $\Gamma $ to $j_\phi (\gamma_\Gamma ) \in \KK(C^*(\Gamma, \Lambda) , C^*(\Gamma , \Lambda))$.

\begin{thm}[{\cite[Theorem 6.6, Proposition 6.10]{Kubota1}}]\label{thm:BC}
Let $\phi \colon \Lambda \to \Gamma $ be a homomorphism of groups. 
\begin{enumerate}
\item If (\ref{cond:BC1}), (\ref{cond:BC2}) and (\ref{cond:BC3}) are satisfied, then the composition $\beta_{\Gamma, \Lambda} \circ j_\phi (\gamma _\Gamma ) \colon \K^*(C^*(\Gamma, \Lambda)) \to \K^*(B\Gamma, B\Lambda)$ is rationally surjective.
\item If (\ref{cond:BC2'}) is satisfied, then $\Im (\epsilon_{\Gamma, \Lambda}^*) \subset \K^*(C^*(\Gamma, \Lambda))$ includes $\Im j_\phi (\gamma_\Gamma )$. 
\end{enumerate}
\end{thm}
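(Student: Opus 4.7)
The plan is to prove both parts by five-lemma arguments applied to a commutative ladder of six-term exact sequences. On the analytic side, unwinding the definition $C^*(\Gamma, \Lambda) = SC(\phi \colon C^*\Lambda \to C^*\Gamma)$ gives a mapping-cone sequence, and on the topological side we have the long exact sequence of the pair $(B\Gamma, B\Lambda)$. By the construction of $\ell_{\Gamma,\Lambda}$ recalled in Subsection 2.1, the dual assembly maps $\beta_{\Gamma,\Lambda}$, $\beta_\Gamma$, $\beta_\Lambda$ assemble into a natural transformation between these sequences. The preliminary task is to verify that precomposition with $j_\phi(\gamma_\Gamma)$ on the relative side restricts to precomposition with $j_\Gamma(\gamma_\Gamma)$ and $j_\Lambda(\phi^*\gamma_\Gamma)$ on the absolute sides; this should follow from naturality of the descent functor with respect to restriction along $\phi$.

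For (1), I would invoke that under (\ref{cond:BC1}) Kasparov's Dirac/dual-Dirac machinery yields rational surjectivity of the absolute dual assembly map $\beta_\Gamma \circ j_\Gamma(\gamma_\Gamma)$, which is the dual form of the rational strong Novikov conjecture. Conditions (\ref{cond:BC2}) and (\ref{cond:BC3}) are tailored so that the analogous surjectivity holds on the $\Lambda$-side with the twisted $\gamma$-element $\phi^*\gamma_\Gamma$: by (\ref{cond:BC3}) each finite subgroup of $\Lambda$ injects via $\phi$ into a finite subgroup of $\Gamma$, and by (\ref{cond:BC2}) every preimage $\phi^{-1}(K)$ satisfies $\gamma=1$, which is precisely enough to kill the finite-isotropy obstruction to assembling BC for $\Lambda$ at the level of $\phi^*\gamma_\Gamma$. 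Combining these via a Chabert--Echterhoff permanence argument (or a Meyer--Nest triangulated decomposition) then gives rational surjectivity of the $\Lambda$-arrow, after which the five-lemma tensored with $\bQ$ delivers the middle arrow.

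For (2), the plan is to factor $j_\phi(\gamma_\Gamma) \in \KK(C^*(\Gamma,\Lambda), C^*(\Gamma,\Lambda))$ through the reduced quotient. Represent $\gamma_\Gamma$ by a Kasparov triple whose underlying $\Gamma$-C*-algebra $\cP$ is proper; then $j_\phi(\gamma_\Gamma)$ factors through $j_\phi(\cP) = SC(\cP \rtimes \Lambda \to \cP \rtimes \Gamma)$. Amenability of $\ker\phi$, together with the properness of $\cP$, implies that the canonical maximal-to-reduced quotients on both pieces of the cone are $\ast$-isomorphisms, so $j_\phi(\cP)$ coincides with its reduced analogue. Composing the resulting diagram, this produces a factorization of $j_\phi(\gamma_\Gamma)$ through $C^*_r(\Gamma,\Lambda)$, which upon dualizing (i.e.\ applying $\KK(-,\bC)$) yields $\mathrm{Im}\, j_\phi(\gamma_\Gamma) \subseteq \mathrm{Im}\,\epsilon_{\Gamma,\Lambda}^*$.

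The main obstacle is in part (1): extracting $\Lambda$-side rational surjectivity from hypotheses phrased in terms of $\phi^{-1}(K) \leq \Lambda$ rather than from a uniform BC statement for $\Lambda$ itself. This permanence step is the delicate one; it requires tracking the proper $\Gamma$-algebra representative of $\gamma_\Gamma$ through the descent $j_\phi$ and performing a careful diagram chase through the finite-subgroup filtration in $KK^\Lambda$, so that the twisted class $\phi^*\gamma_\Gamma$ acts rationally as the identity on the image of the $\Lambda$-assembly.
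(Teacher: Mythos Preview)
This theorem is not proved in the present paper; it is quoted verbatim from Part~I of the series (\cite[Theorem~6.6, Proposition~6.10]{Kubota1}), so there is no proof here to compare your proposal against. What the paper does offer is a hint in Remark~\ref{rmk:nonrel}: the absolute analogue of (1) is said to be proved ``in the same way as \cite[Theorem~6.6]{Kubota1} by using the Dirac--dual Dirac method and the rational injectivity of the higher index map $\alpha_\Gamma$''. This strongly suggests that the argument in \cite{Kubota1} runs through the \emph{forward} assembly side: one first establishes rational injectivity of $\alpha_{\Gamma,\Lambda}$ (via Dirac--dual Dirac and a comparison of exact sequences), and then passes to the dual statement through the adjunction of Proposition~\ref{prp:dual}. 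Your plan works directly on the $\beta$ side with a five-lemma; this is a legitimate alternative route, and the ladder of exact sequences you describe is indeed set up in \cite{Kubota1}, but the duality route has the advantage that the delicate $\Lambda$-side step (controlling $\phi^*\gamma_\Gamma$ using the hypotheses on $\phi^{-1}(K)$) is carried out once, on the homological side where the Baum--Connes machinery lives, rather than being transported to the cohomological side.

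For part~(2) your outline is essentially the standard argument and almost certainly matches \cite[Proposition~6.10]{Kubota1}: represent $\gamma_\Gamma$ through a proper $\Gamma$-algebra $\cP$, so that $\cP\rtimes\Gamma=\cP\rtimes_r\Gamma$; then the amenability of $\ker\phi$ is exactly what is needed to conclude $\cP\rtimes\Lambda=\cP\rtimes_r\Lambda$ as well (since $\Lambda/\ker\phi$ acts properly on $\cP$ and amenable extensions preserve the equality of crossed products), whence $j_\phi(\gamma_\Gamma)$ factors through $C^*_r(\Gamma,\Lambda)$. Your own diagnosis of the main obstacle in (1) is accurate; that is precisely the step where conditions (\ref{cond:BC2}) and (\ref{cond:BC3}) enter, and it is the substance of \cite[Theorem~6.6]{Kubota1}.
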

Therefore, if (\ref{cond:BC1}), (\ref{cond:BC2'}) and (\ref{cond:BC3}) are satisfied, then $\beta _{\Gamma, \Lambda} \circ \epsilon _{\Gamma, \Lambda}^*$ is rationally surjective.

\begin{rmk}\label{rmk:nonrel}
Theorem \ref{thm:BC} (1) is a relative analogue of the following statement: Let $\beta_\Gamma \colon \K^*(C^*\Gamma ) \to \K^*(B\Gamma)$ denote the dual higher index map, i.e., the Kasparov product $\ell_\Gamma \otimes_{C^*\Gamma}{\cdot}$. If $\Gamma$ has the $\gamma$-element, then $\beta_\Gamma \circ j_\Gamma (\gamma_\Gamma) \colon \K^*(C^*(\Gamma)) \to \K^*(B\Gamma )$ is rationally surjective. This is proved in the same way as \cite[Theorem 6.6]{Kubota1} by using the Dirac-dual Dirac method and the rational injectivity of the higher index map $\alpha _\Gamma \colon \K_*(B\Gamma) \to \K_*(C^*\Gamma)$ shown in \cite[Section 15]{MR928402}. Also, it is shown in the same way as \cite[Proposition 6.10]{Kubota3} that the image $\Im j_\Gamma (\gamma _\Gamma)$ is included to $\Im \epsilon_\Gamma^*$, where $\epsilon _\Gamma \colon C^*(\Gamma) \to C^*_r(\Gamma)$ is the quotient. 
\end{rmk}
\begin{rmk}
As is pointed out in \cite[Remark 6.8]{Kubota1}, we do not need to restrict the situation to the case that $B\Gamma$ and $B\Lambda$ have the homotopy type of finite CW-complexes.
\end{rmk}

\subsection{Almost flat relative bundles}
Here we briefly review the notion of almost flat (stably) relative vector bundles and its almost monodromy correspondence. 
Let $X$ be a finite CW-complex with a good open cover $\cU:= _{\mu \in I}$. Then the fundamental group $\Gamma:=\pi_1(X)$ is generated by $\cG:=\{ \gamma_{\mu \nu} \}_{\mu,\nu \in I} $ once we fix a translation function $\{ \gamma _{\mu \nu}\}_{\mu , \nu \in I}$ of the $\Gamma$-Galois covering $\tilde{X}$.

\begin{defn}
Let $X$, $\cU$, $\Gamma$, $\cG$ be as above. Let $A$ be a unital C*-algebra, let $P$ be a finitely generated projective Hilbert $A$-module and let $T$ be a maximal subtree of the $1$-skeleton $N_\cU^{(1)}$ of the nerve of $\cU$. 
\begin{itemize}
\item  A $\mathrm{U}(P)$-valued \v{C}ech $1$-cocycle $\bv=\{ v_{\mu \nu}\}_{\mu,\nu \in I}$ on $\cU$ is an \emph{$(\varepsilon , \cU)$-flat bundle} on $X$ with the typical fiber $P$ if $\|  v_{\mu\nu}(x)-v_{\mu\nu}(y)\| <\varepsilon$ for any $x,y \in U_{\mu\nu}:=U_\mu \cap U_\nu$. It is said to be normalized on $T$ if $\| v_{\mu \nu} -1 \| <\varepsilon $ for any $\langle \mu,\nu \rangle  \in T$.
\item A map $\pi \colon \Gamma \to \mathrm{U}(P)$ is a \emph{$(\varepsilon , \cG)$-representation} of $\Gamma$ on $P$ if $\pi(e)=1$ and
\[ \|\pi(g)\pi(h)-\pi(gh)\|<\varepsilon   \]
for any $g,h \in \cG$. 
\end{itemize}
We write $\Bdl_P^{\varepsilon , \cU}(X)$ for the set of $(\varepsilon , \cU)$-flat bundles with the typical fiber $P$ and $\qRep_{P}^{\varepsilon , \cG}(\Gamma)$ for the set of $(\varepsilon, \cG)$-representations of $\Gamma$ on $P$. We define the metrics on $\Bdl_P^{\varepsilon , \cU}(X)$ and $\qRep_{P}^{\varepsilon , \cG}(\Gamma)$ as $d(\bv, \bv'):= \max _{\mu,\nu} \| v_{\mu\nu} -v'_{\mu\nu} \| $ and $d(\pi, \pi') = \sup_{\gamma \in \cG} \| \pi(\gamma) - \pi'(\gamma) \|$ respectively.
\end{defn}

\begin{rmk}\label{rmk:MF}
The bundle $E_\bv$ associated to a $\mathrm{U}(P)$-valued \v{C}ech $1$-cocycle is constructed as following. As in (\ref{form:projMF}), let $ \{ \eta_\mu \}_{\mu \in I}$ be a family of positive continuous functions on $X$ such that $\sum_{\mu \in I} \eta_\mu ^2 =1 $ and let $e_{\mu \nu} \in \bM_{I}$ denotes the matrix element, i.e., $e_{\mu \nu} e_\sigma = \delta_{\nu , \sigma} e_\mu$ where $e _\mu$ is the standard basis of $\bC^I$. Let
\begin{align*}
p_{\bv}(x)&:= \sum_{\mu , \nu} \eta _\mu(x) \eta_\nu(x) v_{\mu \nu}(x) \otimes e_{\mu\nu} \in C(X) \otimes \bB(P) \otimes \bM_I, \\
\psi_\mu^\bv (x) &:= \sum_{\nu } \eta_\nu (x) v_{\nu \mu} (x)  \otimes e_\nu \in C_b(U_\mu) \otimes \bB(P) \otimes \bC^I.
\end{align*}
Then we have $p_\bv(x) \psi_\mu ^{\bv}(x)=\psi_\mu(x)$ for $x \in U_\mu$ and  $\psi_\mu^{\bv}(x)^*\psi_{\nu}^\bv(x) = v_{\mu \nu}(x)$ for $x\in U_{\mu \nu}$. That is, $p_\bv$ is a projection with the support $E_\bv$ and $\psi_\mu ^\bv$ is a local trivialization of $E_\bv$.
\end{rmk}

It is essentially proved in \cite[Theorem 3.1, Theorem 3.2]{mathOA150306170} (see also \cite[Lemma 6.9]{Kubota3}) that there is a constant $C>0$ depending only on $\cU$ and maps
\begin{align}
\begin{split}
\alpha& \colon \Bdl_{P}^{\varepsilon ,\cU} (X)_T \to \qRep ^{C\varepsilon, \cG}_{P}(\Gamma ), \\
\beta& \colon \qRep^{ \varepsilon, \cG}_{P} (\Gamma) \to \Bdl_{P}^{C \varepsilon, \cU} (X)_T,
\end{split}
\label{form:beta}
\end{align}
satisfying 
\begin{itemize}
\item $d(\alpha (\bv), \alpha (\bv ' )) \leq d(\bv, \bv') + C \varepsilon$, $d(\beta \circ \alpha  (\bv) , \bv) \leq C \varepsilon$ for any $\bv, \bv' \in \Bdl_P^{\varepsilon, \cU} (X)$, and 
\item $d(\beta (\pi), \beta (\pi ')) \leq d(\pi , \pi' ) + C \varepsilon$, $d(\alpha \circ \beta (\pi), \pi) \leq C \varepsilon$ for any $\pi, \pi' \in \qRep _P^{\varepsilon, \cG}(\Gamma )$.
\end{itemize}
\begin{rmk}\label{rmk:beta}
The construction of the map $\beta$ is essentially given in \cite[Lemma 4.4]{Kubota3} (see also Definition 6.7). Here it is mentioned that, for a $(\varepsilon, \cG)$-representation $\pi$ of $\Gamma$, the associated bundle $\bv:=\beta (\pi)$ satisfies $\|v_{\mu \nu}(x) - \pi(\gamma _{\mu\nu}) \|<4\varepsilon$. Indeed, this inequality characterizes $\beta(\pi)$ up to small correction. 
\end{rmk}

\begin{defn}
Let $(X,Y)$ be a pair of compact spaces. A \emph{stably relative bundle} on $(X,Y)$ with the typical fiber $(P,Q)$ is a quadruple $(E_1,E_2, E_0, u)$, where $E_1$ and $E_2$ are $P$-bundles on $X$, $E_0$ is a $Q$-bundle on $Y$ and $u$ is a unitary bundle isomorphism $E_1|_Y \oplus E_0 \to E_2|_Y \oplus E_0$. 
\end{defn}
A stably relative bundle of Hilbert $\bC$-modules with the typical fiber $(\bC^n, \bC^m)$ is simply called a stably relative vector bundle of rank $(n,m)$. We simply call a stably relative bundle of the form $(E_1,E_2,0,u)$ a \emph{relative bundle}. 
\begin{rmk}\label{rmk:module}
A stably relative bundle associates an element of the relative $\K^0$-group $\K^0(X,Y;A):=\K_0(C_0(X_1^\circ ) \otimes A)$ in the following way. 
Let $f_1(r):=\min \{ 1 , \max \{ 0 , 1-3r \} \}$ and $f_2(r):= \min \{ 1 , \max \{ 0 , 3r-2 \} \}$.
The inverse of $\kappa$ is given by mapping $(E_1,E_2,E_0,u)$ to 
\[ [E_1,E_2,E_0,u] :=[\sE_1 \oplus \sE_2^\op, 1,  \big( \begin{smallmatrix}0 & \tilde{u}^* \\ \tilde{u} & 0\end{smallmatrix} \big) ] \in \KK(\bC, C_0(X^\circ_1) \otimes A),\]
where 
\begin{align*}
 \sE_1&:=C_0(X^\circ_1, E_1) \oplus C_0(Y_1^\circ , E_0), \\ 
\sE_2&:=C_0(X^\circ_1, E_2) \oplus C_0(Y_1^\circ , E_0), \\
\tilde{u}&:= f_1(r) 1_{E_0} + f_2(r) u \in \bB(\sE_1, \sE_2).
\end{align*}
In particular, $[E_1, E_2, E_0, u]=0$ if $E_1=E_2$ and $u=1_{E_1 |_Y \oplus E_0}$.
\end{rmk}

Let $(X,Y)$ be a pair of finite CW-complexes. We say that a good open cover is a good open cover $\cU=\{ U_\mu \}_{mu \in I}$ of $X$ such that $\cU|_Y := \{ Y \cap U_\mu \}$ is a good cover of $Y$.  
\begin{defn}
Let $(X,Y)$ and $\cU$ be as above and let $P$ be a finitely generated Hilbert $A$-module.  Let $T$ be a maximal subtree of the $1$-skeleton of $N(\cU)$ such that $T|_{N(\cU|_Y)}$ is also a maximal subtree. 
\begin{itemize}
\item For two $(\varepsilon , \cU)$-flat bundles $\bv_1=\{ v_{\mu\nu}^1\}$ and $\bv_2=\{ v_{\mu\nu}^2\}$, a \emph{morphism of $(\varepsilon , \cU)$-flat bundles} $\mathbf{u} \in \Hom_\varepsilon (\bv_1, \bv_2)$ is a family of unitaries $\bu=\{ u_\mu \} _{\mu \in I} \in \mathrm{U}(P)^I$ such that 
\[ \sup _{\mu, \nu \in I} \sup_{x \in U_{\mu\nu}} \| u_\mu v_{\mu \nu}^{1}(x) u_\nu^* -v_{\mu\nu}^2(x) \|  < \varepsilon . \]
\item An \emph{$(\varepsilon , \cU )$-flat stably relative bundle} on $(X,Y)$ with the typical fiber $(P , Q)$ is a quadruple $\fv:=(\bv_1,\bv_2,\bv_0, \bu)$, where 
\begin{itemize}
\item $\bv_1 $ and $\bv_2 $ are $(\varepsilon, \cU)$-flat $P$-bundle on $X$, 
\item $\bv_0$ is a $(\varepsilon , \cU _Y)$-flat $Q$-bundle on $Y$ and
\item $\bu \in \Hom_{\varepsilon} (\bv_1|_Y \oplus \bv_0 ,  \bv_2|_Y \oplus \bv_0)$.
\end{itemize}
It is said to be normalized on $T$ if $\bv_1$, $\bv_2$ are normalized at $T$ and $\bv_0$ is normalized at $T \cap Y$.
\end{itemize}
We write the set of $(\varepsilon , \cU )$-flat stably relative bundles on $(X,Y)$ normalized at $T$ with the typical fiber $(P , Q)$ as $\Bdl _{P,Q}^{\varepsilon , \cU}(X,Y)_T$. We define the metric on $\Bdl _{P,Q}^{\varepsilon , \cU}(X,Y)_T$ as $d(\fv,\fv') := \max \{ d(\bv_1, \bv_1') , d(\bv_2,\bv_2'), d(\bv_0,\bv_0'), d(\bu,\bu') \}$, where $d(\bu,\bu'):= \max _\mu \| u_\mu - u_\mu' \|$. 
\end{defn}
\begin{rmk}\label{rmk:relative}
An $(\varepsilon , \cU)$-flat stably relative bundle $\fv=(\bv_1,\bv_2,\bv_0,\bu)$ associates an element of the relative $\K^0$-group $\K^0(X,Y,A)$. We give some remarks on this element.
\begin{enumerate}
\item If $\varepsilon >0$ is sufficiently small and $\fv , \fv' \in \Bdl^{\varepsilon, \cU} _{P,Q}(X,Y)$ satisfies $d(\fv, \fv') < \varepsilon$, then we have $[\fv]=[\fv']$ (\cite[Lemma 6.11]{Kubota3}).  
\item We quickly remind the definition of $[\fv]$. As is proved in \cite[Lemma 3.4]{Kubota3}, there is a family $\{ \bar{u}_\mu \colon U_\mu \to \mathrm{U}(P \oplus Q) \}_{\mu \in I}$ such that $\bar{u}_\mu (v_{\mu\nu}^1 \oplus v_{\mu\nu}^0 ) \bar{u}_\nu^* = v_{\mu\nu}^2 \oplus v_{\mu\nu}^0 $ and $\| \bar{u}_\mu - u_\mu \| < C \varepsilon$, where $C>0$ is a constant depending only on $\cU$. This family $\{ \bar{u}_\mu \}_{\mu \in I}$ induces a bundle map $\bar{u} \colon E_{\bv_1|_Y} \oplus E_{\bv_0} \to E_{\bv_2|_Y} \oplus E_{\bv_0}$. 
\item Let $\{ \eta_\mu \}$ and $e_{\mu\nu}$ be as in Remark \ref{rmk:MF}. The element
\[ \bar{w} := \sum \eta_\mu \eta_\nu \cdot  (v_{\mu\nu}^2 \oplus v_{\mu\nu}^0)\bar{u}_\nu \otimes e_{\mu
\nu} \in C(Y) \otimes \bB(P) \otimes \bM_I \]
is a partial isometry such that $\bar{w}^*\bar{w}= p_{\bv_1|_Y} \oplus p_{\bv_0}$, $\bar{w}\bar{w}^*= p_{\bv_2|_Y} \oplus p_{\bv_0}$ and 
\[ (\psi_{\bv_2|_Y\oplus \bv_0})^* \bar{w}(\psi_{\bv_1|_Y \oplus \bv_0}) = \bar{u}_\mu. \]
That is, $w$ is identified with $\bar{u}$ in (2) under the canonical isomorphism $E_{\bv_i|_Y} \cong  p_{\bv_1|_Y} \underline{P}^I_Y$ for $i=1,2$ and $E_{\bv_0} \cong p_{\bv_0}\underline{Q}^I_Y$. We remark that this $\bar{w}$ satisfies
\[ \| \bar{w} -  p_{\bv_2} \cdot \mathrm{diag} (u_\mu )_{\mu \in I}  \| \leq |I|^2 \cdot \sup_{\mu \in I} \| \bar{u}_\mu-u_\mu \| <|I|^2 \varepsilon, \]
where $\mathrm{diag}(u_\mu)_{\mu \in I}$ is a unitary in $\bB(P) \otimes \bM_I$.

\end{enumerate}
\end{rmk} 

We say that an element $\xi \in \K^0(X,Y ; A)$ is \emph{(resp.\ stably) almost flat} with respect to a good open cover $\cU$ if for any $\varepsilon >0$ there is a $(\varepsilon, \cU)$-flat (resp.\ stably) relative vector bundle $\fv$ of finitely generated projective Hilbert $A$-modules such that $x=[\fv]$. It is shown in \cite[Corollary 3.16]{Kubota3} that (stably) almost flatness does not depend on the choice of open covers $\cU$. 
\begin{defn}
Let $(X,Y)$ be a pair of finite CW-complexes.
\begin{enumerate}
\item We write $\K_{\mathrm{af}}^0(X,Y;A)$ (resp.\ $\K_{\mathrm{s\mathchar`-af}}^0(X,Y;A)$) for the subgroup of (resp.\ stably) almost flat elements.
\item We say that a $\K$-homology class $\xi \in \K_*(X,Y)$ has \emph{infinite (resp.\ stably) relative $\K$-area} if there is an (resp.\ stably) almost flat $\K$-theory class $x \in \K^0(M,N)$ such that the index pairing $ \langle x, \xi \rangle$ is non-zero.
\item We say that $\xi \in \K_*(X,Y)$ has \emph{infinite (resp.\ stably) relative C*-$\K$-area} if for any $\varepsilon>0$ there is a C*-algebra $A_\varepsilon$ and a (resp.\ stably) relative $(\varepsilon , \cU)$-flat bundle $\fv$ of finitely generated projective Hilbert $A_\varepsilon$-modules such that the index pairing $ \langle [\fv], \xi \rangle \in \K_0(A_\varepsilon)$ is non-zero. 
\end{enumerate}
\end{defn}
In particular, we say that a spin manifold $M$ with the boundary $N$ has (stably) relative infinite (C*-)K-area if so is the K-homology fundamental class $[M,N] \in \K_*(M,N)$.

\begin{thm}[{\cite[Theorem 5.1]{Kubota3}}]\label{thm:enlarge}
Let $M$ be a compact spin manifold with boundary $N$. If $M_\infty := M \sqcup_N N \times [0,\infty)$ is area-enlargeable, then $(M,N)$ has infinite stably relative C*-K-area.
\end{thm}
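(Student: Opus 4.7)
The plan is to produce, for each $\varepsilon>0$, an explicit almost-flat stably relative complex vector bundle on $(M,N)$ (taking $A_\varepsilon = \bC$) whose pairing with $[M,N]$ is a fixed non-zero integer, which is in fact stronger than what the statement asks for.

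Unpacking the hypothesis, area-enlargeability of $M_\infty$ supplies, for each $\varepsilon>0$, a finite Riemannian covering $\pi \colon \hat{M}_\infty \to M_\infty$ together with a smooth $\varepsilon$-area-contracting map $f \colon \hat{M}_\infty \to S^{2k}$ of non-zero degree that is constant outside a compact subset. Since the end of $M_\infty$ is isometric to $N\times [0,\infty)$, we may arrange $f$ to be constant on $\pi^{-1}(N\times[r,\infty))$ for some $r\geq 1$; restricting to $\hat{M} := \pi^{-1}(M_r)$ yields a finite cover $\pi \colon \hat{M} \to M_r\cong M$ on which $f$ is constant in a collar of $\hat{N} := \pi^{-1}(N)$. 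Pulling back a fixed Hermitian Bott bundle $(E_B, \nabla_B) \to S^{2k}$ whose curvature has operator norm $\leq C_k$, the pullback $\hat{E} := f^*E_B$ carries a connection with curvature bounded by $C_k\varepsilon$ and is canonically trivialized on the collar of $\hat{N}$. With respect to a fixed, sufficiently fine good open cover $\hat{\cU}$ of $\hat{M}$, its \v{C}ech cocycle is $(C'\varepsilon, \hat{\cU})$-flat and defines a stably relative bundle $\hat{\fv}$ on $(\hat{M}, \hat{N})$, with the boundary unitary $\bu$ supplied by the collar trivialization and $C'$ depending only on $\hat{\cU}$ and $C_k$.

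I then descend to $(M,N)$ by pushforward along $\pi$, whose deck group $G$ is finite: the pushforward $\fv_\varepsilon := \pi_*\hat{\fv}$ is a stably relative complex vector bundle on $(M,N)$ of rank $|G|\cdot \mathrm{rk}(E_B)$. For a good cover $\cU$ of $(M,N)$ whose elements are evenly covered by $\pi$, the local transition functions of $\fv_\varepsilon$ are block matrices combining transition functions of $\hat{\fv}$ on each sheet with locally constant permutation matrices from the $G$-action; hence $\fv_\varepsilon$ is $(C''\varepsilon, \cU)$-flat for an absolute constant $C''$, and the trivialization on the collar of $\hat{N}$ descends to one on the collar of $N$. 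Thus $\fv_\varepsilon$ represents a class in $\K^0(M,N)$ in the sense of Remark \ref{rmk:relative}.

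To conclude, I verify $\langle [\fv_\varepsilon], [M,N] \rangle \neq 0$. By multiplicativity of the K-theoretic pairing under finite coverings, equivalently by associativity of the Kasparov product with the $\pi_*$ cycle, this pairing equals $\langle [\hat{\fv}], [\hat{M},\hat{N}] \rangle \in \K_0(\bC) = \bZ$, which is the APS index of the Dirac operator on $\hat{M}$ twisted by $\hat{E}$. Since $\hat{E}$ is trivial in a collar of $\hat{N}$, the boundary eta correction vanishes and the APS theorem yields
\[ \langle [\fv_\varepsilon], [M,N] \rangle \;=\; \int_{\hat{M}} \widehat{A}(T\hat{M})\,\ch(\hat{E}) \;=\; \deg(f) \int_{S^{2k}}\ch(E_B) \;\neq\; 0, \]
using $\widehat{A}(S^{2k})=1$ and the non-vanishing of the top Chern character of the Bott bundle. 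This integer is independent of $\varepsilon$, completing the argument. The main obstacle is the K-theoretic bookkeeping in this last step: identifying the relative pairing on $(M,N)$ with the absolute APS integer on $\hat{M}$ requires combining the explicit representative of $[\fv_\varepsilon]$ from Remark \ref{rmk:relative} with multiplicativity of Kasparov products along $\pi_*$, and then invoking APS for a bundle merely trivialized in a collar. The almost-flatness estimates are by contrast routine, since all constants depend only on $\hat{\cU}$, $C_k$, and $|G|$, not on $\varepsilon$.
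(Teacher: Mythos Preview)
Your argument has a genuine gap: area-enlargeability does not, in general, supply \emph{finite} coverings. The standard definition (going back to Gromov--Lawson, and as used throughout the Hanke--Schick work on which this paper builds) only guarantees, for each $\varepsilon>0$, a spin covering $\hat{M}_\infty \to M_\infty$---possibly of infinite degree---together with an $\varepsilon$-area-contracting map to the sphere that is constant outside a compact set. When the covering is infinite, your pushforward $\pi_*\hat{\fv}$ has infinite rank and does not define a class in $\K^0(M,N)$, so the descent step collapses. This is exactly the obstruction that forces one to pass to Hilbert $A_\varepsilon$-module bundles with $A_\varepsilon \neq \bC$, and it is the reason the theorem asserts only infinite stably relative \emph{C*}-K-area rather than the stronger K-area you claim to establish. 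Handling infinite covers is precisely the point of \cite{MR2259056,MR2353861}; for manifolds that are area-enlargeable only through infinite covers, the stronger conclusion you aim for is not known and your argument cannot deliver it.

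The statement is imported from \cite{Kubota3} and not reproved in this paper, so there is no in-paper proof to compare against line by line. The argument there, in the Hanke--Schick template, packages the pullback of the Bott bundle on the (possibly infinite) cover as an almost flat bundle of finitely generated projective Hilbert modules over a suitable coefficient C*-algebra, and detects nonvanishing of the pairing in $\K_0(A_\varepsilon)$ rather than in $\bZ$. What you have written is essentially the classical finite-cover Gromov--Lawson argument, which covers only the easy special case.
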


Finally we review the almost monodromy correspondence in the relative setting.

\begin{defn}
Let $(\Gamma , \Lambda)$ be a pair of discrete groups and let $\phi \colon \Lambda \to \Gamma $ be a homomorphism.
\begin{itemize}
\item Let $\pi_1$ and $\pi_2$ be $(\varepsilon , \cG)$-representations of $\Gamma$. An \emph{$\varepsilon$-intertwiner} $u \in \Hom _\varepsilon ( \pi_1 , \pi_2)$ is a unitary $u \in \mathrm{U}(P)$ such that $\| u\pi_1(\gamma )u^* - \pi_2(\gamma) \|< \varepsilon$ for any $\gamma \in \cG$.
\item A \emph{stably relative $(\varepsilon , \cG )$-representation} of $(\Gamma, \Lambda) $ is a quadruple $\boldsymbol{\pi}:=(\pi _1, \pi_2, \pi_0 , u)$, where 
\begin{itemize}
\item $\pi_1 \colon \Gamma \to \mathrm{U}(P)$ and $\pi_2 \colon \Gamma \to \mathrm{U}(P)$ are $(\varepsilon , \cG _\Gamma) $-representations,
\item $\pi_0 \colon \Lambda \to \mathrm{U}(Q)$ is a $(\varepsilon , \cG_\Lambda )$-representation, and
\item $u \in \Hom _\varepsilon (\pi_1 \circ \phi \oplus \pi_0 ,  \pi_2 \circ \phi \oplus \pi_0)$.
\end{itemize}
\end{itemize}
We write $\qRep _{P,Q}^{\varepsilon , \cG} (\Gamma , \Lambda )$ for the set of stably relative $( \varepsilon , \cG)$-representations of $(\Gamma, \Lambda )$ on $(P,Q)$. We define the metric on $\qRep _{P,Q}^{\varepsilon , \cG} (\Gamma , \Lambda )$ as $d(\boldsymbol{\pi},\boldsymbol{\pi}') := \max \{ d(\pi_1, \pi_1') , d(\pi_2,\pi_2'), d(\pi_0,\pi_0'), \| u - u'\| \}$. 
\end{defn}

\begin{thm}[{\cite[Definition 6.11, Theorem 6.12]{Kubota3}}]\label{thm:monodromy}
There is a constant $C_{\mathrm{am}}>0$ depending only on $\cU$ and continuous maps
\begin{align*}
\boldsymbol{\alpha}& \colon \Bdl_{P,Q}^{\varepsilon ,\cU} (X,Y)_T \to \qRep ^{C_{\mathrm{am}}\varepsilon, \cG}_{P,Q}(\Gamma, \Lambda ), \\
\boldsymbol{\beta}& \colon \qRep^{ \varepsilon, \cG}_{P,Q} (\Gamma, \Lambda) \to \Bdl_{P,Q}^{C_{\mathrm{am}} \varepsilon, \cU} (X,Y)_T,
\end{align*}
satisfying 
\begin{enumerate}
\item For $\fv, \fv' \in \Bdl^{\varepsilon,\cU}_{P,Q} (X,Y)_T$, we have $d(\boldsymbol{\alpha} (\fv) , \boldsymbol{\alpha}(\fv')) \leq d(\fv , \fv') + C_{\mathrm{am}}\varepsilon$ and $d(\boldsymbol{\beta} \circ  \boldsymbol{\alpha}(\fv ) , \fv ) \leq C_{\mathrm{am}}\varepsilon$.
\item For $\boldsymbol{\pi}, \boldsymbol{\pi}' \in \qRep^{\varepsilon, \cG}_{P,Q} (\Gamma, \Lambda)$, we have $d(\boldsymbol{\beta}(\boldsymbol{\pi}), \boldsymbol{\beta} ( \boldsymbol{\pi}')) \leq d(\boldsymbol{\pi}, \boldsymbol{\pi}') + C_{\mathrm{am}} \varepsilon$ and $d(\boldsymbol{\alpha} \circ \boldsymbol{\beta }(\boldsymbol{\pi}) , \boldsymbol{\pi} )\leq C_{\mathrm{am}} \varepsilon$.
\end{enumerate}

\end{thm}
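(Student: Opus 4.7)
The plan is to build the relative maps $\boldsymbol{\alpha}$ and $\boldsymbol{\beta}$ componentwise from the absolute almost monodromy correspondence (\ref{form:beta}), and then patch together the intertwiner data in the most naive possible way: a single unitary on the representation side comes from evaluating the local intertwining family at a basepoint patch, and conversely a single intertwining unitary produces the constant family on the bundle side.

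More concretely, fix a base vertex $\mu_0 \in N(\cU|_Y)^{(0)}$ belonging to $T$. For $\fv=(\bv_1,\bv_2,\bv_0,\bu) \in \Bdl_{P,Q}^{\varepsilon,\cU}(X,Y)_T$, define
\[ \boldsymbol{\alpha}(\fv):= \bigl(\alpha(\bv_1),\alpha(\bv_2),\alpha(\bv_0),u_{\mu_0}\bigr), \]
where $\alpha$ on each bundle component is the absolute map from (\ref{form:beta}). For $\boldsymbol{\pi}=(\pi_1,\pi_2,\pi_0,u) \in \qRep_{P,Q}^{\varepsilon,\cG}(\Gamma,\Lambda)$, define
\[ \boldsymbol{\beta}(\boldsymbol{\pi}):= \bigl(\beta(\pi_1),\beta(\pi_2),\beta(\pi_0),\{u\}_{\mu \in I}\bigr), \]
with the constant family $u_\mu=u$ for all $\mu$. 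The metric compatibility claims in (1) and (2) for the bundle/representation components follow immediately from the absolute correspondence, and for the intertwiner component they reduce to $\|u_{\mu_0}-u'_{\mu_0}\| \le d(\bu,\bu')$ and $\|u-u'\|=\|u-u'\|$ respectively.

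The main verification is that $\boldsymbol{\alpha}$ and $\boldsymbol{\beta}$ actually land in the claimed spaces of relative data. For $\boldsymbol{\alpha}(\fv)$: the unitary $u_{\mu_0}$ must approximately intertwine $\alpha(\bv_1)\circ \phi \oplus \alpha(\bv_0)$ with $\alpha(\bv_2)\circ \phi \oplus \alpha(\bv_0)$ on all generators $\gamma \in \cG$. Using Remark \ref{rmk:relative}(2), replace $\bu$ by the exact gauge family $\{\bar{u}_\mu\}$ with $\bar{u}_\mu (v^1_{\mu\nu}\oplus v^0_{\mu\nu})\bar{u}_\nu^*=v^2_{\mu\nu}\oplus v^0_{\mu\nu}$ and $\|\bar{u}_\mu-u_\mu\|<C_0\varepsilon$; since the $\bv_i$ are normalized on $T$, the $\bar{u}_\mu$ are pairwise $O(\varepsilon)$-close along $T$, so $\bar{u}_\mu \approx u_{\mu_0}$ up to $O(\mathrm{diam}(T)\cdot \varepsilon)$ for every $\mu \in I$, which translates the bundle intertwining relation into the sought-for $\varepsilon$-intertwining of generators. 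For $\boldsymbol{\beta}(\boldsymbol{\pi})$: by Remark \ref{rmk:beta}, the bundle $\beta(\pi_i\circ\phi)$ has transition functions $4\varepsilon$-close to the constant $\pi_i(\phi(\gamma_{\mu\nu}))$ on $U_{\mu\nu}\subset Y$, and similarly for $\beta(\pi_0)$; combining with $\|u(\pi_1(\phi(\gamma))\oplus \pi_0(\gamma))u^*-(\pi_2(\phi(\gamma))\oplus\pi_0(\gamma))\|<\varepsilon$ gives the approximate cocycle intertwining with a constant depending only on $\cU$.

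The hardest step in practice will be bookkeeping the constants so that a single $C_{\mathrm{am}}$ works uniformly: one has to absorb the constants from the absolute correspondence, the constant $C_0$ of Remark \ref{rmk:relative}(2), and the tree-diameter factor propagating $\bar{u}_{\mu_0}$ to all patches. Once this is done, the identities $d(\boldsymbol{\beta}\circ\boldsymbol{\alpha}(\fv),\fv)\le C_{\mathrm{am}}\varepsilon$ and $d(\boldsymbol{\alpha}\circ\boldsymbol{\beta}(\boldsymbol{\pi}),\boldsymbol{\pi})\le C_{\mathrm{am}}\varepsilon$ on the bundle/representation components are the absolute statements from (\ref{form:beta}), and on the intertwiner component they reduce to tracking that the constant family $\{u\}_\mu$ evaluated at $\mu_0$ returns $u$, and that the gauge family $\{\bar{u}_\mu\}$ built from $u_{\mu_0}$ returns something $O(\varepsilon)$-close to the original $\bu$—both tautological modulo the estimates already assembled.
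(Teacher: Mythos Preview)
Your approach is correct and matches what the paper records. The paper does not actually prove this theorem here---it is quoted from \cite{Kubota3}---but it does recall the definition of $\boldsymbol{\beta}$ immediately after the statement (formula (\ref{form:bbeta})), namely $\boldsymbol{\beta}(\boldsymbol{\pi}) = (\beta(\pi_1),\beta(\pi_2),\beta(\pi_0),\Delta_I(u))$ with $\Delta_I$ the diagonal embedding, which is exactly your constant family $\{u\}_{\mu \in I}$. Your choice of $\boldsymbol{\alpha}$ via evaluation at a base vertex $\mu_0$ and the tree-propagation argument for the intertwiner are the natural counterparts, and the verification you outline (using the absolute correspondence componentwise, Remark~\ref{rmk:relative}(2) for the exact gauge family, and Remark~\ref{rmk:beta} for the approximate constancy of $\beta(\pi_i)$) is the expected one.
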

For the latter use, we only remind the definition of $\boldsymbol{\beta}$ given in \cite[Definition 6.10]{Kubota3}. For a $(\varepsilon , \cG)$-representation $\boldsymbol{\pi}=(\pi_1,\pi_2,\pi_0,u)$ of $(\Gamma, \Lambda)$, 
\begin{align}
\boldsymbol{\beta} (\boldsymbol{\pi}) := (\beta (\pi_1), \beta (\pi_2) , \beta (\pi_0) , \Delta _I(u)), \label{form:bbeta}
\end{align}
where $\beta$ is the map in (\ref{form:beta}) and $\Delta_I \colon \mathrm{U}(P \oplus Q) \to \mathrm{U}(P \oplus Q)^I$ is the diagonal embedding.

\section{Relative index pairing with coefficient in a C*-algebra}\label{section:3}
In this section, we establish an obstruction for the relative higher index to vanish arising from an index pairing with coefficient in a C*-algebra.
It has two applications; a relative version of the Hanke--Schick theorem \cite{MR2259056,MR2353861} and the non-vanishing of relative higher index in the setting of Hanke--Pape--Schick~\cite{MR3449594}.

\subsection{Index pairing with stably h-relative representations}
For a representation of the fundamental group $\Gamma = \pi_1(M)$ of a closed spin manifold $M$ on a finitely generated projective Hilbert $A$-module $P$, i.e., a homomorphism $\pi \colon \Gamma \to \mathrm{U}(P)$, the Kasparov product $\alpha_{\Gamma}([M]) \hotimes_{C^*\Gamma} [\pi] \in \K_0(A)$ coincides with the index pairing with the flat $P$-bundle associated to $\pi$. 
Here we develop its relative version. The relative counterpart of group representation is a pair of representations of $\Gamma$ whose restriction to $\Lambda$ are identified `up to stabilization and homotopy' in the following sense. 
\begin{defn}
Let $A$ be a unital C*-algebra and let $P_1$, $P_2$, $Q$ be finitely generated projective Hilbert $A$-modules. A \emph{stably h-relative representation} of $(\Gamma, \Lambda)$ on $(P_1,P_2,Q)$ is a quintuple $\Pi:= (\pi_1,\pi_2, \pi_0 , u, \tilde{\pi} )$, where
\begin{itemize}
\item $\pi_i \colon \Gamma \to \mathrm{U}(P_i)$ for $i=1,2$ and $\pi_0 \colon \Lambda \to \mathrm{U}(Q) $ are representations,
\item $u \colon P_1 \oplus Q \to P_2 \oplus Q$ be a unitary, and
\item $\tilde{\pi}=\{ \tilde{\pi}_\kappa\}_{\kappa \in [1,2]}$ is a continuous family of representations of $\Lambda $ to $P_2 \oplus Q$ (that is, $\tilde{\pi}$ is a homomorphism from $\Lambda$ to $\mathrm{U}(\bB(P_2 \oplus Q)[1,2])$) such that $\tilde{\pi}_{1}=\Ad (u) \circ ( \pi_1 \circ \phi \oplus \pi_0 )$ and $\tilde{\pi}_{2}=\pi_2 \circ \phi \oplus \pi_0$.
\end{itemize}
\end{defn}

We associate the following two objects to a stably h-relative representation. 
First, let $\cP_i := \tilde{X} \times _{\Gamma, \pi_i} P$ for $i=1,2$, let $\cQ:= \tilde{Y} \times _{\Lambda , \pi_0 } Q$ and let $V_\kappa$ be a continuous family of bundle isomorphisms
\[ V_\kappa \colon \tilde{Y} \times _{\Lambda , \tilde{\pi}_\kappa } (P_1 \oplus Q) \to \tilde{Y} \times _{\Lambda, \tilde{\pi}_2 } (P_2 \oplus Q) \]
for $\kappa \in [1,2]$ such that $V_2$ is the identity. Note that such $V_\kappa$ exists and unique up to homotopy. Then, $(\cP_1, \cP_2, \cQ, V_1u)$ is a stably relative Hilbert $A$-module bundle with the typical fiber $(P,Q)$.

Second, let $\tilde{P}_i$ denote the Hilbert $A(-1,1)$-module $\tilde{P}_i:=P_i(-1,1) \oplus Q(-1,0)$. We define a $\KK$-class
\begin{align}
\boldsymbol{\Pi} = \bigg[ \tilde{P}_1 \oplus \tilde{P}_2, \Pi_1 \oplus \Pi_2, \begin{pmatrix} 0 & U^* \\ U & 0 \end{pmatrix} \bigg] \in \KK(C\phi , A(-1,1)),
\label{form:qhom}
\end{align}
where
\begin{align*}
\Pi_1(a,b_s)(s)&:=\left\{ \begin{array}{ll}(\pi _1 \oplus \pi_0 \circ \phi )(a) & s \in (-1,0), \\ \pi_1(b_s) & s \in [0,1), \end{array}\right. \\
\Pi_2(a,b_s)(s)&:=\left\{ \begin{array}{ll}\tilde{\pi}_{2+s}(a) & s \in (-1,0), \\ \pi_{2}(b_s) & s \in [0,1), \end{array}\right. 
\end{align*}
and $U$ is defined by using functions $f_1$ and $f_2$ as in Remark \ref{rmk:module} as
\[ U:= f_1(-s) 1_{Q} + f_2(-s) \bar{u}  \in \bB(\tilde{P}). \]
By a reparametrization of $\tilde{\pi}_\kappa$, we may assume that $\tilde{\pi}_\kappa = \tilde{\pi}_1$ for $\kappa \in [1,\frac 4 3]$ and $\tilde{\pi}_\kappa = \tilde{\pi}_2$ for $\kappa \in [\frac 5 3, 2]$. Then $U$ intertwines $\Pi_1$ with $\Pi_2$, that is, $U\Pi_1(x)=\Pi_2(x)U$ for any $x \in C\phi$.

\begin{thm}\label{prp:bundle}
The Kasparov product $\ell_{\Gamma, \Lambda } \otimes_{C^*(\Gamma, \Lambda)} \boldsymbol{\Pi} \in \KK (\bC, C_0(X^\circ ) \otimes A) $ is represented by the stably relative bundle $(\cP_1, \cP_2, \cQ, V_1 u )$ on $(X,Y)$.
\end{thm}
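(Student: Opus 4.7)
The strategy is to compute the Kasparov product $\ell_{\Gamma,\Lambda} \hotimes_{C^*(\Gamma,\Lambda)} \boldsymbol{\Pi}$ directly from the explicit cycle $(\sE_2, 1, \rho)$ given in Subsection \ref{section:2.1} and the quasi-homomorphism representative of $\boldsymbol{\Pi}$ displayed in (\ref{form:qhom}). The resulting cycle should then be matched, up to unitary equivalence, operator homotopy, and compact perturbation, with the representative of the stably relative bundle class $[\cP_1, \cP_2, \cQ, V_1 u]$ described in Remark \ref{rmk:module}.

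The first step is to identify the interior tensor product $\sE := \sE_2 \hotimes_{\Pi_1 \oplus \Pi_2}(\tilde{P}_1 \oplus \tilde{P}_2)$ as a Hilbert $C_0(X_2^\circ) \otimes A$-module. The module $\sE_2$ is glued from $SC(X, \cV)$ on the bulk and $C_0(Y_2, \cX)$ on the collar via $\psi_Y \colon S\cV|_Y \to \cX$. The absolute identity $\cV \hotimes_{C^*\Gamma, \pi_i} P_i \cong \cP_i$ turns the bulk piece into $SC(X, \cP_i)$. On the collar piece, the identity $\cX = \tilde{Y} \times_\Lambda C\phi$ combined with the $s$-dependent representation $\Pi_i$ — which interpolates between the $C^*\Gamma$-part and the $C^*\Lambda$-part of $C\phi$ through the path $\tilde{\pi}_\kappa$ — produces a bundle over $Y_2$ whose fibers are assembled out of $\cP_i|_Y$, $\cQ$, and the twisted bundles $\tilde{Y} \times_{\Lambda, \tilde{\pi}_\kappa}(P_i \oplus Q)$. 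Applying the (unique up to homotopy) bundle trivializations $V_\kappa$ collapses this to the $C_0(Y_1^\circ, \cP_i|_Y \oplus \cQ)$-summand appearing in Remark \ref{rmk:module}, with the gluing on $Y \times \{0\}$ induced by $\psi_Y$ matching the canonical decomposition of that representative.

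The second step is to recognize the Kasparov product operator. Combining $\rho$ with the off-diagonal $\bigl(\begin{smallmatrix} 0 & U^* \\ U & 0 \end{smallmatrix}\bigr)$, one obtains a cycle $(\sE \oplus \sE^{\op}, 1, F)$. Since $U = f_1(-s) + f_2(-s)\bar{u}$ is exactly the interpolating partial isometry built from $u$ that appears in the representative of Remark \ref{rmk:module}, and the multiplication operator $\rho$ provides the analogous interpolation profile in the $Y_2'$ direction (cf.\ Figure 1), after a straightforward reparametrization identifying the cone coordinate of $C\phi$ with the $Y_2'$-coordinate $r$, the cycle $(\sE \oplus \sE^{\op}, 1, F)$ is homotopic to the representative of $[\cP_1, \cP_2, \cQ, V_1 u]$. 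Compactness of the commutators $[F, a]$ for $a \in \bC$ is trivial, and verification of the Kasparov-product axiom reduces to checking that $\rho$ and $U$ commute on the overlap, which is immediate.

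The main obstacle is the careful bookkeeping around the three distinct interval coordinates entering the calculation — the suspension variable in $\sE_2$, the cone coordinate of $C\phi$ defining $\tilde{P}_i$ through $\Pi_i$, and the homotopy parameter $\kappa$ of $\tilde{\pi}_\kappa$ — and a single explicit reparametrization and homotopy of Kasparov cycles must be arranged to identify the product with the claimed representative. Apart from this organizational work, the argument is a direct relativization of the standard identity $\ell_\Gamma \hotimes_{C^*\Gamma}[\pi] = [\cP]$ underlying the Hanke--Schick theorem.
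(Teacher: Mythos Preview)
Your outline follows the same strategy as the paper's proof: compute the tensor products $\sE_2 \hotimes_{\Pi_i} \tilde{P}_i$ as sections of explicit bundles over an auxiliary space $Z \subset X_2^\circ \times (-1,1)$, use the trivializations $V_\kappa$ to straighten them to modules of the shape $\bar{\sP}_i = C_0(Z,\cP_i) \oplus C_0((Y_2')^\circ,\cQ)$, and then match the resulting Kasparov cycle with the representative of $[\cP_1,\cP_2,\cQ,V_1u]$ from Remark~\ref{rmk:module} via a reparametrization.

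Two places where the paper supplies substantially more than your sketch, and which you should not dismiss as routine: (i) the product operator is not obtained by vaguely ``combining $\rho$ with the off-diagonal,'' but through the explicit connection formula of Lemma~\ref{lem:Kas}, which yields the matrix $\bigl(\begin{smallmatrix} \bar\rho & \bar\sigma^2\bar U^*\bar V^* \\ \bar\sigma^2\bar V\bar U & -\bar\rho \end{smallmatrix}\bigr)$ with $\bar\sigma = (1-\bar\rho^2)^{1/4}$; what must be checked is that $\bar U$ is a $U$-connection, which is a stronger statement than ``$\rho$ and $U$ commute on the overlap.'' (ii) Your ``straightforward reparametrization identifying the cone coordinate with the $Y_2'$-coordinate'' is in fact the piecewise-defined map $f\colon Z \to X_2^\circ(0,1)$ built out of $\bar\rho$ itself, together with the observation that $f^*(2s-1)=\bar\rho$; the proof then compares the product cycle with $\beta \otimes [\cP_1,\cP_2,\cQ,V_1u]$ (also computed via Lemma~\ref{lem:Kas}) through $f^*$ and an open inclusion $\iota$, and concludes by showing $\iota_*\circ f^*$ is homotopic to the standard inclusion. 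This is exactly the ``bookkeeping of the three interval coordinates'' you anticipated, but it is the main content of the proof rather than a throwaway step.
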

For the proof, we use the following lemma. 
\begin{lem}[{\cite[Lemma A.2]{Kubota1}}]\label{lem:Kas}
Let $A$, $B$ and $D$ be $\sigma$-unital Real C*-algebras such that $A$ is separable, let $(E _1, \pi _1, T_1)$ be a real self-adjoint Kasparov $A$-$B$ bimodule and let $(E_2, \varphi_2 , F_2)$ be a real Kasparov $B$-$D$ bimodule. Set $E:=E_1 \otimes _B E_2$ with the trivial $\bZ_2$-grading, $\pi :=\pi_1 \otimes _B 1$ and $\tilde{T}_1:=T_1 \otimes _B 1$. Let $G=\big( \begin{smallmatrix} 0 & G_0^* \\ G_0 & 0\end{smallmatrix}
\big) \in \bB (E)$ be an odd $F$-connection and assume that $[\pi(A), T] \subset \bK(E)$, where
\[ T= \begin{pmatrix}\tilde{T}_1 & (1-\tilde{T}^2_1)^{1/4}G^*_0 (1-\tilde{T}^2_1)^{1/4} \\  (1-\tilde{T}^2_1)^{1/4}G_0 (1-\tilde{T}^2_1)^{1/4} & -\tilde{T}_1 \end{pmatrix} \in \bB(E). \]
Then, the real self-adjoint Kasparov $A$-$D$ bimodule $(E , \pi, T)$ represents the Kasparov product $[E_1,\pi_1, T_1] \hotimes_B [E_2, \pi_2 , F_2]$.
\end{lem}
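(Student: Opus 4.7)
The strategy is to compute the interior product $\ell_{\Gamma,\Lambda}\hotimes_{C\phi}\boldsymbol{\Pi}$ by applying Lemma~\ref{lem:Kas} to the Kasparov representatives $(\sE_2,1,\rho)$ of $\ell_{\Gamma,\Lambda}$ and $(\tilde P_1\oplus\tilde P_2,\Pi_1\oplus\Pi_2,\big(\begin{smallmatrix}0&U^*\\U&0\end{smallmatrix}\big))$ of $\boldsymbol{\Pi}$, and then to identify the result with the representative of $[\cP_1,\cP_2,\cQ,V_1u]$ given in Remark~\ref{rmk:module}.

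First I would form the interior tensor product $E:=\sE_2\hotimes_{C\phi}(\tilde P_1\oplus\tilde P_2)$. The pullback description of $\sE_2$, combined with the basic identifications $\cV\hotimes_{C^*\Gamma,\pi_i}P_i\cong\cP_i$ and $\cW\hotimes_{C^*\Lambda,\pi_0}Q\cong\cQ$ of associated Mishchenko bundles, splits $E$ as $E_1\oplus E_2$ where each $E_i$ is a module of sections over $X_2^\circ$ twisted on the collar $Y\times(0,1)$ by the path $\tilde\pi_\kappa$. After a reparametrization collapsing one suspension coordinate of $\tilde P_i = P_i(-1,1)\oplus Q(-1,0)$ against the collar coordinate of $\sE_2$, each $E_i$ is naturally isomorphic to the Hilbert $C_0(X_1^\circ)\otimes A$-module $C_0(X_1^\circ,\cP_i)\oplus C_0(Y_1^\circ,\cQ)$ appearing as $\sE_i$ in Remark~\ref{rmk:module}.

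Next, since $\rho$ is scalar-valued and $\bC$ acts on $\sE_2$ by its unit representation, the commutator hypothesis $[\pi(\bC),T]\subset\bK(E)$ of Lemma~\ref{lem:Kas} is automatic. The task is then to exhibit an odd $F$-connection $G=\big(\begin{smallmatrix}0&G_0^*\\G_0&0\end{smallmatrix}\big)$ on $E$ whose $Y$-component, under the identification above, is precisely the relative clutching map $V_1u\colon\cP_1|_Y\oplus\cQ\to\cP_2|_Y\oplus\cQ$. Such a $G_0$ is produced by combining the intertwiner $U$ with a pointwise application of the homotopy-trivializing isomorphisms $V_\kappa$; consistency at the two endpoints of $\kappa\in[1,2]$ follows from $\tilde\pi_1=\Ad(u)\circ(\pi_1\circ\phi\oplus\pi_0)$, $\tilde\pi_2=\pi_2\circ\phi\oplus\pi_0$, and the uniqueness of $V_\kappa$ up to homotopy.

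Lemma~\ref{lem:Kas} then outputs the Kasparov representative $(E,1,T)$ with $T=\big(\begin{smallmatrix}\tilde\rho&(1-\tilde\rho^2)^{1/2}G_0^*\\ G_0(1-\tilde\rho^2)^{1/2}&-\tilde\rho\end{smallmatrix}\big)$. A straight-line homotopy along the collar (interpolating between the sign structure carried by $\rho$ and the one carried by $f_1,f_2$, both of which are standard interpolations between characteristic functions of matching subintervals of $X_1$) converts this into $\big(E,1,\big(\begin{smallmatrix}0&\tilde u^*\\ \tilde u&0\end{smallmatrix}\big)\big)$, which is precisely the representative of $[\cP_1,\cP_2,\cQ,V_1u]$ specified in Remark~\ref{rmk:module}. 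The main obstacle will be the bookkeeping in the first step: the collar coordinate $r\in[0,2]$ of $X_2$, the suspension coordinate $s\in(-1,1)$ of $\tilde P_i$, and the homotopy coordinate $\kappa\in[1,2]$ of $\tilde\pi_\kappa$ must be matched so that the boundary identifications force $u$ at one endpoint and the identity at the other, with $V_\kappa$ interpolating in between, thereby producing $V_1u$ as the effective clutching map.
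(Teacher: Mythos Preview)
Your proposal does not address the stated lemma at all. Lemma~\ref{lem:Kas} is an abstract connection criterion for Kasparov products: given a self-adjoint $A$-$B$ Kasparov bimodule $(E_1,\pi_1,T_1)$, a $B$-$D$ bimodule $(E_2,\varphi_2,F_2)$, and an $F_2$-connection $G$, the explicit operator $T$ built from $\tilde T_1$ and $G$ represents the product. In the paper this lemma is not proved; it is quoted from \cite[Lemma~A.2]{Kubota1}. A proof would proceed by verifying Connes--Skandalis' axioms for the product (or by invoking a standard operator-homotopy argument), and has nothing to do with $\ell_{\Gamma,\Lambda}$, $\boldsymbol{\Pi}$, or relative bundles.

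What you have written is instead a sketch of the proof of Theorem~\ref{prp:bundle}, which \emph{applies} Lemma~\ref{lem:Kas}. Even as a proof of Theorem~\ref{prp:bundle} your sketch has a genuine gap. You assert that after ``a reparametrization collapsing one suspension coordinate'' the module $E_i=\sE_2\hotimes_{\Pi_i}\tilde P_i$ becomes isomorphic to $C_0(X_1^\circ,\cP_i)\oplus C_0(Y_1^\circ,\cQ)$ as a Hilbert $C_0(X_1^\circ)\otimes A$-module. This is not correct as stated: $\sE_2\hotimes_{\Pi_i}\tilde P_i$ is a module over $C_0(X_2^\circ)\otimes A(-1,1)$, and the support of the resulting field is the region $Z=X_2^\circ(0,1)\cup (Y_2')^\circ(-1,0]$, not $X_1^\circ$. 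The paper handles this by first writing both sides as Kasparov modules over $Z$ and $X_2^\circ(0,1)$ respectively, then constructing an explicit continuous map $f\colon Z\to X(0,1)$ together with the open inclusion $\iota\colon Z\to X(-1,1)$, and checking that $f^*$ pulls one representative back to the other while $\iota_*\circ f^*$ is homotopic to the inclusion $X_2^\circ(0,1)\hookrightarrow X_2^\circ(-1,1)$. Your ``straight-line homotopy along the collar'' does not supply this comparison, because the two modules do not live over the same base to begin with.
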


\begin{proof}[Proof of Theorem \ref{prp:bundle}]
The Hilbert $C_0(X^\circ_2) \otimes A$-module $\sE_2 \otimes_{\Pi_2} \tilde{P}_2$ is the section space of the continuous field 
\[ \tilde{\cP}_2:= \bigsqcup_{s \in (0,1)} \cP_2 \cup \bigsqcup_{s \in (-1,0]} \tilde{Y} \times _{\tilde{\pi}_{2+s}} (P \oplus Q) \]
of Hilbert $A$-modules over $X_2^\circ \times (-1,1)$. 
Let $Z$ denote its support, that is, $Z:=X_2^\circ (0,1) \cup (Y_2')^\circ (-1,0]$.  

For $i=1,2$, set
\[ \bar{\sP} _i := C_0(Z, \cP _i) \oplus C_0((Y_2')^\circ , \cQ). \]
Then $\bar{\sP}_1$ is canonically identified with $\sE_2 \otimes_{\tilde{\Pi}_1} \tilde{P}$ and
\[ \bar{V} (\varphi)(x,s) = \left\{ \begin{array}{ll}\varphi(x,s) & s \in (0,1), \ x \in X_2^\circ , \\ V_{2+s}(\varphi(x, s)) & s \in (-1,0], \ x \in (Y_2')^\circ, \end{array} \right. \]
gives a unitary isomorphism 
\[ \bar{V} \colon \sE \otimes _{\tilde{\Pi}_2} \tilde{P} \to \bar{\sP}_2 . \]
Moreover, since $U$ intertwines $\Pi_1$ with $\Pi_2$, it induces an operator
\[ \bar{U} \colon \sE_2 \otimes_{\tilde{\Pi}_1} \tilde{P} \to \sE_2 \otimes_{\tilde{\Pi}_2} \tilde{P}.\]
In particular, $\bar{U}$ is a $U$-connection. By Lemma \ref{lem:Kas} we obtain that
\begin{align*}
\ell_{\Gamma , \Lambda} \otimes _{C\phi} \boldsymbol{\Pi} &= \bigg[ (\sE_2 \otimes_{\tilde{\Pi}_1} \tilde{P}_1) \oplus (\sE_2 \otimes_{\tilde{\Pi}_2} \tilde{P}_2)^{\mathrm{op}}, 1, \begin{pmatrix}\bar{\rho} & \bar{\sigma}^2\bar{U}^* \\ \bar{\sigma}^2\bar{U} & -\bar{\rho} \end{pmatrix} \bigg] \\
&=\bigg[ \bar{\sP}_1 \oplus \bar{\sP}_2 , 1, \begin{pmatrix}\bar{\rho} & \bar{\sigma}^2\bar{U}^*\bar{V}^* \\ \bar{\sigma}^2\bar{V}\bar{U} & -\bar{\rho} \end{pmatrix} \bigg]_{\textstyle ,}
\end{align*}
where 
\[ \bar{\rho}(x,s):=(\rho \otimes_{\Pi_i} 1)(x,s)= \left\{ \begin{array}{ll}\rho_s(x) & (x,s) \in X^\circ_2 (0,1), \\ \rho_0(x) & (x,s) \in (Y_2')^\circ (-1,0], \end{array} \right. \]
and $\bar{\sigma}=(1-\bar{\rho}^2)^{1/4}$. Note that $\bar{V}\bar{U}=f_1(-s)1_{\cQ} + f_2(-s)V_1u$.

On the other hand, let $\tilde{\sP}_i:=C_0(X_2^\circ , \cP_i) \oplus C_0((Y_2')^\circ , \cQ)$ for $i=1,2$ and $\tilde{U}:=f_1(r-1)1_{\cQ}+f_2(r-1)V_1u$. As is mentioned in Remark \ref{rmk:module}, we have
\[ [\cP_1 , \cP_2 , \cQ , V_1u] = \bigg[ \tilde{\sP}_1 \oplus \tilde{\sP}_2^{\mathrm{op}}, 1 , \begin{pmatrix} 0 & \tilde{U} \\ \tilde{U}^* & 0 \end{pmatrix} \bigg]_{\textstyle .} \]
Hence Lemma \ref{lem:Kas} implies that
\[ \beta \otimes [\cP_1,\cP_2,\cQ,V_2 ] = \bigg[ \tilde{\sP}_1(0,1)  \oplus \tilde{\sP}_2 ^{\mathrm{op}}(0,1) , 1, \begin{pmatrix}2s-1 & \tau^2
 \tilde{U}^* \\ \tau^2 \tilde{U} & 1-2s \end{pmatrix} \bigg]_{\textstyle ,}  \]
where $\tau=(1-(2s-1)^2)^{1/4}$. 

\begin{figure}[t]
\begin{tikzpicture}[scale=0.4]
\shade [top color = black!90!white, bottom color = black!90!white, middle color = white, shading angle = 135] (-3,-2) rectangle (6,4);
\shade [top color = black!42!white, bottom color = black!42!white, middle color = white] (-3,0) rectangle (0,4);
\shade [left color = black!42!white, right color = black!42!white, middle color = white] (0,0) rectangle (6,-4);
\fill [color = black!42!white] (6,0) -- (6,4) -- (0,4) -- (6,0);
\fill [color = white] (-3.1,-2.1) rectangle (0,0);
\draw [->] (-4.5,-4.3) -- (-4.5,4.5);
\draw [->] (-1.5,-5) -- (7.5,-5);
\draw [thick,dashed] (-3,4) -- (6,4);
\draw [thick, dashed] (6,4) -- (6,-4);
\draw (6,0) -- (0,0);
\draw (0,0) -- (0,4);
\draw [thick,dashed] (0,0) -- (-3,0);
\draw [thick,dashed] (0,0) -- (0,-4);
\draw [thick] (0,-4) -- (6,-4);
\node at (6.3,-4.6) {$r$};
\node at (-3.4,4.3) {$s$};
\node at (-1.5,5) {$X_1$};
\node at (3,5) {$Y_2'$};
\fill (6,-5) coordinate (O) circle[radius=1pt] node[below=2pt]  {$2$};
\fill (0,-5) coordinate (O) circle[radius=1pt] node[below=2pt]  {$1$};
\fill (-4.5,4) coordinate (O) circle[radius=1pt] node[left=2pt]  {$1$};
\fill (-4.5,0) coordinate (O) circle[radius=1pt] node[left=2pt]  {$0$};
\fill (-4.5,-4) coordinate (O) circle[radius=1pt] node[left=2pt]  {$-1$};
\end{tikzpicture}
\begin{tikzpicture}[scale=0.4]
\shade [top color = black!42!white, bottom color = black!42!white, middle color = white] (-3,-2) rectangle (6,2);
\draw [->] (-4.5,-3.8) -- (-4.5,3.8);
\draw [->] (-1.5,-3.5) -- (7.5,-3.5);
\draw [thick,dashed] (-3,2) -- (6,2);
\draw [thick, dashed] (6,2) -- (6,-2);
\draw [thick, dashed] (6,-2) -- (-3,-2);
\draw (0,-2) -- (0,2);
\node at (6.4 ,-2.6) {$r$};
\node at (-3.6,3.7) {$s$};
\node at (-1.5,3) {$X_1$};
\node at (3,3) {$Y_2'$};
\fill (6,-3.5) coordinate (O) circle[radius=1pt] node[below=2pt]  {$2$};
\fill (0,-3.5) coordinate (O) circle[radius=1pt] node[below=2pt]  {$1$};
\fill (-4.5,2) coordinate (O) circle[radius=1pt] node[left=2pt]  {$1$};
\fill (-4.5,-2) coordinate (O) circle[radius=1pt] node[left=2pt]  {$0$};
\end{tikzpicture}
\caption{The shading shows the value of $|\rho (r,s)|$ on $Z$ and $|2s-1|$ on $X(0,1)$ respectively.}
\end{figure}
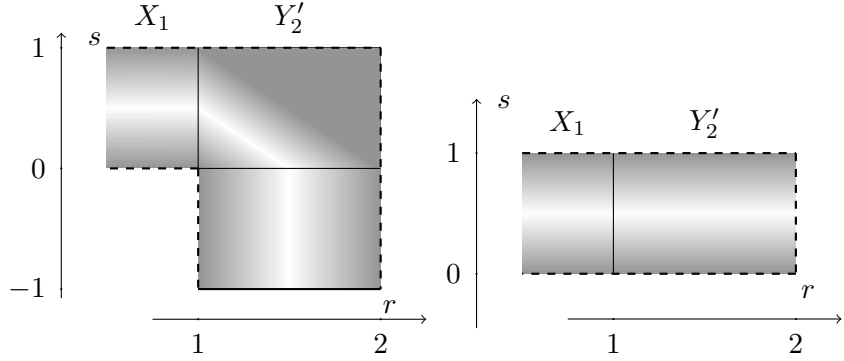

Let $\iota \colon Z \to X(-1,1)$ denote the open embedding. We define a continuous map $f \colon Z \to X(0,1)$ by $f(x,s)=(x,s)$ for $(x,s) \in X_1^\circ (0,1)$ and 
\begin{align*} f(y,r,s)= \left\{ \begin{array}{ll}(y,1-s,{\scriptstyle \frac{\bar{\rho}(r,s)+1}{2}}) & (y,r,s) \in (Y_2')^\circ(-1,0), \\ (y, 1 , {\scriptstyle \frac{\bar{\rho}(r,s)+1}{2}}) & (y,r,s) \in (Y_2')^\circ(0,1). \end{array} \right. 
\end{align*}
Then the $\ast$-homomorphism $f^* \colon C_0(X_2^\circ(0,1)) \to C_0(Z)$ satisfies $f^*(2s-1)=\bar{\rho} \in C_0(Z)$. Moreover, by the constructions, there are unitaries $\Phi_i \colon \tilde{\sP}_i \otimes _{f^*} C_0(Z) \to \bar{\sP_i}$ of Hilbert $C_0(Z) \otimes A$-modules for $i=1,2$ such that $\Phi_2  (\tilde{U} \otimes _{f^*} 1) \Phi _1^* = \bar{V}_1\bar{U}$.
Consequently we obtain that
\[ \ell_{\Gamma, \Lambda } \otimes \boldsymbol{\Pi} = (\beta \otimes [\cP_1, \cP_2, \cQ , V_2] ) \otimes [f^*] \otimes [\iota_*]. \] 
This concludes the proof since $\iota_* \circ f^* \colon C_0(X^\circ_1(0,1)) \to C_0(X^\circ_1(-1,1)) $ is homotopic to the inclusion $X_2^\circ(0,1) \to X_2^\circ(-1,1)$.
\end{proof}

\subsection{Relative Hanke--Schick obstruction}
First, we apply Theorem \ref{prp:bundle} to show a relative version of \cite[Theorem 3.9]{MR3289846}. 

\begin{thm}\label{thm:Karea}
Let $M$ be a compact spin manifold with boundary $N$. Let $\Gamma :=\pi_1(M)$, $\Lambda := \pi_1(N)$ and let $\phi$ be the homomorphism induced from the inclusion $N \to M$.
\begin{enumerate}
\item If $M$ has infinite stably relative C*-$\K$-area, then the relative higher index $\mu_*^{\Gamma, \Lambda} ([M,N])$ does not vanish. 
\item If $M$ has infinite relative C*-$\K$-area, then the relative higher index $\mu_*^{\Gamma, \phi(\Lambda)} ([M,N])$ does not vanish. 
\end{enumerate}
\end{thm}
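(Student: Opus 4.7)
My plan is to promote an infinite sequence of almost flat (stably) relative bundles with non-vanishing index pairing into an honest stably h-relative representation over an asymptotic C*-algebra, and then apply Theorem \ref{prp:bundle} together with associativity of the Kasparov product to transport the non-vanishing pairing onto the higher-index side. This mirrors the Hanke--Schick strategy in the absolute case, with the almost monodromy correspondence of Theorem \ref{thm:monodromy} playing the role of its classical counterpart.

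For (1), I would first choose $\varepsilon_n \to 0$, C*-algebras $A_n$, finitely generated projective Hilbert $A_n$-modules $(P_n, Q_n)$ (stabilized so they embed uniformly into fixed free modules), and $\fv_n \in \Bdl_{P_n, Q_n}^{\varepsilon_n, \cU}(M, N)_T$ with $\langle [\fv_n], [M, N]\rangle \neq 0$ in $\K_0(A_n)$. Applying $\boldsymbol{\alpha}$ from Theorem \ref{thm:monodromy} converts these into stably relative $(C_{\mathrm{am}}\varepsilon_n, \cG)$-representations $\boldsymbol{\pi}_n = (\pi_1^n, \pi_2^n, \pi_0^n, u^n)$. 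Setting $A_\infty := \prod_n A_n / \bigoplus_n A_n$ and defining $P_\infty, Q_\infty$ as the analogous quotients of $\prod_n P_n, \prod_n Q_n$, the vanishing defects (applied to each fixed group element via a bounded-length word argument) force the sequences $(\pi_i^n), (\pi_0^n), (u^n)$ to descend to \emph{honest} unitary representations $\pi_i^\infty \colon \Gamma \to \mathrm{U}(P_\infty)$, $\pi_0^\infty \colon \Lambda \to \mathrm{U}(Q_\infty)$ and an exact intertwiner $u^\infty$. Taking $\tilde{\pi}_\kappa$ to be the constant path at $\pi_2^\infty \circ \phi \oplus \pi_0^\infty$ assembles these into a stably h-relative representation $\Pi^\infty$, and via (\ref{form:qhom}) into a class $\boldsymbol{\Pi}^\infty \in \KK(C^*(\Gamma, \Lambda), A_\infty)$.

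Next I would apply Theorem \ref{prp:bundle} to identify $\ell_{\Gamma, \Lambda} \hotimes_{C^*(\Gamma, \Lambda)} \boldsymbol{\Pi}^\infty$ with the K-theory class of the stably relative Hilbert $A_\infty$-module bundle $(\cP_1^\infty, \cP_2^\infty, \cQ^\infty, V_1 u^\infty)$ on $(M, N)$ attached to $\Pi^\infty$. Associativity of the Kasparov product then gives
\[ \alpha_{\Gamma, \Lambda}([M,N]) \hotimes_{C^*(\Gamma, \Lambda)} \boldsymbol{\Pi}^\infty \;=\; [M,N] \hotimes_{C_0(M^\circ)} [\cP_1^\infty, \cP_2^\infty, \cQ^\infty, V_1 u^\infty] \in \K_0(A_\infty), \]
so it suffices to show the right-hand side is non-zero. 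By construction the asymptotic bundle is the fiberwise limit of $\boldsymbol{\beta}(\boldsymbol{\pi}_n)$, which by Theorem \ref{thm:monodromy} and Remark \ref{rmk:relative} (1) represents the same K-theory class as $\fv_n$ for large $n$; hence the displayed pairing is represented by the sequence $(\langle[\fv_n],[M,N]\rangle)_n$ of non-zero classes in $\K_0(A_n)$, which is non-zero in $\K_0(A_\infty)$, forcing $\alpha_{\Gamma, \Lambda}([M,N]) \neq 0$.

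Part (2) should follow the same argument applied to $(\Gamma, \phi(\Lambda))$: a \emph{relative} (non-stable) bundle $\fv_n$ with $Q_n = 0$ yields under $\boldsymbol{\alpha}$ a relative quasi-representation, whose asymptotic limit is an h-relative representation of $(\Gamma, \phi(\Lambda))$; injectivity of $\phi(\Lambda) \hookrightarrow \Gamma$ is exactly what allows us to stay within the nonstabilized framework. The hardest step will be the final K-theoretic comparison: arguing that $[M,N] \hotimes [\cP_1^\infty, \cP_2^\infty, \cQ^\infty, V_1 u^\infty] \in \K_0(A_\infty)$ is indeed non-zero requires a controlled homomorphism from $\prod_n \K_0(A_n) / \bigoplus_n \K_0(A_n)$ into $\K_0(A_\infty)$ detecting the sequence of pairings; this is most cleanly handled by replacing $A_\infty$ by an appropriate ultrapower in the spirit of Dadarlat \cite{MR3275029,MR2982445}, combined with Remark \ref{rmk:relative} (1) to identify the asymptotic bundle with the $\fv_n$'s up to small correction.
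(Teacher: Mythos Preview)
Your strategy is the paper's: assemble the sequence of almost flat (stably) relative bundles into a single object over a product-type C*-algebra, pass to the quotient where it becomes genuinely flat, convert it via the almost monodromy correspondence into a stably (h-)relative representation, and then invoke Theorem~\ref{prp:bundle} and associativity of the Kasparov product.

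The paper differs from your sketch in two linked choices that together dissolve the difficulty you flag at the end. First, it assembles the \v{C}ech data $\bv_n^i,\bu_n$ directly into one stably relative bundle $\fv$ over the product and applies $\boldsymbol{\alpha}$ \emph{once} to the flat quotient $\tau_*\fv$, rather than applying $\boldsymbol{\alpha}$ componentwise and then assembling the quasi-representations. Second---and this is the crucial point---it takes the coefficient algebra to be $B:=\prod_n \bB(P_n\oplus Q_n)$ (not $\prod_n A_n$), with $P:=(\prod 1_{P_n})B$, $Q:=(\prod 1_{Q_n})B$, $J:=\bigoplus_n\bB(P_n\oplus Q_n)$ and $D:=B/J$. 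Since each $\bB(P_n\oplus Q_n)$ is Morita equivalent to $A_n$, one has $\K_0(B)\cong\prod_n\K_0(A_n)$ exactly, and the six-term sequence for $J\lhd B$ shows $\ker\bigl(\tau_*\colon\K_0(B)\to\K_0(D)\bigr)\subset\bigoplus_n\K_0(A_n)$. The computation then reads
\[
\alpha_{\Gamma,\Lambda}([M,N])\hotimes_{C^*(\Gamma,\Lambda)}\boldsymbol{\Pi}
=[\tau_*\fv]\hotimes_{C_0(M^\circ)}[M,N]
=\tau_*\Bigl(\textstyle\prod_n\langle[\fv_n],[M,N]\rangle\Bigr),
\]
which is nonzero because a sequence with all entries nonzero cannot lie in $\bigoplus_n\K_0(A_n)$. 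This completely sidesteps the comparison map $\prod\K_0(A_n)/\bigoplus\K_0(A_n)\to\K_0(A_\infty)$ that you correctly identify as delicate for your choice $A_\infty=\prod A_n/\bigoplus A_n$; with the paper's $B$ the final step is elementary. Your approach can be completed (e.g.\ via the ultrapower trick you mention), but switching the ambient algebra is far cleaner.

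For part~(2) the paper proceeds exactly as you say: when each $\fv_n$ is a genuine relative (non-stable) bundle, $\boldsymbol{\alpha}(\tau_*\fv)$ is a relative representation of $(\Gamma,\Lambda)$, and the point (\cite[Remark~6.3]{Kubota3}) is that any such representation automatically factors through $(\Gamma,\phi(\Lambda))$, yielding the stated conclusion.
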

\begin{proof}
First we show (1). By the assumption, for each $n \in \bN$ there is a C*-algebra $A_n$, a pair of finitely generated projective Hilbert $A_n$-modules $(P_n, Q_n)$ and a $(\frac{1}{n} , \cU)$-flat stably relative bundle $\fv_n:=(\bv_n^1, \bv_n^2, \bv_n^0, \bu_n)$ with the typical fiber $(P_n,Q_n)$ such that $\langle [\fv_n] , [M,N] \rangle \neq 0  \in \K_0(A_n)$. 
Set 
\begin{align*}
B&:=\prod_{n \in \bN} \bB(P_n \oplus Q_n ),\\
p&:=\prod 1_{P_n}, \ \ P=pB,\\
q&:=\prod 1_{Q_n}, \ \ Q=qB.
\end{align*}
We define the stably relative bundle $\fv=(\bv^1, \bv^2,\bv^0 , \bu)$ with the typical fiber $(P,Q)$ as $\bv^i=\{ v_{\mu \nu}^i\}_{\mu , \nu \in I}$, $\bv^0= \{ v_{\mu\nu}^0 \}_{\mu , \nu \in I} $ and $\bu=\{ u_\mu \}_{\mu \in I}$, where
\begin{align*}
v^i_{\mu \nu}(x) &:= \prod_{n \in \bN} (v_n^i)_{\mu\nu} (x) \in \bB(P),\\
v^0_{\mu \nu}(y) &:= \prod_{n \in \bN} (v_n^0)_{\mu \nu}(y) \in \bB(Q),\\
u_\mu &:= \prod_{n \in \bN} (u_n )_\mu \in \bB(P \oplus Q),
\end{align*}
for $i=1,2$, $x \in U_{\mu \nu}$ and $y \in U_{\mu \nu } \cap N$.  

Let $J:=\bigoplus_{n \in \bN} \bB(P_n \oplus Q_n)$, $D=B/J$ and $\tau \colon B \to D$ denote the quotient. Then we have
\begin{itemize} 
\item $v^i_{\mu \nu}(x) v^i_{\nu \sigma}(x) - v_{\mu \sigma}^i(x) \in J$ and
\item $(v^1_{\mu\nu} \oplus v^0_{\mu \nu})(y) - u_\mu(v^2_{\mu\nu} \oplus v^0_{\mu \nu})(y)u_\nu^* \in J$,
\end{itemize}
that is, 
\[ \tau_*\fv:=(\{ \tau(v^1_{\mu \nu})\}  ,\{ \tau(v^2_{\mu \nu})\}  ,\{ \tau(v^0_{\mu \nu})\}  ,\{ \tau(u_{\mu })\}  )\]
is a stably relative flat bundle. Let $\boldsymbol{\Pi} \in \KK(C^*(\Gamma, \Lambda) , D)$ denote the Kasparov bimodule associated to the stably relative representation $\boldsymbol{\alpha} (\tau_*\fv)$ as in Theorem \ref{thm:monodromy}. By Theorem \ref{prp:bundle} we obtain that
\begin{align*}
\alpha_{\Gamma, \Lambda}([M,N]) \hotimes \boldsymbol{\Pi} & = \ell_{\Gamma, \Lambda} \otimes_{C_0(M^\circ)} [M,N] \hotimes_{C^*(\Gamma, \Lambda)} \boldsymbol{\Pi} \\
&=[\tau_*\fv] \hotimes_{C_0(M^\circ)} [M,N]\\
&= \tau_*([\fv] \hotimes_{C_0(M^\circ)} [M,N])\\
&= \tau_*(\prod _n \langle [\fv_n] , [M,N] \rangle ).
\end{align*}
It is non-zero because $\ker \tau_*$ is identified with $\bigoplus \K_0(A_n)$ through the isomorphism $\K_0(B) \cong \prod \K_0(A_n)$.

The claim (2) is proved in the same way. We only remark that in this case $\Pi$ is a relative representation of $(\Gamma, \Lambda)$, which is actually a relative representation of $(\Gamma, \phi(\Lambda))$ by \cite[Remark 6.3]{Kubota3}.
\end{proof}

Together with Theorem \ref{thm:enlarge}, Theorem \ref{thm:Karea} implies the following relative version of the result of \cite{MR2259056,MR2353861}. 
\begin{cor}
Let $(M, g)$ be a Riemannian spin manifold with a collared boundary $N$. If $M_\infty$ is area-enlargeable, then $\mu_*^{\Gamma, \Lambda}([M,N])$ does not vanish.
\end{cor}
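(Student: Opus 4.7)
The plan is to chain together the two preceding results in a single step. First, I would feed the hypothesis that $M_\infty = M \sqcup_N N \times [0,\infty)$ is area-enlargeable into Theorem \ref{thm:enlarge}, whose output is precisely that the pair $(M,N)$ has infinite stably relative C*-$\K$-area. By the convention adopted just before that theorem (saying a manifold with boundary has infinite stably relative C*-$\K$-area exactly when its $\K$-homology fundamental class does), this gives that $[M,N] \in \K_*(M,N)$ itself carries this property. The collared-boundary hypothesis is what is needed for the construction of $[M,N]$ as a relative $\K$-homology class in the first place.

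With $[M,N]$ now known to have infinite stably relative C*-$\K$-area, the second step is an immediate appeal to Theorem \ref{thm:Karea}(1), which promotes exactly that property to the nonvanishing of the Chang--Weinberger--Yu relative higher index $\mu_*^{\Gamma, \Lambda}([M,N]) \in \K_*(C^*(\Gamma, \Lambda))$, where $\Gamma = \pi_1(M)$, $\Lambda = \pi_1(N)$, and $\phi \colon \Lambda \to \Gamma$ is induced by inclusion. No additional group-theoretic hypothesis is required, since the definition of $\mu_*^{\Gamma, \Lambda}$ itself makes sense for an arbitrary such pair.

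There is effectively no obstacle: the corollary is a two-line consequence of chaining the quoted theorems, with all substantive work already carried out in their proofs. The geometric content — that area-enlargeability of $M_\infty$ produces a family of almost flat stably relative bundles on $(M,N)$ whose index pairings with $[M,N]$ are nontrivial in appropriate $\K_0$-groups — lies in Theorem \ref{thm:enlarge}. The analytic content — that such a family of nontrivial pairings, packaged via the infinite product construction of Theorem \ref{thm:Karea}, survives passage to the quotient by $\bigoplus_n \bB(P_n \oplus Q_n)$ and hence detects $\mu_*^{\Gamma, \Lambda}([M,N])$ through the Kasparov product with $\ell_{\Gamma, \Lambda}$ as in Theorem \ref{prp:bundle} — lies in Theorem \ref{thm:Karea}(1). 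Composing the two gives the corollary.
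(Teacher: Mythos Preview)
Your proposal is correct and follows exactly the paper's approach: the corollary is stated as an immediate consequence of chaining Theorem \ref{thm:enlarge} (area-enlargeability of $M_\infty$ implies infinite stably relative C*-$\K$-area) with Theorem \ref{thm:Karea}(1) (infinite stably relative C*-$\K$-area implies nonvanishing of $\mu_*^{\Gamma,\Lambda}([M,N])$). The paper does not even give a separate proof, simply introducing the corollary with the phrase ``Together with Theorem \ref{thm:enlarge}, Theorem \ref{thm:Karea} implies the following.''
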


\subsection{The Hanke--Pape--Schick codimension 2 obstruction}\label{section:3.2}
The second application of Theorem \ref{prp:bundle} is concerned with the codimension $2$ obstruction of positive scalar curvature metric which is first introduced by Gromov--Lawson~\cite[Theorem 7.5]{MR720933} and generalized by Hanke--Pape--Schick~\cite[Theorem 4.3]{MR3449594}. Here we show the following theorem. 
\begin{thm}\label{thm:codim2}
Let $M$ be an $n$-dimensional closed spin manifold with an embedded codimension $2$ submanifold $N$ satisfying
\begin{itemize}
\item the induced map $\pi_1(N) \to \pi_1(M)$ is injective,
\item the induced map $\pi_2(N) \to \pi_2(M)$ is surjective, \
\item the normal bundle of $N$ is trivial.
\end{itemize}
Let $W \cong N \times \bD^2$ be a closed tubular neighborhood of $N$, let $M_0:=M \setminus W^\circ$, let $N_0:=\partial M_0$, let $\Gamma :=\pi_1(M)$ and let $\Lambda :=\pi_1(N)$. 
Then $\mu ^\Lambda _{n-2}([N]) \neq 0$ implies that $\mu ^{\Gamma, \Lambda}_n([M_0 , N_0]) \neq 0$.
\end{thm}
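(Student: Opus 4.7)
The strategy mirrors the proof of Theorem \ref{thm:Karea}, with the enlargeability input replaced by the Hanke--Pape--Schick (HPS) codimension-$2$ construction of \cite{MR3449594}. The aim is to produce, for every $\varepsilon>0$, an $(\varepsilon,\cU)$-flat stably relative $C^*\Lambda$-module bundle $\fv_\varepsilon$ on $(M_0,N_0)$ whose relative index pairing with $[M_0,N_0]$ coincides (up to the Bott identification from the $\bD^2$-factor of $W$) with $\alpha_\Lambda([N])$, and therefore is non-zero by hypothesis. The usual limiting device and Theorem \ref{prp:bundle} then force $\alpha_{\Gamma,\Lambda}([M_0,N_0])\neq 0$.

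\textbf{Step 1 (HPS construction in relative form).} Start from the Mishchenko line bundle $\cV_N=\tilde N\times_\Lambda C^*\Lambda$ and consider its pull-back $\pi_N^*\cV_N$ along the projection $\pi_N\colon N_0=N\times S^1\to N$; this is a flat $C^*\Lambda$-bundle on $N_0$ and will supply the boundary data of $\fv_\varepsilon$. Extend the cocycle across $M_0$ to an $(\varepsilon,\cU)$-flat $C^*\Lambda$-module bundle by the obstruction argument of \cite{MR3449594}: the injection $\phi\colon\Lambda\hookrightarrow\Gamma$ lets the $\Lambda$-valued cocycle be transported along the $1$-skeleton of $M_0$; the surjection $\pi_2(N)\twoheadrightarrow\pi_2(M)$, together with the triviality of the normal bundle of $N$, kills the obstruction to extending almost-flatly across the $2$-cells of $M_0$ up to arbitrarily small \v{C}ech defect; higher skeleta are handled by standard extension. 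Using the excision $\K_*(M_0,N_0)\cong\K_*(M,W)$, the splitting of the Dirac operator on $W=N\times\bD^2$, and Bott periodicity in the $\bD^2$-direction, the resulting relative index pairing satisfies
\[ \bigl\langle[\fv_\varepsilon],[M_0,N_0]\bigr\rangle \;=\; \alpha_\Lambda([N])\;\neq\;0. \]

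\textbf{Step 2 (Limiting and Theorem \ref{prp:bundle}).} Apply the product-algebra device from the proof of Theorem \ref{thm:Karea} to the family $\{\fv_{1/n}\}_{n\in\bN}$: set $B:=\prod_n\bB(P_n\oplus Q_n)$, $J:=\bigoplus_n\bB(P_n\oplus Q_n)$ and $D:=B/J$, with quotient $\tau\colon B\to D$. The componentwise product $\tau_*\fv$ is a flat stably relative $D$-module bundle on $(M_0,N_0)$. The relative almost monodromy correspondence (Theorem \ref{thm:monodromy}) translates $\tau_*\fv$ into a stably relative representation of $(\Gamma,\Lambda)$ which, enriched with the constant homotopy $\tilde\pi_\kappa\equiv\tilde\pi_1$, becomes a stably h-relative representation $\boldsymbol\Pi$. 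Theorem \ref{prp:bundle} then yields
\[ \alpha_{\Gamma,\Lambda}([M_0,N_0])\hotimes_{C^*(\Gamma,\Lambda)}\boldsymbol\Pi \;=\; [\tau_*\fv]\hotimes_{C_0(M_0^\circ)}[M_0,N_0] \;=\; \tau_*\!\Bigl(\textstyle\prod_n\bigl\langle[\fv_{1/n}],[M_0,N_0]\bigr\rangle\Bigr), \]
which is non-zero in $\K_0(D)$ because $\ker\tau_*\cong\bigoplus_n\K_0(C^*\Lambda)$ while every component of the product is non-zero by Step 1. Consequently $\alpha_{\Gamma,\Lambda}([M_0,N_0])\neq 0$.

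\textbf{Main obstacle.} The substance of the proof lies in Step 1: transcribing the HPS obstruction-theoretic construction, originally phrased at the level of the closed manifold $M$ and its absolute higher index, into an almost flat \emph{stably relative} $C^*\Lambda$-module bundle on the pair $(M_0,N_0)$, and identifying the resulting relative index pairing precisely with $\alpha_\Lambda([N])$ via the Bott element of the $\bD^2$-factor of $W$. Both homotopy-group hypotheses and the triviality of the normal bundle of $N$ enter at this stage. Step 2 is then a routine transcription of the limiting argument used in the proof of Theorem \ref{thm:Karea}.
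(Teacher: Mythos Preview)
Your Step 2 is indeed a routine transcription, but Step 1 contains a genuine gap and rests on a misreading of \cite{MR3449594}. That paper does not produce almost flat bundles by an obstruction-theoretic extension across the $2$-skeleton; the hypotheses on $\pi_1$ and $\pi_2$ are used to build a splitting homomorphism $r\colon \pi_1(\bar M\setminus \bar W^\circ)\to\Lambda\times\bZ$ (see Lemma \ref{lem:codim2}) and hence a genuine $\Lambda\times\bZ$-Galois covering. There is no mechanism in \cite{MR3449594} that takes a flat $C^*\Lambda$-bundle on $N_0$ and extends it almost flatly over the interior of $M_0$; in particular your sentence ``the surjection $\pi_2(N)\twoheadrightarrow\pi_2(M)$ \dots kills the obstruction to extending almost-flatly across the $2$-cells'' has no content as stated. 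Without an actual construction of $\fv_\varepsilon$ and a verification of the claimed index identity $\langle[\fv_\varepsilon],[M_0,N_0]\rangle=\alpha_\Lambda([N])$, Step 1 is not a proof but a wish.

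The paper proceeds along an entirely different line, avoiding almost flat approximation altogether. From the covering of Lemma \ref{lem:codim2} it builds two \emph{honestly flat}, infinitely generated Hilbert $C^*(\Lambda\times\bZ)$-module bundles $\cZ_1,\cZ_2$ on $M_0$ (push-forwards of Mishchenko-type line bundles along the infinite cover $\bar M\to M$; infiniteness is secured by Lemma \ref{lem:infgen}) together with a partial isometry $U$ on $N_0$ that becomes a unitary only after passing to the Calkin-type quotient $\bar\cZ_i=\bB(\cZ_i,\sH_A)/\bK(\cZ_i,\sH_A)$. This yields a single, honest relative representation $\Pi=(\bar\sigma_1,\bar\sigma_2,\bar U)$ of $(\Gamma,\Lambda)$ in $B/J$-modules, so Theorem \ref{prp:bundle} applies directly with no limiting device. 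The identification of the resulting pairing with $\mu^\Lambda_{n-2}([N])$ is then carried out via the boundary-map comparison of Lemma \ref{lem:pairext} and the ``boundary of Dirac is Dirac'' principle, not via a Bott element on $\bD^2$. If you want to repair your argument, you should abandon the search for $\varepsilon$-flat extensions and instead use the covering-space construction to produce a flat relative representation over a Calkin algebra.
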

\begin{rmk}
It is proved in \cite[Corollary 5.3]{Kubota1} that, for a partitioned manifold $M=M_1 \sqcup_N M_2$ with $f_i \colon (M_i, N) \to (B\Gamma _i, B\Lambda)$ with $\Lambda \to \Gamma _i$ injective, the non-vanishing of $\mu ^{\Gamma, \Lambda}([M_i,N])$ implies that of $\mu ^{\Gamma_1*_\Lambda \Gamma_2} _* ([M])$.
We apply this theorem to $M=N \times \bD^2 \sqcup _{N \times S^1} M_0$, $\Gamma_1=\pi_1(N \times \bD_2)$, $\Lambda = \pi_1(N)$ and $\Gamma_2=\pi_1(M_0)$ in the setting of Theorem \ref{thm:codim2}. Then the conclusion of Theorem \ref{thm:codim2} implies the non-vanishing of $\mu^\Gamma([M])$. 
In particular, we obtain that $M$ does not admit any metric with positive scalar curvature, as is proved in \cite[Theorem 4.3]{MR3449594}.

As is remarked at the introduction of \cite{MR3449594}, the stable Gromov--Lawson--Rosenberg conjecture proved by Rosenberg--Stolz \cite{MR1321004} and \cite[Theorem 4.3]{MR3449594} also implies the non-vanishing of the higher index of $M$ if $\Gamma$ satisfies the Baum--Connes injectivity. Here we give a direct proof of this fact without the assumption of Baum--Connes injectivity. 
\end{rmk}

For the proof, we prepare general lemmas on the boundary map of $\K$-theory. 
\begin{lem}\label{lem:Klift}
Let $0 \to I \to D \to D/I \to 0$ be an exact sequence of C*-algebras. For a pair of projections $(q_1,q_2) \in \cM(D/I)^{\oplus 2}$ with $q_1-q_2\in D/I$, the element $\partial [q_1, q_2] \in \K_1(I)$ is represented by a unitary 
\[ \exp(-2\pi i \tilde{q}_1)\exp(2\pi i \tilde{q}_2) \in 1+I, \]
where $\tilde{q}_i \in \cM(D)$ is a self-adjoint lift of $q_i$ such that $\tilde{q}_1 - \tilde{q}_2 \in D$. 
\end{lem}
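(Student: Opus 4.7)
The plan is to apply the standard exponential-boundary formula for $\partial$ directly, interpreting $[q_1, q_2]$ as the formal difference $[q_1] - [q_2]$ in the multiplier picture of $\K_0(D/I)$, and then to verify that the resulting unitary lies in $1 + I$.

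First, recall that for the short exact sequence $0 \to I \to D \to D/I \to 0$, a pair $(q_1, q_2) \in \cM(D/I)^{\oplus 2}$ with $q_1 - q_2 \in D/I$ represents the class $[q_1] - [q_2] \in \K_0(D/I)$ in the multiplier model of $\K_0$ for the non-unital algebra $D/I$. The exponential boundary $\partial \colon \K_0(D/I) \to \K_1(I)$ can be computed, by naturality from the unital exact sequence $0 \to I \to \cM(D) \to \cM(D)/I \to 0$ (noting that $I$ is automatically an ideal of $\cM(D)$ by a Cohen factorization argument), by choosing self-adjoint lifts $\tilde q_i \in \cM(D)$ of $q_i$ with $\tilde q_1 - \tilde q_2 \in D$ and forming
\[ \partial([q_1] - [q_2]) = [\exp(-2\pi i \tilde q_1) \exp(2\pi i \tilde q_2)] \in \K_1(I), \]
using that $\partial$ is a group homomorphism and that $[u] - [v] = [u v^{-1}]$ in $\K_1$.

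Next, verify that $U := \exp(-2\pi i \tilde q_1) \exp(2\pi i \tilde q_2)$ lies in $1 + I$. Setting $a := \tilde q_2 - \tilde q_1 \in D$ and interpolating along $\tilde q_t := \tilde q_1 + t a$, the Duhamel formula gives
\[ \exp(2 \pi i \tilde q_2) - \exp(2\pi i \tilde q_1) = 2\pi i \int_0^1 \int_0^1 \exp\bigl(2\pi i (1-s) \tilde q_t\bigr) \, a \, \exp\bigl(2\pi i s \tilde q_t\bigr) \, ds \, dt. \]
Since $a \in D$ and $D$ is an ideal of $\cM(D)$, the integrand is in $D$ pointwise, so the right-hand side is in $D$; multiplying on the left by the unitary $\exp(-2\pi i \tilde q_1) \in \cM(D)$ yields $U - 1 \in D$. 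The image of $U - 1$ under the composition $D \hookrightarrow \cM(D) \to \cM(D/I)$ equals $\exp(-2\pi i q_1) \exp(2\pi i q_2) - 1 = 1 \cdot 1 - 1 = 0$, because $q_1$ and $q_2$ are projections and so both exponentials evaluate to the identity. The injectivity of $D/I \hookrightarrow \cM(D/I)$ then forces $U - 1 \in I$, as required.

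The main obstacle is the naturality bookkeeping needed to identify $\partial$, defined from the non-unital sequence $0 \to I \to D \to D/I \to 0$, with the exponential boundary computed in the unital algebra $\cM(D)$ and to justify the multiplier-projection formula for $\K_0(D/I)$; once this standard identification is in place, the remainder of the argument is the Duhamel estimate followed by the straightforward image calculation.
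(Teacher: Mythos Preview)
Your computation that $U:=\exp(-2\pi i\tilde q_1)\exp(2\pi i\tilde q_2)\in 1+I$ via Duhamel is fine. The gap is in the step where you invoke naturality with the exact sequence $0\to I\to\cM(D)\to\cM(D)/I\to 0$ to evaluate the boundary map. The projections $q_i$ live in $\cM(D/I)$, not in $\cM(D)/I$, and these two algebras are genuinely different: the canonical map $\cM(D)/I\to\cM(D/I)$ has kernel $\cI/I$, where $\cI:=\ker(\cM(D)\to\cM(D/I))$, and in general $\cI\supsetneq I$. The images $\bar q_i:=\tilde q_i+I\in\cM(D)/I$ satisfy only $\tilde q_i^2-\tilde q_i\in\cI$, not $\tilde q_i^2-\tilde q_i\in I$, so $\bar q_i$ need not be projections. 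Hence the standard exponential formula for the boundary of $[\bar q_i]$ along your chosen sequence is not available.

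If instead you use the sequence $0\to\cI\to\cM(D)\to\cM(D/I)\to 0$, then the exponential formula does give $\partial([q_1]-[q_2])=[U]\in\K_1(\cI)$, and naturality tells you this agrees with the image of $\partial[q_1,q_2]\in\K_1(I)$ under $\K_1(I)\to\K_1(\cI)$. But that map need not be injective, so you cannot conclude $\partial[q_1,q_2]=[U]$ in $\K_1(I)$ from this alone. Knowing $U\in 1+I$ gives you a \emph{candidate} class in $\K_1(I)$ with the right image in $\K_1(\cI)$, but does not pin it down.

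The paper's proof resolves exactly this injectivity issue by replacing the vertical maps with maps into the ``doubles'' $\cI\oplus_{\cI/I}\cI$, $\cM(D)\oplus_{\cQ(D)}\cM(D)$, $\cM(D/I)\oplus_{\cQ(D/I)}\cM(D/I)$. The pair $(q_1,q_2)$ is then itself a projection in the last algebra, the pair $(\tilde q_1,\tilde q_2)$ is a self-adjoint lift in the middle one, and the five lemma shows the inclusion $I\to\cI\oplus_{\cI/I}\cI$ is a $\K$-isomorphism, so the exponential formula applied componentwise gives the desired identity in $\K_1(I)$.
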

\begin{proof}
Let $\cI$ denotes the kernel of the homomorphism $\cM(D) \to \cM(D/I)$. It includes $I$ as an ideal and $\cI \cap D = I$ holds. Consider the diagram of exact sequences
\[ \xymatrix@R=1.5em@C=1.5em{
0 \ar[r] & I \ar[d]^{\iota} \ar[r] & D \ar[r] \ar[d] & D/I \ar[r] \ar[d] & 0 \\ 
0 \ar[r] & \cI \oplus_{\cI/I} \cI \ar[r] & \cM(D) \oplus_{\cQ(D)} \cM(D) \ar[r] & \cM(D/I) \oplus_{\cQ(D/I)} \cM(D/I) \ar[r] & 0.
}
\]
The vertical morphisms are inclusions into the first component, which induce isomorphisms of $\K$-theory by the five lemma. Now $(q_1, q_2) \in \cM(D/I) \oplus _{\cQ(D/I)} \cM(D/I)$ is lifted to a self-adjoint element $(\tilde{q}_1,\tilde{q}_2) \in \cM(D) \oplus _{\cQ(D)} \cM(D)$ and hence 
\begin{align*}
\partial [(q_1,q_2)] = [(e^{-2\pi i \tilde{q}_1}, e^{-2\pi i \tilde{q}_2})] = \iota_*[e^{-2\pi i \tilde{q}_1}e^{2\pi i \tilde{q}_2}] \in \K_1(\cI \oplus_{\cI/I} \cI),
\end{align*}
which shows the lemma by commutativity of the boundary map. 
The last equality holds because 
\[ (e^{-2\pi i \tilde{q}_1}, e^{-2\pi i \tilde{q}_2}) = (e^{-2\pi i \tilde{q}_1}e^{2\pi i \tilde{q}_2} , 1)\cdot (e^{-2\pi i \tilde{q}_2}, e^{-2\pi i \tilde{q}_2}), \]
and $[(e^{-2\pi i \tilde{q}_2}, e^{-2\pi i \tilde{q}_2})]$ is in the image of the diagonal inclusion $\cI \to \cI \oplus _{\cI/I} \cI$ (we remark that $\K_1(\cI)=0$).
\end{proof}

Let $A$ be a C*-algebra and let $B:= \bB(\sH_A)$ and $J:= \bK(\sH_A)$. 
Let $\cZ_1$ and $\cZ_2$ be bundles of infinitely generated projective Hilbert $A$-modules with the typical fiber $Z_1$ and $Z_2$ respectively. Then 
\[ \bar{\cZ}_i :=\bB(\cZ_i,\sH_A)/\bK(\cZ_i,\sH_A)\]
(where $\bB(\cZ_i,\sH_A)$ denotes the set of adjointable bounded operators on from $\sH_A$ to $\cZ_i$) is a Hilbert $B/J$-module bundle with $\bB(\bar{\cZ}_i) \cong \cQ(Z_i)$ (the $B/J$-action from the right, the $\cQ(Z_i)$-action from the left and the inner product are induced from the product of operators). 
Suppose that there is a bundle homomorphism $U \colon \cZ_1|_{N_0} \to \cZ _2|_{N_0}$ such that $U^*U-1 \in \bK(C(N_0, \cZ_1))$ and $UU^*-1 \in \bK(C(N_0, \cZ_2))$. Then it induces a unitary operator $\bar{U} \colon \bar{\cZ}_1 \to \bar{\cZ}_2$.

We write $[\partial_{B/J}] \in \KK_1(B/J,J)$ and $[\partial_{C(N_0)}] \in \KK_1(C(N_0), C_0(M_0^\circ ))$ for the $\KK$-classes corresponding to the extensions $0 \to J \to B \to B/J \to 0$ and $0 \to C_0(M_0^\circ ) \to C(M_0) \to C(N_0) \to 0$ respectively. 
\begin{lem}\label{lem:pairext}
Let $\cZ_i$, $U$, $\bar{\cZ}_i$ and $\bar{U}$ as above. Then we have
\begin{align}
[\bar{\cZ}_1 , \bar{\cZ}_2 , \bar{U}] \hotimes _{B/J} [\partial_{B/J}] =- [\cZ_1|_{N_0}, \cZ_2|_{N_0}, U] \hotimes _{C(N_0)} [\partial_{C(N_0)}] \label{form:pairext}
\end{align}
under the isomorphism $\KK(\bC, C_0(M_0^\circ) \otimes J) \cong \KK(\bC, C_0(M_0^\circ) \otimes A)$ given by the Kasparov product with the imprimitivity bimodule $[\sH_A] \in \KK(J,A)$.
\end{lem}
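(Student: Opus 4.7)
The plan is to exhibit both sides of \eqref{form:pairext} as images of a single class $\xi \in \K_1(C(N_0) \otimes B/J)$ under two perpendicular compositions of boundary maps in the $3\times 3$ grid of extensions obtained by tensoring the spatial extension $C_0(M_0^\circ) \hookrightarrow C(M_0) \twoheadrightarrow C(N_0)$ with the operator extension $J \hookrightarrow B \twoheadrightarrow B/J$. The minus sign in \eqref{form:pairext} will arise from the graded commutativity of the external Kasparov product of two $\KK_1$-classes on disjoint tensor factors.

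First I will construct the common class: after stabilizing $\bar{\cZ}_1|_{N_0}$ and $\bar{\cZ}_2|_{N_0}$ to coincide with a common projection $\bar{p} \in C(N_0) \otimes B/J$, the bundle isomorphism $\bar{U}$ becomes a unitary $\bar{v} \in \bar{p}(C(N_0) \otimes B/J)\bar{p}$, and I will set $\xi := [\bar{v}]$. I then verify two boundary identifications. The index map of the right-column extension $0 \to C_0(M_0^\circ) \otimes B/J \to C(M_0) \otimes B/J \to C(N_0) \otimes B/J \to 0$ should send $\xi$ to $[\bar{\cZ}_1, \bar{\cZ}_2, \bar{U}]$: this is the usual clutching description of a relative K-theory class from a boundary unitary, matching the Kasparov representative of Remark \ref{rmk:module} (with $E_0 = 0$) through an extension of $\bar{v}$ across a collar of $N_0 \subset M_0$. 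Likewise, the index map of the bottom-row extension $0 \to C(N_0) \otimes J \to C(N_0) \otimes B \to C(N_0) \otimes B/J \to 0$ should send $\xi$ to $[\cZ_1|_{N_0}, \cZ_2|_{N_0}, U]$: this is the Fredholm-index interpretation of the Calkin boundary, with $U$ being a partial-isometry-mod-$J$ lift of $\bar{v}$ whose index records the $J$-valued discrepancy between $U^*U$ and $UU^*$.

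Once these identifications are in hand, associativity of the Kasparov product will express both sides of \eqref{form:pairext} as triple Kasparov products of $\xi$ with $[\partial_{C(N_0)}]$ and $[\partial_{B/J}]$:
\begin{align*}
\text{LHS} &= \xi \hotimes \bigl([\partial_{C(N_0)}] \otimes 1_{B/J}\bigr) \hotimes \bigl(1_{C_0(M_0^\circ)} \otimes [\partial_{B/J}]\bigr), \\
-\text{RHS} &= \xi \hotimes \bigl(1_{C(N_0)} \otimes [\partial_{B/J}]\bigr) \hotimes \bigl([\partial_{C(N_0)}] \otimes 1_J\bigr).
\end{align*}
These are the two orderings of the external Kasparov product of the $\KK_1$-classes $[\partial_{C(N_0)}]$ and $[\partial_{B/J}]$ on the commuting tensor factors, which differ by a factor of $(-1)^{1\cdot 1} = -1$ by graded commutativity. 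Composing with the Morita imprimitivity bimodule $[\sH_A] \in \KK(J, A)$ then finishes the argument.

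The hard part will be the rigorous verification of the two boundary identifications, particularly matching the clutching description of the relative bundle in Remark \ref{rmk:module} with the abstract boundary-of-a-unitary construction via a collar neighborhood; once those are in place, the anticommutation step is formal Kasparov calculus. An alternative, more computational, route would apply Lemma \ref{lem:Klift} directly to both sides and match the resulting unitaries $\exp(-2\pi i \tilde{q}_1)\exp(2\pi i \tilde{q}_2)$, but this seems less transparent and requires careful sign bookkeeping for the collar orientation.
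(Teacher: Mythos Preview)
Your approach is correct and takes a genuinely different route from the paper. Amusingly, the paper does exactly what you flag at the end as the ``alternative, more computational route'': it applies Lemma~\ref{lem:Klift} directly to each side. After embedding $\cZ_i \hookrightarrow \underline{\sH_A}$ via Kasparov-stabilization isometries $V_i$ chosen so that $V_2^*V_1 \equiv U$ modulo compacts, and deforming so that the resulting projections $P_i := V_iV_i^*$ agree near $N_0$, a single interpolating self-adjoint element $P'$ serves as the inner lift in \emph{both} applications of Lemma~\ref{lem:Klift}: one obtains $[\exp(2\pi i P')]$ for the left-hand side and $[\exp(-2\pi i P')]$ for the right, so the minus sign drops out by inspection rather than by graded commutativity.

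Your $3\times 3$-grid argument is more conceptual and explains the sign as anticommutation of exterior products of $\KK_1$-classes. One point to tighten: the identification of $\bar{\cZ}_1$ with $\bar{\cZ}_2$ used to build $\xi$ must be made over all of $M_0$, not just over $N_0$, for the clutching boundary $\partial_{C(N_0)}(\xi)$ to recover $[\bar{\cZ}_1, \bar{\cZ}_2, \bar{U}]$; this is available because $B/J$ is properly infinite, so the full projections $q(P_i)\in C(M_0)\otimes B/J$ are globally Murray--von~Neumann equivalent. The paper's choice of $V_i$ with $V_2^*V_1 \equiv U$ mod $\bK$ is precisely what makes such a global identification compatible with $\bar U$. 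With that in place, your two boundary identifications are standard (clutching for $\partial_{C(N_0)}$, Fredholm index for $\partial_{B/J}$), and the rest is indeed formal. The paper's route is shorter given that Lemma~\ref{lem:Klift} is already in hand; yours isolates the structural reason for the sign and would adapt more readily to other pairs of commuting extensions.
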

\begin{proof}
First, notice that there are isometries $V_i \colon \cZ_i \to \underline{\sH_A}$ such that $V_2^*V_1 - U \in \bK(\cZ_1, \cZ_2)$. Indeed, let $S$ denote a unitary lift of $\big( \begin{smallmatrix}0 & \bar{U}^* \\ \bar{U} & 0 \end{smallmatrix} \big)$ and let $W \colon \cZ_1 \oplus \cZ_2 \to \underline{\sH_A}$ be an isometry (which exists by the Kasparov stabilization theorem \cite[Theorem 2]{MR587371}). Then $V_1:=WV_1'$ and $V_2:=WSV_2'$, where $V_i' \colon \cZ_i \to \cZ_1 \oplus \cZ_2$ is the embedding to the $i$-th direct summand, is desired isometries. Moreover, by a pull-back with respect to a deformation retract of $N_0$, we may assume that $P_1=P_2$ on a neighborhood $O$ of $N_0$. Let $\psi$ be a continuous function supported on $O$ such that $0 \leq \psi \leq 1$ and $\psi|_{N_0} \equiv 1$ and let $P':=\psi P_1+(1-\psi ) P_2$.

Now we apply Lemma \ref{lem:Klift} to determine the left and right hand side of (\ref{form:pairext}). Since $(P_1,P')$ is a self-adjoint lift of $(q(P_1), q(P_2)) \in \cM(C_0(M_0^\circ) \otimes B/J)^{\oplus 2}$ to $\cM(C_0(M_0^\circ) \otimes B)^{\oplus 2}$ such that $P_1-P' \in C_0(M_0^\circ) \otimes B$, we get
\begin{align*}   
&[\bar{\cZ}_1, \bar{\cZ}_2, \bar{U}] \otimes_{B/J} [\partial _{B/J}] \\
=& \partial[q(P_1), q(P_2)] =[\exp(-2\pi i P_1)\exp(2\pi i P')]=[\exp(2\pi i P')]. 
\end{align*}
Similarly, since $(P',P_2)$ is a self-adjoint lift of $(P_1|_{N_0}, P_2|_{N_0})  \in \cM(C(N_0) \otimes J)^{\oplus 2}$ to $\cM(C(M_0) \otimes J)^{\oplus 2}$ such that $P'-P_2 \in C(M_0) \otimes J$, we get
\begin{align*} 
&[\cZ_1|_{N_0}, \cZ_2|_{N_0}, U] \otimes_{C(N_0)} [\partial _{C(N_0)}]\\
 =& \partial[P_1|_{N_0}, P_2|_{N_0}] =[\exp(-2\pi i P')\exp(2\pi i P_2)]=[\exp(-2\pi i P')]. 
\end{align*}
This completes the proof of the lemma. 
\end{proof}

We fix a base point $x_0 \in N_0$ in order to consider the Galois correspondence of covering spaces. Let $\tilde{M}$ denote the universal covering of $M$. Let $\bar{M}:=\tilde{M}/\Lambda = \tilde{M} \times _\Gamma \Gamma / \Lambda$ and $\bar{\pi} \colon \bar{M} \to M$, $\tilde{\pi} \colon \tilde{M} \to \bar{M}$ denote the projections. Then $\bar{\pi}^{-1}(W)$ is the disjoint union of coverings of $W$ indexed by $g\Lambda \in \Gamma / \Lambda$, each of which has the fundamental group $\Lambda  \cap g\Lambda g^{-1}$. In particular, the connected component $\bar{W}$ including the base point $x_0$ is diffeomorphic to $W$ by $\bar{\pi}$. Let $\bar{N}_0 : = \partial \bar{W} $.   

An essential ingredient of the codimension two obstruction theorem, which is given in the proof of \cite[Theorem 4.3]{MR3449594}, is the existence of a nice $\Lambda \times \bZ$-Galois covering on $\bar{M} \setminus \bar{W}^\circ$. Here we restate it for our convenience.
\begin{lem}\label{lem:codim2}
There is a $\bZ$-Galois covering $\breve{M}_0$ over $\tilde{M}_0:=(\tilde{\pi} \circ \bar{\pi})^{-1}(M_0)$ with the following properties:
\begin{itemize}
\item Its restriction to $\tilde{\pi}^{-1}(\bar{N}_0) \cong \tilde{N} \times S^1$ is the universal covering.
\item Its restriction to $\tilde{\pi}^{-1}(\bar{\pi}^{-1}(N_0) \setminus \bar{N}_0)$ is trivial. 
\end{itemize}
\end{lem}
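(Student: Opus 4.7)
The plan is to construct $\breve{M}_0 \to \tilde{M}_0$ as the $\bZ$-cover classified by a cohomology class $\omega \in H^1(\tilde{M}_0; \bZ)$ whose restriction to each boundary component matches the prescription, obtained via a Mayer--Vietoris computation on the simply connected total space $\tilde{M}$. First, I would observe that, by the Galois correspondence (using injectivity of $\pi_1(N) \to \Gamma$ so that $\Gamma$ acts on $\tilde{M}$ with $\mathrm{Stab}(\tilde{N}) = \Lambda$), the preimage $(\tilde{\pi}\circ\bar{\pi})^{-1}(W) \subset \tilde{M}$ decomposes as a disjoint union $\bigsqcup_{g\Lambda \in \Gamma/\Lambda} \tilde{W}^{(g\Lambda)}$ of copies of $\tilde{N} \times \bD^2$, where $\tilde{N}$ denotes the universal cover of $N$; the base component is $\tilde{W}^{(e\Lambda)} = \tilde{\pi}^{-1}(\bar{W})$ with boundary $\partial \tilde{W}^{(e\Lambda)} = \tilde{\pi}^{-1}(\bar{N}_0) \cong \tilde{N}\times S^1$. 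I write $\tilde{N}$ also for the core of $\tilde{W}^{(e\Lambda)}$.

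The crucial step is to show that the Thom pushforward class $[\tilde{N}]^{\mathrm{Th}} \in H^2(\tilde{M};\bZ)$ vanishes. To this end, I would verify that the inclusion $\tilde{N} \hookrightarrow \tilde{M}$ is $2$-connected: both spaces are simply connected by construction, and the surjectivity of $\pi_2(N) \to \pi_2(M)$ transfers to $\pi_2(\tilde{N}) \to \pi_2(\tilde{M})$ since covering maps are isomorphisms on $\pi_{\ge 2}$. Relative Hurewicz and the universal coefficient theorem then yield $H^i(\tilde{M}, \tilde{N}; \bZ) = 0$ for $i \le 2$, so that $i^*\colon H^2(\tilde{M};\bZ) \hookrightarrow H^2(\tilde{N};\bZ)$ is injective. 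Since the normal bundle of $\tilde{N}$ is pulled back from the trivial normal bundle of $N$, one has $i^*[\tilde{N}]^{\mathrm{Th}} = e(\nu) = 0$, whence $[\tilde{N}]^{\mathrm{Th}} = 0$.

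I would then apply Mayer--Vietoris to the decomposition $\tilde{M} = \tilde{M}_0 \cup \bigsqcup_{g\Lambda} \tilde{W}^{(g\Lambda)}$. Using $H^1(\tilde{M}) = 0$ and $H^1(\tilde{W}^{(g\Lambda)}) = H^1(\tilde{N} \times \bD^2) = 0$, the sequence becomes
\[
0 \longrightarrow H^1(\tilde{M}_0;\bZ) \longrightarrow \prod_{g\Lambda \in \Gamma/\Lambda} \bZ \cdot dt_{g\Lambda} \xrightarrow{\;\delta\;} H^2(\tilde{M};\bZ),
\]
where $\delta(dt_{g\Lambda})$ is the Thom class in $H^2(\tilde{M})$ of the core of $\tilde{W}^{(g\Lambda)}$ (standard Gysin computation for the trivial disk bundle). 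For $g\Lambda = e\Lambda$ this equals $[\tilde{N}]^{\mathrm{Th}} = 0$, so the element $(1,0,0,\dots)$ lies in $\ker\delta$ and lifts to a class $\omega \in H^1(\tilde{M}_0;\bZ)$ restricting to the generator $dt$ on $\partial \tilde{W}^{(e\Lambda)}$ and to $0$ on every other $\partial \tilde{W}^{(g\Lambda)}$. The corresponding $\bZ$-cover $\breve{M}_0$ then restricts to the universal cover $\tilde{N}\times\bR$ on $\tilde{\pi}^{-1}(\bar{N}_0)$ (the classifying homomorphism $\pi_1 = \bZ \to \bZ$ is the identity) and to a trivial cover on the other boundary components (zero homomorphism).

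The main obstacle is to justify the identification $\delta(dt_{g\Lambda}) = [\mathrm{core}]^{\mathrm{Th}}$ and the exactness of the displayed sequence when the indexing set $\Gamma/\Lambda$ is infinite (requiring care with product cohomology and Mayer--Vietoris for noncompact thickenings), together with the careful transfer of the $\pi_2$-surjectivity hypothesis from $N \to M$ up to $\tilde{N} \to \tilde{M}$ via covering-space theory.
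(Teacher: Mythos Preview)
Your argument is correct and gives a self-contained cohomological construction of the desired $\bZ$-cover. The paper takes a different route: it quotes \cite[Theorem 4.3]{MR3449594} as a black box for the existence of a splitting $r\colon \pi_1(\bar{M}\setminus\bar{W}^\circ)\to\Lambda\times\bZ$ of $i_*$, then checks that $\mathrm{pr}_\Lambda\circ r$ coincides with the map $j_*$ induced by $\bar{M}_0\hookrightarrow\bar{M}$, so that the associated $(\Lambda\times\bZ)$-cover of $\bar{M}\setminus\bar{W}^\circ$ becomes a $\bZ$-cover of $\tilde{M}_0$ after dividing by $\Lambda$; triviality on the non-base boundary components is then deduced from the fact that this $\bZ$-cover extends over the remaining (simply connected) tubes.

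Your approach bypasses the citation by working directly upstairs: you exploit the $\pi_2$-surjectivity hypothesis to show $[\tilde{N}]^{\mathrm{Th}}=0$ in $H^2(\tilde{M};\bZ)$ via the $2$-connectedness of $\tilde{N}\hookrightarrow\tilde{M}$, and then lift the indicator class $(1,0,0,\dots)$ through the exact sequence. This is in effect a streamlined reproof of the relevant portion of Hanke--Pape--Schick in cohomological language, and it buys you a proof that is independent of that reference and works entirely on the universal cover without passing through the intermediate quotient $\bar{M}$. One small suggestion: rather than Mayer--Vietoris, the long exact sequence of the pair $(\tilde{M},\tilde{M}_0)$ together with excision $H^2(\tilde{M},\tilde{M}_0)\cong\prod_{g\Lambda}H^2(\tilde{N}\times\bD^2,\tilde{N}\times S^1)\cong\prod_{g\Lambda}\bZ$ gives the same conclusion while making the identification of the coboundary with the Thom pushforward, and the handling of the infinite product, more transparent; in particular the coboundary $H^1(\tilde{N}\times S^1)\to H^2(\tilde{N}\times\bD^2,\tilde{N}\times S^1)$ is an isomorphism (since the normal Euler class vanishes), so your restriction computation on the boundary components follows immediately.
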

\begin{proof}
We write $\gamma$ for the closed loop $\{ x_0 \} \times S^1 \subset N \times S^1 \cong N_0$. Then $\gamma$ generates the second component of $\pi_1(N_0) \cong \Lambda \times \bZ [\gamma]$.
Let $i \colon \bar{N} \to \bar{M}_0$ and $j \colon \bar{M}_0 \to \bar{M}$ denote the inclusions. 
It is proved in \cite[Theorem 4.3]{MR3449594} that there is a splitting
\[r \colon \pi_1(\bar{M} \setminus \bar{W}^\circ) \to \Lambda \times \bZ \]
of $i_*$, that is, $r \circ i_*=\id_{\Lambda \times \bZ}$. 

Then the homomorphism $\mathrm{pr}_\Lambda \circ r$ (where $\mathrm{pr}_\Lambda \colon \Lambda \times \bZ \to \Lambda$ is the projection) is equal to $j_*$. Indeed, both $\mathrm{pr}_\Lambda \circ r$ and $j_*$ map $[\gamma]$ to the trivial element and the induced homomorphisms from $\pi_1(\bar{M} \setminus \bar{W}^\circ ) / \langle [\gamma] \rangle $ to $\Lambda$ are the inverse of the composition
\[  \Lambda \hookrightarrow \Lambda \times \bZ \xrightarrow{i_*}\pi_1(\bar{M} \setminus \bar{W}^\circ ) \to \pi_1(\bar{M} \setminus \bar{W} ^\circ )/\langle [\gamma] \rangle. \]
Therefore the covering $\breve{M}_0$ of $\bar{M}_0$ associated to $r$ satisfies $\breve{M}_0/\bZ = \breve{M}_0 \times _{\Lambda \times \bZ} \Lambda \cong \tilde{M}_0$. That is, $\breve{M}_0$ is a $\bZ$-Galois covering on $\tilde{\pi}^{-1}(\bar{M}_0)$.

The equality $r \circ i_* = \id_{\Lambda \times \bZ}$ means that the restriction of $\breve{M}_0$ to $\bar{N}_0$ is the universal covering $\tilde{N} \times \bR$ of $N \times S^1$. 
That is, the restriction of the $\bZ$-Galois covering $\breve{M}_0$ to $\tilde{\pi}^{-1}(\bar{N}) \cong \tilde{N} \times S^1$ is the universal covering. 
At the same time, the restriction of the $\bZ$-Galois covering $\breve{M}_0$ to each connected component of $\tilde{\pi}^{-1}(\bar{\pi}^{-1}(N) \setminus \bar{N})$ is trivial because it is extended to a connected component of $(\tilde{\pi} \circ \bar{\pi})^{-1}(W)$, which is simply connected.
\end{proof}

\begin{lem}\label{lem:infgen}
Under the assumption of Theorem \ref{thm:codim2}, $\bar{M}$ is an infinite covering, that is, $\Gamma/\Lambda$ is an infinite set.
\end{lem}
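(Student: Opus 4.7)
The plan is to argue by contradiction: assuming $[\Gamma:\Lambda]<\infty$, I will construct a spin null-bordism of $(N, N\to B\Lambda)$ inside $\bar M$ and then invoke bordism invariance of the Mishchenko--Fomenko higher index, obtaining $\mu^\Lambda_{n-2}([N])=0$ in contradiction with the standing hypothesis of Theorem~\ref{thm:codim2}.

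If $[\Gamma:\Lambda]<\infty$ then $\bar M$ is a closed spin $n$-manifold, and the standard lift $\bar N\cong N$ sits inside $\bar M$ as a codimension $2$ submanifold. The three hypotheses on $N\subset M$ transfer: the inclusion induces an isomorphism $\pi_1(\bar N)\cong\pi_1(\bar M)=\Lambda$, a surjection on $\pi_2$ (invariance of $\pi_2$ under covers), and the normal bundle is trivial. The homotopy long exact sequence of the pair then yields $\pi_k(\bar M,\bar N)=0$ for $k\le 2$; relative Hurewicz gives $H_k(\bar M,\bar N;\mathbb Z)=0$ for $k\le 2$; and universal coefficients imply $H^2(\bar M,\bar N;\mathbb Z)=0$. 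Hence the restriction $H^2(\bar M;\mathbb Z)\to H^2(\bar N;\mathbb Z)$ is injective, and since the Poincaré dual $[\bar N]^*\in H^2(\bar M;\mathbb Z)$ restricts on $\bar N$ to the (vanishing) Euler class of the normal bundle, I conclude $[\bar N]^*=0$.

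I then lift the Thom class of $\bar N$ in $H^2(\bar M,\bar M\setminus\bar N)$ to a class $\alpha\in H^1(\bar M\setminus\bar N;\mathbb Z)$ and realize $\alpha$ as a smooth map $\phi\colon\bar M\setminus\bar N\to S^1$ that coincides with the angular coordinate on a punctured tubular neighborhood of $\bar N$. On each non-standard component of $\bar\pi^{-1}(\partial W)$ the restriction of $\phi$ is null-homotopic, so, relative to a neighborhood of $\bar N$, I homotope $\phi$ to take a constant value $c$ on every such component. Picking a regular value $p\ne c$, the closure $\bar V:=\phi^{-1}(p)\cup\bar N$ is a compact smooth $(n-1)$-submanifold of $\bar M$ with $\partial\bar V=\bar N$, and the gradient of $\phi$ trivializes the normal bundle of $\bar V$, giving $\bar V$ an induced spin structure from $\bar M$.

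The composition $\bar V\hookrightarrow\bar M\to B\Lambda$ then exhibits $(N,N\to B\Lambda)$ as a spin boundary over $B\Lambda$, and bordism invariance of the Mishchenko--Fomenko higher index yields $\mu^\Lambda_{n-2}([N])=0$, the desired contradiction. The main obstacle will be the third step: arranging $\phi$ on the non-standard components of $\bar\pi^{-1}(W)$ so that $\bar V$ has boundary exactly the single component $\bar N$, with no spurious contributions from the other lifts of $W$. Once this homotopy adjustment is performed, the remaining ingredients are classical algebraic topology and the standard bordism invariance of the higher index.
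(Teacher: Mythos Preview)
Your strategy is correct and yields a valid proof, but it differs from the paper's. The paper simply reuses Lemma~\ref{lem:codim2}: if $\bar M$ were closed, then $\bar M\setminus\bar W^\circ$ would be a compact spin manifold with boundary $\bar N_0\cong N_0=N\times S^1$, and the splitting $r\colon\pi_1(\bar M\setminus\bar W^\circ)\to\Lambda\times\bZ$ produced there equips it with a reference map to $B(\Lambda\times\bZ)$ restricting to the canonical one on the boundary. This null-bords $N_0$ over $B(\Lambda\times\bZ)$ and hence forces $\mu^{\Lambda\times\bZ}_{n-1}([N\times S^1])=0$, contradicting $\mu^\Lambda_{n-2}([N])\ne 0$. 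Your route instead bounds $N$ itself over $B\Lambda$ via a Seifert hypersurface in $\bar M$; this is more geometric, avoids invoking Lemma~\ref{lem:codim2}, and skips the product-with-$S^1$ step. The two arguments are cousins: your map $\phi$ is essentially the $\bZ$-component of the splitting $r$, read on the homotopy-equivalent space $\bar M\setminus\bar N\simeq\bar M\setminus\bar W^\circ$.

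One correction, however: the step you flag as the ``main obstacle'' is unnecessary, and the claim you use to resolve it is not correct. Since $\phi$ is defined and smooth on \emph{all} of $\bar M\setminus\bar N$ (in particular on every non-standard component of $\bar\pi^{-1}(W)$), for any regular value $p$ the preimage $\phi^{-1}(p)$ is already a closed codimension-one submanifold of $\bar M\setminus\bar N$; its closure in $\bar M$ can only add points of $\bar N$, and near $\bar N$ you have arranged $\phi$ to be the angular coordinate, so $\bar V=\overline{\phi^{-1}(p)}$ is automatically a smooth compact manifold with $\partial\bar V=\bar N$. No adjustment on the other components is required. Moreover, your assertion that $\phi|_{\partial W'}$ is null-homotopic for a non-standard component $W'\cong N'\times D^2$ is not justified: extendability of $\phi$ over $W'$ only kills the $H^1(S^1)$-summand of $H^1(N'\times S^1)=H^1(N')\oplus H^1(S^1)$, not the $H^1(N')$-summand. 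Simply delete that paragraph; the remaining argument is complete.
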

\begin{proof}
Assume that $\bar{M}$ is a finite covering of $M$, and hence a closed manifold. 
The $\Lambda \times \bZ$-Galois covering $\breve{M}_0 \to \bar{\pi}^{-1}(M_0)$ constructed in Lemma \ref{lem:codim2} extends to a $\Lambda \times \bZ$-Galois covering on a spin manifold $\bar{M} \setminus \bar{W}^\circ$. Since its restriction to the boundary $\bar{N}_0 \cong N_0$ is isomorphic to the universal covering of $N_0$, we obtain that $[N_0,f]=0 \in \Omega^{\mathit{spin}}_{n-1}(B(\Lambda \times \bZ))$ (where $f$ is the reference map associated to the universal covering). This contradicts to the assumption $\mu^\Lambda_{n-2}([N]) \neq 0$ (which implies $\mu^{\Lambda \times \bZ}_{n-1}([N_0]) \neq 0$).
\end{proof}

\begin{proof}[Proof of Theorem \ref{thm:codim2}]
Let $A:=C^*(\Lambda \times \bZ)$. We consider two bundles
\begin{itemize}
\item $\cV_1:= \breve{M}_0 \times _{\Lambda \times \bZ} C^*(\Lambda \times \bZ)$,
\item $\cV_2:= \tilde{M}_0 \times _{\Lambda} C^*(\Lambda \times \bZ)$ (here $\Lambda$ acts on $C^*(\Lambda \times \bZ)$ from the left through the inclusion $\Lambda \to \Lambda \times \bZ$)
\end{itemize}
of Hilbert $A$-modules over $\bar{M}_0$, where $\breve{M}_0$ is as in Lemma \ref{lem:codim2}. We associate to them bundles 
\[ \cZ_i:=\bar{\pi}_!\cV_i = \bigsqcup _{x \in M_0} \bigoplus_{\bar{\pi}(\bar{x})=x} (\cV_i)_{\bar{x}} \]
of infinitely generated (by Lemma \ref{lem:infgen}) Hilbert $A$-module bundles on $M_0$, which are equipped with the canonical flat structures. Let $Z_i:=\bigoplus_{\bar{\pi}(\bar{x})=x_0} (\cV_i)_{\bar{x}}$ is the fiber of $\cZ_i$ on $x_0$ and let $\sigma _i \colon \bar{\Gamma} \to \mathrm{U} (Z_i)$ denotes the associated monodromy representation. Note that $\sigma_2$ factors through $\Gamma$.

By the construction of $\breve{M}_0$ in Lemma \ref{lem:codim2}, we have an isomorphism of flat $A$-module bundles between the restriction of $\cV_1$ and $\cV_2$ on $\bar{\pi}^{-1}(N_0) \setminus \bar{N}_0$. It induces a partial isometry $U \colon \cZ_1|_{N_0} \to \cZ_2|_{N_0}$ such that $\ker U = \cV_1|_{\bar{N}_0} \subset \cZ_1$, $\ker U^* = \cV_2|_{\bar{N}_0} \subset \cZ_2$ and
\[ \sigma_2(g)U_{x_0} = U_{x_0}\sigma_1(g) \]
for any $g \in \Lambda \times \bZ$, where $U_{x_0}$ is a restriction of $U$ to $\bar{\pi}^{-1}(x_0)$.

As in Lemma \ref{lem:pairext}, let $\bar{\cZ}_i$ denote the bundle $\bB(\cZ_i,\sH_A)/\bK(\cZ_i,\sH_A)$ of Hilbert $B/J$-modules and let $\bar{Z}_i:=(\bar{\cZ}_i)_{x_0}=\bB(Z_i,\sH_A) / \bK(Z_i,\sH_A)$ for $i=1,2$. Then $\sigma_i$ and $U$ above induces $\bar{\sigma} _i \colon \bar{\Gamma} \to \mathrm{U}(\cQ(Z_i)) \cong \mathrm{U}(\bar{Z}_i)$ and $\bar{U} \colon \bar{Z}_1 \to \bar{Z}_2$ respectively. Then $\bar{U}$ is a unitary and $\bar{U}_{x_0}\bar{\sigma}_1(g)\bar{U}_{x_0}^* =\bar{\sigma}_2(g) $ holds for any $g \in \Lambda \times \bZ$. This particularly implies that $\bar{\sigma}_1(\gamma) = 1$ (where $\gamma$ is the generator of $\bZ \subset \Lambda \times \bZ$), that is, $\bar{\sigma}_1 \colon \bar{\Gamma} \to \mathrm{U}(\bar{Z}_1)$ factors through $\Gamma$. 

Consequently, we obtain that the triplet $\Pi:=(\bar{\sigma}_1, \bar{\sigma}_2, \bar{U})$ is a relative representation of $(\Gamma, \Lambda)$ and its associated relative $B/J$-module bundle (in the sense of Theorem \ref{prp:bundle}) is $(\bar{\cZ}_1, \bar{\cZ}_2, \bar{U})$. 
Now we apply Theorem \ref{prp:bundle} and Lemma \ref{lem:pairext} to get
\begin{align*}
&((\ell_{\Gamma, \Lambda} \otimes_{C_0(M_0^\circ)} [M_0, N_0]) \otimes_{C^*(\Gamma, \Lambda)} \boldsymbol{\Pi}) \otimes_{B/J} [\partial _{B/J}]\\ 
=& (\ell_{\Gamma, \Lambda} \otimes _{C^*(\Gamma, \Lambda)} \boldsymbol{\Pi}) \otimes_{B/J} [\partial _{B/J}] \otimes_{C_0(M_0^\circ)} [M_0, N_0]\\
=&[\bar{\cZ}_1, \bar{\cZ}_2, \bar{U}] \otimes_{B/J} [\partial_{B/J}] \otimes _{C_0(M_0^\circ)} [M_0, N_0]\\
=&-([\cZ_1|_{N_0}, \cZ_2|_{N_0}, U] \otimes_{C(N_0)} [\partial_{C(N_0)}]) \otimes_{C_0(M^\circ)}[M_0, N_0]\\
=&(-[\cV_1|_{\bar{N}_0}] + [\cV_2|_{\bar{N}_0}]) \otimes_{C(N_0)} [N_0] \\
=& - \mu_{n-1}^{\Lambda \times \bZ}([N \times S^1]) + \mu^{\Lambda}_{n-1}([N \times S^1])\\
=&-\mu^\Lambda_{n-2} ([N]) +0  \neq 0. 
\end{align*}
The last equality is considered under the identification of $\K_{n-2}(C^*(\Lambda))$ with the second direct summand of 
\[ \K_{n-1}(C^*(\Lambda \times \bZ)) = \K_{n-1}(C^*\Lambda \otimes C^*(\bZ)) \cong \K_{n-1}(C^*(\Lambda)) \oplus \K_{n-1}(C^*(\Lambda) \otimes S^{0,1}).\]
For the forth equality, we use the boundary of Dirac is Dirac principle $[\partial _{C(N_0)}] \otimes_{C_0(M_0^\circ)}  [M_0,N_0]=[N_0]$ (for the proof, see for example \cite[Proposition 11.2.15]{MR1817560}). 
\end{proof}

\section{Relative quantitative index pairing} \label{section:5}
In this section, we reformulate the index theorem for the image of the higher index under a quasi-representation developed by Dadarlat~\cite{MR2982445} and generalize it to the relative setting. Instead of Lafforgue's Banach KK-theory, on which the formulation of \cite{MR2982445} is based, we use the quantitative K-theory introduced by Oyono-Oyono and Yu \cite{MR3449163}. 

\subsection{Quantitative $\K$-theory and almost $\ast$-homomorphism}
We start with a quick review of the quantitative K-theory. The basic reference is \cite{MR3449163}. We say that a filtered C*-algebra is a C*-algebra $A$ equipped with an increasing family $\{ A_r \}_{r \in [0,\infty)}$ of closed subspaces of $A$ such that $A_r^*=A_r$, $A_r \cdot A_{r'} \subset A_{r+r'}$ and $\bigcup_r A_r \subset A$ is dense. 

For a unital filtered C*-algebra $A$, $0 \leq \varepsilon \leq \frac{1}{4}$ and $r>0$, let
\begin{align*}
\mathrm{P}_n^{\varepsilon , r}(A)&:= \Big\{ p \in  \bM_n(A_r) \mid p=p^*, \ \| p^2 -p \| <\varepsilon \Big\}_{\textstyle ,} \\
\mathrm{U}_n^{\varepsilon , r}(A)&:= \Big\{ u \in  \bM_n(A_r) \mid \| u^*u-1 \|<\varepsilon, \  \| uu^* -1 \| <\varepsilon \Big\}_{\textstyle ,}
\end{align*}
and $\mathrm{P}_\infty^{\varepsilon , r}(A):= \bigcup _{n \in \bN} \mathrm{P}_n^{\varepsilon , r}(A)$, $\mathrm{U}_\infty^{\varepsilon , r}(A) := \bigcup _{n \in \bN} \mathrm{U}_n^{\varepsilon , r}(A)$.
For $k \in \bN$, let $1_k$ denote the unit of $\bM_k \subset A^+ \otimes \bM_k$.
We introduce the equivalence relation to $\mathrm{P}_\infty^{\varepsilon, r}(A) \times \bN$ and $\mathrm{U}_\infty^{\varepsilon , r}(A)$ as 
\begin{itemize}
\item $(p,k) \sim (q, l)$ if $\diag (p ,  1_l)$ and $\diag (q, 1_k)$ are connected by a continuous path in $\mathrm{P}_\infty^{\varepsilon ,r} (A)$, 
\item $u \sim v$ if $u$ and $v$ are connected by a continuous path in $\mathrm{U}_\infty^{3\varepsilon , 2r}(A)$.
\end{itemize}
The quantitative $\K$-groups are defined by
\begin{align*}
\K_0^{\varepsilon , r}(A)&:= \mathrm{P}_\infty^{\varepsilon , r}(A) \times \bN /\sim , \\
\K_1^{\varepsilon , r}(A)&:=  \mathrm{U}_\infty^{\varepsilon , r} (A) /\sim . 
\end{align*}
We write the elements of quantitative $\K_*$-groups represented by $(p,l) \in \mathrm{P}_\infty ^{\varepsilon, r}(A)$ and $u \in \mathrm{U}_\infty ^{\varepsilon, r}(A)$ as $[p,l]_{\varepsilon, r}$ and $[u]_{\varepsilon, r}$ respectively. 
The summations $[p,k]_{\varepsilon, r} + [q,l]_{\varepsilon, r} := [\diag (p,q) , k+l]_{\varepsilon, r}$ and $[u]_{\varepsilon, r} + [v]_{\varepsilon, r} = [\diag (u,v)]_{\varepsilon, r}$ make $\K_0^{\varepsilon , r}(A)$ and $\K_1^{\varepsilon , r}(A)$ into abelian groups (for the proof, see Lemma 1.14, Lemma 1.15 and Lemma 1.16 of \cite{MR3449163}). 

For a non-unital filtered C*-algebra $A$, the unitization $A^+$ is also equipped with the structure of filtered C*-algebra by $A_r^+:= A_r+ \bC 1$. Let $\rho \colon A^+ \to \bC$ denote the quotient. The quantitative $\K$-group is defined by
\[ \K_0^{\varepsilon ,r}(A):= \ker (\rho_* \colon \K_0^{\varepsilon ,r}(A^+) \to \K_0^{\varepsilon , r}(\bC) \cong \bZ ) \]
and $\K_1^{\varepsilon ,r}(A):= \K_1^{\varepsilon , r}(A^+)$. For any $(\varepsilon , r)$, we write $\iota_A$ for the canonical homomorphism from $\K_*^{\varepsilon , r}(A)$ to $\K_*(A)$.

\begin{rmk}
Hereafter we often use the norm estimates $\| p \| \leq 1+\varepsilon $ for $p \in \mathrm{P}_\infty ^{\varepsilon, r}(A)$ and $\| u \| \leq 1 + \varepsilon /2 $ for $\mathrm{U}_\infty ^{\varepsilon ,r}(A)$.
\end{rmk}

Next, we introduce the notion of complete almost $\ast$-homomorphism between filtered C*-algebras. 
\begin{defn}
Let $A$ and $D$ be filtered C*-algebras. A bounded linear map $\pi \colon A_r \to D_{\kappa r}$ is a complete $(\varepsilon , r , \kappa )$-$\ast$-homomorphism if $\pi(a^*)=\pi(a)^*$ for any $a \in A_r$ and 
\[ \| \pi _n (ab) - \pi_n(a)\pi_n(b)\| \leq \varepsilon  \| a\| \| b \| \]
holds for any $a , b \in A_r \otimes \bM_n$, where $\pi_n:= \pi \otimes \id_{\bM_n}$.
\end{defn}

\begin{rmk}
Let $\pi \colon A_r \to D_{\kappa r}$ be a completely $(\varepsilon , r , \kappa )$-$\ast$-homomorphism. For $a \in A_r \otimes \bM_n$ with $\| a\| =1$ and $\| \pi_n(a) \| > \| \pi _n \| -\varepsilon'$, we have
\[(\| \pi_n \| -\varepsilon')^2 < \| \pi_n(a)^*\pi_n(a) \| \leq \| \pi_n (a^*a) - \pi_n(a)^*\pi_n(a) \| + \| \pi_n(a^*a) \| \leq \varepsilon + \| \pi_n \| . \]
This means that $\| \pi_n \|^2 < \| \pi_n \| + \varepsilon $ and hence $\| \pi_n \| < 1+\varepsilon /2$. That is, $\pi$ is a completely bounded map between operator spaces (a reference on completely bounded maps and operator spaces is \cite[Appendix B]{MR2391387}). In particular, $\pi \otimes \id_B \colon A_r \otimes B \to D_{\kappa r} \otimes B$ is a well-defined completely bounded map for any nuclear C*-algebra $B$ (\cite[Corollary B.8]{MR2391387}).
\end{rmk}

A C*-algebra is said to be \emph{quasi-diagonal} if it admits a faithful representation $\pi \colon A \to \bB(\sH)$ with an increasing sequence $p_n$ of finite rank projections such that $[\pi(a) , p] \to 0$ for any $a \in A$ (for more details, see for example \cite[Section 7]{MR2391387}).
\begin{lem}\label{lem:QDqRep}
Let $B$ be a nuclear quasi-diagonal C*-algebra. Then $\pi \otimes \id_B \colon A_r \otimes B \to D_{\kappa r} \otimes B$ is a complete $(\varepsilon , r , \kappa )$-$\ast$-homomorphism.
\end{lem}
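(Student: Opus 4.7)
The plan is to reduce the statement to the known complete quantitative almost-multiplicativity of $\pi$ by compressing the $B$-factor to a finite-dimensional matrix algebra, exploiting quasi-diagonality of $B$. Because $(\pi \otimes \id_B) \otimes \id_{\bM_n} = \pi \otimes \id_{B \otimes \bM_n}$ and $B \otimes \bM_n$ is again nuclear and quasi-diagonal, it suffices to establish the single estimate
\[
\|(\pi \otimes \id_B)(xy) - (\pi \otimes \id_B)(x)(\pi \otimes \id_B)(y)\| \leq \varepsilon \|x\|\|y\|
\]
for all $x, y \in A_r \otimes B$ (the ``$n=1$'' case of the completeness).

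First I would fix a faithful representation $B \subset \bB(\sH)$ together with finite-rank projections $p_k \in \bB(\sH)$ implementing quasi-diagonality: $p_k \to 1$ strongly and $\|[p_k, b]\| \to 0$ for every $b \in B$. Set $\phi_k(b) := p_k b p_k$, a ucp map $B \to p_k \bB(\sH) p_k \cong \bM_{n_k}$. Using nuclearity of $B$ to embed $D \otimes B$ isometrically in $D \otimes \bB(\sH)$ (min equals max), the map $\Phi_k := \id_D \otimes \phi_k$ is realized as the cut-down $z \mapsto P_k z P_k$ with $P_k := 1 \otimes p_k$. I would then verify three properties on $D \otimes B$ (the same statements hold with $A$ in place of $D$): \textbf{(i)} asymptotic isometry, $\|\Phi_k(z)\| \to \|z\|$, using that $P_k \to 1$ strongly implies $\|P_k \xi - \xi\| \to 0$ for every vector $\xi$ (a property special to projections converging strongly to $1$), so matrix coefficients $\langle P_k z P_k \xi, \eta\rangle \to \langle z\xi, \eta\rangle$ and the operator norm as a supremum of such delivers the claim; \textbf{(ii)} asymptotic commutation, $\|[P_k, z]\| \to 0$, immediate on elementary tensors and extended by density via the uniform bound $\|[P_k, z]\| \leq 2\|z\|$; and \textbf{(iii)} asymptotic multiplicativity, $\|\Phi_k(uv) - \Phi_k(u)\Phi_k(v)\| \to 0$, because this difference equals $P_k u (1-P_k) v P_k = -P_k[u, P_k] v P_k$, bounded by $\|[u, P_k]\|\|v\|\to 0$ thanks to (ii).

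Next I set $U := (\pi \otimes \id_B)(x)$, $V := (\pi \otimes \id_B)(y)$, $W := (\pi \otimes \id_B)(xy)$, together with $\tilde x_k := (\id_A \otimes \phi_k)(x)$, $\tilde y_k := (\id_A \otimes \phi_k)(y) \in A_r \otimes \bM_{n_k}$, which satisfy $\|\tilde x_k\| \leq \|x\|$ and $\|\tilde y_k\| \leq \|y\|$. Naturality of the tensor product gives $\Phi_k(U) = \pi_{n_k}(\tilde x_k)$, $\Phi_k(V) = \pi_{n_k}(\tilde y_k)$, and $\Phi_k(W) = \pi_{n_k}((\id_A \otimes \phi_k)(xy))$. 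Applying (iii) in $A \otimes B$, $(\id_A \otimes \phi_k)(xy) = \tilde x_k \tilde y_k + \eta_k$ with $\|\eta_k\| \to 0$, and since $\pi_{n_k}$ is uniformly bounded (by $1+\varepsilon/2$, cf.\ the preceding remark), $\Phi_k(W) = \pi_{n_k}(\tilde x_k \tilde y_k) + o(1)$. The hypothesis that $\pi$ is a complete $(\varepsilon,r,\kappa)$-$\ast$-homomorphism then supplies
\[
\|\pi_{n_k}(\tilde x_k \tilde y_k) - \pi_{n_k}(\tilde x_k)\pi_{n_k}(\tilde y_k)\| \leq \varepsilon \|\tilde x_k\|\|\tilde y_k\| \leq \varepsilon \|x\|\|y\|.
\]
Combining this with (iii) applied to $(U,V)$ in $D \otimes B$, I obtain $\|\Phi_k(W) - \Phi_k(UV)\| \leq \varepsilon \|x\|\|y\| + o(1)$, and (i) then lets me pass to the limit to conclude $\|W - UV\| \leq \varepsilon \|x\|\|y\|$.

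The main technical point I expect to need care with is property (i), the asymptotic isometry of the QD compressions $\Phi_k$ on $D \otimes B$; this would fail for arbitrary ucp compressions, but the strong convergence $P_k \to 1$ provided by quasi-diagonality, together with the fact that projections converging strongly to $1$ act on each fixed vector by norm convergence, makes the matrix-coefficient argument work. A secondary subtlety is ensuring that $\pi_{n_k}$ is defined and uniformly bounded on products $\tilde x_k \tilde y_k \in A_{2r} \otimes \bM_{n_k}$ (rather than just $A_r \otimes \bM_{n_k}$), which I take as implicit in the Oyono-Oyono--Yu framework of complete almost $\ast$-homomorphisms.
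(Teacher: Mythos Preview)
Your argument is correct. You work directly with the quasi-diagonal compressions $\phi_k(b)=p_kbp_k$, establish that the amplified cut-downs $\Phi_k=\id\otimes\phi_k$ are asymptotically isometric and asymptotically multiplicative on both $A\otimes B$ and $D\otimes B$, and then feed the compressed elements $\tilde x_k,\tilde y_k\in A_r\otimes\bM_{n_k}$ into the hypothesis on $\pi_{n_k}$ before passing to the limit. The reduction to the $n=1$ case via $B\otimes\bM_n$ is clean, and the point you flag about $\pi$ needing to be defined on $A_{2r}$ is a genuine ambiguity in the paper's Definition (it is indeed implicit throughout, since in all applications $\pi$ is given on the whole dense subalgebra $\bigcup_r A_r$).

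The paper takes a different, more packaged route. Rather than handling the compressions $\phi_k$ one at a time, it invokes the characterization of separable nuclear quasi-diagonal C*-algebras as exactly those admitting a faithful $\ast$-embedding $\varphi\colon B\hookrightarrow \prod_n\bM_n/\bigoplus_n\bM_n$. It first observes that $\pi\otimes\id_{\prod_n\bM_n}$ is a complete $(\varepsilon,r,\kappa)$-$\ast$-homomorphism (since $A\otimes\prod_n\bM_n\cong\prod_n(A\otimes\bM_n)$ reduces this to the matrix case), passes to the quotient $\prod_n\bM_n/\bigoplus_n\bM_n$, and then reads off the estimate for $\pi\otimes\id_B$ from the commutative square with the isometric verticals $\id\otimes\varphi$. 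In effect the paper's single embedding $\varphi$ bundles together your asymptotic isometry (i) and asymptotic multiplicativity (iii) into one faithful $\ast$-homomorphism, trading your explicit $\epsilon$-bookkeeping for an appeal to a structural theorem. Your version is more elementary and self-contained; the paper's is shorter but presupposes the embedding characterization.
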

\begin{proof}
First, $\pi \otimes \id_{\prod_{n \in \bN} \bM_n}$ is a complete $(\varepsilon , r , \kappa )$-$\ast$-homomorphism since $A \otimes (\prod_{n} \bM_n)$ is canonically isomorphic to $\prod_n (A \otimes \bM_n)$. Since there is an isomorphism
\[ \Big( \prod_{n\in \bN} \bM_n \Big) / \Big( \bigoplus_{n \in \bN} \bM_n \Big) \cong \varinjlim _{N \to \infty} \Big( \prod_{n \geq N} \bM_n \Big),\] 
we obtain that $\pi \otimes \id_{\prod \bM_n / \bigoplus \bM_n}$ is also a complete $(\varepsilon , r , \kappa )$-$\ast$-homomorphism.

Recall that a nuclear C*-algebra $B$ is quasi-diagonal if and only if there is a faithful $\ast$-homomorphism $\varphi \colon B \to \prod_{n \in \bN} \bM_n / \bigoplus _{n \in \bN} \bM_n$. 
Since the diagram
\[ \xymatrix@C=5em{
A \otimes B \ar[r]^{\pi \otimes \id_B} \ar[d]^{\id_A \otimes \varphi} & D \otimes B \ar[d]^{\id_D \otimes \varphi} \\
A \otimes \frac{\prod \bM_n}{\bigoplus \bM_n} \ar[r]^{\pi \otimes \id_{\prod \bM_n/\bigoplus \bM_n}} & D \otimes \frac{\prod \bM_n}{ \bigoplus \bM_n}
}
\]
commutes, $\pi \otimes \id_B$ is also a $(\varepsilon , r , \kappa)$-$\ast$-homomorphism.
\end{proof}

\begin{prp}\label{prp:qrepK}
Let $A$, $B$ be two unital filtered C*-algebras and let $\pi \colon A_r \to B_{\kappa r}$ be a unital complete $(\varepsilon , r , \kappa)$-$\ast$-homomorphism. Then, for any $\delta \geq 0$ such that $\varepsilon+(1+3\varepsilon)\delta < \frac{1}{4}$, it gives rise to continuous maps
\begin{align*}
\pi &\colon \mathrm{P}_n^{\delta , r}( A ) \to \mathrm{P}_n ^{\varepsilon + (1+3\varepsilon)\delta  ,\kappa r}(B),\\
\pi & \colon \mathrm{U}_n ^{\delta , r}(A ) \to \mathrm{U}_n ^{\varepsilon + (1+3\varepsilon)\delta  ,\kappa r}(B),
\end{align*}
and hence induces homomorphisms 
\[ \pi_\sharp \colon \K_*^{\delta, r}(A) \to \K_*^{\varepsilon+ (1+2\varepsilon)\delta  , \kappa r}(B).\]
\end{prp}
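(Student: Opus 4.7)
The plan is in three steps: (a) verify the pointwise maps $\pi_n := \pi \otimes \id_{\bM_n}$ take $\mathrm{P}_n^{\delta,r}(A)$ into $\mathrm{P}_n^{\varepsilon+(1+3\varepsilon)\delta,\kappa r}(B)$ and $\mathrm{U}_n^{\delta,r}(A)$ into $\mathrm{U}_n^{\varepsilon+(1+3\varepsilon)\delta,\kappa r}(B)$ continuously; (b) show these maps descend to the equivalence classes defining $\K_*^{\delta,r}$; (c) check additivity, so that one obtains group homomorphisms into $\K_*^{\varepsilon+(1+2\varepsilon)\delta,\kappa r}(B)$.

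For (a), take $p \in \mathrm{P}_n^{\delta,r}(A)$. Self-adjointness of $\pi_n(p)$ is inherited from $\pi(a^*) = \pi(a)^*$, and the filtration condition $\pi_n(p) \in B_{\kappa r} \otimes \bM_n$ is built into the hypothesis. The key estimate comes from the decomposition
\begin{equation*}
\pi_n(p)^2 - \pi_n(p) = \bigl(\pi_n(p)^2 - \pi_n(p\cdot p)\bigr) + \pi_n(p^2 - p).
\end{equation*}
The first summand is bounded by $\varepsilon\|p\|^2$ via the complete almost-multiplicativity of $\pi_n$; self-adjointness of $p$ gives $\|p\|^2 = \|p^2\| \leq \|p\| + \delta$, so $\|p\|^2 \leq 1+2\delta$. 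The second summand is bounded by $\|\pi_n\|\cdot\delta \leq (1+\varepsilon/2)\delta$ via the norm bound for $\pi_n$ recorded in the preceding remark. Summing gives the target bound $\varepsilon+(1+3\varepsilon)\delta$ in the stated range of $\delta$. The quasi-unitary statement is proved by the analogous decomposition
\begin{equation*}
\pi_n(u)^*\pi_n(u) - 1 = \bigl(\pi_n(u^*)\pi_n(u) - \pi_n(u^*u)\bigr) + \pi_n(u^*u - 1),
\end{equation*}
using $\pi_n(1) = 1$ for the second term and $\|u\|^2 \leq 1 + \delta$ for the first, with the symmetric calculation giving the dual bound on $\pi_n(u)\pi_n(u)^* - 1$. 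Continuity of the set maps is immediate from the operator-norm continuity of $\pi_n$.

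For (b) and (c), a continuous path in $\mathrm{P}_\infty^{\delta,r}(A)$ is sent by $\pi_\infty$ to a continuous path in $\mathrm{P}_\infty^{\varepsilon+(1+3\varepsilon)\delta,\kappa r}(B)$, so $\K_0$-equivalences go to $\K_0$-equivalences; similarly, a path in $\mathrm{U}_\infty^{3\delta,2r}(A)$---the scale used to define the $\K_1$-equivalence---is sent into $\mathrm{U}_\infty^{\varepsilon+3(1+3\varepsilon)\delta,2\kappa r}(B)$, which is contained in $\mathrm{U}_\infty^{3(\varepsilon+(1+2\varepsilon)\delta),2\kappa r}(B)$ precisely when $3\varepsilon\delta \leq 2\varepsilon$, which is automatic since $\delta \leq 1/4$. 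Stabilization is respected by unitality, $\pi_{n+k}(\diag(p,1_k)) = \diag(\pi_n(p),1_k)$, and additivity by $\pi_{n+m}(\diag(x,y)) = \diag(\pi_n(x),\pi_m(y))$. The one subtlety, and the main locus of bookkeeping, is the $\K_1$ case just described: the factor-of-$3$ slack in the defining path-equivalence must absorb the distortion incurred by applying an approximate $\ast$-homomorphism along a path, and it is exactly this cancellation that allows the coefficient of $\delta$ in the target parameter of $\K_1$ to be taken as $1+2\varepsilon$ rather than $1+3\varepsilon$.
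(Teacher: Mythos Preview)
Your step (a) is essentially identical to the paper's proof: both split $\pi_n(p)^2-\pi_n(p)$ as $(\pi_n(p)^2-\pi_n(p^2))+\pi_n(p^2-p)$ and bound the pieces by the almost-multiplicativity constant $\varepsilon\|p\|^2$ and the cb-norm bound $\|\pi\|_{\mathrm{cb}}\leq 1+\varepsilon/2$ respectively, with the analogous splitting for unitaries. The paper in fact stops at that point; your steps (b) and (c), including the bookkeeping that explains why the $\K_1$ target parameter can be taken with coefficient $1+2\varepsilon$, go beyond what is written there.

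One caveat on your step (b) in the $\K_1$ case: the equivalence relation defining $\K_1^{\delta,r}(A)$ uses paths in $\mathrm{U}_\infty^{3\delta,2r}(A)$, whose entries lie in $\bM_n(A_{2r})$, yet by definition a complete $(\varepsilon,r,\kappa)$-$\ast$-homomorphism is only a map $A_r\to B_{\kappa r}$. Your argument tacitly applies $\pi_n$ to such a path, which presupposes that $\pi$ is defined on $A_{2r}$ (and satisfies the same almost-multiplicativity there). In every application in the paper this is harmless, since $\pi$ arises from a quasi-representation of a group and is defined on all of $\bC[\Gamma]$; but strictly speaking the abstract definition does not supply it, and the paper does not address this point either.
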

\begin{proof}
Let $p \in \mathrm{P}_n^{\delta , r}(A)$ and $u \in \mathrm{U}_n^{\delta, r}(A)$. Then we have 
\begin{align*}
\| \pi_n(p)^2-\pi_n(p) \| & \leq \| \pi_n(p)^2-\pi_n(p^2) \| + \| \pi_n(p^2-p) \| \\ 
& \leq \varepsilon \| p\|^2 +  \| \pi\|_{\mathrm{cb}} \| p^2-p\| \\
&\leq \varepsilon (1+\delta) + (1+\varepsilon /2) \delta \\
&\leq \varepsilon + (1+2\varepsilon)\delta ,\\
\| \pi_n(u)^*\pi_n(u) -1 \| & \leq \| \pi_n(u)^*\pi_n(u) -\pi_n(u^*u) \| + \| \pi_n(u^*u -1) \| \\
& \leq \varepsilon \| u^* \| \| u \| + \| \pi \|_{\mathrm{cb}} \delta \\
& \leq \varepsilon (1+\delta)^2 + (1+\varepsilon /2) \delta \\
& \leq \varepsilon + (1+3\varepsilon)\delta .
\end{align*}
Similarly we also have $\| \pi_n(u)\pi_n(u)^* -1 \| \leq \varepsilon + (1+3\varepsilon)\delta$.
\end{proof}
\begin{rmk}
For possibly non-unital filtered C*-algebras $A$, $B$ and a $(\varepsilon ,r , \kappa)$-$\ast$-homomorphism $\pi$, it is straightforward to see that the unitization $\ast$-homomorphism $\pi^+ \colon A^+ \to B^+$ defined by $\pi^+|_A=\pi$ and $\pi^+(1_A)=1_B$ is also a complete $(\varepsilon , r, \kappa)$-$\ast$-homomorphism. Therefore $\pi$ induces a homomorphism of quantitative $\K$-groups by Proposition \ref{prp:qrepK}.
\end{rmk}

\subsection{Quantitative index pairing}
Let $\Gamma$ be a finitely generated discrete group and let $e \in \cG_\Gamma \subset \Gamma$ be a finite set generating $\Gamma $. We assume that $\gamma^{-1} \in \cG_\Gamma $ for $\gamma \in \cG_\Gamma $. Let $l _\Gamma$ denote the word length function on $\Gamma$ with respect to $\cG_\Gamma $. Since $l_\Gamma $ satisfies $l_\Gamma (\gamma \cdot \gamma ') \leq l_\Gamma  (\gamma )+ l_\Gamma (\gamma ')$, it gives the structure of a filtered C*-algebra on the group C*-algebra $C^*\Gamma$, that is, 
\[C^*(\Gamma)_r :=\bigg\{ \sum _{\ell (\gamma) \leq r } c_\gamma u_\gamma \in \bC[\Gamma] \bigg\} \subset C^*(\Gamma) \]
forms an increasing sequence of closed subspaces of $C^*(\Gamma)$ such that  $C^*(\Gamma)_r \cdot C^*(\Gamma)_{r'} \subset C^*(\Gamma)_{r+r'}$ and $\bigcup C^*(\Gamma )_r = \bC[\Gamma]$ is dense in $C^*(\Gamma)$. 
For $r \in \bZ_{>0}$, we write $\cG_\Gamma^r$ for the set $\{ \gamma _1 \cdots \gamma_r \mid \gamma _i \in \cG_\Gamma \}$.

For a $(\varepsilon , \cG_\Gamma^r)$-representation $\pi$ of $\Gamma$ on $P$, we use the same letter $\pi$ for the linear map $C^*(\Gamma)_r \to B:=\bB(P)$ given by $\pi(\sum c_\gamma u_{\gamma}):= \sum c_\gamma \pi(\gamma )$. We say that $\pi$ is \emph{self-adjoint} if $\pi(\gamma^{-1})=\pi(\gamma)^*$ holds. Note that for any $(\varepsilon , \cG_\Gamma^r)$-representation $\pi$ there is a self-adjoint $(70\varepsilon , \cG_\Gamma^r)$-representation $\breve{\pi}$ with $d(\pi, \breve{\pi}) < 20\varepsilon$ (\cite[Proposition 5.6]{mathOA150306170}).
\begin{prp}\label{prp:almhom}
Let $\pi$ be a self-adjoint $(\varepsilon , \cG_\Gamma^r)$-representation of $\Gamma$ on $P$.  
 
Then $\pi $ is a unital complete $(|\cG_\Gamma^r|^2 \varepsilon , r, 1) $-$\ast$-homomorphism.
\end{prp}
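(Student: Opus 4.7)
The plan is to reduce the claim to a direct Fourier-coefficient computation, so the core task is a uniform bound on matrix coefficients of elements of $C^*(\Gamma)_r \otimes \bM_n$. Any $a \in C^*(\Gamma)_r \otimes \bM_n$ admits a unique expansion $a = \sum_{\gamma \in \cG_\Gamma^r} a_\gamma \otimes u_\gamma$ with $a_\gamma \in \bM_n$, and by $\bC$-linearity $\pi_n(a) = \sum_\gamma a_\gamma \otimes \pi(\gamma)$. The first step is to show $\|a_\gamma\|_{\bM_n} \leq \|a\|$ for every $\gamma$: apply the $\ast$-homomorphism $\lambda \otimes \id_{\bM_n} \colon C^*(\Gamma) \otimes \bM_n \to \bB(\ell^2(\Gamma) \otimes \bC^n)$, where $\lambda$ is the left regular representation (the tensor $\lambda \otimes \id_{\bM_n}$ is automatically norm-decreasing because $\bM_n$ is finite-dimensional), to the vector $\delta_e \otimes v$ for $v \in \bC^n$. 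The image is $\sum_\gamma \delta_\gamma \otimes a_\gamma v$, whose squared norm $\sum_\gamma \|a_\gamma v\|^2$ is bounded by $\|a\|^2 \|v\|^2$; the claimed coefficient estimate follows by taking the supremum in $v$.

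Given this, for $a, b \in C^*(\Gamma)_r \otimes \bM_n$ with Fourier expansions indexed by $\gamma, \delta \in \cG_\Gamma^r$, expand
\[
\pi_n(ab) - \pi_n(a)\pi_n(b) = \sum_{\gamma, \delta \in \cG_\Gamma^r} a_\gamma b_\delta \otimes \bigl(\pi(\gamma\delta) - \pi(\gamma)\pi(\delta)\bigr).
\]
Each inner factor has norm $<\varepsilon$ by the $(\varepsilon,\cG_\Gamma^r)$-quasi-representation hypothesis (note $\gamma, \delta$ both lie in $\cG_\Gamma^r$, which is exactly what the definition requires, even though $\gamma\delta$ may have length up to $2r$). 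Each coefficient satisfies $\|a_\gamma b_\delta\| \leq \|a_\gamma\|\|b_\delta\| \leq \|a\|\|b\|$ by the coefficient estimate of the first step. Since the double sum has at most $|\cG_\Gamma^r|^2$ non-zero terms, the triangle inequality produces the bound $|\cG_\Gamma^r|^2\,\varepsilon\,\|a\|\|b\|$, which is the quasi-multiplicativity required of a complete $(|\cG_\Gamma^r|^2\varepsilon, r, 1)$-$\ast$-homomorphism.

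The remaining requirements are immediate: unitality holds because $\pi(e)=1$ by the definition of quasi-representation, and $\pi(a^*) = \pi(a)^*$ follows from the self-adjointness assumption $\pi(\gamma^{-1}) = \pi(\gamma)^*$, which makes the linear extension intertwine the involutions on $\bC[\Gamma]$ and $\bB(P)$. The target $\bB(P)$ is trivially filtered, so the constant $\kappa = 1$ is automatic. The only slightly delicate point is the Fourier coefficient bound, which relies on the (standard) fact that the map $\lambda\otimes\id_{\bM_n}$ on the maximal group C*-algebra does not increase norms; everything else is a routine norm estimate, and no genuine obstacle is anticipated.
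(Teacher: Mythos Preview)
Your proposal is correct and follows essentially the same approach as the paper: expand $\pi_n(ab)-\pi_n(a)\pi_n(b)$ as a double sum over $\cG_\Gamma^r$, bound each term using the quasi-representation hypothesis and the Fourier coefficient estimate $\|a_\gamma\|\le\|a\|$, and count terms. The only minor difference is in how you obtain the coefficient estimate: the paper uses the tracial state $\tau$ on $C^*\Gamma$ (noting $a_\gamma=(\id_{\bM_n}\otimes\tau)(a\,u_{\gamma^{-1}})$ and that $\tau$ is contractive), whereas you use the left regular representation applied to $\delta_e\otimes v$ and extract each coefficient from the resulting orthogonal sum. Both arguments are standard and equally direct.
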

\begin{proof}
Let $x = \sum_{\gamma \in \cG_\Gamma ^r } a_\gamma u_\gamma$ and $y=\sum_{\gamma \in \cG_\Gamma ^r } b_\gamma u_\gamma$ be elements in $C^*(\Gamma)_r  \otimes \bM_n$, where $a_\gamma$ and $b_\gamma$ are elements of $ \bM_n$. We remark that $\| a_\gamma \| \leq \| x\|$ and $\| b_\gamma \| \leq \| y \|$ for any $\gamma \in \Gamma$. Indeed, let $\tau \colon  C^*\Gamma \to \bC$ denote the tracial state given by $\tau (\sum c_\gamma u_\gamma):=c_e$. Then we have
\[ \| a_\gamma \| = \| (\id _{\bM_n} \otimes \tau )(xu_{\gamma ^{-1}})\| \leq \|x u_{\gamma^{-1}} \| = \| x\|. \]

Now we obtain that
\begin{align*}
\| \pi _n (x)\pi_n(y)-\pi _n  (xy) \| &= \Big\| \sum_{\gamma, \gamma' \in \cG_\Gamma ^r } a_\gamma b_{\gamma '} (\pi(\gamma)\pi(\gamma')-\pi(\gamma\gamma')) \Big\|   \\
& \leq \sum _{\gamma , \gamma' \in \cG_\Gamma ^r } \| a_\gamma \|  \cdot \| b_{\gamma'} \|  \cdot \|\pi(\gamma)\pi(\gamma')-\pi(\gamma\gamma')\|  \\
& \leq \Big( \sum_{\gamma\in \cG_\Gamma ^r } \| a_\gamma \|  \Big) \Big( \sum_{\gamma' \in \cG_\Gamma ^r } \| b_{\gamma'}\| \Big) \varepsilon \\
& \leq |\cG_\Gamma^r| ^2  \|x\| \| y\| \varepsilon. \qedhere
\end{align*}
\end{proof}

Let $X$ be a finite CW-complex and let $\Gamma :=\pi_1(X)$ (note that $\Gamma$ is finitely presented). Let $\cU := \{ U_{\mu}\} _{\mu \in I}$ be a good cover of $X$ and let $\{ \gamma_{\mu \nu}\}_{\mu,\nu \in I}$ be a flat transition function of the universal covering $\tilde{X} \to X$. Let $\cG_\Gamma:=\{ \gamma_{\mu \nu} \}_{\mu, \nu \in I}$. 
Let $\bv= \{ v_{\mu \nu} \} $ be a $\mathrm{U}(P)$-valued \v{C}ech $1$-cocycle. As are mentioned in (\ref{form:projMF}) and Remark \ref{rmk:MF}, the projections 
\begin{align*}
 P_\cV &:= \sum_{\mu,\nu \in I} \eta_\mu \eta_\nu \otimes u_{\gamma _{\mu\nu}} \otimes e_{\mu\nu} \in C(X) \otimes (C^*\Gamma)_1 \otimes \bM_I , \\ 
 p_\bv &:= \sum_{\mu,\nu \in I} \eta_\mu \eta_\nu v_{\mu \nu} \otimes e_{\mu\nu} \in C(X) \otimes B \otimes \bM_I ,
\end{align*}
have the support isomorphic to $\cV$ and $E_\bv$ respectively.
\begin{prp}\label{lem:quantBC}
There is a group homomorphism
\[ \alpha_\Gamma^{\mathrm{alg}} \colon \K_0(X) \to  \K^{0,3}_0(\bK(\sH) \otimes C^*(\Gamma))\]
such that $\iota_{C^*(\Gamma)} (\alpha_\Gamma^{\mathrm{alg}}(\xi))=\alpha_\Gamma (\xi)\in \K_0(C^*(\Gamma))$ for any $\xi \in \K_0(X)$.
\end{prp}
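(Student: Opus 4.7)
The plan is to realize the Kasparov product $\alpha_\Gamma(\xi) = [P_\cV]\otimes_{C(X)}\xi$ at the quantitative K-theory level. The starting observation is that the Mishchenko projection $P_\cV \in C(X) \otimes (C^*\Gamma)_1 \otimes \bM_I$ is an actual projection of filtration $1$ in the $C^*\Gamma$-direction, its non-trivial entries being multiples of the generators $u_{\gamma_{\mu\nu}}$ with $\gamma_{\mu\nu} \in \cG_\Gamma$. I would represent $\xi \in \K_0(X) = \KK(C(X),\bC)$ by a graded Fredholm module $(\sH, \pi, F)$ normalized so that $F = F^*$ is odd, $F^2 = 1$ exactly (which can be arranged by absorbing a degenerate module), and $[F, \pi(a)] \in \bK(\sH)$ for all $a \in C(X)$. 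Then $\tilde P := (\pi \otimes \id \otimes \id)(P_\cV) \in \bB(\sH) \otimes C^*\Gamma \otimes \bM_I$ is a genuine projection of filtration $1$, and its commutator with $F \otimes 1 \otimes 1$ lies in $\bK(\sH) \otimes (C^*\Gamma)_1 \otimes \bM_I$.

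The Kasparov product is represented by the Fredholm operator $T := \tilde P_-(F_+ \otimes 1 \otimes 1)\tilde P_+$, whose index in $\K_0(C^*\Gamma)$ equals $\alpha_\Gamma(\xi)$. The central computation, using $F^2 = 1$ and the commutator estimate, is
\[
\tilde P F \tilde P F \tilde P \;=\; \tilde P F^2 \tilde P + \tilde P F [\tilde P, F]\tilde P \;=\; \tilde P + \tilde P F[\tilde P, F]\tilde P,
\]
so both $\tilde P_+ - T^*T$ and $\tilde P_- - TT^*$ lie in $\bK(\sH) \otimes (C^*\Gamma)_3 \otimes \bM_I$; the filtration $3 = 1 + 1 + 1$ arises from three copies of $\tilde P$ surrounding one filtration-$1$ commutator. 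To pass from the almost-partial-isometry $T$ to exact projections of filtration $3$, I would stabilize $T$ by adjoining a trivial finite-dimensional summand and modifying $T$ by an explicit finite-rank operator absorbing its compact defects, producing a genuine partial isometry $\bar T$ for which $\tilde P_+ - \bar T^*\bar T$ and $\tilde P_- - \bar T \bar T^*$ are actual projections still lying in $\bK(\sH) \otimes (C^*\Gamma)_3 \otimes \bM_I$. Setting
\[
\alpha_\Gamma^{\mathrm{alg}}(\xi) := [\tilde P_+ - \bar T^*\bar T] - [\tilde P_- - \bar T \bar T^*] \in \K_0^{0,3}(\bK(\sH) \otimes C^*\Gamma),
\]
the image under $\iota_{C^*\Gamma}$ is the Fredholm index of $T$, which equals $\alpha_\Gamma(\xi)$ by definition of the Kasparov product.

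The main obstacle is the stabilization step: producing genuine projections (as required for $\varepsilon = 0$) while staying within filtration $3$. The standard polar decomposition via $(T^*T)^{-1/2}$ would have an unbounded Taylor expansion and destroy the filtration control, so instead one must exploit that the compact defects $\tilde P_\pm - T^\sharp T^\sharp$ have finite-rank essential range by Fredholmness and perform an explicit finite-rank modification. Verifying that $\alpha_\Gamma^{\mathrm{alg}}$ is a well-defined group homomorphism on $\K_0(X)$ then amounts to running the same argument parametrically along operator homotopies and compact perturbations of the Fredholm module, whose effect on the resulting quantitative K-theory class can be tracked within filtration $3$.
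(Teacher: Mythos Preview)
Your approach is correct up to the point you yourself flag as the obstacle, but the proposed resolution of that obstacle does not work. The claim that the compact defects $\tilde P_\pm - T^{\sharp}T$ have ``finite-rank essential range by Fredholmness'' is valid over $\bC$ but fails over a nontrivial coefficient C*-algebra such as $C^*\Gamma$: a Mishchenko--Fomenko Fredholm operator need not have finite-rank kernel or cokernel, only finitely generated projective ones, and even when a finite-rank-in-$\sH$ perturbation exists it is an element of $\bK(\sH)\otimes C^*\Gamma\otimes\bM_I$ whose $C^*\Gamma$-component has no a priori bound on its word-length filtration. So there is no mechanism producing a genuine partial isometry $\bar T$ with $\tilde P_\pm - \bar T^{\sharp}\bar T$ simultaneously a projection and of filtration $\leq 3$.

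The paper sidesteps this analytic step by an algebraic trick in the quasi-homomorphism picture. Representing $\xi$ by $(\varphi_1,\varphi_2)\colon C(X)\to\bB(\sH)\triangleright\bK(\sH)$, the elements $P_i:=\varphi_i(P_\cV)$ are already genuine projections of filtration $1$ with $P_1-P_2$ compact. One then conjugates $\diag(P_1,1_I-P_2)$ by the filtration-$1$ self-adjoint unitary
\[
V=\begin{pmatrix}P_2 & 1_I-P_2\\ 1_I-P_2 & P_2\end{pmatrix},
\]
which satisfies $V\diag(P_2,1_I-P_2)V=\diag(1_I,0)$; hence $V\diag(P_1,1_I-P_2)V$ is a genuine projection lying in $\diag(1_I,0)+\bM_2\bigl(\bK(\sH)\otimes C^*(\Gamma)_3\otimes\bM_I\bigr)$, and one defines $\alpha_\Gamma^{\mathrm{alg}}(\xi):=[V\diag(P_1,1_I-P_2)V,\,|I|]_{0,3}$. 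Your Fredholm module with $F^2=1$ converts directly into such a quasi-homomorphism via $\varphi_1=\pi_+$ and $\varphi_2=F_+^{*}\pi_-F_+$, after which this argument applies verbatim; the missing ingredient in your proposal is precisely this conjugation identity, which replaces the analytic perturbation you could not control.
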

\begin{proof}
Let $(\varphi _1 , \varphi_2 ) \colon C(X) \to \bB(\sH) \triangleright \bK(\sH)$ be a quasi-homomorphism representing $\xi \in \KK(C(X) , \bC)$. Let $P_1:= \varphi_1(P_\cV)$ and $P_2:=\varphi_2(P_\cV)$. Set
\[ V:=\begin{pmatrix}P_2 & 1_I-P_2 \\ 1_I-P_2 & P_2 \end{pmatrix} \in \bM_{2}(\bB(\sH) \otimes C^*(\Gamma)_1 \otimes \bM_I). \]
Then $V$ is a self-adjoint unitary and $V \diag (P_2,1_I-P_2) V = \diag (1_I,0)$ holds. This implies that
\[ V \begin{pmatrix}P_1 & 0 \\ 0 & 1_I-P_2 \end{pmatrix}V  - \begin{pmatrix}1_I & 0 \\ 0 & 0 \end{pmatrix}  \in  \bM_{2}(\bK(\sH) \otimes C^*(\Gamma)_3 \otimes \bM_I),  \]
that is, the pair $(V\diag (P_1, 1_I -P_2)V , \diag (1_I , 0))$ determines a difference class $[V\diag (P_1, 1_I-P_2)v , \diag (1_I , 0)] \in \K_0(\bK(\sH) \otimes C^*(\Gamma))$. Moreover we have the equality of difference classes as
\begin{align*}
[P_1,P_2] &= \bigg[\begin{pmatrix}P_1 & 0 \\ 0 & 1_I -P_2 \end{pmatrix} _{\textstyle ,} \begin{pmatrix}P_2 & 0 \\ 0 & 1_I-P_2 \end{pmatrix}\bigg] \\
& = \bigg[{V\begin{pmatrix}P_1 & 0 \\ 0 & 1_I-P_2 \end{pmatrix}V}_{\textstyle ,} \begin{pmatrix} 1_I & 0 \\ 0 & 0 \end{pmatrix}\bigg] \in \K_0(\bK \otimes C^*\Gamma) .
\end{align*}

Now we define the map $\alpha_\Gamma ^{\mathrm{alg}}$ as
\[ \alpha _\Gamma^{\mathrm{alg}} (\xi):= [ V \diag (P_1, 1_I-P_2) V , |I| ]_{0,3}. \]
It is well-defined independent of the choice of a representative $(\varphi_1, \varphi_2)$ because a homotopy of quasi-homomorphisms gives rise to a homotopy of $(0,3)$-projections $V\diag (P_1,1_n-P_2)V$. The above discussion means that this $\alpha_\Gamma ^{\mathrm{alg}}$ satisfies $\iota_{C^*(\Gamma)} \circ \alpha_\Gamma ^{\mathrm{alg}}=\alpha_\Gamma$. 
\end{proof}
\begin{defn}
We call the map $\alpha^{\mathrm{alg}}_\Gamma$ as in Proposition \ref{lem:quantBC} the \emph{algebraic Mishchenko--Fomenko higher index}. For $r > 3$, we call the composition of $\alpha_\Gamma ^{\mathrm{alg}}$ with $\iota_{0,3}^{\varepsilon, r} \colon \K_*^{0,3}(C^*\Gamma) \to \K_*^{\delta ,r}(C^*\Gamma)$ the quantitative higher index and write it as $\alpha_{\Gamma}^{\delta , r}$. 
\end{defn}

Now we reformulate \cite[Theorem 3.2]{MR2982445} in the framework of quantitative K-theory.
\begin{thm}\label{thm:quant}
There is a constant $C_1 = C_1(\cU)$ depending only on $\cU$ that the following holds: For $ 0< \varepsilon < (4C_1)^{-1}$, $r>3$, $\pi \in \qRep_P^{\varepsilon , \cG^3_\Gamma}(\Gamma)$ and $\xi \in \K_0(X)$, we have
\[ \iota_B \circ (\id_{\bK(\sH)} \otimes \pi)_\sharp (\alpha_{\Gamma}^{\mathrm{alg}}(\xi))= \langle [\beta (\pi)] , \xi \rangle \in \K_0(B). \]
\end{thm}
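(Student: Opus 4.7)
The plan is to unpack both sides using concrete representatives and then match them after passage from quantitative to ordinary $\K$-theory. Pick a quasi-homomorphism $(\varphi_1, \varphi_2) \colon C(X) \to \bB(\sH) \triangleright \bK(\sH)$ representing $\xi \in \KK(C(X), \bC)$, and set $P_i := (\varphi_i \otimes \id_{C^*\Gamma} \otimes \id_{\bM_I})(P_\cV)$ for $i = 1, 2$. By the construction in the proof of Proposition \ref{lem:quantBC},
\[ \alpha_\Gamma^{\mathrm{alg}}(\xi) = [V \diag(P_1, 1_I - P_2) V, |I|]_{0,3}, \]
with $V$ the self-adjoint unitary of that proof. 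After replacing $\pi$ by the nearby self-adjoint quasi-representation $\breve{\pi}$ (losing a universal factor), Proposition \ref{prp:almhom} and Lemma \ref{lem:QDqRep}, applied to the nuclear and quasi-diagonal algebra $\bK(\sH)$, show that $\id_{\bK(\sH)} \otimes \pi$ is a complete $(|\cG_\Gamma^3|^2 \varepsilon, 3, 1)$-$\ast$-homomorphism. Proposition \ref{prp:qrepK} then produces
\[ (\id \otimes \pi)_\sharp \alpha_\Gamma^{\mathrm{alg}}(\xi) = [\pi(V) \diag(\pi(P_1), 1_I - \pi(P_2)) \pi(V), |I|]_{\delta, 3} \]
for a $\delta$ bounded linearly in $\varepsilon$ by the constants attached to $\cU$.

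The second step matches the quantitative projection $\pi(P_i) = \sum \varphi_i(\eta_\mu \eta_\nu) \otimes \pi(\gamma_{\mu\nu}) \otimes e_{\mu\nu}$ with the honest projection $\varphi_i(p_{\beta(\pi)})$ associated to the bundle produced by the monodromy correspondence. By Remark \ref{rmk:beta}, the \v{C}ech cocycle of $\beta(\pi)$ satisfies $\|\beta(\pi)_{\mu\nu}(x) - \pi(\gamma_{\mu\nu})\| < 4\varepsilon$ uniformly, so the defining formula of Remark \ref{rmk:MF} gives
\[ \|\varphi_i(p_{\beta(\pi)}) - \pi(P_i)\| \leq 4 |I|^2 \varepsilon. \]
For $\varepsilon$ small enough the straight-line homotopy $t \mapsto (1-t)\pi(P_i) + t \varphi_i(p_{\beta(\pi)})$ remains in $\mathrm{P}^{\delta', r}$ and deforms after $\iota_B$ to an honest projection path in $\bK(\sH) \otimes \bB(P)$. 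Together with a parallel small correction replacing $\pi(V)$ by the self-adjoint unitary $V'$ built from $\varphi_2(p_{\beta(\pi)})$, the image under $\iota_B$ becomes
\[ \iota_B \circ (\id \otimes \pi)_\sharp \alpha_\Gamma^{\mathrm{alg}}(\xi) = [V' \diag(\varphi_1(p_{\beta(\pi)}), 1_I - \varphi_2(p_{\beta(\pi)})) V', |I|] \in \K_0(\bK(\sH) \otimes \bB(P)). \]

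For the third step, the algebraic identity invoked in the proof of Proposition \ref{lem:quantBC} shows that the class on the right equals the Kasparov difference class $[\varphi_1(p_{\beta(\pi)}), \varphi_2(p_{\beta(\pi)})]$, which is well-defined because $\varphi_1 - \varphi_2$ takes values in $\bK(\sH)$. By the very definition of the index pairing via a quasi-homomorphism, this difference class is $\langle [\beta(\pi)], \xi \rangle$, up to the canonical Morita identification $\K_0(\bK(\sH) \otimes \bB(P)) \cong \K_0(\bB(P))$. The principal technical obstacle is bookkeeping: one must choose the universal constant $C_1(\cU)$ to absorb the combinatorial factors $|\cG_\Gamma^3|^2$ from Proposition \ref{prp:almhom}, the factor $4 |I|^2$ from the Remark \ref{rmk:beta} estimate, and the factor $70$ coming from the passage to a self-adjoint quasi-representation, so that for $\varepsilon < (4C_1)^{-1}$ every intermediate quantitative projection lies in the regime where the linear homotopies used above are valid. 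Once this is arranged, the three identifications chain into the desired equality.
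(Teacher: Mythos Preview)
Your overall strategy matches the paper's proof: compute $(\id\otimes\pi)_\sharp$ on the explicit representative from Proposition~\ref{lem:quantBC}, replace the quasi-representation values $\pi(\gamma_{\mu\nu})$ by the cocycle $\beta(\pi)_{\mu\nu}$ via the estimate of Remark~\ref{rmk:beta}, and then undo the conjugation by $V$ to land on the difference class $[\varphi_1(p_{\beta(\pi)}),\varphi_2(p_{\beta(\pi)})]$. The paper packages these steps as Lemma~\ref{lem:quant1} and Lemma~\ref{lem:quant2} with the constant $C_1=\max\{15|\cG_\Gamma^3|^2,\,200|I|^2\}$.

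There is, however, a genuine gap in your first step. Proposition~\ref{prp:qrepK} only tells you that
\[
(\id\otimes\pi)_\sharp[V\diag(P_1,1_I-P_2)V,\,|I|]_{0,3}
  =\bigl[(\id\otimes\pi)\bigl(V\diag(P_1,1_I-P_2)V\bigr),\,|I|\bigr]_{\delta,3},
\]
i.e.\ the \emph{unfactored} form; it does not give the product $\pi(V)\diag(\pi(P_1),1_I-\pi(P_2))\pi(V)$. Passing from one to the other requires a separate norm estimate coming from almost multiplicativity, and this is precisely the content of Lemma~\ref{lem:quant1}. You cannot skip it.

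More importantly, to carry out that estimate you need $\id\otimes\pi$ to be an almost $\ast$-homomorphism on an algebra that actually contains the factors $V$ and $P_i$. These elements lie in $\bB(\sH)\otimes C^*(\Gamma)_1\otimes\bM_I$, not in $\bK(\sH)\otimes C^*(\Gamma)\otimes\bM_I$, so applying Lemma~\ref{lem:QDqRep} to $\bK(\sH)$ alone is not enough. The paper instead works with the intermediate algebra $\cB:=\bK(\sH)+\varphi_1(C(X))$, which is still nuclear and quasi-diagonal and does contain $P_i$ and $V$; Lemma~\ref{lem:QDqRep} applied to $\cB$ then yields $\id_\cB\otimes\pi$ as a complete $(|\cG_\Gamma^3|^2\varepsilon,3,1)$-$\ast$-homomorphism, after which the factorization estimate goes through. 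You could alternatively expand everything as a finite sum of elementary tensors and estimate directly, but that is messier and you have not done it either. Once this step is repaired, the remainder of your argument is fine and coincides with the paper's.
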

\begin{rmk}\label{rmk:quantHS}
Here we discuss the relation of Theorem \ref{thm:quant} with the K-theory of group C*-algebra. 
For any $(\delta, r)$ with $|\cG_\Gamma^r|^2\varepsilon + (1+3|\cG_\Gamma^r|^2\varepsilon)\delta < 1/4$, $ (\id_{\bK(\sH)} \otimes \pi)_\sharp $ is defined on $\K_0^{\delta, r}(\bK(\sH) \otimes C^*\Gamma)$. Hence the left hand side of Theorem \ref{thm:quant} is written as $\iota_B \circ (\id_{\bK(\sH)} \otimes \pi)_\sharp  \circ \alpha_{\Gamma}^{\delta, r}(\xi)$. Let $\xi \in \K_*(X)$ be a K-homology class satisfying $\alpha_\Gamma(\xi)=0$. Then there is $(\delta, r)$ with $\delta<1/4$ such that $\alpha_\Gamma^{\delta, r}(\xi) =0$, and hence $\iota_B \circ (\id_{\bK(\sH)} \otimes \pi)_\sharp  \circ \alpha_{\Gamma}^{\delta, r}(\xi)=0$ for any $\pi \in \qRep_P^{\varepsilon , \cG_\Gamma ^r} (\Gamma)$ with $\varepsilon < \min \{  \frac{1}{4C_1} , \frac{1/4 - \delta}{|\cG_\Gamma ^r |^2(1+3\varepsilon)} \} $. By Theorem \ref{thm:quant} we obtain $\langle [\beta (\pi)] , \xi \rangle =0 $. This is a quantitative version of Hanke--Schick theorem \cite[Theorem 3.9]{MR3289846}.
\end{rmk}

For the proof of Theorem \ref{thm:quant}, first of all let $(\varphi _1 , \varphi_2) \colon C(X) \to \bB(\sH) \triangleright \bK(\sH)$ be a quasi-homomorphism representing $\xi \in \K_0(X)$ and let $\cB:=\bK(\sH) + \varphi_1(C(X))$. Note that $\cB$ is nuclear and quasi-diagonal.

Let $\pi$ denote a $(\cG_\Gamma, \varepsilon)$-representation of $\Gamma$ and let $\bv:= \beta (\pi)$. Let $P_1$, $P_2$ and $V$ be as in the proof of Proposition \ref{lem:quantBC}.
Set
\[ p_\pi := (\id_{C(X) \otimes \bM_I} \otimes \pi)(P_\cV) \in C(X) \otimes B \otimes \bM_I. \]
Moreover, let $p_{\pi , i}:= (\varphi_i \otimes \id )(p_{\pi})$, $p_{\bv , i}:= (\varphi_i \otimes \id)(p_\bv) \in \cB \otimes B \otimes \bM_I$ (for $i=1,2$) and 
\[ v_{\pi} := \begin{pmatrix} p_{\pi , 2} & 1_n - p_{\pi , 2} \\ 1_n - p_{\pi , 2} & p_{\pi , 2} \end{pmatrix}_{\textstyle ,} \ \  \ v_\bv:=\begin{pmatrix} p_{\bv , 2} & 1_n - p_{\bv , 2} \\ 1_n - p_{\bv , 2} & p_{\bv , 2} \end{pmatrix} _{\textstyle .} \]

\begin{lem}\label{lem:quant1}
For $0<\varepsilon < (60 |\cG^3_{\Gamma}|^2 )^{-1}$,  both $(\id_\cB \otimes \pi)(V\diag (P_1, 1-P_2)V)$ and $v_\pi \diag (p_{\pi ,1} , 1-p_{\pi , 2})v_\pi$ are $(15|\cG_\Gamma ^3|\varepsilon , 3)$-projections and 
\begin{align*} 
&[ (\id_\cB \otimes \pi)(V\diag (P_1, 1-P_2)V) , |I| ]_{15|\cG_\Gamma^3|\varepsilon ,3} \\
=& [v_\pi \diag (p_{\pi ,1} , 1-p_{\pi , 2})v_\pi , |I| ]_{15|\cG_\Gamma ^3|^3 \varepsilon , 3}
\end{align*}
holds.
\end{lem}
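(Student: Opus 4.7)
The plan is to verify separately that each of the two elements is a $(\delta, 3)$-projection for $\delta = 15|\cG_\Gamma^3|\varepsilon$, and then establish that their classes coincide via a straight-line homotopy after a modest enlargement of the parameter. The central tool is the multiplicativity control on $\id_\cB \otimes \pi$: by Proposition \ref{prp:almhom}, the (self-adjoint adjustment of the) quasi-representation $\pi$ is a complete $(|\cG_\Gamma^3|^2 \varepsilon, 3, 1)$-$\ast$-homomorphism on $C^*(\Gamma)_3$. Since $\cB = \bK(\sH) + \varphi_1(C(X))$ is an extension of a commutative C*-algebra by the compacts, it is both nuclear and quasi-diagonal, so Lemma \ref{lem:QDqRep} upgrades this to a complete almost-$\ast$-homomorphism $\id_\cB \otimes \pi$ on $\cB \otimes C^*(\Gamma)_3$, with the same constants.

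For the first almost projection, I would observe that $V$ and $\diag(P_1, 1-P_2)$ already lie in $\bM_2(\cB \otimes C^*(\Gamma)_1 \otimes \bM_I)$ because $P_2 - P_1 \in \bK(\sH) \otimes C^*(\Gamma)_1 \otimes \bM_I \subset \cB \otimes C^*(\Gamma)_1 \otimes \bM_I$, so $V\diag(P_1, 1-P_2)V$ is an honest projection at filtration level $3$. Applying the complete almost-$\ast$-homomorphism $\id_\cB \otimes \pi$ then yields an almost projection whose $p^2-p$ defect is controlled directly by the multiplicativity defect of $\id_\cB \otimes \pi$ applied to $(V\diag(P_1, 1-P_2)V)^2 = V\diag(P_1, 1-P_2)V$.

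For the second element, I would compute $p_\pi^2 - p_\pi$ by hand, using the partition-of-unity identity $\sum_\nu \eta_\nu^2 = 1$ to collapse one summation; this reduces the defect to $\sum_{\mu,\nu'} \eta_\mu \eta_{\nu'}\, c_{\mu\nu'} \otimes e_{\mu\nu'}$ with $\|c_{\mu\nu'}\| \leq \varepsilon$, giving a norm bound linear in $|I|$ (and hence in $|\cG_\Gamma^3|$). After transporting along $\varphi_i$ and assembling into $v_\pi \diag(p_{\pi,1}, 1-p_{\pi,2}) v_\pi$, the defect of this element is bounded by $15|\cG_\Gamma^3| \varepsilon$ by elementary manipulations: $v_\pi$ is within $O(|\cG_\Gamma^3|\varepsilon)$ of being a self-adjoint unitary, and the threefold product inherits the additive error.

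To identify the two classes I would note that the difference $\pi^{\cB}(VMV) - v_\pi \diag(p_{\pi,1}, 1-p_{\pi,2}) v_\pi$ (writing $\pi^{\cB} = \id_\cB \otimes \pi$ and $M = \diag(P_1,1-P_2)$) telescopes into two pairwise multiplicativity defects of $\pi^{\cB}$, and is therefore bounded in norm by $O(|\cG_\Gamma^3|^2\varepsilon)$. Under the assumption $\varepsilon < (60|\cG_\Gamma^3|^2)^{-1}$ this norm closeness is enough to run the straight-line homotopy $t \mapsto t\pi^{\cB}(VMV) + (1-t) v_\pi \diag(p_{\pi,1}, 1-p_{\pi,2}) v_\pi$ through almost projections at parameter $15|\cG_\Gamma^3|^3\varepsilon$, yielding the claimed equality in the quantitative $\K$-group. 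The main obstacle will be tracking the numerical constants so that the final parameters really match $15|\cG_\Gamma^3|\varepsilon$ and $15|\cG_\Gamma^3|^3\varepsilon$ respectively; conceptually everything is packaged in Proposition \ref{prp:almhom}, but the improvement from the generic $|\cG_\Gamma^3|^2\varepsilon$ bound to the sharper $|\cG_\Gamma^3|\varepsilon$ bound in the first statement requires exploiting the specific structure of $P_\cV$ and the partition of unity rather than the abstract defect estimate.
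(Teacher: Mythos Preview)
Your approach is essentially the same as the paper's: use Lemma~\ref{lem:QDqRep} and Proposition~\ref{prp:almhom} to get that $\id_\cB\otimes\pi$ is a complete $(|\cG_\Gamma^3|^2\varepsilon,3,1)$-$\ast$-homomorphism, apply Proposition~\ref{prp:qrepK} to the honest projection $V\diag(P_1,1-P_2)V$ for the first almost projection, telescope the difference into two multiplicativity defects, and invoke the standard ``close almost-projections give equal classes'' lemma (\cite[Lemma~1.7]{MR3449163}, which is exactly your straight-line homotopy).

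One point where you are doing more work than the paper: the paper never computes $p_\pi^2-p_\pi$ directly or estimates $v_\pi\diag(p_{\pi,1},1-p_{\pi,2})v_\pi$ separately. Instead it observes the single identity
\[
(\id_\cB\otimes\pi)(\varphi_i\otimes\id_{C^*\Gamma})=(\varphi_i\otimes\id_B)(\id_{C(X)}\otimes\pi),
\]
which immediately gives $(\id_\cB\otimes\pi)(P_i)=p_{\pi,i}$ and $(\id_\cB\otimes\pi)(V)=v_\pi$. With this in hand, the second element is literally $\pi^\cB(V)\pi^\cB(M)\pi^\cB(V)$, so both the almost-projection bound for it \emph{and} the norm-closeness to $\pi^\cB(VMV)$ follow from the same two applications of the multiplicativity defect; there is no need for a separate partition-of-unity computation.

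This also explains the constants you were worried about. The exponent in the statement is a typo: the paper's own proof only produces and only uses bounds of order $|\cG_\Gamma^3|^2\varepsilon$ (the difference is $\le 3|\cG_\Gamma^3|^2\varepsilon$, and \cite[Lemma~1.7]{MR3449163} gives the factor $5$, yielding $15|\cG_\Gamma^3|^2\varepsilon$). So your attempt to squeeze out a linear-in-$|\cG_\Gamma^3|$ bound by exploiting the specific form of $P_\cV$ is chasing a misprint; it is not needed for the argument and the paper does not do it.
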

\begin{proof}
By Lemma \ref{lem:QDqRep}, $\id_{\cB} \otimes \pi$ is a $(|\cG_\Gamma^3|^2 \varepsilon , 3 , 1)$-$\ast$-homomorphism. Hence, by Proposition \ref{prp:qrepK}, we have
\[ (\id_\cB \otimes \pi)(V\diag (P_1, 1-P_2)V) \in \mathrm{P}_{|I|}^{|\cG_\Gamma^3|^2 \varepsilon , 3}(\bK(\sH) \otimes B). \]
Moreover, since 
\[ (\id_{\cB} \otimes \pi ) (\varphi_i \otimes \id_{C^*(\Gamma)}) = (\varphi _i \otimes \id_{C^*(\Gamma)})(\id_{C(X)} \otimes \pi )\]
as completely bounded maps, we have $(\id_\cB \otimes \pi)(P_i)=p_{\pi , i}$ for $i=1,2$ and $(\id_\cB \otimes \pi)(V)= v_\pi$. Therefore Proposition \ref{prp:almhom} implies that
\begin{align*}
&\|(\id_\cB \otimes \pi)(V \diag (P_1, 1-P_2) V) - v_\pi \diag (p_{\pi , 1}, 1-p_{\pi, 2}) v_{\pi} \| \\
\leq & \| (\id_\cB \otimes \pi)(V \diag (P_1, 1-P_2) V)- (\id_\cB \otimes \pi)(V \diag (P_1, 1-P_2))(\id_\cB \otimes \pi)( V) \| \\
& + \| (\id_\cB \otimes \pi)(V \diag (P_1, 1-P_2) )v_\pi - (\id_\cB \otimes \pi)(V)(\id_\cB \otimes \pi)( \diag (P_1, 1-P_2) )v_\pi \|  \\
\leq & \| V \diag (P_1 , 1-P_2)\| \cdot \| V \| \cdot |\cG_\Gamma^3|^2 \varepsilon + \| v_\pi \| \cdot  \| \diag (P_1 , 1-P_2)\| \cdot  \| V\| \cdot |\cG_\Gamma^3|^2 \varepsilon  \\
\leq & 3|\cG_\Gamma^3|^2 \varepsilon.
\end{align*}
This shows the lemma by \cite[Lemma 1.7]{MR3449163}, which claims that if $p$ is a $(\varepsilon , r)$-projection and $\| p -q\| <\varepsilon$ then $q$ is a $(5\varepsilon, r)$-projection and $[p]_{5\varepsilon , r}=[q]_{5\varepsilon, r}$.
\end{proof}

\begin{lem}\label{lem:quant2}
For $0<\varepsilon < (800|I|^2)^{-1}$, both $v_\pi \diag (p_{\pi , 1} , 1_I-p_{\pi , 2}) v_\pi$ and $v_\bv \diag (p_{\bv , 1} , 1_I-p_{\bv , 2})v _\bv$ are $(200|I|^2\varepsilon ,3)$-projections and
\begin{align*} 
[v_\pi \diag (p_{\pi , 1} , p_{\pi , 2}) v_\pi , |I|]_{200|I|^2\varepsilon} = [v_\bv \diag (p_{\bv , 1} , p_{\bv , 2}) v_\bv , |I|]_{200|I|^2\varepsilon , r}
\end{align*}
holds.
\end{lem}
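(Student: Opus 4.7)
The plan is to reduce to a direct norm comparison, exploiting the fact that $v_\bv \diag(p_{\bv,1}, 1_I - p_{\bv,2}) v_\bv$ is literally a projection, and then invoke \cite[Lemma 1.7]{MR3449163}.

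First I would observe that since $\beta$ takes values in genuine $\mathrm{U}(P)$-valued \v{C}ech $1$-cocycles (the codomain in (\ref{form:beta})), $p_\bv$ is a true projection in $C(X) \otimes B \otimes \bM_I$, hence so are $p_{\bv,1}$ and $p_{\bv,2}$ as images under the $\ast$-homomorphisms $\varphi_i$. The matrix $v_\bv$ is then a true self-adjoint unitary (one computes $v_\bv^2 = 1$ directly using $p_{\bv,2}^2 = p_{\bv,2}$), and therefore $v_\bv \diag(p_{\bv,1}, 1_I - p_{\bv,2}) v_\bv$ is an honest $(0,3)$-projection. So everything reduces to controlling the distance between the two elements.

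Set $D_\pi := \diag(p_{\pi,1}, 1_I - p_{\pi,2})$ and $D_\bv := \diag(p_{\bv,1}, 1_I - p_{\bv,2})$. Remark \ref{rmk:beta} furnishes the pointwise estimate $\| v_{\mu\nu}(x) - \pi(\gamma_{\mu\nu}) \| < 4\varepsilon$ on $U_{\mu\nu}$; applied to
\[
p_\pi - p_\bv = \sum_{\mu,\nu} \eta_\mu \eta_\nu \bigl(\pi(\gamma_{\mu\nu}) - v_{\mu\nu}\bigr) \otimes e_{\mu\nu},
\]
together with Cauchy--Schwarz $\bigl(\sum_\mu \eta_\mu(x)\bigr)^2 \leq |I|\sum_\mu \eta_\mu(x)^2 = |I|$, this yields $\|p_\pi - p_\bv\| \leq 4|I|\varepsilon$. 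Since the $\varphi_i$ are norm-nonincreasing we get $\|p_{\pi,i} - p_{\bv,i}\| \leq 4|I|\varepsilon$, and the block form of $v_\pi, v_\bv$ yields $\|v_\pi - v_\bv\| \leq 2\|p_{\pi,2} - p_{\bv,2}\| \leq 8|I|\varepsilon$. Chaining these via the triangle inequality
\[
\|v_\pi D_\pi v_\pi - v_\bv D_\bv v_\bv\| \leq \|v_\pi\|^2\|D_\pi - D_\bv\| + \bigl(\|v_\pi\|\|D_\pi\| + \|v_\bv\|\|D_\bv\|\bigr)\|v_\pi - v_\bv\|,
\]
with the crude bounds $\|v_\pi\|, \|D_\pi\| \leq 1 + O(|I|\varepsilon)$, delivers an overall estimate of order $C|I|\varepsilon$ for a small absolute $C$, comfortably below $40|I|^2\varepsilon$ for $\varepsilon < (800|I|^2)^{-1}$.

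With this norm bound in hand and the $\bv$-side being an exact projection, \cite[Lemma 1.7]{MR3449163} immediately promotes it to the conclusion that both elements are $(200|I|^2\varepsilon, 3)$-projections and that their classes agree in $\K_0^{200|I|^2\varepsilon,3}$. The only real obstacle is careful bookkeeping of constants (the $200|I|^2$ slack in the statement is generous for the $O(|I|\varepsilon)$-size error) and confirmation of the filtration degree $3$, which is automatic because $p_\bv$ lives in $C(X) \otimes (C^*\Gamma)_1 \otimes \bM_I$ and the complete almost $\ast$-homomorphism $\pi$ of Proposition \ref{prp:almhom} has $\kappa = 1$.
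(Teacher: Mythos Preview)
Your argument is correct and follows essentially the same route as the paper's proof: bound $\|p_\pi - p_\bv\|$ via Remark~\ref{rmk:beta}, propagate to $\|v_\pi - v_\bv\|$ and then to the conjugated diagonals, and conclude by \cite[Lemma 1.7]{MR3449163} using that the $\bv$-side is an exact projection. Your Cauchy--Schwarz step even gives the sharper bound $4|I|\varepsilon$ in place of the paper's $4|I|^2\varepsilon$; the only slip is the closing remark (it is $P_\cV$, not $p_\bv$, that lives in $C(X)\otimes (C^*\Gamma)_1\otimes\bM_I$), but since the target $B=\bB(P)$ carries the trivial filtration the degree-$3$ claim is automatic anyway.
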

\begin{proof}
As is reminded in Remark \ref{rmk:beta}, the \v{C}ech $1$-cocycle $\bv =\beta(\pi)$ satisfies $\| v_{\mu\nu}(x) - \pi(\gamma_{\mu \nu}) \| < 4 \varepsilon$. Then we have 
\begin{align}
\begin{split}
\| p_\pi(x) - p_\bv(x) \| =& \Big\| \sum_{\mu,\nu} \eta_\mu(x) \eta_\nu(x)(\pi(\gamma _{\mu \nu}) - v_{\mu \nu }(x)) \otimes e_{\mu \nu} \Big\| \\
\leq & 4|I|^2 \varepsilon. 
\end{split}\label{form:ppipv}
\end{align}
This implies that $\| p_{\pi , i} - p_{\bv , i} \| =\| \varphi_i(p_{\pi} - p_\bv)\| \leq 4|I|^2\varepsilon$ and hence 
\[ \| v_{\pi} - v_{\bv} \| = \bigg\| \begin{pmatrix} p_{\pi , 2} - p_{\bv , 2}  & p_{\bv , 2} - p_{\pi , 2}  \\ p_{\bv , 2} - p_{\pi , 2}  & p_{\pi , 2} - p_{\bv , 2}  \end{pmatrix} \bigg\| \leq	2 \cdot 4|I|^2\varepsilon =8|I|^2\varepsilon. \]
Therefore we get
\begin{align*} 
&\| v_\pi \diag ( p_{\pi , 1} , 1_I- p_{\pi , 2})v_\pi - v_\bv \diag (p_{\bv , 1}, 1_I-p_{\bv , 2}) v_\bv \| \\ 
\leq & \| (v_\pi- v_\bv) \diag( p_{\bv , 1} , 1_I-p_{\bv , 2}) v_\bv \| + \| v_\pi \diag (p_{\bv , 1} ,1_I-p_{\bv , 2}) (v_\bv - v_{\pi}) \|\\
&+ \| v_\pi  \diag (p_{\pi , 1} - p_{\bv , 1}, p_{\bv , 2}- p_{\pi , 2}) v_\pi \| \\
\leq & \| v_\pi - v_\bv \| + \| v_\pi \| \cdot \| v_{\pi} - v_\bv \| + \| v_{\pi}\|^2 \max \{ \| p_{\pi , 1} - p_{\bv, 1} \| , \| p_{\pi , 2} - p_{\bv , 2} \|  \} \\
 \leq & 8|I|^2\varepsilon +2 \cdot 8|I|^2\varepsilon + 2^2 \cdot 4|I|^2\varepsilon =40|I|^2\varepsilon.   
\end{align*}
Here we use the fact $\| p_{\bv , i} \| =1$, $\| v_{\bv} \|=1$ and $\| v_\pi \| \leq 2$, which follows from $\| v_{\pi}^2-1_{2I} \| \leq 4|I|^2\varepsilon \leq 1$.
Now \cite[Lemma 1.7]{MR3449163} concludes the proof since $v_\bv \diag (p_{\bv , 1}, 1_I-p_{\bv , 2}) v_\bv$ is a projection.
\end{proof}
\begin{proof}[Proof of Theorem \ref{thm:quant}]
Let $C_1:= \max \{ 15|\cG_\Gamma^3|^2 , 200|I|^2 \}$. Then Lemma \ref{lem:quant1} and Lemma \ref{lem:quant2} conclude the proof as
\begin{align*}
\iota_B(\id_\cB \otimes \pi)_\sharp (\alpha _\Gamma^{\mathrm{alg}}(\xi)) =&\iota_B [ \pi ( V \diag (P_1, 1-P_2)V), |I|]_{C_1\varepsilon ,3} \\
=& \iota_B[v_\pi \diag (p_{\pi ,1} , 1-p_{\pi , 2})v _{\pi}, |I|]_{C_1\varepsilon ,3} \\
=& \iota_B[v_\bv \diag (p_{\bv , 1} , p_{\bv , 2}) v_\bv , |I|]_{C_1\varepsilon, 3} \\
=& [p_{\bv , 1} , p_{\bv,2}] = \langle [p_\bv], \xi \rangle = \langle [\beta (\pi)] , \xi \rangle , 
\end{align*}
where $ [p_{\bv , 1} , p_{\bv,2}] \in \K_0(B)$ denotes the difference class.
\end{proof}

Theorem \ref{thm:quant} is related to the Connes--Gromov--Moscovici index formula \cite[Th\'{e}or\`{e}me 10]{MR1042862}, which is generalized in \cite{MR2982445}.
Let $\tau$ be a tracial state on the C*-algebra $A$. For a bundle $E$ of finitely generated Hilbert $A$-modules, let $\ch_\tau (E) \in \Omega ^{\mathrm{even}}(M)$ denote the Chern character defined in \cite[Definition 5.1]{MR2188248}. In particular, if $A=\bC$ and $\tau$ is the identity map, then $\ch_\tau (E)$ is the usual Chern character.

\begin{cor}[{cf.\ \cite[Theorem 3.6]{MR2982445}}]
Let $\pi \in \qRep^{\varepsilon , \cG }_P(\Gamma )$ for $\varepsilon < (4C_1)^{-1}$ and let $\tau$ be a trace on $A$. Then, for any elliptic operator $D$ on $M$ with the principal symbol $\sigma(D)$, we have
\[ (\tau  \circ \pi_\sharp)( \alpha_{\Gamma , \Lambda}^{\delta, r} ([M])) = \int_{T^*M}  \ch _\tau ( E_{\beta(\pi)} ) \ch (\sigma (D)) \Td (T_\bC M). \]
\end{cor}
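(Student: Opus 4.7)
The plan is to combine Theorem \ref{thm:quant} with the $\tau$-twisted Mishchenko--Fomenko index theorem. Theorem \ref{thm:quant} converts the quantitative object on the left-hand side into an ordinary K-theoretic index pairing of the almost flat bundle $E_{\beta(\pi)}$ with the K-homology class $[M]$; the twisted index theorem then expresses this pairing cohomologically, producing the integral on the right.

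First I would take $\xi=[M]\in\K_0(M)$ represented by the elliptic operator $D$ and apply Theorem \ref{thm:quant} to obtain
\[ \iota_B \circ (\id_{\bK(\sH)}\otimes \pi)_\sharp(\alpha_\Gamma^{\mathrm{alg}}([M]))=\langle [\beta(\pi)],[M]\rangle \in \K_0(\bK(\sH)\otimes B). \]
Since $P$ is finitely generated projective over $A$, we have $B=\bB(P)=\bK(P)$, so the trace $\tau$ on $A$ extends canonically to a densely defined trace on $\bK(\sH)\otimes B$ via the tensor of the standard semifinite trace on $\bK(\sH)$ with $\tau$ on $\bK(P)$. Invoking the definition $\alpha_\Gamma^{\delta,r}([M]):=\iota_{0,3}^{\delta,r}(\alpha_\Gamma^{\mathrm{alg}}([M]))$ together with the naturality of $\pi_\sharp$ under relaxation of the quantitative parameters, applying $\tau$ on both sides identifies the left-hand side of the displayed equality with $(\tau\circ\pi_\sharp)(\alpha_\Gamma^{\delta,r}([M]))$, the left-hand side of the corollary.

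Second, the image under $\tau$ of $\langle[\beta(\pi)],[M]\rangle$ is precisely the $\tau$-value of the Mishchenko--Fomenko index of $D$ twisted by the finitely generated projective Hilbert $A$-module bundle $E_{\beta(\pi)}$. By the $\tau$-twisted Mishchenko--Fomenko index theorem in the Chern character form developed in \cite{MR2188248}, where $\ch_\tau$ is defined for such bundles, this value equals
\[ \int_{T^*M} \ch_\tau(E_{\beta(\pi)})\,\ch(\sigma(D))\,\Td(T_\bC M), \]
which completes the identification with the right-hand side of the claim.

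The only delicate point is the bookkeeping in the first step, certifying that the extension of $\tau$ to $\bK(\sH)\otimes B$ is compatible with $\iota_B$ and with $\pi_\sharp$ on the relevant classes; beyond Theorem \ref{thm:quant}, no new analytic input is required, and the remainder is a direct invocation of the $\tau$-twisted Atiyah--Singer--Mishchenko--Fomenko formula. The quantitative framework contributes nothing to the cohomological computation itself, because all noise coming from the quasi-representation has already been absorbed by Theorem \ref{thm:quant}.
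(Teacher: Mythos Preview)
Your proposal is correct and follows essentially the same two-step argument as the paper: invoke Theorem~\ref{thm:quant} to reduce the quantitative left-hand side to the ordinary index pairing $\tau(\langle [\beta(\pi)],[M]\rangle)$, and then apply Schick's $L^2$-index theorem \cite[Theorem~6.10]{MR2188248} to obtain the cohomological integral. The paper's proof is a one-liner that leaves the first step implicit, while you have spelled out the bookkeeping with the trace and the quantitative maps, but the content is the same.
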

\begin{proof}
Apply Schick's $L^2$-index theorem \cite[Theorem 6.10]{MR2188248} for the index pairing $ \tau (\langle \bv , [M] \rangle ) = \tau (\ind D_{E_{\beta(\pi)}}) $.
\end{proof}

\subsection{Relative quantitative index pairing}
Now we establish a relative version of the quantitative index pairing in the above subsection. 
Let $\cG=(\cG_\Gamma, \cG_{\Lambda})$ denote a generating set of $(\Gamma , \Lambda)$. We write $\cG^r:=(\cG_\Gamma ^r, \cG_\Lambda^r)$ and $|\cG^r|:=\max \{ |\cG_\Gamma^r| , |\cG_\Lambda^r| \}$. Let $l_\Gamma$ and $l_\Lambda$ denote the word length function on $\Gamma$ and $\Lambda$ with respect to $\cG_\Gamma$ and $\cG_\Lambda$ respectively.
Then the assumption $\phi (\cG_\Lambda) \subset \cG_\Lambda $ implies $\phi(C^*(\Lambda)_r) \subset C^*(\Gamma)_r$. 
We put the structure of a filtered C*-algebra on $C\phi$ as
\[ (C\phi)_r:= \{ (a,b_s) \in C\phi \mid a \in C^*(\Lambda)_r, \ b_s \in C^*(\Gamma)_r \}. \]  
As in Lemma \ref{lem:uniMF}, let 
\begin{align*}
U_\cW &:= -e^{-\pi i \rho_0}P_\cW + 1-P_\cW \in (C_0((Y_2')^\circ) \otimes C^*(\Lambda)_1 \otimes \bM_I)^{+}\\
V_{\cV , s} &:= -e^{-\pi i \rho_s} P_\cV + 1-P_\cV \in (C_0(X^\circ_2) \otimes (C^*\Gamma)_1 \otimes \bM_I)^{+}. 
\end{align*}
Then $(U_\cW, V_{\cV ,s}) $ is a $(0,1)$-unitary of $(C_0(X^\circ_2) \otimes C\phi)^+$ such that $[U_{\cW},V_{\cV, s}]=\ell_{\Gamma, \Lambda}$.

\begin{prp}\label{prp:qrelBC}
There is a group homomorphism
\[\alpha _{\Gamma, \Lambda }^{\mathrm{alg}} \colon \K_0(X,Y) \to \K_1^{0,2}(\bK(\sH) \otimes C\phi ) \]
such that $\iota_{C\phi} (\alpha_{\Gamma, \Lambda}^{\mathrm{alg}}(\xi))=\alpha_{\Gamma, \Lambda}(\xi)$ for any $\xi \in \K_0(X,Y)$.
\end{prp}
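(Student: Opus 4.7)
The plan is to mimic the proof of Proposition~\ref{lem:quantBC}, but with the $(0,1)$-unitary representative $\mathbf{u} := (U_\cW, V_{\cV,s})$ of $\ell_{\Gamma, \Lambda}$ from Lemma~\ref{lem:uniMF} playing the role of the projection $P_\cV$. Since $\ell_{\Gamma, \Lambda}$ is a $\K_1$-class rather than a $\K_0$-class, we land in quantitative $\K_1$; and because we now only need to multiply a pair of filtration-$1$ elements (rather than a triple as in $V\diag(P_1, 1-P_2)V$), the filtration budget drops from $3$ to $2$, matching the target group $\K_1^{0,2}$.

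First I would represent $\xi \in \K_0(X,Y) \cong \K_0(C_0(X_2^\circ))$ by a quasi-homomorphism $(\varphi_1, \varphi_2) \colon C_0(X_2^\circ) \to \bB(\sH) \triangleright \bK(\sH)$. Extending each $\varphi_i$ to a $\ast$-homomorphism $\tilde\varphi_i$ on the unitization and tensoring with the identity on $C\phi \otimes \bM_I$, I evaluate on $\mathbf{u}$ to get exact $(0,1)$-unitaries
\[ w_i := (\tilde\varphi_i \otimes \mathrm{id})(\mathbf{u}) \in (\bB(\sH) \otimes C\phi \otimes \bM_I)^+. \]
Because $(\varphi_1 - \varphi_2)(a) \in \bK(\sH)$ for $a \in C_0(X_2^\circ)$ and $\mathbf{u} - 1 \in C_0(X_2^\circ) \otimes C\phi \otimes \bM_I$, the unitary $W := w_1 w_2^{-1}$ satisfies $W - 1 \in \bK(\sH) \otimes C\phi \otimes \bM_I$, so it is a genuine $(0,2)$-unitary in $(\bK(\sH) \otimes C\phi \otimes \bM_I)^+$. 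I then set
\[ \alpha_{\Gamma, \Lambda}^{\mathrm{alg}}(\xi) := [W]_{0,2} \in \K_1^{0,2}(\bK(\sH) \otimes C\phi). \]

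Independence of the choice of $(\varphi_1, \varphi_2)$ is the standard verification: an operator homotopy of quasi-homomorphisms produces a continuous path of $(0,2)$-unitaries $W^t$, while compact perturbations, unitary conjugations, and addition of degenerate Kasparov modules only alter $W$ within its $\K_1^{0,2}$-class. Additivity of $\alpha_{\Gamma, \Lambda}^{\mathrm{alg}}$ then follows from block-diagonal sums of quasi-homomorphisms, so the construction yields a well-defined group homomorphism.

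Finally, to prove $\iota_{C\phi}\bigl(\alpha_{\Gamma, \Lambda}^{\mathrm{alg}}(\xi)\bigr) = \alpha_{\Gamma, \Lambda}(\xi) = \ell_{\Gamma, \Lambda} \hotimes_{C_0(X_2^\circ)} \xi$, I would invoke the standard formula computing the Kasparov product of a $\K_1$-class $[u]$ with a K-homology class $[(\varphi_1, \varphi_2)]$: in the quasi-homomorphism picture this product is represented precisely by $(\tilde\varphi_1 \otimes \mathrm{id})(u)\,(\tilde\varphi_2 \otimes \mathrm{id})(u)^{-1}$ in $\K_1$ of the target. Alternatively one can apply Lemma~\ref{lem:Kas} to the odd Kasparov module of $\ell_{\Gamma, \Lambda}$ against the quasi-homomorphism module $(\sH \oplus \sH^{\mathrm{op}}, \varphi_1 \oplus \varphi_2, \bigl(\begin{smallmatrix}0 & 1\\ 1 & 0\end{smallmatrix}\bigr))$ representing $\xi$, and read off the unitary representative. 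The main obstacle will be the sign and direct-sum bookkeeping needed to ensure that the representative produced by Kasparov's formula coincides with $W = w_1 w_2^{-1}$ on the nose, rather than up to an a~priori nontrivial twist — everything else is a direct transcription of the argument for Proposition~\ref{lem:quantBC} with projections replaced by unitaries.
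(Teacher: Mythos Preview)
Your proposal is correct and follows essentially the same approach as the paper: represent $\xi$ by a quasi-homomorphism $(\varphi_1,\varphi_2)$, push the $(0,1)$-unitary $(U_\cW,V_{\cV,s})$ through each $\varphi_i$, and take the product $w_1w_2^{*}$ to obtain a $(0,2)$-unitary in $(\bK(\sH)\otimes C\phi\otimes\bM_I)^+$ whose class defines $\alpha_{\Gamma,\Lambda}^{\mathrm{alg}}(\xi)$. The paper's proof is slightly terser (it simply declares the compatibility with $\alpha_{\Gamma,\Lambda}$ ``straightforward'' by analogy with Proposition~\ref{lem:quantBC}), but the construction and verification steps are identical to yours.
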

\begin{proof}
Let $(\varphi _1  , \varphi_2) \colon C_0(X_2^\circ) \to \bB(\sH) \triangleright \bK(\sH)$ be a quasi-homomorphism representing $\xi \in \K_0(X,Y)$. Let $U_i:=\varphi_i(U_\cW)$ and $V_{i,s}:=\varphi_i(V_{\cV,s})$ for $i=1,2$. Then
\[ (U_1U_2^* , V_{1,s}V_{2,s}^*) \in (\bK(\sH) \otimes C\phi)^+ \]
is a $(0,2)$-unitary. Now we define the map $\alpha_{\Gamma, \Lambda}^{\mathrm{alg}}$ as 
\[ \alpha_{\Gamma, \Lambda}^{\mathrm{alg}}(\xi):= [(U_1U_2^* , V_{1,s}V_{2,s}^*)] \in \K^{0,2}_1(\bK(\sH) \otimes C\phi). \]
Then it is straightforward to check that $\alpha_{\Gamma, \Lambda}^{\mathrm{alg}}$ satisfies $\iota_{C\phi } \circ \alpha_{\Gamma, \Lambda}^{\mathrm{alg}} =\alpha _{\Gamma, \Lambda}$ in a similar fashion to Proposition \ref{lem:quantBC}. It is also checked in the same way as Proposition \ref{lem:quantBC} that the map $\alpha_{\Gamma, \Lambda}^{\mathrm{alg}}$ is well-defined independent of the choice of $(\varphi_1, \varphi_2)$. 
\end{proof}

\begin{defn}
We call $\alpha _{\Gamma, \Lambda}^{\mathrm{alg}}$ as in Proposition \ref{prp:qrelBC} the \emph{algebraic relative Mishchenko--Fomenko higher index}. For $r > 3$, we call the composition of $\alpha_{\Gamma , \Lambda} ^{\mathrm{alg}}$ with $\iota_{0,2}^{\varepsilon, r} \colon \K_1^{0,2}(\bK(\sH) \otimes C\phi ) \to \K_1^{\delta ,r}(\bK (\sH ) \otimes C \phi )$ the quantitative higher index and write it as $\alpha_{\Gamma , \Lambda}^{\delta , r}$. 
\end{defn}

Next we construct a $(\varepsilon, r , \kappa)$-$\ast$-homomorphism from $C\phi$ associated to a stably relative quasi-representation. Hereafter let $(X,Y)$ be a pair of finite CW-complexes with a good open cover $\cU$. Let $\Gamma :=\pi_1(X)$ and $\Lambda:=\pi_1(Y)$. Moreover, we choose a translation function $\{ \gamma _{\mu \nu} \}$ of $\tilde{X}$ and $\{ \lambda _{\mu \nu} \}_{\mu,\nu \in I}$ of $\tilde{Y}$ such that $\phi(\lambda _{\mu \nu})=\gamma_{\mu \nu}$ for $\mu, \nu \in I$ such that $U_{\mu\nu} \cap Y \neq \emptyset$. Let $\cG_\Gamma = \{ \gamma_{\mu \nu} \} $ and $\cG_\Lambda = \{ \lambda_{\mu \nu} \}$. We write as $\cG^r:=(\cG_\Gamma ^r, \cG_\Lambda^r)$ and $|\cG^r|:= \max \{ |\cG_\Gamma ^r|, |\cG_\Lambda^r| \}$.

Let 
\[\cS:= \begin{pmatrix} C_0[-1,1) & C_0[-1, 0) \\ C_0[-1, 0) & C[-1,0] \end{pmatrix}_{\textstyle ,} \ \ \cS_0 := \begin{pmatrix} C_0(-1,1) & C_0(-1, 0) \\ C_0(-1, 0) & C_0(-1,0) \end{pmatrix} \]
and let $\hat{\cS}:= \{ (f,g) \in \cS \oplus \cS \mid f-g \in \cS_0 \}$. Then the embedding $C_0(-1,1) \to \cS_0$ to the left upper component induces a $\KK$-equivalence and hence $\K_*(\cS_0 \otimes B) \cong \K_{*-1}(B)$. We write $\theta$ for the quasi-homomorphism $(\pr _1 , \pr_2) \colon \hat{\cS} \to \cS \triangleright \cS_0$, where $\pr_i$ (for $i=1,2$) denotes the projection to the $i$-th component.

Let $\boldsymbol{\pi}=(\pi_1,\pi_2,\pi_0,u) \in \qRep_{P,Q}^{\varepsilon , \cG^r }(\Gamma, \Lambda)$ such that each $\pi_i$ is a self-adjoint representation. 
Pick a continuous path $\{ \bar{u}_s \}_{s \in [1,2]}$ of unitaries in $\mathrm{U}(\bB((P \oplus Q)^{\oplus 2}))$ such that $\bar{u}_1= \diag (u, u^*)$ and $\bar{u}_2=1$. We associate $\boldsymbol{\pi}$ with continuous families of maps from $\cG ^r_\Lambda$ to $\bB((P \oplus Q)^{\oplus 2})$ parametrized by $s \in [1,2]$ defined as
\begin{align*}
\tilde{\pi}_{1,s}'(\gamma)&:= (s-1)( \diag ( \pi_1(\phi(\gamma)), \pi_0(\gamma) , 1_{P\oplus Q}) ) \\
& \ \ \ \ \  + (2-s)\bar{u}_1^*(\diag (\pi_2(\phi(\gamma)), \pi_0(\gamma) , 1_{P \oplus Q}))\bar{u}_1, \\
\tilde{\pi}_{2,s}(\gamma)& := \bar{u}_s^* (\diag (\pi_2(\phi(\gamma)) ,  \pi_0(\gamma) , 1_{P \oplus Q}))\bar{u}_s,
\end{align*}
and $\tilde{\pi}_{1,s}(\gamma ):= \tilde{\pi}_{1,s}'(\gamma)(\tilde{\pi}_{1,s}'(\gamma)^*\tilde{\pi}_{1,s}'(\gamma))^{-1/2} $.
Then 
\[ \bar{\pi}(a,b)(s):=\left\{ \begin{array}{ll} (\pi_1(b_{s}), \pi_2(b_s)) & s \in (0 , 1) \\ (\tilde{\pi}_{1,2+s}(a), \tilde{\pi}_{2,2+s}(a))  & s \in (-1, 0] \end{array} \right. \]
determines a linear map $\bar{\pi} \colon (C\phi )_r \to B \otimes \hat{\cS}$. 

\begin{lem}\label{lem:relqhom}
For any $\boldsymbol{\pi} \in \qRep^{\varepsilon , \cG^r}_{P,Q}(\Gamma , \Lambda)$ which is self-adjoint, the above $\bar{\pi}$ is a complete $(10|\cG^r|^2 \varepsilon , r , 1)$-$\ast$-homomorphism.
\end{lem}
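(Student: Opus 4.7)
The plan is to verify two properties of $\bar{\pi}$ on $(C\phi)_r$ (and its matricial amplification): self-adjointness, and almost multiplicativity with the claimed constant. Since the multiplication in $B \otimes \hat{\cS} \subset B \otimes (\cS \oplus \cS)$ is pointwise in $s$ and componentwise, it suffices to bound the defect sectorwise: on $s \in (0,1)$, where $\bar\pi$ only involves $\pi_1, \pi_2$, and on $s \in (-1, 0]$, where it involves $\tilde{\pi}_{1,\kappa}, \tilde{\pi}_{2,\kappa}$.

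Self-adjointness is direct. For $s > 0$ it follows from the assumption that $\pi_1, \pi_2$ are self-adjoint. For $s \leq 0$, setting $A(\gamma) := \diag(\pi_1(\phi\gamma) \oplus \pi_0(\gamma), 1_{P \oplus Q})$ and $B(\gamma) := \bar{u}_1^{*} \diag(\pi_2(\phi\gamma) \oplus \pi_0(\gamma), 1_{P \oplus Q}) \bar{u}_1$, both $A$ and $B$ take self-adjoint group elements to adjoint-invariant unitaries, so $\tilde{\pi}'_{1,\kappa}$ is self-adjoint; the polar decomposition defining $\tilde{\pi}_{1,\kappa}$ preserves this via $[x (x^*x)^{-1/2}]^{*} = x^{*}(xx^{*})^{-1/2}$, and $\tilde{\pi}_{2,\kappa}$ is a unitary conjugate of a self-adjoint quasi-representation. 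For almost multiplicativity in the $s > 0$ sector, Proposition \ref{prp:almhom} applied to each $\pi_i$ gives defect at most $|\cG_\Gamma^r|^2 \varepsilon$. For $\tilde{\pi}_{2,\kappa}$, the map $\gamma \mapsto \diag(\pi_2 \circ \phi(\gamma) \oplus \pi_0(\gamma), 1_{P \oplus Q})$ is a self-adjoint $(\varepsilon, \cG_\Lambda^r)$-representation of $\Lambda$ (using $\phi(\cG_\Lambda) \subset \cG_\Gamma$ and stability under direct sums), so Proposition \ref{prp:almhom} gives defect $|\cG_\Lambda^r|^2 \varepsilon$, and conjugating by the unitary $\bar{u}_\kappa$ preserves the defect.

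The delicate step is bounding $\tilde{\pi}_{1,\kappa}$. The intertwiner condition $u \in \Hom_\varepsilon(\pi_1 \circ \phi \oplus \pi_0, \pi_2 \circ \phi \oplus \pi_0)$ together with $\bar{u}_1 = \diag(u, u^{*})$ yields $\|A(\gamma) - B(\gamma)\| < \varepsilon$ for $\gamma \in \cG_\Lambda^r$. Since $A(\gamma), B(\gamma)$ are exact unitaries (the $\pi_i$ take values in $\mathrm{U}$), the identity
\begin{equation*}
\tilde{\pi}'_{1,\kappa}(\gamma) \tilde{\pi}'_{1,\kappa}(\gamma)^{*} = 1 - (\kappa-1)(2-\kappa) (A(\gamma) - B(\gamma))(A(\gamma) - B(\gamma))^{*}
\end{equation*}
gives $\| \tilde{\pi}'_{1,\kappa}(\gamma) \tilde{\pi}'_{1,\kappa}(\gamma)^{*} - 1\| \leq \varepsilon^{2}/4$, so by functional calculus $\|\tilde{\pi}_{1,\kappa}(\gamma) - \tilde{\pi}'_{1,\kappa}(\gamma)\| = O(\varepsilon^{2})$ on generators, which extends linearly to $\|\tilde{\pi}_{1,\kappa} - \tilde{\pi}'_{1,\kappa}\|_{\mathrm{cb}} = O(|\cG_\Lambda^r| \varepsilon^{2})$ on $(C^*\Lambda)_r$. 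Direct expansion of the convex combination yields the algebraic identity
\begin{equation*}
\tilde{\pi}'_{1,\kappa}(xy) - \tilde{\pi}'_{1,\kappa}(x) \tilde{\pi}'_{1,\kappa}(y) = (\kappa-1)(2-\kappa) (A(x) - B(x))(A(y) - B(y)) + (\kappa-1) \delta_{A} + (2-\kappa) \delta_{B},
\end{equation*}
with $\delta_{A} := A(xy) - A(x) A(y)$ and similarly $\delta_{B}$. Proposition \ref{prp:almhom} applied to $A, B$ bounds $\|\delta_A\|, \|\delta_B\| \leq |\cG_\Lambda^r|^2 \varepsilon \|x\|\|y\|$, while linear extension of $\|A(\gamma) - B(\gamma)\| < \varepsilon$ gives $\|A(x) - B(x)\| \leq |\cG_\Lambda^r| \varepsilon \|x\|$, so the defect of $\tilde{\pi}'_{1,\kappa}$ is at most $(2 + O(\varepsilon)) |\cG_\Lambda^r|^2 \varepsilon$ and that of $\tilde{\pi}_{1,\kappa}$ at most $4 |\cG_\Lambda^r|^2 \varepsilon$ for small $\varepsilon$. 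Taking the maximum over sectors yields the bound $10 |\cG^r|^2 \varepsilon$.

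The hard part is the treatment of $\tilde{\pi}_{1,\kappa}$: one must simultaneously control its deviation from the convex combination $\tilde{\pi}'_{1,\kappa}$ (which is the reason for the $\varepsilon^{2}$-smallness of $\tilde{\pi}'_{1,\kappa} \tilde{\pi}'_{1,\kappa}{}^{*} - 1$) and the defect of $\tilde{\pi}'_{1,\kappa}$ itself, where the algebraic identity above is essential to prevent a cubic factor of $|\cG_\Lambda^r|$ from appearing.
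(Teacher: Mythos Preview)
Your argument is correct, but the paper handles the key case of $\tilde{\pi}_{1,\kappa}$ by a much shorter route. Instead of expanding the defect of the convex combination $\tilde{\pi}'_{1,\kappa}$ via your algebraic identity and then controlling the polar correction to order $\varepsilon^2$, the paper simply compares $\tilde{\pi}_{1,\kappa}$ to the fixed endpoint $\tilde{\pi}_{1,2}=A$. Since $\tilde{\pi}'_{1,\kappa}(\gamma)$ is a convex combination of $A(\gamma)$ and $B(\gamma)$ with $\|A(\gamma)-B(\gamma)\|<\varepsilon$, one gets $\|\tilde{\pi}'_{1,\kappa}(\gamma)-A(\gamma)\|<\varepsilon$, then $\|1-\tilde{\pi}'_{1,\kappa}(\gamma)^*\tilde{\pi}'_{1,\kappa}(\gamma)\|<2\varepsilon$, hence $\|\tilde{\pi}_{1,\kappa}(\gamma)-A(\gamma)\|<3\varepsilon$. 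A single triangle inequality at the group level then shows $\tilde{\pi}_{1,\kappa}$ is a $(10\varepsilon,\cG^r_\Lambda)$-representation, and one invokes Proposition~\ref{prp:almhom} once at the end.

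Your route buys a sharper sector constant (roughly $2|\cG^r|^2\varepsilon$ instead of $10|\cG^r|^2\varepsilon$), at the cost of the extra algebra and one subtlety you glossed over: when you bound $\|\tilde{\pi}_{1,\kappa}(xy)-\tilde{\pi}'_{1,\kappa}(xy)\|$ you need the polar-correction estimate on $\cG_\Lambda^{2r}$, not just $\cG_\Lambda^{r}$; this is easily supplied since $\|A(\gamma)-B(\gamma)\|<4\varepsilon$ for $\gamma\in\cG_\Lambda^{2r}$ by the triangle inequality, but it should be said. The paper's approach avoids this entirely because it works purely at the group level before passing to the algebra. Your explicit verification of self-adjointness (which the paper leaves implicit) is a welcome addition.
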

\begin{proof}
Since $\|\tilde{\pi}_{1,2}(\gamma) - \tilde{\pi}_{1,s}'(\gamma) \| <  \varepsilon $, we have $\| 1 - \tilde{\pi}_{1,s}'(\gamma)^*\tilde{\pi}_{1,s}'(\gamma) \| < 2 \varepsilon $ and hence
\[ \| \tilde{\pi}_{1,s}(\gamma ) - \tilde{\pi}_{1,2}(\gamma )\| \leq  \|\tilde{\pi}_{1,s}'(\gamma ) - \tilde{\pi}_{1,2}(\gamma ) \| + \|1 - (\tilde{\pi}_{1,s}'(\gamma)^*\tilde{\pi}_{1,s}'(\gamma))^{-1/2} \| < 3\varepsilon.\]
Then we obtain that 
\begin{align}
\begin{split}
& \| \tilde{\pi}_{1,2+s}(\gamma)\tilde{\pi}_{1,2+s}(\gamma') - \tilde{\pi}_{1,2+s}(\gamma\gamma' ) \| \\
\leq & \| \tilde{\pi}_{1,2+s}(\gamma)\tilde{\pi}_{1,2+s}(\gamma') - \tilde{\pi}_{1,2}(\gamma)\tilde{\pi}_{1,2}(\gamma') \| \\
 & + \| \tilde{\pi}_{1,2}(\gamma)\tilde{\pi}_{1,2}(\gamma') - \tilde{\pi}_{1,2}( \gamma \gamma')  \|  + \| \tilde{\pi}_{1,2+s}(\gamma \gamma')  - \tilde{\pi}_{1,2}(\gamma \gamma')  \| \\
\leq & 2 \cdot 3 \varepsilon + \varepsilon  + 3\varepsilon = 10 \varepsilon.   
\end{split}
\label{form:7epsilon}
\end{align}
that is, each $\tilde{\pi}_{i,2+s}$ is a $(10\varepsilon , \cG^r)$-representation. 

Now Lemma \ref{prp:almhom} implies that each evaluation $\ev_s \circ \bar{\pi} \colon (C\phi)_r \to \bM_2 \oplus \bM_2$ is a complete $(10|\cG^r|^2\varepsilon , r , 1)$-$\ast$-homomorphism, which finishes the proof.
\end{proof}

Therefore, by Proposition \ref{prp:qrepK} we get a homomorphism
\[ \theta \circ \iota_{C\phi} \circ (\id \otimes \bar{\pi})_\sharp \colon \K_1^{\delta, r}(C\phi) \to \K_1(\cS_0 \otimes  B) \cong \K_0(B) \]
for $\delta>0$ such that $\varepsilon + (1+3\varepsilon)\delta < 1/4$ and $r>0$.

\begin{thm}\label{thm:relquant}
There is a constant $C_2=C_2(\cU)$ depending only on $\cU$ that the following holds: For $0 < \varepsilon < (4C_2)^{-1}$, $\boldsymbol{\pi} \in \qRep_{P, Q}^{\varepsilon, \cG^2} (\Gamma, \Lambda)$ and $\xi \in \K_0(X,Y)$, we have 
\[ (\theta \circ \iota_{C\phi} \circ (\id_{\bK} \otimes \bar{\pi})_\sharp) (\alpha^{\mathrm{alg}}_{\Gamma, \Lambda}(\xi))= \langle [\boldsymbol{\beta}(\boldsymbol{\pi})] , \xi \rangle \in \K_0(B). \]
\end{thm}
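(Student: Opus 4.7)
The plan is to parallel the strategy used for Theorem \ref{thm:quant}, but carried out in the relative/mapping-cone framework so that the end of the computation can be identified with the pairing of Theorem \ref{prp:bundle}. Fix a quasi-homomorphism $(\varphi_1,\varphi_2)\colon C_0(X_2^\circ)\to \bB(\sH)\triangleright \bK(\sH)$ representing $\xi$, set $\cB:=\bK(\sH)+\varphi_1(C_0(X_2^\circ))$ (which is nuclear and quasi-diagonal, being a commutative C*-algebra plus compacts), and use the explicit representative of Proposition \ref{prp:qrelBC}:
\[
\alpha_{\Gamma,\Lambda}^{\mathrm{alg}}(\xi) = \bigl[(U_1U_2^{*},\,V_{1,s}V_{2,s}^{*})\bigr]_{0,2}\in \K_1^{0,2}(\bK(\sH)\otimes C\phi),
\]
with $U_i=\varphi_i(U_\cW)$ and $V_{i,s}=\varphi_i(V_{\cV,s})$. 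By Lemma \ref{lem:QDqRep} applied to $\cB$, $\id_\cB\otimes\bar\pi$ is a complete $(10|\cG^r|^2\varepsilon,r,1)$-$\ast$-homomorphism, so Proposition \ref{prp:qrepK} produces a $(\delta,r)$-unitary representing $(\id_\cB\otimes\bar\pi)_\sharp(\alpha_{\Gamma,\Lambda}^{\mathrm{alg}}(\xi))$ in $\K_1^{\delta,r}(\bK(\sH)\otimes B\otimes\hat\cS)$.

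Next, in analogy with Lemma \ref{lem:quant1}, I would write out the image of this unitary under evaluation at each $s$: for $s\in[0,1)$ each component is (close to) $v_{\pi_i,s}\cdot w_{\pi_i,s}$ for the naive unitary obtained by substituting the representations $\pi_i$ into the formulas defining $V_{\cV,s}$; for $s\in(-1,0]$ each component is (close to) the unitary obtained by substituting $\tilde\pi_{i,2+s}$ into the formula defining $U_\cW$. The approximation error is controlled by the completely-bounded estimate of Proposition \ref{prp:qrepK}, using $\|\bar u_s\|\leq 1$ and the estimate \eqref{form:7epsilon} on $\tilde\pi_{i,s}$, together with the fact that $\|(\varphi_i\otimes\id)(x)\|\leq\|x\|$. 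Exactly as in Lemma \ref{lem:quant1}, \cite[Lemma 1.7]{MR3449163} lets us pass to the corresponding quantitative $\K$-class.

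Then, in analogy with Lemma \ref{lem:quant2}, I would replace the representation-built expressions by the \v Cech-cocycle expressions built from $\boldsymbol\beta(\boldsymbol\pi)=(\beta(\pi_1),\beta(\pi_2),\beta(\pi_0),\Delta_I(u))$. The crucial estimate is $\|v^j_{\mu\nu}(x)-\pi_j(\gamma_{\mu\nu})\|<4\varepsilon$ from Remark \ref{rmk:beta}, applied component-wise to $\pi_1,\pi_2,\pi_0$; for the unitary part one uses Remark \ref{rmk:relative}(3), namely that $\bar w$ is $|I|^2\varepsilon$-close to $p_{\bv^2}\cdot\mathrm{diag}(u_\mu)$, to match the $\bar u_s$-homotopy with the standard $\mathrm{diag}(u_\mu)$-homotopy used in the definition of $[\boldsymbol\beta(\boldsymbol\pi)]$. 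Collecting everything and setting $C_2:=\max\{10|\cG^2|^2,C\cdot|I|^2\}$ for a suitable combinatorial constant $C$, the resulting quantitative class agrees with the class of the expression built from $\boldsymbol\beta(\boldsymbol\pi)$. Applying $\theta\circ\iota_{C\phi}$ then extracts the difference class in $\K_0(B)$ obtained by pairing $(\varphi_1,\varphi_2)$ with the relative Mishchenko-type projection/unitary assembled from $\boldsymbol\beta(\boldsymbol\pi)$; by Remark \ref{rmk:module} and Remark \ref{rmk:relative}, this is precisely $\langle[\boldsymbol\beta(\boldsymbol\pi)],\xi\rangle$.

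The main obstacle is the third paragraph: carefully bookkeeping the $\hat\cS$-component so that the contribution of the homotopy $\bar u_s$ in the definition of $\bar\pi$ matches, after applying the quasi-homomorphism $\theta$, the unitary part $\Delta_I(u)$ appearing in $\boldsymbol\beta(\boldsymbol\pi)$. In particular, one must check that the difference class computed in the first component of $\hat\cS$ (built from $\pi_1,\pi_0$ via $\tilde\pi_{1,s}$) minus the one in the second component (built from $\pi_2,\pi_0$ via $\tilde\pi_{2,s}$) realizes the Kasparov difference construction of Remark \ref{rmk:module} up to $O(|\cG^r|^2\,|I|^2\,\varepsilon)$, which forces a slightly larger constant $C_2$ than $C_1$ but otherwise follows by the same combinatorial estimates used in Lemma \ref{lem:quant2}.
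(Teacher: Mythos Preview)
Your first two steps are essentially what the paper does (its Lemma \ref{lem:relquant1} is exactly your analogue of Lemma \ref{lem:quant1}). The gap is in your third paragraph, precisely at the point you flag as ``the main obstacle.'' You propose to match the $\bar u_s$-homotopy in $\bar\pi$ with ``the standard $\mathrm{diag}(u_\mu)$-homotopy used in the definition of $[\boldsymbol\beta(\boldsymbol\pi)]$,'' but there is no such homotopy in that definition: $[\boldsymbol\beta(\boldsymbol\pi)]$ is built from the single intertwiner $\Delta_I(u)$ at the boundary (Remark \ref{rmk:module}, Remark \ref{rmk:relative}). The $\hat\cS$-direction parameter $s\in(-1,0]$ carrying $\tilde\pi_{i,2+s}$ has no counterpart on the bundle side, so the ``same combinatorial estimates used in Lemma \ref{lem:quant2}'' cannot close the argument by themselves.

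The paper resolves this with a genuinely new ingredient you are missing: it introduces the auxiliary algebras $\hat\cC(X,Y)$, $\cC_0(X,Y)$ and a quasi-homomorphism $\theta_{X,Y}$ mirroring $\hat\cS$, $\cS_0$, $\theta$, and then uses the reparametrization $\iota_*\circ f^*\colon \cC_0(X,Y)(0,1)\to C_0(X_2^\circ)\otimes\cS_0$ coming from the proof of Theorem \ref{prp:bundle}. This map swaps the collar coordinate $r\in[1,2]$ on $Y_2'$ with the suspension coordinate $s\in(-1,0]$ in $\hat\cS$. One then applies $\beta$ to the whole family $\tilde\pi_{j,s}$ to obtain bundles $\bv_{j,s}$, assembles them into projections $\tilde p_{\fv,j}\in\hat\cC(X,Y)\otimes B\otimes\bM_I$, and forms a unitary $u_{\fv,s}$ with $\theta_{X,Y}[u_\fv]=[\fv]\otimes\beta$. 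The comparison (Lemma \ref{lem:relquant2}) is then between $u_{\boldsymbol\pi,s}$ and $\iota_*f^*(u_{\fv,s})$, and it reduces to the pointwise estimate $\|p_{\bv_{j,2+s}}-\tilde\pi_{j,2+s}(P_\cW)\|<4|I|^2\cdot 10\varepsilon$ from Remark \ref{rmk:beta} and \eqref{form:7epsilon}. Without this $r\leftrightarrow s$ swap, you have no way to absorb the auxiliary path $\bar u_s$ into the spatial picture of the relative bundle.
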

\begin{rmk}
Here is a remark parallel to Remark \ref{rmk:quantHS}. 
For any $(\delta, r)$ with $10 |\cG^r|^2\varepsilon + (1+40|\cG^r|^2\varepsilon)\delta < 1/4$, the left hand side of Theorem \ref{thm:relquant} is written as $\iota_B \circ (\id_{\bK(\sH)} \otimes \pi)_\sharp  \circ \alpha_{\Gamma , \Lambda}^{\delta, r}(\xi)$. Hence if a K-homology class $\xi \in \K_*(X)$ satisfies $\alpha_{\Gamma , \Lambda}(\xi)=0$, then there is $(\delta, r)$ with $\delta<1/4$ such that $\alpha_\Gamma^{\delta, r}(\xi) =0$. By Theorem \ref{thm:relquant}, we have $\langle [\boldsymbol{\beta} (\boldsymbol{\pi})] , \xi \rangle =0 $ for any $\boldsymbol{\pi} \in \qRep_{P,Q}^{\varepsilon , \cG^r} (\Gamma, \Lambda)$ with $\varepsilon < \min \{  \frac{1}{4C_2} , \frac{1/4 - \delta}{10|\cG_\Gamma ^r |^2(1+3\varepsilon)} \} $. This is a quantitative version of Theorem \ref{thm:Karea}.
\end{rmk}

Let $(\varphi_1, \varphi_2) \colon C_0(X^\circ ) \to \bB(\sH) \triangleright \bK(\sH)$ be a quasi-homomorphism representing $\xi \in \K_0(X,Y)$ and let $\cB:= \bK(\sH) + \varphi_1(C_0(X_2^\circ))$. Then $\cB$ is nuclear and quasi-diagonal. Let $U^i$, $V_{s}^i$ be as in the proof of Proposition \ref{prp:qrelBC}. We consider the element
\[ u_{\boldsymbol{\pi}} = (u_{\boldsymbol{\pi} ,s})_{s \in (-1,1)} := (\id_{C_0(X_2^\circ) \otimes \bM_I} \otimes \bar{\pi})(U, V_s) \in C_0(X_2^\circ) \otimes \hat{\cS} \otimes B \otimes \bM_I \]
and set $u_{\boldsymbol{\pi} }^i:=\varphi_i(u_{\boldsymbol{\pi} })$ for $i=1,2$.

\begin{lem}\label{lem:relquant1}
For $0 < \varepsilon < (160|\cG^2|^2)^{-1}$, both $(\id_{\bK} \otimes \bar{\pi})((U^1, V^{1}_s)(U^2, V^2_s)^*)$ and $u_{\boldsymbol{\pi}}^1(u_{\boldsymbol{\pi}}^2)^*$ are $(40|\cG^2|^2\varepsilon , 2)$-unitaries and 
\[ [(\id_{\bK} \otimes \bar{\pi})((U_1, V_{1,s})(U_2, V_{2,s})^*)]_{40|\cG^2|^2\varepsilon , 2} =  [ u_{\boldsymbol{\pi} , s}^1(u_{\boldsymbol{\pi} , s}^2)^*]_{40|\cG^2|^2\varepsilon , 2} \]
holds.
\end{lem}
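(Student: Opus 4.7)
The plan is to mirror the structure of the proof of Lemma \ref{lem:quant1}, replacing projections with unitaries throughout. The main goal is to show the two elements are close in norm and to apply the unitary analogue of \cite[Lemma 1.7]{MR3449163}.

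First I would invoke Lemma \ref{lem:relqhom} to record that $\bar{\pi} \colon (C\phi)_r \to B \otimes \hat{\cS}$ is a complete $(10|\cG^r|^2 \varepsilon, r, 1)$-$\ast$-homomorphism. Since $\bK(\sH)$ is nuclear and quasi-diagonal, Lemma \ref{lem:QDqRep} upgrades this to the statement that $\id_{\bK} \otimes \bar{\pi}$ is still a complete $(10|\cG^r|^2 \varepsilon, r, 1)$-$\ast$-homomorphism. At $r=2$, Proposition \ref{prp:qrepK} applied to the $(0,2)$-unitary $(U^1,V_s^1)(U^2,V_s^2)^*$ (with $\delta=0$) immediately gives that its image under $\id_{\bK}\otimes \bar{\pi}$ is a $(10|\cG^2|^2\varepsilon, 2)$-unitary.

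Next I would observe the intertwining identity
\[
(\id_{\bK} \otimes \bar{\pi})\circ(\varphi_i \otimes \id_{C\phi}) = (\varphi_i \otimes \id_B \otimes \id_{\hat{\cS}})\circ(\id_{C_0(X_2^\circ)} \otimes \bar{\pi})
\]
as completely bounded maps, exactly as in the proof of Lemma \ref{lem:quant1}. This yields $(\id_{\bK}\otimes \bar{\pi})(U^i,V_s^i) = u_{\boldsymbol{\pi}}^i$. Combining this with the almost multiplicativity of $\id_{\bK}\otimes \bar{\pi}$ and the fact that each $(U^i,V_s^i)$ has norm $1$, I would estimate
\[
\bigl\|(\id_{\bK}\otimes \bar{\pi})\bigl((U^1,V_s^1)(U^2,V_s^2)^*\bigr) - u_{\boldsymbol{\pi}}^1(u_{\boldsymbol{\pi}}^2)^*\bigr\| \leq 10|\cG^2|^2\varepsilon.
\]
From this norm proximity and the fact that one side is already a $(10|\cG^2|^2\varepsilon, 2)$-unitary, the unitary analogue of \cite[Lemma 1.7]{MR3449163}---namely, that a perturbation $\|u-v\|<\eta$ of a $(\varepsilon',r)$-unitary $u$ is a $(\varepsilon'+4\eta,r)$-unitary representing the same class in $\K_1^{\varepsilon'+4\eta,r}$ when $\eta$ is small---pushes both elements into $\mathrm{U}^{40|\cG^2|^2\varepsilon,2}$ with identified classes.

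The only delicate point is to confirm that the correct constant really is $40|\cG^2|^2$ and not something slightly larger; this comes down to bookkeeping the standard triangle inequality $\|v^*v - 1\| \leq \|v-u\|(\|v\|+\|u\|) + \|u^*u-1\|$ together with the norm bound $\|u\| \leq 1 + \varepsilon'/2$ for a $(\varepsilon',r)$-unitary, and similarly for $vv^*-1$. For $\varepsilon<(160|\cG^2|^2)^{-1}$ the resulting slack fits comfortably inside the factor of $40$, and the quantitative K-class equality is witnessed by the linear homotopy $t u_{\boldsymbol{\pi}}^1(u_{\boldsymbol{\pi}}^2)^* + (1-t)(\id_\bK\otimes\bar{\pi})((U^1,V_s^1)(U^2,V_s^2)^*)$, which stays inside $\mathrm{U}^{120|\cG^2|^2\varepsilon,4}$ along the path as required by the equivalence relation defining $\K_1^{\varepsilon,r}$.
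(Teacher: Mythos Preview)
Your proposal is correct and follows essentially the same approach as the paper: invoke Lemma~\ref{lem:relqhom} and Proposition~\ref{prp:qrepK} to see the first element is a $(10|\cG^2|^2\varepsilon,2)$-unitary, use the intertwining identity to obtain $u_{\boldsymbol{\pi}}^i$ from $(U^i,V_s^i)$, deduce the $10|\cG^2|^2\varepsilon$ norm bound, and conclude via the unitary version of \cite[Lemma~1.7]{MR3449163}. One small technical point: the individual factors $(U^i,V_s^i)$ do not lie in $(\bK(\sH)\otimes C\phi)^+$ but rather in $(\cB\otimes C\phi)^+$ where $\cB=\bK(\sH)+\varphi_1(C_0(X_2^\circ))$, so the intertwining identity and the application of Lemma~\ref{lem:QDqRep} should be stated with $\id_{\cB}\otimes\bar{\pi}$ rather than $\id_{\bK}\otimes\bar{\pi}$; this is exactly what the paper does, and your argument goes through verbatim with that substitution.
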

\begin{proof}
By Proposition \ref{prp:qrepK} and Lemma \ref{lem:relqhom}, the element $(\id_{\bK} \otimes \bar{\pi})((U_1, V_{1,s})(U_2, V_{2,s})^*)$ is a $10|\cG^2|^2\varepsilon$-unitary.
By Lemma \ref{lem:QDqRep}, the tensor product $(\id_{\cB} \otimes \bar{\pi})$ is well-defined as a completely bounded map and $(\id_{\cB} \otimes \bar{\pi})(U^i, V^i_s)(s) = u^i_{\boldsymbol{\pi} ,s}$ holds for $s \in (-1,1)$. Hence we have
\begin{align*}
&\| (\id_{\bK} \otimes \bar{\pi})((U_1, V_{1,s})(U_2, V_{2,s})^*) - u_{\boldsymbol{\pi},s}^1(u_{\boldsymbol{\pi}, s}^2)^* \| 
\leq  10|\cG^2|^2 \varepsilon .
\end{align*}
This shows the lemma by \cite[Lemma 1.7]{MR3449163}, which claims that if $u$ is a $(\varepsilon ,r)$-unitary and $\| u-v\|<\varepsilon$ holds then $v$ is a $(4\varepsilon , r)$-unitary and $[u]_{4\varepsilon , r}=[v]_{4\varepsilon , r}$. 
\end{proof}

For the proof, it is convenient to rephrase Theorem \ref{prp:bundle} in terms of unitaries $(U_\cW , V_{\cV,s})$.
Let 
\[\cC(X,Y):= \begin{pmatrix} C(X_2) & C_0(Y(1,2] ) \\ C_0(Y(1,2]) &C_0(Y[1,2] ) \end{pmatrix}_{\textstyle ,} \ \ \cC_0(X,Y) := \begin{pmatrix} C_0(X_2^\circ) & C_0((Y_{2}')^\circ ) \\ C_0((Y_{2}')^\circ ) &C_0((Y_{2}')^\circ ) \end{pmatrix} \]
and let $\hat{\cC}(X,Y):= \{ (f,g) \in \cC(X,Y) \oplus \cC(X,Y) \mid f-g \in \cC_0(X,Y) \}$. Then the embedding $C_0(X_2^\circ) \to \cC_0(X,Y)$ to the left upper component induces a $\KK$-equivalence. Let $\theta_{X,Y}$ denote the quasi-homomorphism $(\pr _1 , \pr_2) \colon \hat{\cC}(X,Y) \to \cC(X,Y) \triangleright \cC_0(X,Y)$. Then the continuous map $fs$ and $\iota$ as in Theorem \ref{prp:bundle} induces 
\[ \iota_* \circ f^* \colon \cC_0(X,Y)(0,1) \to C_0(X_2^\circ) \otimes \cS_0,  \]
which is extended to a $\ast$-homomorphism from $\cC(X,Y)(0,1)$ to $C_0(X_2^\circ) \otimes \cS$ denoted by the same letter $\iota_* \circ f^*$.

Let $\fv :=\boldsymbol{\beta} (\boldsymbol{\pi})$, $\bv_{j}:=\beta(\pi_j)$ and $\bv_{j,s}:=\beta(\tilde{\pi}_{j,s})$ for $j=1,2$ and $s \in [1,2]$.  
Let $\tilde{p}_{\fv,j} \in \cC(X_1,Y_1) \otimes B \otimes \bM_I$ for $j=1,2$ denote the projections
\[ \tilde{p}_{\fv , j}(x):=\left\{ \begin{array}{ll} p_{\bv_j}(x) & x \in X_1^\circ, \\ p_{\bv _{j,r}}(y) & x=(y,r) \in Y_2'. \end{array}\right. \]
Then $\tilde{p}_{\fv ,1} - \tilde{p}_{\fv ,2} \in \cC_0(X, Y) \otimes B \otimes \bM_I$, that is, $\tilde{p}_{\fv}:=(\tilde{p}_{\fv ,1}, \tilde{p}_{\fv ,2})$ is a projection in $\hat{\cC}(X,Y) \otimes B \otimes \bM_I$, such that $\theta_{X,Y}[(\tilde{p}_{\fv ,1}, \tilde{p}_{\fv ,2})] = [\fv]$. Now, the element
\begin{align*}
u_{\fv , s} := \tilde{p}_{\fv} e^{2\pi i \rho_s} + 1-\tilde{p}_{\fv} \in \hat{\cC}(X,Y)(0,1) \otimes B \otimes \bM_I
\end{align*}
is a unitary satisfying 
\[ \theta_{X,Y}[u_\fv]= [\fv] \otimes \beta \in \K_1(\cC_0(X,Y)(0,1) \otimes B).\]

\begin{lem}\label{lem:relquant2}
For $0< \varepsilon < (1280|I|^2)^{-1}$, both $u_{\boldsymbol{\pi},s}^1(u_{\boldsymbol{\pi},s}^2)^*$ and $u^1_{\fv ,s} (u_{\fv,s}^2)^*$ are $(320|I|^2\varepsilon,2)$-unitaries and
\[ [u_{\boldsymbol{\pi},s}^1(u_{\boldsymbol{\pi},s}^2)^*]_{320|I|^2\varepsilon ,2}=(\iota_* \circ f^*)[u^1_{\fv ,s} (u_{\fv,s}^2)^*]_{320|I|^2\varepsilon , 2}\]
holds.
\end{lem}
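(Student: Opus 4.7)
The strategy mirrors Lemma~\ref{lem:quant2} in the relative setting, passing from a pointwise norm comparison to quantitative unitaries via \cite[Lemma 1.7]{MR3449163}. The core input, coming from the calculation in~\eqref{form:ppipv} together with Remark~\ref{rmk:beta}, is that for any self-adjoint $(\delta,\cG^r)$-representation $\sigma$ with $\bw=\beta(\sigma)$ one has $\|(\sigma\otimes\id)(P_\cV)-p_\bw\|\le 4|I|^2\delta$. Applied to each $\pi_j$ ($j=1,2$) this gives
\[ \|(\pi_j\otimes\id)(P_\cV)-p_{\bv_j}\|\le 4|I|^2\varepsilon,\]
and applied to each $\tilde{\pi}_{j,s}$ (which is a $(10\varepsilon,\cG^r)$-representation by~\eqref{form:7epsilon}, uniformly in $s\in[1,2]$) it gives
\[ \|(\tilde{\pi}_{j,s}\otimes\id)(P_\cW)-p_{\bv_{j,s}}\|\le 40|I|^2\varepsilon,\]
where $\bv_j=\beta(\pi_j)$ and $\bv_{j,s}=\beta(\tilde{\pi}_{j,s})$.

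Next I would unfold both unitaries pointwise in $C_0(X_2^\circ)\otimes\hat{\cS}\otimes B\otimes\bM_I$. The element $\bar{\pi}(U_\cW,V_{\cV,s})$ has $j$-th component $-e^{-\pi i\rho_s}(\pi_j\otimes\id)(P_\cV)+1-(\pi_j\otimes\id)(P_\cV)$ on $X_1^\circ(0,1)$, and the analogous expression with $\rho_0$ and $(\tilde{\pi}_{j,2+s}\otimes\id)(P_\cW)$ on $(Y_2')^\circ(-1,0]$. By the construction of the reparametrization $\iota_*\circ f^*$ in the proof of Theorem~\ref{prp:bundle}, which aligns the phase $e^{2\pi i\rho_s}$ appearing in $u_\fv$ with the convention of Lemma~\ref{lem:uniMF}, the element $(\iota_*\circ f^*)(u_\fv)$ has the identical pointwise form but with $(\pi_j\otimes\id)(P_\cV)$ replaced by $p_{\bv_j}$ (respectively $(\tilde{\pi}_{j,2+s}\otimes\id)(P_\cW)$ by $p_{\bv_{j,2+s}}$). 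Combining with the projection estimates above yields
\[ \|u_{\boldsymbol{\pi}}-(\iota_*\circ f^*)(u_\fv)\|\le 40|I|^2\varepsilon.\]

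Applying $\varphi_i$ (which is norm-nonincreasing) componentwise and using $\|u_{\fv,s}^i\|=1$ together with $\|u_{\boldsymbol{\pi},s}^i\|\le 1+5|\cG^2|^2\varepsilon$ (from Lemma~\ref{lem:relquant1} and the elementary bound $\|u\|^2\le 1+\|u^*u-1\|$) in the product expansion $u^1(u^2)^*-v^1(v^2)^*=(u^1-v^1)(u^2)^*+v^1((u^2)^*-(v^2)^*)$, one obtains
\[ \|u_{\boldsymbol{\pi},s}^1(u_{\boldsymbol{\pi},s}^2)^* - u_{\fv,s}^1(u_{\fv,s}^2)^*\|\le 80|I|^2\varepsilon.\]
Since $u_{\fv,s}^1(u_{\fv,s}^2)^*$ is a genuine unitary (hence a $(0,2)$-unitary), \cite[Lemma 1.7]{MR3449163} forces both products to be $(4\cdot 80|I|^2\varepsilon,2)=(320|I|^2\varepsilon,2)$-unitaries representing the same class in $\K_1^{320|I|^2\varepsilon,2}$. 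The hypothesis $\varepsilon<(1280|I|^2)^{-1}$ is exactly what guarantees $320|I|^2\varepsilon<1/4$, which is required for the quantitative $\K$-theory.

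The technical hurdle I expect is verifying that the two exponential conventions $-e^{-\pi i\rho_s}$ (used in $V_{\cV,s}$ and $U_\cW$) and $e^{2\pi i\rho_s}$ (used in $u_\fv$) really do align under $\iota_*\circ f^*$ separately on the $X_1^\circ(0,1)$ and $(Y_2')^\circ(-1,0]$ pieces. This phase matching is the heart of the reparametrization set up in the proof of Theorem~\ref{prp:bundle}, and once it is in hand, the remaining norm bookkeeping is routine.
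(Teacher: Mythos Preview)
Your approach is essentially the same as the paper's: unfold both unitaries pointwise after the reparametrization $\iota_*\circ f^*$, compare via the projection estimates \eqref{form:ppipv} and \eqref{form:7epsilon}, and close with \cite[Lemma 1.7]{MR3449163}. Two small remarks: (i) the paper computes the pre-$\varphi_i$ bound as $80|I|^2\varepsilon$ rather than your $40|I|^2\varepsilon$, because the unitary difference picks up an extra factor $|{-e^{-\pi i\rho}}-1|\le 2$ on top of the projection difference (so your arithmetic is off by this factor of $2$, though the final constant $320|I|^2\varepsilon$ still lands correctly once everything is accounted for); (ii) the ``technical hurdle'' you flag about aligning $-e^{-\pi i\rho_s}$ with $e^{2\pi i\rho_s}$ is resolved in the paper simply by writing out the explicit pointwise formulas for $u_{\boldsymbol{\pi},s}$ and $\iota_*f^*(u_{\fv,s})$ on the two pieces $X_2^\circ(0,1)$ and $(Y_2')^\circ(-1,0]$ and observing that the phases agree on each piece by the definition of $f$ (which was designed so that $f^*(2s-1)=\bar{\rho}$).
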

\begin{proof}
By the definitions of $f$, $u_{\boldsymbol{\pi} ,s}$ and $u_{\fv , s}$, we have
\begin{align*}
u_{\boldsymbol{\pi},s}(x) &= -e^{-\pi i \rho(s,r)}(\pi_1(P_\cV), \pi_2(P_\cV))(x) + 1- (\pi_1(P_\cV), \pi_2(P_\cV))(x), \\ 
\iota_* f^*(u_{\fv,s})(x) &= -e^{-\pi i \rho(s,r)}(p_{\bv_1}, p_{\bv_2})(x) + 1- (p_{\bv_1}, p_{\bv_2})(x),
\end{align*}
for $(x , s) \in X_2(0,1)$ and 
\begin{align*}
u_{\boldsymbol{\pi},s}(y,r) &= e^{2\pi i (r-1)}(\tilde{\pi}_{1,2+s}(P_\cW), \tilde{\pi}_{2,2+s}(P_\cW))(y) \\
& \hspace{6em} + 1- (\tilde{\pi}_{1,2+s}(P_\cW), \tilde{\pi}_{2,2+s}(P_\cW))(y), \\
i_*f^*(u_{\fv,s})(y,r) &= e^{2\pi i (r-1)}(p_{\bv_{1, 2+s}}, p_{\bv_{2,2+s}})(y ) + 1- (p_{\bv_{1, 2+s}}, p_{\bv_{2,2+s}})(y) ,
\end{align*}
for $(y,r,s) \in Y_2'(-1,0] $.
Hence (\ref{form:ppipv}) and (\ref{form:7epsilon}) implies that 
\begin{align*}
 & \| u_{\boldsymbol{\pi} , s} - \iota_*f^* (u_{\fv, s}) \| \\
 \leq & \|e ^{2\pi i (r-1)} \| \|(\tilde{\pi}_{1,2+s}(P_\cW), \tilde{\pi}_{2,2+s}(P_\cW)) - (p_{\bv_{1, 2+s}}, p_{\bv_{2,2+s}})  \| \\
 & + \|(1- (\tilde{\pi}_{1,2+s}(P_\cW), \tilde{\pi}_{2,2+s}(P_\cW))) - (1-(p_{\bv_{1, 2+s}}, p_{\bv_{2,2+s}})) \| \\
 \leq & 2 \cdot 4|I|^2 \cdot 10 \varepsilon = 80|I|^2 \varepsilon 
 \end{align*}
for $s \in (-1,0]$. By the same argument we also see that $\| u_{\boldsymbol{\pi} , s} - \iota_*f^* (u_{\fv, s}) \| <80 |I|^2\varepsilon$ for $s \in [0,1)$.  Again by \cite[Lemma 1.7]{MR3449163}, this conclude the proof.
\end{proof}

\begin{proof}[Proof of Theorem \ref{thm:relquant}]
Let $C_2:=\max \{ 320|I|^2 , 40|\cG^2|^2 \}$. Then Lemma \ref{lem:relquant1} and \ref{lem:relquant2} prove the theorem as
\begin{align*}
(\id \otimes \bar{\pi})_\sharp (\alpha _{\Gamma,\Lambda}^{\mathrm{alg}}(\xi)) =& [(\id_{\bK} \otimes \bar{\pi})((U_1, V_{1,s})(U_2, V_{2,s})^*)]_{C_2\varepsilon ,2} \\
=& [u_{\boldsymbol{\pi},s}^1(u_{\boldsymbol{\pi},s}^2)^*]_{C_2\varepsilon ,2}\\
=&(\iota_* \circ f^*)[u^1_{\fv ,s} (u_{\fv,s}^2)^*]_{C_2\varepsilon , 2}. \qedhere
\end{align*}
\end{proof}

\begin{cor}
Let $D$ be an elliptic operator on $M$, let $\varepsilon < (4C_2)^{-1}$, $\pi \in \qRep^{\varepsilon , \cG^2 }_{P,Q}(\Gamma , \Lambda)$ and let $\tau$ be a trace on $A$. Then we have
\[ (\tau \circ \theta \circ \iota_{C\phi} \circ (\id_\bK \otimes \bar{\pi})_\sharp )( \mu_0^{\Gamma , \Lambda} ([D])) = \int_{T^*M}  \ch _\tau ( \boldsymbol{\beta}(\boldsymbol{\pi}) ) \ch (\sigma (D)) \Td (T_\bC M). \]
\end{cor}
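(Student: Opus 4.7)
The plan is to reduce this to the relative quantitative index theorem (Theorem \ref{thm:relquant}) and then apply an $L^2$-type index theorem, mirroring the short proof of the preceding corollary.

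First, I would apply Theorem \ref{thm:relquant} to the K-homology class $\xi = [D] \in \K_0(M,N)$. Because $\iota_{C\phi} \circ \alpha_{\Gamma,\Lambda}^{\mathrm{alg}} = \alpha_{\Gamma,\Lambda} = \mu_0^{\Gamma,\Lambda}$ by Proposition \ref{prp:qrelBC}, the argument of $\tau$ on the left-hand side of the statement equals
\[
(\theta \circ \iota_{C\phi} \circ (\id_\bK \otimes \bar{\pi})_\sharp)(\alpha_{\Gamma,\Lambda}^{\mathrm{alg}}([D])) = \langle [\boldsymbol{\beta}(\boldsymbol{\pi})], [D] \rangle \in \K_0(B),
\]
so the problem is reduced to computing the trace of the relative index pairing with the stably relative bundle $\fv := \boldsymbol{\beta}(\boldsymbol{\pi}) = (\bv_1, \bv_2, \bv_0, \bu)$.

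Second, I would unpack this index pairing in geometric terms. By the mapping cone model of Remark \ref{rmk:module}, the element $[\fv] \otimes_{C_0(M^\circ)} [D] \in \K_0(B)$ is the index of the Dirac-type operator built from $D$ twisted by the pair $(E_{\bv_1}, E_{\bv_2})$, modified near the boundary by the transgression data $(E_{\bv_0}, \bu)$ coming from the stable identification on $N$. Applying $\tau$ and invoking Schick's $L^2$-index theorem \cite[Theorem 6.10]{MR2188248} termwise yields the cohomological expression
\[
\tau\bigl(\langle [\fv], [D]\rangle\bigr) = \int_{T^*M} \ch_\tau(\fv)\, \ch(\sigma(D))\, \Td(T_\bC M),
\]
where $\ch_\tau(\fv)$ denotes the relative Chern character form. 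This form is represented by $\ch_\tau(E_{\bv_1}) - \ch_\tau(E_{\bv_2})$ on the interior plus a transgression form constructed from $\bu$ and a connection on $E_{\bv_0}$ supported in a collar of $N$.

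The main step to justify carefully is this last point: that the relative Chern character $\ch_\tau$ of a stably relative bundle over $(M,N)$ is the correct differential-form representative pairing against $\ch(\sigma(D))\Td(T_\bC M)$ in the relative index formula. This follows because, after passing to the mapping cone, the stably relative bundle is classified by an honest $B$-bundle over $M_2 / \partial$, Schick's $L^2$-index theorem applies there directly, and the transgression induced by $\bu$ realizes the relative Chern-Weil representative; the stabilization by $E_{\bv_0}$ does not change the integral since its contribution cancels between the two collar endpoints. With this identification in hand, combining the two displayed equalities completes the proof.
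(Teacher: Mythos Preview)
Your first step—reducing via Theorem \ref{thm:relquant} to computing $\tau(\langle [\boldsymbol{\beta}(\boldsymbol{\pi})], [D]\rangle)$—is exactly what the paper does. The divergence comes in the second step.

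You try to work directly on $(M,N)$ with a relative Chern character built from a transgression form, and you justify this by passing to the mapping cone $M_2/\partial$ and claiming ``Schick's $L^2$-index theorem applies there directly.'' This is the gap: $M_2/\partial$ is not a smooth closed manifold (it has a cone singularity), and Schick's theorem \cite[Theorem 6.10]{MR2188248} is stated for closed manifolds. One could in principle develop a relative Chern--Weil/transgression argument and prove a boundary version of the $L^2$-index theorem, but that is substantial extra work, not a direct citation.

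The paper sidesteps all of this with a one-line trick: extend $D$ to an elliptic operator $\hat{D}$ on the invertible double $\hat{M}$, which \emph{is} a smooth closed manifold. Push the relative class forward by the open embedding $i\colon M^\circ \hookrightarrow \hat{M}$, write $i_*\boldsymbol{\beta}(\boldsymbol{\pi}) = [E_1]-[E_2]$ for honest bundles $E_1,E_2$ on $\hat{M}$, and apply Schick's theorem to $\hat{D}_{E_1}$ and $\hat{D}_{E_2}$ separately. The resulting Chern form $\ch_\tau(E_1)-\ch_\tau(E_2)$ is compactly supported in $M^\circ$ and represents $\ch_\tau(\boldsymbol{\beta}(\boldsymbol{\pi}))$ in $H_c^*(M^\circ)$, so the integral over $T^*\hat{M}$ reduces to the stated integral over $T^*M$. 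No transgression forms or singular spaces are needed.
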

\begin{proof}
We extend $D$ to an elliptic operator $\hat{D}$ on the invertible double $\hat{M}$. Let $i \colon M^\circ \to M$ denote the open embedding and let $E_1$, $E_2$ be vector bundles on $\hat{M}$ such that $i_*\boldsymbol{\beta}(\boldsymbol{\pi})=[E_1]-[E_2]$. Then Theorem \ref{thm:relquant} and the $L^2$-index theorem \cite[Theorem 6.10]{MR2188248} for the index pairing 
\[ \tau(\langle \boldsymbol{\beta}(\boldsymbol{\pi}) ,  [D] \rangle) = \tau(\langle i_*\boldsymbol{\beta}(\boldsymbol{\pi}) ,  [\hat{D}] \rangle)=\tau(\ind \hat{D}_{E_1} - \ind \hat{D}_{E_2})\] 
shows the corollary since the Chern character form $\ch _\tau ( i_* \boldsymbol{\beta}(\boldsymbol{\pi}) )=\ch_\tau (E_1) - \ch_\tau (E_2)$ is a compactly supported differential form on $M^\circ$ cohomologous to $\ch _\tau ( \boldsymbol{\beta}(\boldsymbol{\pi}) )$ in $H_c^*(M^\circ)$. 
\end{proof}

\section{Dual assembly map and almost flat bundles}\label{section:8}
In this section, we relate the dual higher index map $\beta_{\Gamma, \Lambda }$ defined in Proposition \ref{prp:dual} with the almost monodromy correspondence, Theorem \ref{thm:monodromy}. The goal of this section is to show that the index pairing with elements of the subgroup $\K_{\mathrm{s\mathchar`-af}}^0(X,Y)$ of almost flat $\K$-theory class has rich information enough to detect the non-vanishing of the relative higher index under a certain assumptions on the fundamental groups.

\subsection{K-homology group of mapping cone C*-algebras}
Let $A$ and $B$ be separable C*-algebras and let $\phi \colon A \to B$ be a $\ast$-homomorphism. Let us choose unital $\ast$-representations of unitization C*-algebras $\sigma \colon A^+ \to \bB(\sH)$ and $\tau \colon B^+ \to \bB(\sK)$ such that $\tau$ and $\bar{\sigma} := \sigma \oplus \tau \circ \phi $ are ample representations, that is, $\tau^{-1}(\bK(\sK))=0$ and $\bar{\sigma}^{-1}(\bK(\bar{\sH}))=0$ (where $\bar{\sH}:=\sH \oplus \sK$). Note that we can choose $\sigma$ as the zero representation if $\phi$ is injective.

For a C*-algebra $D$, let $C_u(\fT, D)$ denote the C*-algebra of $A$-valued uniformly continuous functions on $\fT:=[0,\infty)$. Hereafter we identify $\fT$ with $[0,1)$ by a reparametrization $t \mapsto s=t(1+t^2)^{-1/2}$. Following \cite{mathKT160906690}, we define the C*-algebras
\begingroup
\allowdisplaybreaks[4]
\begin{align}
\begin{split}
\fD(A)&:=\{ T \in \bB (\bar{\sH}) \mid [T, \bar{\sigma} (a)] \in \bK(\bar{\sH} ) \ \forall a \in A  \}, \\
\fD(B)&:=\{ T \in \bB (\sK ) \mid [T , \tau(b)] \in \bB(\sK) \ \forall b \in B \}, \\
\fC(A)&:=\{ T \in \fD(A) \mid T \bar{\sigma}(a) \in \bK(\bar{\sH}) \ \forall a \in A \}, \\
\fD_L(A)&:=\{ T_s \in C_u(\fT , \fD(A)) \mid [T_s , \bar{\sigma} (a)]\in C_0([0,1) , \bK (\bar{\sH})) \ \forall a \in A \}, \\
\fC_L(A)&:= C_u(\fT , \fC(A)) \cap \fD_L(A), \\
\fD_L^0(A)&:=\{ T_s \in \fD_L(A) \mid T_0=0 \}, \\
\fC_L^0(A)&:=\{ T_s \in \fC_L(A) \mid T_0=0 \}. 
\end{split}
\label{defn:dual}
\end{align}
\endgroup
Note that $\fD(B) \subset \fD(A)$ as C*-subalgebras of $\bB(\bar{\sH})$. We write $\phi _\fD$ for this inclusion. 

\begin{lem}\label{lem:Kisom}
The inclusions 
\begin{itemize}
\item $\iota_1 \colon \fC_L^0(A) \to \fC_L(A)$,
\item $\iota_2 \colon \fC_L^0(A) \to \fD_L^0(A)$ and
\item $\iota_3 \colon \fD(A)(0, 1) \to \fD_L^0(A)$
\end{itemize}
induce isomorphisms of $\K$-groups.
\end{lem}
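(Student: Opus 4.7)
The plan is to realise each inclusion $\iota_j$ as the inclusion of an ideal into an ambient algebra, identify the corresponding quotient, and then apply the six-term exact sequence; the three isomorphism claims thereby reduce to showing that each quotient has trivial $\K$-theory. Concretely, $\iota_1$ is the kernel of the evaluation $\ev_0 \colon \fC_L(A) \twoheadrightarrow \fC(A)$, so the $\iota_1$ claim is equivalent to $\K_*(\fC(A)) = 0$. The map $\iota_2$ exhibits $\fC_L^0(A)$ as an ideal of $\fD_L^0(A)$ with quotient $R := \fD_L^0(A)/\fC_L^0(A)$, and $\iota_3$ exhibits $\fD(A) \otimes C_0(0,1)$ as an ideal of $\fD_L^0(A)$ with quotient $Q$ recording the uniformly continuous asymptotic behaviour of $T_s$ as $s \to 1$ modulo compactly supported perturbations. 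Thus the lemma reduces to the three vanishing claims $\K_*(\fC(A)) = \K_*(R) = \K_*(Q) = 0$.

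The first of these, $\K_*(\fC(A)) = 0$, is the standard Paschke-duality input. The ampleness of $\bar{\sigma}$ combined with Voiculescu's absorption theorem yields isometries $V_1, V_2 \in \bB(\bar{\sH})$ with $V_1 V_1^* + V_2 V_2^* = 1$ and $[V_i, \bar{\sigma}(a)] \in \bK(\bar{\sH})$ for every $a \in A$. Conjugation by $V_1$ defines a $*$-endomorphism $\Psi$ of $\fC(A)$, and the relation $\mathrm{id}_{\fC(A)} \oplus \Psi \cong \Psi$ realised by the pair $V_1, V_2$ implements an Eilenberg swindle which kills $\K_*(\fC(A))$.

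For $\K_*(Q) = 0$ and $\K_*(R) = 0$, the plan is to construct an analogous swindle along the localization parameter $s \in [0,1)$. For each $r \in [0,1]$ one considers the reparametrization $\Phi_r(T)_s := T_{h_r(s)}$, where $h_r \colon [0,1) \to [0,1)$ is a continuous interpolation from $h_0 \equiv 0$ to $h_1 = \mathrm{id}$, chosen so that $h_r(s) \to 1$ as $s \to 1$ in order to preserve the commutator-compactness condition $[{\cdot}, \bar{\sigma}(a)] \in C_0([0,1), \bK)$. The family $\{\Phi_r\}_{r \in [0,1]}$ is then a norm-continuous homotopy of $*$-endomorphisms of $\fD_L^0(A)$ from the zero map to the identity; combined with the stabilization afforded by the isometries $V_i$ above, it implements an Eilenberg swindle in each of the quotients $Q$ and $R$ and forces their $\K$-theories to vanish.

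The most delicate step will be checking that these reparametrizations descend to well-defined $*$-endomorphisms of $Q$ and $R$ for every intermediate value of $r$: both the uniform continuity and the $C_0$-asymptotic commutator condition must be preserved, or else their failure must be absorbed by the relevant ideal. Since the algebras (\ref{defn:dual}) are modelled on those of \cite{mathKT160906690}, the whole argument is likely a direct adaptation of the corresponding $\K$-theoretic identifications for localization algebras developed there; should the direct reparametrization prove cumbersome, one can instead identify $Q$ and $R$ with suspension or mapping-cone versions of the Paschke-dual quotient $\fD(A)/\fC(A)$ and cite the vanishing from that framework.
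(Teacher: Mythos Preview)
Your overall strategy---reduce each $\iota_j$ to the vanishing of $\K_*$ of the corresponding quotient via the six-term sequence---is exactly what the paper does, and your treatment of $\iota_1$ via the Voiculescu swindle on $\fC(A)$ is correct and matches the cited source. The difference is that for $\iota_2$ and $\iota_3$ the paper simply cites the vanishing of $\K_*(\fD_L^0(A)/\fC_L^0(A))$ and $\K_*(\fD_L(A))$ from the localization-algebra literature, whereas you attempt to sketch the swindles directly.

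There is a genuine error in that sketch. You assert that the reparametrizations $\Phi_r(T)_s=T_{h_r(s)}$ give a norm-continuous homotopy of $*$-endomorphisms of $\fD_L^0(A)$ from the zero map to the identity. That would make $\fD_L^0(A)$ contractible, hence $\K$-acyclic; but then all three maps in the lemma would be isomorphisms between zero groups, contradicting the very next lemma in the paper, which identifies $\K_{1-*}(\fD_L^0(A))$ with $\KK_*(A,C_0(0,1))$. Concretely, your requirements on $h_r$ are inconsistent: ``$h_0\equiv 0$'' and ``$h_r(s)\to 1$ as $s\to 1$ for every $r$'' cannot both hold for a continuous family, and without the latter the asymptotic commutator condition $[T_{h_r(s)},\bar\sigma(a)]\in C_0([0,1),\bK)$ fails for intermediate $r$. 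The swindle has to live in the quotients, not in $\fD_L^0(A)$, and there it works for different reasons in the two cases: in $R$ the commutator constraint disappears after passing to $\fD(A)/\fC(A)$, so a naive rescaling contracts; for $\iota_3$ one needs an extra step you did not anticipate---the paper first homotopes $\iota_3$ to the inclusion $\fD(A)(0,\tfrac12)\hookrightarrow\fD_L^0(A)$, so that the cokernel becomes $\fD_L(A)$ itself (no boundary condition at the left endpoint), and it is on $\fD_L(A)$ that the shift-plus-isometries swindle runs. Your quotient $Q=\fD_L^0(A)/\fD(A)(0,1)$ does not admit this clean identification. Your closing fallback, citing the vanishing from the localization-algebra framework, is in fact precisely the paper's proof.
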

\begin{proof}
Note that $\iota_3$ is homotopic to the inclusion of $\fD(A)(0,1) \cong \fD(A)(0,\frac{1}{2})$ into $\fD_L^0(A)$.
They follow from the vanishing of $\K$-groups of $\fC _L(A)/\fC_L^0(A) \cong \fC(A)$, $\fD_L^0(A)/\fC_L^0(A)$ and $\fD_L^0(A)/\fD(A)(0,\frac{1}{2}) \cong \fD_L(A)$, which are proved in \cite[Proposition 5.3.7]{MR1817560}, \cite[Proposition 4.3 (b)]{mathKT160906690} and \cite[Proposition 4.3 (a)]{mathKT160906690} respectively.
\end{proof}

We consider two homomorphisms
\[\Theta_{A,*} \colon \K_{1-*}(\fD _L^0(A)) \to \KK_*(A, C_0(0,1)) \]
for $\ast = 0,1$ given by
\begin{align*}
\Theta _A ([u_s]) &:= \bigg[ \bar{\sH}(0,1) \oplus \bar{\sH}(0,1)^{\mathrm{op}} , \sigma \oplus \sigma ,  \begin{pmatrix}0 & u_s^* \\ u_s & 0 \end{pmatrix} \bigg]_{\textstyle .} \\
\Theta _{A, 1} ([p_s])&:=[\bar{\sH}(0,1), \bar{\sigma} , 2p_s-1 ],
\end{align*}
for $u_s \in \mathrm{U}(\bM_N(\fD _L(A)^0)^+)$ and $p_s \in \mathrm{P}(\bM_N(\fD_L^0(A)^+))$.

\begin{lem}
The above $\Theta _{A , 0}$ and $\Theta_{A, 1}$ are isomorphisms.
\end{lem}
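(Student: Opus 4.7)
The plan is to identify $\Theta_{A,*}$ with a composition of well-known isomorphisms, using Lemma~\ref{lem:Kisom} as the bridge between the localized picture $\fD_L^0(A)$ and the ordinary dual picture $\fD(A)$.

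First, by Lemma~\ref{lem:Kisom}, the inclusion $\iota_3 \colon \fD(A)(0,1) \to \fD_L^0(A)$ induces an isomorphism $\iota_{3,*}$ on $\K$-theory. Hence it suffices to verify that the pullback $\Theta_{A,*} \circ \iota_{3,*}$ defines an isomorphism $\K_{1-*}(\fD(A)(0,1)) \to \KK_*(A, C_0(0,1))$. Next, I would apply the topological suspension isomorphism $\K_{1-*}(\fD(A)(0,1)) \cong \K_{-*}(\fD(A))$ on the source and Kasparov's suspension $\KK_*(A, C_0(0,1)) \cong \KK_{*+1}(A, \bC)$ on the target (with degrees read modulo $2$), both realized concretely as Kasparov product with the Bott generator of $\KK_1(\bC, C_0(0,1))$.

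A direct inspection of the formulas defining $\Theta_{A,0}$ and $\Theta_{A,1}$ shows that under these identifications, $\Theta_{A,0}\circ\iota_{3,*}$ sends a unitary class $[u]\in\K_1(\fD(A))$ represented by $u\in\mathrm{U}(\bM_N(\fD(A)^+))$ to the odd Kasparov bimodule $(\bar{\sH}^{\oplus N}, \bar{\sigma}^{\oplus N}, u)$, and $\Theta_{A,1}\circ\iota_{3,*}$ sends a projection class $[p]\in\K_0(\fD(A))$ to the even bimodule $(\bar{\sH}^{\oplus N}, \bar{\sigma}^{\oplus N}, 2p-1)$. Both of these are precisely the classical Paschke duality maps associated to the ample representation $\bar{\sigma}$, which are known isomorphisms (see e.g.\ \cite[Chapter 5]{MR1817560}). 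The conclusion then follows by composing the isomorphisms along the chain.

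The principal technical obstacle is the compatibility check in the middle step: one must carefully verify that the two suspension isomorphisms really do intertwine $\Theta_{A,*}\circ\iota_{3,*}$ with the unsuspended Paschke duality map. Concretely, this amounts to recognizing the Kasparov product of an odd (respectively even) cycle over $\fD(A)$ with the Bott generator of $\KK_1(\bC,C_0(0,1))$ as the explicit formula used to define $\Theta_{A,*}$. This is a routine diagram chase once orientations and the Puppe sequence maps are pinned down, and no new analytic input beyond classical Paschke duality is required.
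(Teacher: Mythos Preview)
Your overall strategy—reduce via $\iota_3$ to $\fD(A)(0,1)$, then desuspend both sides to land on classical Paschke duality $\K_{1-*}(\fD(A))\cong\KK_*(A,\bC)$—is sound and is a genuinely different route from the paper's. The paper does not desuspend; instead it introduces the parametrized dual algebras $\fD(A,X)$ and $\fD_0(A,X)$, invokes the known isomorphism $\K_{1-*}(\fD(A,X))\cong\KK_*(A,C_0(X))$ for $X$ an open interval, and then proves that the chain of inclusions $\fD(A)(0,1)\hookrightarrow\fD_0(A,(0,1))\hookrightarrow\fD(A,(-1,2))$ are $\K$-equivalences. The advantage of the paper's route is that the formulas defining $\Theta_{A,*}$ match the formulas defining the parametrized Paschke map on the nose, so no Bott-element compatibility needs to be verified; the cost is the extra comparison lemmas for the inclusions. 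Your route trades that for the suspension compatibility check.

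That said, your execution contains a bookkeeping error that undermines confidence in the ``routine diagram chase'' you defer. The suspension isomorphism gives $\K_1(\fD(A)(0,1))\cong\K_0(\fD(A))$ and $\K_0(\fD(A)(0,1))\cong\K_1(\fD(A))$, so after your identifications $\Theta_{A,0}\circ\iota_{3,*}$ becomes a map $\K_0(\fD(A))\to\KK_1(A,\bC)$ (projections to odd cycles, $[p]\mapsto[\bar\sH,\bar\sigma,2p-1]$), and $\Theta_{A,1}\circ\iota_{3,*}$ becomes $\K_1(\fD(A))\to\KK_0(A,\bC)$ (unitaries to even cycles). You have these two swapped, and you also call $(\bar\sH,\bar\sigma,2p-1)$ an even bimodule when it is the standard odd Paschke cycle. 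Since the suspension compatibility is precisely the step where signs, degrees, and orientations must be pinned down, and since that step carries the entire weight of your argument, you should actually carry it out rather than assert it—these errors suggest it has not yet been done carefully.
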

\begin{proof}
By Lemma \ref{lem:Kisom}, it suffices to show that the composition
\[ \Psi_{A, *} \colon \K_{1-*}(\fD(A)(0, 1)) \xrightarrow{(\iota_3)_*} \K_{1-*}(\fD_L^0(A)) \xrightarrow{\Theta _{A, *}} \KK_*(A,C_0(0,1)) \]
is an isomorphism. 

For a locally compact space $X$, let
\begin{align*}
\fD(A, X )&:=\{ T \in C_b^{\mathrm{st}}(X, \bB(\sH)) \mid [T, \sigma(a)] \in C_0(X, \bK(\sH)) \}, \\
\fD _0(A, X)&:=\overline{C_0(X) \cdot\fD(A, X)},
\end{align*}
where $C_b^{\mathrm{st}}(X, \bB(\sH))$ denotes the C*-algebra of bounded strictly continuous $\bB(\sH)$-valued functions on $X$, which is isomorphic to the bounded operator algebra on the Hilbert $C_0(X)$-module $\sH \otimes C_0(X)$. By the duality of KK-theory \cite[Theorem 3.2]{MR1814167} and Kasparov's generalized Voiculescu theorem \cite[Theorem 5]{MR587371}, the homomorphisms $ \tilde{\Theta} _{A,X,*} \colon \K_{1-*}(\fD(A, X)) \to \KK_*(A, C_0(X))$ given by
\begin{align*} \tilde{\Theta}_{A,X,0}([u_x])&:= \bigg[ C_0(X , \bar{\sH} \oplus \bar{\sH}^{\mathrm{op}} ) , \sigma \oplus \sigma ,  \begin{pmatrix}0 & u_x^* \\ u_x & 0 \end{pmatrix} \bigg]_{\textstyle ,}\\
\tilde{\Theta}_{A,X,1}([p_x])&:= [ C_0(X , \bar{\sH} ) ,  \sigma ,  2p_x-1 ],
\end{align*}
are isomorphic. 

The remaining task is to show that the inclusions
\begin{enumerate}
\item $\fD(A)(0,1) \to \fD_0(A, (0,1))$ and
\item $\fD_0(A,(0,1)) \to \fD(A, (-1, 2) )$
\end{enumerate}
induce isomorphisms of $\K$-groups. Indeed, the composition of these two inclusions is homotopic to the inclusion $\fD(A)(0,1) \to \fD(A, (0,1))$.

For (1), apply the five lemma for the map between long exact sequences of $\K$-groups associated to 
\[
\xymatrix{
0 \ar[r] & \fD(A)(0, 1) \ar[r] \ar[d] & \fD(A)[0,1) \ar[r] \ar[d] & \fD(A) \ar[r] \ar@{=}[d] & 0 \\
0 \ar[r] & \fD _0 (A, (0,1)) \ar[r] & \fD _0 (A, [0,1)) \ar[r] & \fD(A) \ar[r] & 0 \\
}
\]
Note that $\fD(A)[0,1)$ and $\fD_0(A, [0,1))$ has trivial K-groups since they are contractible.
For (2), observe that  
\[ \fD(A, (-1,2))/\fD_0(A, (0,1))  \cong \fD(A, (-1,0]) \oplus \fD(A, [1,2))\]
and
\[ \K_*(\fD(A, [0,1))) \cong \KK_{1-*}(A, C_0[0,1)) =0. \qedhere \]
\end{proof}

It is proved in \cite[Proposition 4.2]{mathKT160906690} that $\fD_L(A)/\fC_L(A)$ is canonically isomorphic to $C_u(\fT, \fD(A))/C_u(\fT , \fC(A))$. Hence the $\ast$-homomorphism $\fD(A) \to C_u([0,1), \fD(A))$ mapping $T\in \fD(A)$ to the constant function with value $T$ induces a $\ast$-homomorphism 
\[ c \colon \fD(A) \to C_u(\fT, \fD(A) )/C_u^0(\fT , \fC(A)) \cong \fD_L(A)/\fC_L^0(A),\]
where $C_u^0([0,1), \fC(A)):= \{ T_s \in C_u(\fT, \fC(A)) \mid T_0=0\}$.
Set 
\[
 \fD_L(\phi):= \{ T_s \in \fD_L(A) \mid T_0 \in \fD(B), \ T_s-T_0 \in \fC(A) \}.
\]
Then there is a commutative diagram of exact sequences
\[\xymatrix{
0 \ar[r] &\fC_L^0(A) \ar[r] \ar@{=}[d] \ar@{}[rd]& \fD_L(\phi) \ar[d] \ar[r] & \fD(B) \ar[r] \ar[d]^{c \circ  \phi_\fD } & 0 \\
0 \ar[r] & \fC_L^0(A) \ar[r] & \fD_L(A) \ar[r] & \fD_L(A)/\fC_L^0(A) \ar[r] & 0.
}\]
Let $\iota_4$ denote the inclusion $\fC_L^0(A) \to \fD_L(\phi)$ and let $q$ denote the quotient $\fD_L(\phi) \to \fD(B)$.

\begin{lem}\label{lem:diag1}
The diagram
\[\xymatrix{
\K_* (\fD(B)(0,1)) \ar[r]^{\partial} \ar[d]^{\Psi_{B , *}} & \K_* (\fC_L^0(A)) \ar[d]^{\Theta_{A, *} \circ (\iota_2)_*} \\
\KK_{1-*} (B, C_0(0,1) ) \ar[r]^{\phi^*} & \KK_{1-*} (A , C_0(0,1))
}\]
commutes.
\end{lem}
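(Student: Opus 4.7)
The approach is direct cycle-chasing around the diagram. Both composites produce classes in $\KK_{1-*}(A,C_0(0,1))$ represented by explicit Kasparov bimodules, and the plan is to exhibit these as equal up to a degenerate direct summand. Fix $x\in\K_*(\fD(B)(0,1))$ and choose a projection or unitary representative $\tilde x$ in the relevant unitization, according to parity.

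The left route $\phi^*\circ\Psi_{B,*}(x)$ unfolds directly: by the definition $\Psi_{B,*}=\Theta_{B,*}\circ(\iota_3)_*$, it produces a Kasparov cycle on $\sK(0,1)$ or $\sK(0,1)^{\oplus 2}$ with left $B$-action $\tau$ and operator built directly from $\tilde x$; the pullback $\phi^*$ replaces the left action by $\tau\circ\phi$ without altering the module or the operator.

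For the right route, I will compute $\partial x$ by constructing a canonical lift of $\tilde x$ to $\fD_L(\phi)(0,1)^+$: tensor $\phi_\fD(\tilde x)$ with a cutoff $\chi\in C_u(\fT)$ satisfying $\chi(0)=1$ and $\chi\to 0$ at infinity. Applying the exponential or index formula for $\partial$ (as in Lemma \ref{lem:Klift}) yields an explicit element of $\fC_L^0(A)(0,1)^+$. Passing through $(\iota_2)_*$ and $\Theta_{A,*}$ produces a Kasparov cycle on $\bar{\sH}(0,1)$ or $\bar{\sH}(0,1)^{\oplus 2}$ with left action $\bar{\sigma}=\sigma\oplus\tau\circ\phi$. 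Because $\phi_\fD$ sends $\fD(B)$ into $0\oplus\bB(\sK)\subset\bB(\bar{\sH})$, the operator part of this cycle splits along $\bar{\sH}=\sH\oplus\sK$: the $\sH$-summand is trivial modulo compacts and yields a degenerate cycle with action $\sigma$, while the $\sK$-summand, after an operator homotopy that collapses the cutoff $\chi$, matches the cycle representing $\phi^*\circ\Psi_{B,*}(x)$. Discarding the degenerate summand finishes the identification.

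The principal obstacle is this final identification, which is an instance of the naturality of Bott periodicity (or the index map) with respect to $\phi_\fD$. It must be checked parity-by-parity using standard K-theoretic operator homotopies on the cutoff $\chi$: the cases $*=0$ and $*=1$ use projection and unitary representatives respectively, but the argument is parallel. One may alternatively bypass these explicit homotopies by identifying $\partial$ with the Kasparov product against a fixed extension class in $\KK_1(\fD(B),\fC_L^0(A))$ and invoking associativity of the Kasparov product, at the cost of proving first that this extension class corresponds to $\phi_\fD$ under the isomorphisms $\Theta_B$ and $\Theta_A$.
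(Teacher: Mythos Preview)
Your proposed lift does not land in $\fD_L(\phi)$, and this is not a minor technicality: it is the whole content of the lemma. Recall that membership in $\fD_L(\phi)$ demands two things of a path $T_s$: first, $[T_s,\bar{\sigma}(a)]\in C_0([0,1),\bK(\bar{\sH}))$ (the localization condition), and second, $T_s-T_0\in\fC(A)$ for every $s$. Your candidate $T_s=\chi(s)\,\phi_\fD(\tilde x)$ satisfies the first condition because the cutoff kills the commutator at infinity, but it fails the second: $(\chi(s)-1)\phi_\fD(\tilde x)\,\bar{\sigma}(a)=0\oplus(\chi(s)-1)\tilde x\,\tau(\phi(a))$, and $\tilde x\,\tau(\phi(a))$ has no reason to be compact (only its commutator with $\tau$ is). The constant lift $T_s\equiv\phi_\fD(\tilde x)$ has the opposite defect: it trivially satisfies $T_s-T_0=0\in\fC(A)$ but fails the localization condition since the commutator is a nonzero constant. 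There is in fact no natural section of $\fD_L(\phi)\to\fD(B)$, which is why the boundary map carries content; your strategy of ``lift and apply the index formula'' cannot be executed without first solving a problem equivalent to the lemma.

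The paper avoids this by replacing the boundary map with the mapping cone description $\partial=l_*\circ(\varphi_*)^{-1}$, where $Ck$ is the cone on $k\colon\fC_L^0(A)\hookrightarrow\fD_L(\phi)$, $\varphi\colon Ck\to S\fD(B)$ is evaluation at $s=0$, and $l\colon Ck\to\fC_L^0(A)$ is evaluation at $t=0$. The commutativity then reduces to exhibiting a homotopy of $\ast$-homomorphisms $\theta_\kappa\colon Ck\to\fD_L^0(A)$ between $\iota_2\circ l$ and (the composite through) $\varphi$; this homotopy is a rotation in the $(t,s)$-square and requires no lifting at all. Your alternative suggestion of packaging $\partial$ as a Kasparov product with an extension class is formally correct but, as you acknowledge, the identification of that class with $\phi_\fD$ under $\Theta_A,\Theta_B$ is exactly the statement to be proved.
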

\begin{proof}
Let $k \colon \fC_L^0(A) \to \fD_L(\phi )$ denote the inclusion. We regard an element $f \in Ck$ as a $\fD(A)$-valued continuous function on $[0,1]_t \times [0,1)_s$ such that $f(0,{\cdot}) \in \fC_L^0(\phi)$, $f(t, {\cdot}) \in \fD_L(\phi)$ for $t \in (0,1)$ and $f(1, {\cdot})=0$. Let 
\[ \varphi \colon Ck \to S(\fD_L(\phi)/\fC_L^0(A)) \cong S\fD(B)\]
denote the quotient (in other words, the evaluating homomorphism at $s=0$) and let $l \colon Ck \to \fC_L^0(A)$ denote the evaluating $\ast$-homomorphism at $t=0$. Since $\varphi_*$ is an isomorphism and $l_* \circ (\varphi_*)^{-1}= \partial$, it suffices to show that the diagram
\[\xymatrix{
\K_*(Ck) \ar[r]^{l_*} \ar[d]^{\varphi_*} & \K_*(C_L^0(A)) \ar[d]^{(\iota_2)_*} \\
\K_*(\fD(B)(0,1)) \ar[r]^{\fD\phi \circ \iota_3} \ar[d]^{\Psi_{A,*}} & \K_*(\fD_L^0(A) ) \ar[d]^{\Theta_{A, *}} \\
\K_*(\fD(B, (0,1))) \ar[r]^{\phi^*} &\K_*(\fD(A, (0,1))) 
}\]
commutes.
The lower square commutes by definition. Since the continuous path
\[ \theta_\kappa (f)(s)=\left\{ \begin{array}{ll}f (s ,s \tan (\pi\kappa /2) ) & \kappa \in [0,1/2], \\ f (s \tan (\pi(1-\kappa) /2) , s ) & \kappa \in [0,1/2], \end{array} \right.  \]
of $\ast$-homomorphisms from $Ck$ to $\fD_L^0(A)$ for $\kappa \in [0,1]$ satisfies $\theta_0 =\iota_2 \circ l$ and $\theta_1 = \varphi$, we obtain that the upper square also commutes.
\end{proof}

Let $\tilde{\sH}$ denote a Hilbert $C_0(-1,1)$-module $\sH (-1,0) \oplus \sK(-1,1)$.
We define the $\ast$-homomorphism $\tilde{\sigma} \colon C\phi \to \bB(\tilde{\sH})$ by
\[ \pi(a,b_s)(s)=\left\{ \begin{array}{ll} \bar{\sigma}(a) & s \in (-1,0), \\ \sigma(b_s) & s \in [0,1), \end{array} \right. \]
and  the group homomorphism
\[\Theta _\phi \colon \K_1(\fD_L(\phi)) \to \KK(C\phi , C_0(\bR)) ) \]
by 
\[ \Theta _\phi ([u_s]):= \bigg[ \tilde{\sH} \oplus \tilde{\sH}^{\mathrm{op}}, \pi \oplus \pi, \begin{pmatrix}0 & u_{-s}^* \\ u_{-s}& 0 \end{pmatrix} \bigg]_{\textstyle .}\]
Here we extend $u_s$ to $(-1,1)$ as $u_s=u_0$ for $s<0$.
In other words, $\Theta_\phi ([u_s])$ is represented by the quasi-homomorphism $(\Ad (u_{-s}) \circ \tilde{\sigma} , \tilde{\sigma}) \colon C\phi \to \bB(\tilde{\sH}) \triangleright \bK(\tilde{\sH})$.

\begin{lem}\label{lem:diag2}
The diagram
\[
\xymatrix@C=4em{
\K_1(\fC_L^0(A)) \ar[r]^{(\iota_4)_*} \ar[d]^{\Theta_{A , 0} \circ (\iota_2)_*}& \K_1(\fD_L(\phi ) ) \ar[r]^{q_*({\cdot }) \hotimes_{\bC} \beta } \ar[d]^{\Theta_\phi } & \K_0(\fD(B)(0,1)) \ar[d]^{\Psi_{B , 1} }  \\
\KK(A,C_0(-1,1)) \ar[r]^{\theta^*} & \KK(C\phi , C_0(-1,1 ) ) \ar[r]^{\beta \hotimes_{C_0(0,1)}  \psi^*({\cdot}) } & \KK_{1} (B,C_0(-1,1)) 
}
\]
commutes. 
\end{lem}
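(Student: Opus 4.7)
The plan is to verify both squares by unpacking the definitions at the level of Kasparov bimodules and matching the resulting cycles up to operator-homotopy and addition of degenerate summands. The structural observation driving both verifications is that the Hilbert module $\tilde{\sH} = \sH(-1,0) \oplus \sK(-1,1)$ and the representation $\pi$ of $C\phi$ naturally decompose the cycle $\Theta_\phi[u_s]$ into an ``$A$-piece'' supported on $s \in (-1, 0)$ (on which $\pi$ factors through $\bar{\sigma} \circ \theta$) and a ``$B$-piece'' supported on $s \in [0, 1)$ (on which $\pi$ factors through $\tau$); the left square identifies the $A$-piece and the right square isolates the $B$-piece.

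For the left square, I would represent $[u_s] \in \K_1(\fC_L^0(A))$ by $u_s \in \bM_N(\fC_L^0(A)^+)$ with $u_0 = 1$, so that the extension convention $u_s = u_0 = 1$ for $s \leq 0$ used in $\Theta_\phi$ forces the off-diagonal operator with entries $u_{-s}^*$ and $u_{-s}$ to equal the identity swap on the $s \in [0, 1)$ portion. This swap commutes with the diagonal $\pi$-action on $\sK(0,1)^N \oplus \sK(0,1)^{N,\op}$, so this portion is a degenerate Kasparov cycle. The remaining $s \in (-1, 0)$ portion lives on $\bar{\sH}(-1, 0)^N \oplus \bar{\sH}(-1, 0)^{N,\op}$ with $C\phi$-action $\bar{\sigma} \circ \theta$, and after the reparametrization $s \mapsto -s$ (identifying $C_0(-1, 0)$ with $C_0(0, 1)$) this matches $\theta^* \Theta_{A, 0}(\iota_2)_*[u_s]$ exactly.

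For the right square, take $[u_s] \in \K_1(\fD_L(\phi))$ and compute $\psi^* \Theta_\phi[u_s]$: since $\pi(\psi(b)) = 0$ on $s \in (-1, 0)$ for any $b \in SB$, the cycle is concentrated on $s \in [0, 1)$ with Hilbert module $\sK(0, 1)^N \oplus \sK(0, 1)^{N,\op}$, $SB$-action through $\tau$, and off-diagonal operator determined by $u_{-s} = u_0$ (constant on this range by the extension convention). This constant cycle over $B \otimes C_0(0, 1)$ represents the external product of the $\Theta_{B, 0}$-style cycle of $u_0$ with the Bott generator of $\K_1(C_0(0, 1))$; pairing further via the Kasparov product with $\beta$ then identifies the result with the suspension-plus-$\Theta_{B, 1}$ construction defining $\Psi_{B, 1}(q_*[u_s] \hotimes \beta)$.

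The main obstacle will be the careful bookkeeping of intervals, reparametrizations, and orientations, specifically reconciling the $C_0(-1, 1)$ target of $\Theta_\phi$ with the $C_0(0, 1)$ target of $\Theta_{B, 1}$ and the Bott element, and checking that no extraneous signs arise from the various identifications. A cleaner alternative would be to express both vertical maps as Kasparov products against a single universal element in $\KK(\fD_L(\phi), C\phi \otimes C_0(-1, 1))$ and appeal to associativity, but constructing such a class appears to require essentially the same analysis.
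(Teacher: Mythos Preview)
Your approach is essentially the paper's: for the left square, strip off the degenerate $s\in[0,1)$ summand of $\Theta_\phi\circ(\iota_4)_*$ and match the remaining $\bar{\sH}(-1,0)$ piece with $\theta^*\circ\Theta_{A,0}\circ(\iota_2)_*$ via $s\mapsto -s$; for the right square, observe that $\pi\circ\psi$ kills the $s<0$ part so the cycle collapses to the constant-$u_0$ cycle on $\sK(0,1)$, which one then identifies with $\Psi_{B,1}(q_*[u]\hotimes\beta)$.

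One caution on the left square: your ``matches exactly'' is too optimistic. The reflection $s\mapsto -s$ identifying $C_0(-1,0)$ with $C_0(0,1)$, when both are viewed inside $C_0(-1,1)$, induces $-1$ on $\K$-theory; the paper carries this sign explicitly and obtains $\theta^*\circ\Theta_{A,0}\circ(\iota_2)_*=-\Theta_\phi\circ(\iota_4)_*$ before concluding. So the sign you flagged as ``the main obstacle'' does in fact appear, and you should track it rather than assert an exact match. (It does not affect the five-lemma argument downstream.)
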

\begin{proof}
Let $u_s \in \bM_N(\fC_L^0(A))^+ $ be a unitary. Then we have 
\begin{align*} 
(\theta^* \circ (\iota_2)_* \circ \Theta_{A,0})([u_s])&= \bigg[ \bar{\sH}(0,1) \oplus \bar{\sH}^{\mathrm{op}}(0,1), \bar{\sigma} \oplus \bar{\sigma} , \begin{pmatrix} 0 & u_s^* \\ u_s & 0\end{pmatrix} \bigg] \\
&=-\bigg[ \bar{\sH}(-1,0) \oplus \bar{\sH}^{\mathrm{op}}(-1,0), \bar{\sigma} \oplus \bar{\sigma} , \begin{pmatrix} 0 & u_{-s}^* \\ u_{-s} & 0\end{pmatrix} \bigg] \\
&= -\bigg[ \tilde{\sH} \oplus \tilde{\sH}^{\mathrm{op}}, \tilde{\sigma} \oplus \tilde{\sigma}, \begin{pmatrix}0 & u_{-s}^* \\ u_{-s}& 0 \end{pmatrix} \bigg] \\
&= -(\Theta_\phi \circ (\iota_4)_*)([u_{s}]).
\end{align*}
This means that the left square commutes. 

Next, let $v_s \in \bM_N(\fD_L(\phi))$ be a unitary. Let $\tilde{\tau}$ denote the $\ast$-homomorphism from $B(0,1)$ to $\bB(\sK(-1, 1))$ given by $\tilde{\tau}(b)(s)=\sigma(b_{s})$ for $b=(b_s)_{s \in (0,1)} \in B(0,1)$. Then we have
\begin{align*} 
(\psi^* \circ \Theta_\phi)([v_s])&= \bigg[ \tilde{\sH} \oplus \tilde{\sH}^{\mathrm{op}}, \tilde{\sigma} |_{B(0,1)} \oplus \tilde{\sigma}|_{B(0,1)} , \begin{pmatrix}0 & v_{-s}^* \\ v_{-s}& 0 \end{pmatrix} \bigg] \\
&= \bigg[ \sK(0,1) \oplus \sK(0,1)^{\mathrm{op}}, \tilde{\tau} \oplus \tilde{\tau} , \begin{pmatrix}0 & v_{0}^* \\ v_{0}& 0 \end{pmatrix} \bigg]  \\ 
&=\tilde{\Theta}_{B,\pt , 0}([v_0]) \otimes j_* \in \KK(B(0,1), C_0(-1,1)),
\end{align*}
where $j \colon C_0(0,1) \to C_0(-1,1)$ is the inclusion, which induces a $\KK$-equivalence. Now we recall that
\[ \Psi_{B,1} ([v_0] \otimes \beta) = \tilde{\Theta}_{B , \pt , 0}([v_0]) \otimes \beta \in \KK_{-1}(B, C_0(-1,1))\]
by the definition of $\tilde{\Theta}_{B, \pt , 0}$ and $\Psi _{B,1}$. Therefore we get 
\[ \beta \hotimes_{C_0(0,1)} (\psi^* \circ \Theta_\phi)([v_s])  = \tilde{\Theta}_{B,\pi, 0}([v_0]) \otimes \beta = \Psi_{B,1} (q_*([v_s]) \otimes \beta ). \]
This means that the right square commutes. 
\end{proof}

\begin{thm}\label{thm:relKhom}
The homomorphism $\Theta_\phi$ is an isomorphism.
\end{thm}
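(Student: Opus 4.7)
The plan is to apply the five lemma to the diagram in Lemma~\ref{lem:diag2} after extending it by one term on each side, thereby obtaining a morphism between two five-term exact sequences in which four of the five vertical arrows are already known to be isomorphisms.

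On the top, I will extend using the six-term $\K$-theory exact sequence coming from the short exact sequence
\[ 0 \to \fC_L^0(A) \to \fD_L(\phi) \xrightarrow{q} \fD(B) \to 0, \]
after the Bott identification $\K_*(\fD(B)) \cong \K_{*-1}(\fD(B)(0,1))$ by Kasparov product with $\beta$. Under this identification, the connecting map $\K_1(\fD_L(\phi)) \to \K_1(\fD(B))$ is precisely $q_*(\cdot) \hotimes_{\bC} \beta$, matching the rightmost horizontal map in Lemma~\ref{lem:diag2}. On the bottom, I will extend using the Puppe sequence in the second variable of $\KK(\cdot, C_0(-1,1))$ attached to the mapping cone extension $0 \to SB \to C\phi \xrightarrow{\theta} A \to 0$, whose successive maps are $\phi^*$, $\theta^*$, and $\psi^*$ once Bott periodicity is applied to the $SB$ term.

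The two new outer squares so produced are themselves instances of Lemma~\ref{lem:diag1}, one in each degree. The four outer vertical arrows are isomorphisms: $\Psi_{B,*}$ and $\Theta_{A,*}$ are isomorphisms by the discussion preceding Lemma~\ref{lem:diag1}, and $(\iota_2)_*$ is an isomorphism by Lemma~\ref{lem:Kisom}. The five lemma then forces the middle arrow $\Theta_\phi$ to be an isomorphism as well.

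The main obstacle I anticipate is bookkeeping of the sign and orientation conventions. The reparametrization $s \mapsto -s$ hidden in the definition of $\Theta_\phi$, combined with the two Bott identifications used to rewrite the top and bottom rows, forces one to check that the new outer squares commute up to a consistent overall sign. This is essentially a repetition of the computations already performed in the proofs of Lemmas~\ref{lem:diag1} and~\ref{lem:diag2}, with the only additional input being the degree shift; nothing genuinely new is required beyond carefully reconciling these identifications.
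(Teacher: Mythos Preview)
Your proposal is correct and follows essentially the same route as the paper: the paper's proof writes down the five-term diagram obtained by splicing Lemma~\ref{lem:diag1} (in both degrees) onto the two ends of Lemma~\ref{lem:diag2}, invokes exactness of the top row from the extension $0 \to \fC_L^0(A) \to \fD_L(\phi) \to \fD(B) \to 0$ and of the bottom row from the Puppe sequence for $C\phi$, and then applies the five lemma with the four outer verticals being the known isomorphisms $\Psi_{B,*}$ and $\Theta_{A,*}\circ(\iota_2)_*$. Your anticipated sign bookkeeping is indeed the only thing to watch, and the paper absorbs it into the statements of Lemmas~\ref{lem:diag1} and~\ref{lem:diag2} rather than revisiting it here.
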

\begin{proof}
Here we write as $S:=C_0(-1,1)$. Apply the five lemma to the diagram of exact sequences
\[\mathclap{
\xymatrix@C=1em{
\K_1(S\fD(B)) \ar[r] \ar[d]^{\Psi_{B,0}} &\K_1(\cC_L^0(A)) \ar[r] \ar[d]^{\Theta_{A,0} \circ (\iota_2)_*}& \K_1(\fD_L(\phi ) ) \ar[r] \ar[d]^{\Theta_\phi } & \K_0(S\fD(B)) \ar[r] \ar[d]^{\Psi_{B,1} }  & \K _0(\fC_L^0(A)) \ar[d]^{\Theta _{A, 1} \circ (\iota_2)_*} \\
\KK(B,S) \ar[r] & \KK(A, S) \ar[r] & \KK(C\phi , S) \ar[r] & \KK_{1} (B,S) \ar[r] & \KK_{1} (A, S),
}}
\]
which commutes by Lemma \ref{lem:diag1} and Lemma \ref{lem:diag2}.
\end{proof}

Lastly we consider the case that $A$ and $B$ are unital and $\phi \colon A \to B$ preserves the unit. 
Let $(\sigma , \sH)$ and $(\tau , \sK)$ are unital ample $\ast$-representations of $A$ and $B$ respectively and $(\bar{\sigma},\bar{\sH}) := (\sigma \oplus \tau , \sH \oplus \sK) $. 
Then the $\ast$-representations $\sigma ^+ :=\sigma \oplus 0_\sH$ onto $\sH^+:=\sH^{\oplus 2}$ and $\tau^+:=\tau \oplus 0_\sK$ onto $\sK^+:=\sK ^{\oplus 2}$ (where $0_\sH$ is the zero representation to $\sH$) extend to unital ample representations of $A^+$ and $B^+$ respectively.  
Here we use $\sigma^+$ and $\tau^+$ for the definition of C*-algebras as in (\ref{defn:dual}). We also define the C*-algebras $\fD_{L}^{\mathrm{u}}(\phi)$ as
\[ \fD_{L}^{\mathrm{u}}(\phi):= \fD_L(\phi) \cap C_u(\fT , \bB(\bar{\sH})) = p\fD_L(\phi)p, \]
where $p$ denotes the projection onto $\bar{\sH} \subset \bar{\sH}^+$, namely $p=\bar{\sigma}(1)$. 
\begin{lem}\label{lem:Khomunital}
The corner embedding $\fD_L^{\mathrm{u}}(\phi) \to \fD_L(\phi)$ induces an isomorphism of K-theory. 
\end{lem}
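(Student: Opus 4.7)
The aim is to show the corner embedding $\iota \colon \fD_L^{\mathrm{u}}(\phi) = p\fD_L(\phi)p \hookrightarrow \fD_L(\phi)$ is a $\K$-isomorphism, where $p = \bar{\sigma}(1_A)$. My strategy is to compare both sides with a common quotient. Decompose $\bar{\sH}^+ = \bar{\sH}\oplus\bar{\sH}^\perp$ with $\bar{\sH}^\perp := (1-p)\bar{\sH}^+$. Since the extended representations satisfy $\bar{\sigma}^+(a)(1-p) = 0 = \tau^+(b)(1-p)$ for $a\in A$, $b\in B$ (using that $\phi$ is unital, so $p = \bar{\sigma}^+(1_A) = \tau^+(1_B)$), the projection $p$ commutes exactly with $\bar{\sigma}^+(A)$ and $\tau^+(B)$, and both representations are supported on $\bar{\sH}$. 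A direct check of the defining commutator condition for $\fD(A)$ then shows that in block form relative to $\bar{\sH}\oplus\bar{\sH}^\perp$, the off-diagonal entries of any $T\in\fD(A)$ must be compact, the $(2,2)$ entry is unconstrained in $\bB(\bar{\sH}^\perp)$, and the $(1,1)$ entry lies in $p\fD(A)p$; parallel decompositions hold for $\fD(B)$ and $\fC(A)$.

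Consider the two-sided ideal $J := \fD_L(\phi)(1-p)\fD_L(\phi)$ generated by $1-p$. Combining the block analysis with the $\fD_L$-condition that all commutators with $\bar{\sigma}^+(a)$ lie in $C_0(\fT,\bK)$, a routine computation gives that the $(1,1)$, $(1,2)$, and $(2,1)$ entries of $J$ lie in $C_0(\fT,\bK)$, while the $(2,2)$ entry is $C_u(\fT,\bB(\bar{\sH}^\perp))$. Since the $(1,2)$, $(2,1)$, and $(2,2)$ blocks of $J$ already coincide with those of $\fD_L(\phi)$, the quotient $\fD_L(\phi)/J$ is canonically isomorphic to $\fD_L^{\mathrm{u}}(\phi)/(\fD_L^{\mathrm{u}}(\phi)\cap C_0(\fT,\bK))$, and the quotient $*$-homomorphism $T\mapsto pTp\bmod C_0(\fT,\bK)$ is well-defined because the multiplicative defect $T_{12}T'_{21}$ lies in $C_0(\fT,\bK)$.

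To show $\K_*(J) = 0$, the sub-ideal $\bM_2(C_0(\fT,\bK))\subseteq J$ has vanishing $\K$-theory because $C_0[0,1)$ is contractible, and the quotient $J/\bM_2(C_0(\fT,\bK)) \cong C_u(\fT,\bB(\bar{\sH}^\perp))/C_0(\fT,\bK(\bar{\sH}^\perp))$ is $\K$-trivial: $\bB(\bar{\sH}^\perp)$ is properly infinite with contractible unitary group (Kuiper), these properties are inherited by the uniformly-continuous path algebra $C_u(\fT,\bB(\bar{\sH}^\perp))$, so $\K_*$ of the latter vanishes, and by the six-term sequence so does $\K_*$ of the Calkin-type quotient. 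Thus $\K_*(J) = 0$, so the projection $q\colon \fD_L(\phi)\to \fD_L(\phi)/J$ is a $\K$-isomorphism. Meanwhile $q\circ\iota$ coincides with the quotient $\fD_L^{\mathrm{u}}(\phi)\to \fD_L^{\mathrm{u}}(\phi)/(\fD_L^{\mathrm{u}}(\phi)\cap C_0(\fT,\bK))$, which is itself a $\K$-isomorphism since the ideal being quotiented out embeds into the $\K$-trivial algebra $C_0(\fT,\bK(\bar{\sH}))$. Two-out-of-three then yields that $\iota_*$ is an isomorphism. The most delicate step is verifying the $\K$-triviality of the path Calkin algebra, i.e.\ propagating proper infiniteness and the Kuiper-type contractibility from $\bB(\bar{\sH}^\perp)$ to the uniformly-continuous path algebra.
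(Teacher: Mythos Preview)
Your overall strategy --- decompose $\fD_L(\phi)$ into $2\times 2$ blocks with respect to $p$ and show that the off-diagonal and the complementary corner contribute nothing to $\K$-theory --- is exactly the paper's. However, you have missed the constraint $T_0\in\fD(B)$ in the definition of $\fD_L(\phi)$. Recall that $\fD(B)\subset\bB(\sK^+)$ is embedded into $\bB(\bar{\sH}^+)=\bB(\sH^+\oplus\sK^+)$ as $0_{\sH^+}\oplus\fD(B)$, so the value $T_0$ of any $T\in\fD_L(\phi)$ is supported on $\sK^+$. This has two consequences that break your argument. First, the $(2,2)$-corner $(1-p)\fD_L(\phi)(1-p)$ is not all of $C_u(\fT,\bB(\bar{\sH}^\perp))$: at $s=0$ one only gets $(1-p)T_0(1-p)\in\bB(\sK)\subsetneq\bB(\bar{\sH}^\perp)$. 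Second --- and this is the fatal point --- your claimed sub-ideal $\bM_2(C_0(\fT,\bK))$ is not contained in $\fD_L(\phi)$, let alone in $J$: a $\bK(\bar{\sH}^+)$-valued function whose value at $s=0$ has a nonzero component on $\sH^+$ fails $T_0\in\fD(B)$. So the short exact sequence you use to compute $\K_*(J)$ does not exist as stated, and your identification of the quotient $\fD_L(\phi)/J$ is likewise off.

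The paper tracks this $s=0$ constraint explicitly, identifying the off-diagonal and the $(1-p)$-corner as algebras $\fC$ and $\fB$ that carry a restriction at $s=0$, and then kills $\K_*(\fB)$ via the evaluation-at-zero extension, whose quotient is a full $\bB(\sH)$-type algebra with trivial $\K$-theory. This route also sidesteps the ``path Calkin'' step you yourself flag as delicate: one never needs to verify directly that $C_u(\fT,\bB)/C_0(\fT,\bK)$ has vanishing $\K$-theory.
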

\begin{proof}
Since $[\sigma(1_B), T_0] \in \bK(\sH)$ and $[\bar{\sigma}(1_A) , T_s] \in \bK(\bar{\sH})$, the off-diagonal part $p\fD_L(\phi)(1-p)$ is of the form
\[ \fC:= \{ T_s \in C_0[0,1) \otimes \bK(\bar{\sH}) \mid T_0 \in \bK(\sH) \},\] 
which has trivial K-groups. Similarly, the corner subalgebra $(1-p)\fD_L(\phi)(1-p)$ is of the form
\[ \fB:= \{T_s \in C_u(\fT , \bB(\bar{\sH})) \mid T_s-T_0 \in \bK(\sH )\} .\]
By the six term exact sequence associated to the extension
\[ 0 \to \{ T_s \in C_u(\fT , \bK(\sH)) \mid T_0=0 \} \to \fB \to \bB(\sH) \to 0, \]
the $\K$-group of $\fB$ turns out to be zero. Hence the composition 
\[  \fD_L^{\mathrm{u}}(\phi) \to  \fD_L(\phi) \to \fD_L(\phi)/ \bM_2 \fC \cong  \begin{pmatrix}\fD_L^{\mathrm{u}}(\phi)/\fC & 0 \\ 0 & \fB/\fC \end{pmatrix} \]
induces an isomorphism of $\K$-theory. This finishes the proof since the quotient $\fD_L(\phi ) \to \fD_L(\phi)/\bM_2 \fC$ also induces the isomorphism of $\K$-theory. 
\end{proof}

\subsection{Range of the dual assembly map}
Let $(X,Y)$ be a pair of finite CW-complexes. Now we determine the rational relative and (stably) almost flat $\K^0$-groups $\K_{\mathrm{af}}^0(X,Y)_\bQ$ and $\K_{\mathrm{s\mathchar`-af}}(X,Y)_\bQ$ under the assumption that $\Gamma := \pi_1(X)$ and $\Lambda := \pi_1(Y)$ satisfy (\ref{cond:BC1}), (\ref{cond:BC2'}), (\ref{cond:BC3}) exhibited in pp.\pageref{cond:BC3} and
\begin{itemize}
\item[\eqnum \label{cond:BC4}] Both $\Gamma$ and $\Lambda$ are residually amenable. 
\end{itemize}
Here we say that a discrete group $\Gamma$ is residually amenable if for any nontrivial element $\gamma \in \Gamma$ there is a homomorphism from $\Gamma$ to an amenable group $\Gamma'$ which maps $\gamma$ to a nontrivial element.
For example, all residually finite groups are residually amenable. In particular, all finitely generated linear groups \cite{MR0003420}
and $3$-manifold groups \cite{MR895623} (thanks to Perelman's proof of the geometrization theorem) are examples of residually amenable groups. Note that they also satisfy the condition (\ref{cond:BC1}).

\begin{lem}\label{lem:intermediate}
Let $\Gamma $ be a residually amenable group and let $\sA$ denote the family of unitary representations of $\Gamma$ factoring through amenable quotients of $\Gamma$. Then the completion $C^*_\sA(\Gamma)$ of $\bC[\Gamma]$ by the norm $\| x \|_{\sA}:= \sup_{\pi \in \sA} \| \pi(x) \|$ is an intermediate completion, that is, there are quotient maps
\[C^*_{{\max}} (\Gamma) \xrightarrow{\epsilon_{\max, \sA}^\Gamma } C^*_\sA (\Gamma ) \xrightarrow{\epsilon_{\sA, r}^\Gamma} C^*_{r}(\Gamma ) \]
such that $\epsilon_{\sA,r}^\Gamma \circ \epsilon_{\max , \sA}^\Gamma = \epsilon ^\Gamma$.
\end{lem}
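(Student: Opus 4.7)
The plan is to verify the two norm inequalities $\|x\|_r \leq \|x\|_\sA \leq \|x\|_{\max}$ on $\bC[\Gamma]$, which together imply that $\|\cdot\|_\sA$ is a C*-norm lying between $\|\cdot\|_r$ and $\|\cdot\|_{\max}$. The quotient maps $\epsilon_{\max, \sA}^\Gamma$ and $\epsilon_{\sA, r}^\Gamma$ then arise by extending the identity on $\bC[\Gamma]$ to the respective completions, and their composition agrees with $\epsilon^\Gamma$ on the dense subalgebra $\bC[\Gamma]$, hence everywhere.

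The right-hand inequality is immediate: any $\pi \in \sA$ is a unitary representation of $\Gamma$, so $\|\pi(x)\| \leq \|x\|_{\max}$ by the universal property of the maximal norm, and taking the supremum over $\sA$ gives $\|x\|_\sA \leq \|x\|_{\max}$ (in particular $\|\cdot\|_\sA$ is finite-valued). As a supremum of C*-seminorms it is itself a C*-seminorm, and positive-definiteness will follow a posteriori from the lower bound by $\|\cdot\|_r$.

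For the left inequality I would unpack residual amenability as follows. For each $\gamma \in \Gamma \setminus \{e\}$, the hypothesis produces a homomorphism to an amenable group $\Gamma_\gamma'$ sending $\gamma$ to a non-trivial element; letting $N_\gamma$ be its kernel, the quotient $\Gamma/N_\gamma$ embeds into $\Gamma_\gamma'$ and is therefore amenable, while $\gamma \notin N_\gamma$. Closing $\{N_\gamma\}$ under finite intersections yields a directed family $\cN$ of normal subgroups of $\Gamma$ with $\bigcap_{N \in \cN} N = \{e\}$ and each $\Gamma/N$ amenable. For each such $N$, set $\pi_N := \lambda_{\Gamma/N} \circ q_N$, the pullback of the regular representation of the amenable quotient along $q_N \colon \Gamma \to \Gamma/N$; by construction $\pi_N \in \sA$.

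The crucial step is to show that the left regular representation $\lambda_\Gamma$ is weakly contained in $\bigoplus_{N \in \cN} \pi_N$. I would invoke Fell's criterion for weak containment via pointwise convergence of diagonal matrix coefficients: the coefficient $\langle \pi_N(\gamma)\delta_{\bar e}, \delta_{\bar e}\rangle$ equals the indicator $\mathbf{1}_N(\gamma)$, which converges pointwise to the Kronecker delta $\delta_{e,\gamma} = \langle \lambda_\Gamma(\gamma)\delta_e, \delta_e\rangle$ as $N$ shrinks to $\{e\}$ along $\cN$. Since $\delta_e$ is cyclic for $\lambda_\Gamma$ and coefficients for cyclic vectors determine the representation up to weak equivalence, pointwise approximation of this single positive-definite function by the $\pi_N$-coefficients yields the weak containment. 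Consequently $\|x\|_r = \|\lambda_\Gamma(x)\| \leq \sup_{N \in \cN} \|\pi_N(x)\| \leq \|x\|_\sA$ for all $x \in \bC[\Gamma]$, which completes the plan. The only delicate point is to invoke Fell's criterion correctly with the cyclic vector $\delta_e$; everything else is formal.
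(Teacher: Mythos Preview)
Your proof is correct and follows essentially the same strategy as the paper: both reduce the nontrivial inequality $\|\cdot\|_r \leq \|\cdot\|_\sA$ to showing that the left regular representation $\lambda_\Gamma$ is weakly contained in the direct sum of the pulled-back regular representations of amenable quotients. The only cosmetic difference is that the paper verifies weak containment by approximating matrix coefficients of an arbitrary vector in $\ell^2(\Gamma)$ directly (via a compactly supported approximant and injectivity of $q_n$ on a suitable finite set), whereas you invoke the cyclic-vector form of Fell's criterion with $\delta_e$, which is a slightly cleaner packaging of the same computation.
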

\begin{proof}
Since $\Gamma $ is residually amenable, there is a decreasing sequence $N_n$ of normal subgroups of $\Gamma$ such that $\Gamma _n:= \Gamma /N_n$ is amenable and $\bigcap_{n} N_n=\{ e \}$. Let $\lambda _n$ denote the left regular representation $\Gamma \to \mathrm{U}(\ell^2(\Gamma_n))$ and let $\lambda $ denote the left regular representation $\Gamma \to \mathrm{U}(\ell^2(\Gamma))$. Now it suffices to show that $\lambda$ is weakly contained in $\bigoplus_n \lambda_n$. 

Let $\varepsilon >0$, let $F \subset \Gamma$ be a finite subset and let $\xi \in L^2(\Gamma)$. Pick a compactly supported function $\eta \in c_c(\Gamma) \subset \ell^2(\Gamma )$ such that $\| \eta \| \leq \| \xi \|$ and $\| \xi - \eta \| < (2\| \xi \|)^{-1} \varepsilon $. For a sufficiently large $n$, the restriction of the quotient $q_n \colon \Gamma \to \Gamma _n$ to $(\mathop{\mathrm{supp}} \eta )^{-1} \cdot F \cdot (\mathop{\mathrm{supp}} \eta )$ is injective. Let us choose a section $s \colon q_n(\mathop{\mathrm{supp}} \eta) \to \mathop{\mathrm{supp}} \eta$ of $q_n$. Then we have
\begin{align*}
| (\lambda(\gamma ) \xi, \xi) - (\lambda_n (\gamma) s^*\eta , s^*\eta ) | =& |(\lambda(\gamma ) \xi, \xi) - (\lambda (\gamma) \eta , \eta ) | \\
\leq & 2\|\xi \| \cdot (2\| \xi \|)^{-1}\varepsilon = \varepsilon   
\end{align*}
for any $\gamma \in F$. This concludes the proof.
\end{proof}

\begin{lem}\label{lem:QD}
For a residually amenable group $\Gamma$, the intermediate completion $C^*_\sA (\Gamma )$ is quasi-diagonal. Moreover, a homomorphism $\phi \colon \Lambda \to \Gamma $ between residually amenable groups induces the $\ast$-homomorphism $\phi_\sA \colon C^*_\sA(\Lambda) \to C^*_\sA(\Gamma)$. 
\end{lem}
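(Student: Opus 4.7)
The plan is to split the proof into the two claims. For quasi-diagonality, the strategy is to invoke the Tikuisis--White--Winter theorem \cite{MR3583354}: for any amenable discrete quotient $G$ of $\Gamma$, the full group C*-algebra $C^*(G)=C^*_r(G)$ is nuclear, carries a faithful trace, is stably finite, and lies in the bootstrap class, so it is quasi-diagonal. I would then invoke Voiculescu's characterization of QD by c.c.p.\ maps to matrix algebras that are $\varepsilon$-multiplicative and $\varepsilon$-isometric on prescribed finite subsets. Given a finite set $\{x_i\}_{i=1}^k \subset C^*_\sA(\Gamma)$ and $\varepsilon>0$, I would approximate each $x_i$ by an element of $\bC[\Gamma]$ and select for each $i$ an amenable quotient $q_i \colon \Gamma \twoheadrightarrow G_i$ with $\|q_i(x_i)\|_{C^*(G_i)}$ close to $\|x_i\|_\sA$. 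Letting $G \subset G_1 \times \cdots \times G_k$ be the image of the diagonal homomorphism — amenable as a subgroup of a finite product of amenable groups — yields a single amenable quotient $q \colon \Gamma \twoheadrightarrow G$ through which every $q_i$ factors; applying QD of $C^*(G)$ and precomposing with the induced $\ast$-homomorphism $C^*_\sA(\Gamma) \to C^*(G)$ (well defined because every unitary representation of $G$ pulled back through $q$ belongs to $\sA$) produces the required c.c.p.\ map on $C^*_\sA(\Gamma)$.

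For functoriality, the key observation is that if $\pi \in \sA$ factors through an amenable quotient $q \colon \Gamma \twoheadrightarrow G$, then $\pi \circ \phi$ factors through $\Lambda/\phi^{-1}(\ker q)$, which embeds into $G$ via the map induced by $\phi$ and is therefore amenable as a subgroup of an amenable group. Hence $\pi \circ \phi$ lies in the corresponding family for $\Lambda$, and $\|\pi(\phi_*(x))\|=\|(\pi \circ \phi)(x)\| \le \|x\|_{\sA(\Lambda)}$ for $x\in\bC[\Lambda]$. Taking the supremum over $\pi \in \sA$ yields $\|\phi_*(x)\|_{\sA(\Gamma)} \le \|x\|_{\sA(\Lambda)}$, so $\phi_*\colon \bC[\Lambda] \to \bC[\Gamma]$ extends by continuity to the required $\ast$-homomorphism $\phi_\sA$.

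I do not anticipate a genuine obstacle: the hardest ingredient is TWW, invoked as a black box, and the rest is standard manipulation of quotients and approximation. The only mildly subtle step is the "combining quotients" part of the QD argument, where one must verify that the combined quotient $G$ remains amenable — but this is immediate from closure of the class of amenable groups under finite direct products and under subgroups.
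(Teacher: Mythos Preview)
Your proposal is correct. The functoriality argument is essentially identical to the paper's: both reduce to the observation that pulling back a representation in $\sA_\Gamma$ along $\phi$ yields one in $\sA_\Lambda$, since the image of $\Lambda$ in an amenable quotient of $\Gamma$ is amenable (subgroups of amenable groups are amenable).

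For quasi-diagonality, both arguments rest on the Tikuisis--White--Winter theorem, but the organization differs. The paper fixes in advance a decreasing sequence $N_n \trianglelefteq \Gamma$ with amenable quotients $\Gamma_n = \Gamma/N_n$ and $\bigcap_n N_n=\{e\}$ (taken from Lemma~\ref{lem:intermediate}), applies TWW to each $C^*(\Gamma_n)$ separately, and then assembles an increasing sequence of finite-rank projections $p_m = \bigoplus_n p_{n,m}$ in $\bigoplus_n \ell^2(\Gamma_n)$ that witnesses quasi-diagonality of the representation $\bigoplus_n \lambda_n$, asserted to be faithful on $C^*_\sA(\Gamma)$. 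You instead invoke Voiculescu's c.c.p.\ characterization and, for each finite set and tolerance, manufacture a \emph{single} amenable quotient $G$ (the diagonal image in a finite product of amenable quotients) through which the relevant norms are nearly attained, apply TWW once to $C^*(G)$, and precompose with the canonical $\ast$-homomorphism $C^*_\sA(\Gamma) \to C^*(G)$. Your route is slightly more self-contained in that it sidesteps any need to verify that a fixed countable chain of quotients with trivial intersection actually realizes the full $\sA$-norm; the paper's route, on the other hand, exhibits an explicit faithful quasi-diagonal representation rather than merely verifying the abstract property.
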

\begin{proof}
Let $\Gamma _n$ and $\lambda _n$ be as in Lemma \ref{lem:intermediate}. By the Tikuisis--White--Winter theorem \cite{MR3583354}, the group C*-algebra $C^*( \Gamma _n )$ is quasi-diagonal. Pick a dense sequence $\{ a_n \}_{n \in \bN}$ of $C^*_\sA(\Gamma)$. Then, for each $n>0$ there is an increasing sequence $\{ p_{n,m} \in \bB(\ell^2(\Gamma_n)) \}_{n \leq m}$ of finite rank projections such that $\| [\lambda_n(a_l), p_{n,m}]\| < 2^{-m}$ for all $l \leq m$. Then, $p_m:=\bigoplus p_{n,m}$ is an increasing sequence of finite rank projections in $\bigoplus \ell^2(\Gamma_n)$ such that $\| [\bigoplus _n\lambda _n (a_l), p_m] \| \to 0$ for all $l \in \bN$. Since $\bigoplus _n \lambda_n$ is a faithful representation of $C^*_\sA(\Gamma)$, the proof of the first part of the lemma is completed.

The second part follows from the fact that $\phi^*(\sA_\Gamma ) \subset \sA _\Lambda $ since amenability is passed to subgroups.
\end{proof}

\begin{thm}[{\cite[Corollary 4.4]{MR3275029}}]\label{thm:nonrelativedBC}
Let $\Gamma$ be a residually amenable group. Then, for any finite CW-complex $X$ with a reference map $ f \colon X \to B\Gamma $, any element in $\Im (\beta_{\Gamma} \circ \epsilon^\Gamma) \subset \K^0(X)$ is almost flat. Moreover, if $\Gamma$ has the $\gamma$-element (e.g.\ $\Gamma$ is coarsely embeddable into a Hilbert space), any element $x \in \Im (f^*)$ is almost flat modulo torsion. 
\end{thm}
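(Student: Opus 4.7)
The first claim will reduce, via Lemma~\ref{lem:intermediate}, to showing that every class of the form $\beta_\Gamma((\epsilon^\Gamma_{\max, \sA})^* \xi_\sA)$ with $\xi_\sA \in \K^0(C^*_\sA(\Gamma))$ is almost flat: any $\xi \in \K^0(C^*_r\Gamma)$ lifts to $\xi_\sA := (\epsilon^\Gamma_{\sA,r})^*\xi$, and then $(\epsilon^\Gamma)^*\xi = (\epsilon^\Gamma_{\max,\sA})^*\xi_\sA$. Fix a good cover $\cU$ of $X$ with associated generating set $\cG_\Gamma$ of $\Gamma$, and represent $\xi_\sA$ by a quasi-homomorphism $(\varphi_+, \varphi_-) \colon C^*_\sA(\Gamma) \to \bB(\sH) \triangleright \bK(\sH)$ with $\varphi_\pm$ faithful and ample. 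Writing $P^\sA_\cV$ for the image in $C(X) \otimes C^*_\sA(\Gamma) \otimes \bM_I$ of the Mishchenko projection~\eqref{form:projMF}, the target class is represented by the compact difference $\varphi_+(P^\sA_\cV) - \varphi_-(P^\sA_\cV)$ viewed in $\K_0(C(X) \otimes \bK(\sH) \otimes \bM_I) \cong \K^0(X)$.

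Next, Lemma~\ref{lem:QD} together with Voiculescu's theorem will provide an increasing net $\{q_m\}$ of finite-rank projections on $\sH$ converging strongly to $1$ with $\|[q_m, \varphi_\pm(a)]\| \to 0$ for every $a \in C^*_\sA(\Gamma)$. Given $\varepsilon > 0$, choose $m$ so large that $\|[q_m, \varphi_\pm(u_\gamma)]\| < \varepsilon$ for all $\gamma \in \cG_\Gamma$; then the compressions $q_m \varphi_\pm(u_\gamma) q_m$ are $O(\varepsilon)$-close to unitaries on $P_m := q_m \sH$, and their polar corrections yield genuine quasi-representations $\pi_m^\pm \in \qRep_{P_m}^{O(\varepsilon), \cG_\Gamma}(\Gamma)$. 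The almost monodromy construction~\eqref{form:beta} then produces $(O(\varepsilon), \cU)$-flat bundles $E_{\beta(\pi_m^\pm)}$ on $X$.

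The heart of the argument is to show that $[E_{\beta(\pi_m^+)}] - [E_{\beta(\pi_m^-)}] = \beta_\Gamma((\epsilon^\Gamma_{\max, \sA})^*\xi_\sA)$ in $\K^0(X)$ for $m$ sufficiently large. The Mishchenko-style projection $p_{\beta(\pi_m^\pm)} = \sum \eta_\mu \eta_\nu \pi_m^\pm(\gamma_{\mu\nu}) \otimes e_{\mu\nu}$ of Remark~\ref{rmk:MF} differs by $O(\varepsilon)$ in norm from the compressed projection $q_m \varphi_\pm(P^\sA_\cV) q_m = \sum \eta_\mu \eta_\nu q_m \varphi_\pm(u_{\gamma_{\mu\nu}}) q_m \otimes e_{\mu\nu}$, while compactness of $\varphi_+(P^\sA_\cV) - \varphi_-(P^\sA_\cV)$ together with $q_m \to 1$ strongly ensures that the difference of compressions represents the same K-class as the full difference once $m$ is large. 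Combining these yields the identification, and since $\varepsilon$ is arbitrary, $\beta_\Gamma((\epsilon^\Gamma_{\max,\sA})^*\xi_\sA)$ is almost flat. For the second claim, Remark~\ref{rmk:nonrel} shows that when $\Gamma$ has the $\gamma$-element, $\beta_\Gamma \circ \epsilon_\Gamma^* \colon \K^*(C^*_r\Gamma) \to \K^*(B\Gamma)$ is rationally surjective (using $\Im j_\Gamma(\gamma_\Gamma) \subset \Im \epsilon_\Gamma^*$). Hence any $y \in \K^*(B\Gamma)$ satisfies $Ny = \beta_\Gamma(\epsilon_\Gamma^*\eta)$ for some $N \in \bZ_{>0}$ and $\eta \in \K^*(C^*_r\Gamma)$; naturality of $\ell_\Gamma$ under pullback gives $Nf^*y = \beta_\Gamma(\epsilon_\Gamma^*\eta) \in \K^0(X)$, which is almost flat by the first part.

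\noindent\textbf{Main obstacle.} The delicate step will be the K-theoretic identification in the third paragraph: one must control the interaction of compression $q_m({\cdot})q_m$ with K-theory, since the individual projections $\varphi_\pm(P^\sA_\cV)$ are non-compact and only their difference lies in $\bK(\sH) \otimes C(X) \otimes \bM_I$. Reconciling the norm-close compressed projection with the Mishchenko-style projection from the polar-corrected quasi-representation, and then passing to the K-class of the compact difference, will require careful use of strong convergence $q_m \to 1$ together with the asymptotic multiplicativity of the compressions on $\cG_\Gamma$.
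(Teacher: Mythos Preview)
Your plan is correct and follows the same logical route as the paper's proof, the only difference being that the paper outsources the technical core to \cite[Corollary~4.4]{MR3275029} as a black box: Lemma~\ref{lem:QD} shows $C^*_\sA(\Gamma)$ is quasi-diagonal, hence any $\K$-homology class pulled back through $(\epsilon^\Gamma_{\max,\sA})^*$ is quasi-diagonal in Dadarlat's sense, and his corollary delivers almost flatness directly; Remark~\ref{rmk:nonrel} then handles the second assertion exactly as you do. What you have written is essentially a reconstruction of Dadarlat's argument --- compression by quasi-central finite-rank projections, polar correction to genuine quasi-representations, and the $\K$-theoretic identification of the resulting almost flat bundle with $\beta_\Gamma$ of the original class. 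The step you flag as the main obstacle is precisely the content of that corollary; your outline (norm-closeness of $p_{\beta(\pi_m^\pm)}$ to the compressed Mishchenko projection, combined with compactness of $\varphi_+(P^\sA_\cV)-\varphi_-(P^\sA_\cV)$ and $q_m\to 1$ strongly) is the right strategy, and the remaining bookkeeping can be carried out cleanly using the unitary-conjugation device of Proposition~\ref{lem:quantBC} to turn the difference class into a genuine projection difference before compressing.
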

\begin{proof}
By Lemma \ref{lem:QD}, any element in the image of  $(\epsilon^\Gamma _\sA )^* \colon \KK(C^*_\sA \Gamma, \bC) \to \KK(C^*\Gamma, \bC)$ is quasi-diagonal in the sense of \cite[Definition 2.2]{MR3275029}, and hence mapped to an almost flat element in $\K^*(X)$ by \cite[Corollary 4.4]{MR3275029}. Now Remark \ref{rmk:nonrel} concludes the proof.
\end{proof}
 
Now we develop the relative version of Theorem \ref{thm:nonrelativedBC}. To this end, firstly we define the intermediate relative group C*-algebra
\[ C^*_\sA(\Gamma, \Lambda ) := SC(\phi_\sA  \colon C^*_\sA \Lambda \to C^*_\sA \Gamma ) \]
and  a finite rank approximation of a representative of each element $x \in \KK(C^*_\sA(\Gamma, \Lambda), \bC)$. 
Let $(\sigma , \sH)$ and $(\tau, \sK)$ be unital $\ast$-representations of $C^*_\sA(\Lambda)$ and $C^*_\sA(\Gamma)$ respectively such that $\tau$ and $\bar{\sigma}:=\sigma \oplus \tau \circ \phi_\sA$ are ample. 
By Theorem \ref{thm:relKhom} and Lemma \ref{lem:Khomunital}, the $\KK$-group $\KK(C^*_\sA (\Gamma, \Lambda), \bC)$ is isomorphic to the K-group of $\fD_L^\mathrm{u}(\phi _\sA )$ by the map $\Theta_\phi$.

Let $\cB:= \bB(\bar{\sH}) \oplus _{\cQ(\bar{\sH})} \bB(\bar{\sH})$. Note that the inclusion $\iota \colon \bK(\bar{\sH}) \to \cB$ to the first component induces the isomorphism of $\K$-groups. Let $p , q \in \bB(\bar{\sH})$ denote the projection onto $\sH$ and $\sK$ and set $P:= (p,p) \cB$, $Q:=(q,q)\cB$ (note that $Q=0$ if $\tau$ is the zero representation). Let $\Pi_u:=(\pi_1 , \pi_2 , \pi_0, \tilde{\pi},1)$ denote the stably h-relative representation of $(\Gamma, \Lambda)$ on $(P,Q)$ defined by $\pi_1:=(\Ad (u_0) \circ \sigma , \sigma)$, $\pi_2:= (\sigma , \sigma)$, $\pi_0 :=(\tau , \tau)$ and $\tilde{\pi}_\kappa$ is a continuous family of representations of $\Lambda$ onto $P \oplus Q $ defined as
\[\tilde{\pi}_\kappa (\gamma ) := \left\{ 
\begin{array}{ll}
(u_0 \bar{\sigma}(\gamma)u_0^* , \bar{\sigma}(\gamma))  & \kappa =1, \\
(u_0u_{2-\kappa}^*\bar{\sigma}(\gamma)u_{2-\kappa}u_0^* , \bar{\sigma}(\gamma))  & \kappa \in (1,2]. \\
\end{array}
\right.
\]
We write $\boldsymbol{\Pi}_u$ for the element of $\KK(C \phi  , \cB (-1,1))$ associated to $\Pi_u$ as in (\ref{form:qhom}). Since $\sigma$ and $\tau$ factors through $C^*_\sA(\Gamma)$ and $C^*_\sA(\Lambda)$ respectively, the Kasparov bimodule representing $\boldsymbol{\Pi}_u$ actually determines an element of $\KK(C\phi_\sA , \cB (-1,1))$.

On the other hand, for a sufficiently large $s \in [0,1)$, we have
\[ \|  u_su_0^* (u_0\bar{\sigma} (\gamma )u_0^*)u_0u_s^* - \bar{\sigma} (\gamma ) \| <\varepsilon \]
for all $\gamma \in \cG_\Lambda$. That is, $\boldsymbol{\pi}_{u, \varepsilon}:=(\pi_1, \pi_2,\pi_0, u_su_0^* )$ is a stably relative $(\varepsilon, \cG)$-representation of $(\Gamma, \Lambda)$ onto $(P,Q)$ (here, only the intertwiner $u_su_0^*$ breaks the condition of stable relative representation up to $\varepsilon $). 

\begin{lem}\label{lem:Pibeta}
For a unitary $u \in \mathrm{U}(\bM_N(\fD_L^{\mathrm{u}}(\phi_\sA)))$ and any $\varepsilon < (4+4|I|^2)^{-1}$, the $\KK$-cycle $\boldsymbol{\Pi}_u$ satisfies
\[\ell_{\Gamma, \Lambda} \otimes _{C^*(\Gamma, \Lambda) } \boldsymbol{\Pi}_u = [\boldsymbol{\beta}(\boldsymbol{\pi}_{u,\varepsilon})] \in \K_0(X,Y;\cB). \]
\end{lem}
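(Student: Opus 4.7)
The plan is to apply Theorem \ref{prp:bundle} to the stably h-relative representation $\Pi_u$ and then identify the resulting stably relative bundle with $\boldsymbol{\beta}(\boldsymbol{\pi}_{u,\varepsilon})$ up to a bundle-theoretic correction small enough that Remark \ref{rmk:relative}(1) forces equality of K-classes. Theorem \ref{prp:bundle} gives that $\ell_{\Gamma, \Lambda} \otimes_{C^*(\Gamma, \Lambda)} \boldsymbol{\Pi}_u$ is represented by the stably relative $\cB$-bundle $(\cP_1, \cP_2, \cQ, V_1)$ on $(X,Y)$, where $\cP_i := \tilde{X} \times_{\Gamma, \pi_i} P$ for $i=1,2$, $\cQ := \tilde{Y} \times_{\Lambda, \pi_0} Q$, and $V_1$ is a bundle isomorphism induced by the homotopy $\tilde{\pi}_\kappa$ (unique up to homotopy), which in our situation runs through the adjoint action of $u_0 u_{2-\kappa}^*$ on the first component of $\cB$.

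Next I would expand $\boldsymbol{\beta}(\boldsymbol{\pi}_{u,\varepsilon}) = (\beta(\pi_1), \beta(\pi_2), \beta(\pi_0), \Delta_I(u_s u_0^*))$ via (\ref{form:bbeta}). Because $\pi_1, \pi_2, \pi_0$ are honest representations, Remark \ref{rmk:beta} gives $\|v_{\mu\nu}(x) - \pi_i(\gamma_{\mu\nu})\| < 4\varepsilon$ for $\bv = \beta(\pi_i)$ (and analogously for $\beta(\pi_0)$), so the cocycles $\beta(\pi_i)$ and $\beta(\pi_0)$ are $4\varepsilon$-close to the standard Mishchenko cocycles $\{\pi_i(\gamma_{\mu\nu})\}$ and $\{\pi_0(\lambda_{\mu\nu})\}$, whose associated bundles, via the projection presentation of Remark \ref{rmk:MF}, are canonically $\cP_i$ and $\cQ$. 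Under these identifications, I would arrange the intertwiner $V_1$ to be locally given by $u_s u_0^* \otimes e_\mu$ (using the same freedom that $V_\kappa$ is defined only up to homotopy, and that for $s$ sufficiently close to $1$ the unitary $u_s u_0^*$ is an $\varepsilon$-intertwiner of $\pi_1 \circ \phi \oplus \pi_0$ with $\pi_2 \circ \phi \oplus \pi_0$). A partition-of-unity estimate analogous to (\ref{form:ppipv}) then bounds the distance between $\Delta_I(u_s u_0^*)$ and the $V_1$-induced local intertwiner by $4|I|^2 \varepsilon$.

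Assembling Steps 1--3, the two stably relative $(\varepsilon', \cU)$-flat bundles $\boldsymbol{\beta}(\boldsymbol{\pi}_{u,\varepsilon})$ and $(\cP_1, \cP_2, \cQ, V_1)$ differ in the metric on $\Bdl_{P,Q}^{\varepsilon', \cU}(X,Y)_T$ by at most $(4 + 4|I|^2)\varepsilon < 1$, so Remark \ref{rmk:relative}(1) yields the equality of their K-classes in $\K_0(X,Y;\cB)$. The main obstacle will be Step 3: the intertwiner $V_1$ of Theorem \ref{prp:bundle} is built from the whole continuous homotopy $\tilde{\pi}_\kappa$ while $\boldsymbol{\beta}(\boldsymbol{\pi}_{u,\varepsilon})$ only sees a single $\varepsilon$-intertwiner $u_s u_0^*$, so I must exploit the homotopy freedom in $V_\kappa$ to choose a representative that matches $\Delta_I(u_s u_0^*)$ pointwise up to a uniformly controlled $O(\varepsilon)$ error.
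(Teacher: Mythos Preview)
Your strategy coincides with the paper's: apply Theorem \ref{prp:bundle}, identify $\cP_i$ with $E_{\beta(\pi_i)}$ (since the $\pi_i$ are honest representations), and then compare the two intertwiners. The paper resolves your Step 3 by constructing an explicit path of partial isometries $w_s$ that equals $(p_{\bv_2}|_Y\oplus p_{\bv_0})(u_{2-s}u_0^*\otimes 1_{\bM_I})$ on $[s_0,2]$ and is spliced on $[1,s_0]$ with a correction $v_{s_0}^*v_s$ coming from a lifted path of projections, with $s_0$ chosen by continuity so that $\|v_{s_0}-v_1\|<\varepsilon$; it then compares the endpoint $w_1$ in norm to the partial isometry $\bar w$ of Remark \ref{rmk:relative}(3) rather than working in the metric of $\Bdl^{\varepsilon',\cU}_{P,Q}$. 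The resulting bound $\|w_1-\bar w\|<(1+|I|^2)\varepsilon<1/4$ is exactly what produces the hypothesis $\varepsilon<(4+4|I|^2)^{-1}$, whereas your invocation of Remark \ref{rmk:relative}(1) with the threshold ``$<1$'' does not literally match that remark's smallness condition.
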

\begin{proof}
We write $\bv_i$ for the \v{C}ech $1$-cocycle $\beta(\pi_i)$ as in  for $i=1,2,0$ and let $p_{\bv_i}$ be the corresponding projecion as in Remark \ref{rmk:MF}. Since $\tilde{\pi}_\kappa= \Ad (u_0u_{2-\kappa}^*) \circ (\pi_2 \oplus \pi_0)$, $\Ad(u_0u_{2-\kappa}^* \otimes 1_{\bM_I})(p_{\bv_2} \oplus p_{\bv_0})$ gives a continuous family of projections connecting $p_{\bv_1} \oplus p_{\bv_0}$ and $p_{\bv_2} \oplus p_{\bv_0}$. Therefore, there is a continuous path of partial isometries $(v_s)_{s \in [1,2]}$ such that 
\begin{itemize}
\item $v_sv_s^* = p_{\bv_2} \oplus p_{\bv_0}$, 
\item $v_s^*v_s = \Ad(u_0u_{2-s}^*\otimes 1_{\bM_I})(p_{\bv_2} \oplus p_{\bv_0})$ for $s \in (1,2]$, 
\item $v_1^*v_1=p_{\bv_1} \oplus p_{\bv_0}$ and 
\item $v_2=p_{\bv_2} \oplus p_{\bv_0}$.
\end{itemize} 
By the continuity of $v_s$, there is $s_0 \in (1,2]$ such that $ \| v_{s_1} - v_{s_2} \| <\varepsilon$ for any $s_1, s_2 \in [1,s_0]$. Set
\[ w_s := \left\{ \begin{array}{ll} (p_{\bv_2}|_Y \oplus p_{\bv_0})(u_{2-s}u_0^*\otimes 1_{\bM_I}) & s \in [s_0 , 2], \\ (p_{\bv_2}|_Y \oplus p_{\bv_0})(u_{2-s_0}u_0^*\otimes 1_{\bM_I})v_{s_0}^*v_s & s \in [1,s_0]. \end{array} \right. \]
Then $w_s$ also satisfies $w_sw_s^* = p_{\bv_2}|_Y \oplus p_{\bv_0}$, $w_s^*w_s = \Ad(u_0u_{2-s}^*\otimes 1_{\bM_I})(p_{\bv_2}|_Y \oplus p_{\bv_0})$ for $s \in (1,2]$, $w_1^*w_1=p_{\bv_1}|_Y \oplus p_{\bv_0}$ and $w_2=p_{\bv_2}|_Y \oplus p_{\bv_0}$. 

Let $E_{\bv_i}$ denote the $P$-bundle $p_{\bv_i} \underline{P}^I_X = \tilde{X} \times _{\pi_i} P$. Then
\[ \ell_{\Gamma, \Lambda} \otimes _{C^*(\Gamma, \Lambda) } \boldsymbol{\Pi}_u = [ E_{\bv_1} , E_{\bv_2} , E_{\bv_0}, w_1 ] \in \K^0(X,Y;\cB ) \]
 by Theorem \ref{prp:bundle}. At the same time, we also have 
 \[ [\boldsymbol{\beta}(\boldsymbol{\pi}_{u, \varepsilon})] = [E_{\bv_1} , E_{\bv_2} , E_{\bv_0} , w_1 ]. \]
Indeed, as is mentioned in (\ref{form:bbeta}) we have $\boldsymbol{\beta} (\boldsymbol{\pi}_{u, \varepsilon}) = (\bv_1, \bv_2 , \bv_0, \Delta_I(u_su_0^*))$. Hence $[\boldsymbol{\beta} (\boldsymbol{\pi}_{u, \varepsilon})] = [(E_{\bv_1}, E_{\bv_2} , E_{\bv_0}, \bar{w})]$, where $\bar{w} \in C(Y) \otimes \bB(P \oplus Q) \otimes \bM_I$ is the partial isometry constructed in Remark \ref{rmk:relative} (3). In particular, $\bar{w}$ satisfies the inequality $\| \bar{w} - (p_{\bv_2|_Y \oplus \bv_0})(u_{2-s_0}u_0^* \otimes 1_{\bM_I} ) \| < |I|^2 \varepsilon$. On the other hand, we have 
\begin{align*} 
&\|  w_1  - (p_{\bv_2}|_Y \oplus p_{\bv_0})(u_{2-s_0}u_0^* \otimes 1_{\bM_I} )  \| \\
 =& \| (p_{\bv_2}|_Y \oplus p_{\bv_0}) (u_{2-s_0}u_0^*\otimes 1_{\bM_I})(v_{s_0}^*v_1-1) \| \\
\leq &\| v_{s_0}^*v_1-1 \| < \varepsilon  ,  
\end{align*}
and hence $\| w_1 - \bar{w} \| < (1+|I|^2) \varepsilon <1/4$. 
\end{proof}

\begin{thm}\label{thm:dBC}
Let $\phi \colon \Lambda \to \Gamma$ be a homomorphism between countable discrete groups. Assume that $(\Gamma , \Lambda )$ satisfies (\ref{cond:BC1}), (\ref{cond:BC2'}) and (\ref{cond:BC4}). Let $(X,Y)$ be a pair of finite CW-complexes with a reference map $f \colon (X,Y) \to (B\Gamma, B\Lambda)$. Then any element $x \in \Im (\beta_{\Gamma , \Lambda } \circ j_\phi (\gamma_\Gamma)) \subset \K^0(X,Y )$ is stably almost flat. Moreover, it is almost flat if $\phi$ is injective.
\end{thm}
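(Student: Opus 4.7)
Write $x = \beta_{\Gamma,\Lambda}(j_\phi(\gamma_\Gamma)(z)) = \ell_{\Gamma,\Lambda}\otimes_{C^*(\Gamma,\Lambda)} j_\phi(\gamma_\Gamma)(z)$ for some $z \in \K^0(C^*(\Gamma,\Lambda))$. Condition (\ref{cond:BC2'}) and Theorem \ref{thm:BC}(2) place $j_\phi(\gamma_\Gamma)(z) \in \Im \epsilon^*_{\Gamma,\Lambda}$. The residual amenability hypothesis (\ref{cond:BC4}), combined with Lemma \ref{lem:intermediate}, yields a factorization $C^*(\Gamma,\Lambda) \twoheadrightarrow C^*_\sA(\Gamma,\Lambda) \twoheadrightarrow C^*_r(\Gamma,\Lambda)$, so one may lift to $w \in \K^0(C^*_\sA(\Gamma,\Lambda))$ with $(\epsilon^\sA_{\Gamma,\Lambda})^*(w) = j_\phi(\gamma_\Gamma)(z)$. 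Consequently $x = \ell_{\Gamma,\Lambda}\otimes_{C^*(\Gamma,\Lambda)}(\epsilon^\sA_{\Gamma,\Lambda})^*(w)$.

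\textbf{Unitary representative and finite-rank approximation.} By Theorem \ref{thm:relKhom} together with Lemma \ref{lem:Khomunital}, $w$ is represented by a unitary $u=(u_s)_{s\in \fT} \in \mathrm{U}(\bM_N(\fD_L^{\mathrm{u}}(\phi_\sA)^+))$ with $\Theta_{\phi_\sA}[u] = w$. Fixing $\varepsilon < (4+4|I|^2)^{-1}$ and $s$ sufficiently close to $1$, Lemma \ref{lem:Pibeta} produces a stably relative $(\varepsilon,\cG)$-representation $\boldsymbol{\pi}_{u,\varepsilon}$ of $(\Gamma,\Lambda)$ on the Hilbert $\cB$-modules $(P,Q)$ satisfying
\[
x = [\boldsymbol{\beta}(\boldsymbol{\pi}_{u,\varepsilon})] \in \K^0(X,Y;\cB) \cong \K^0(X,Y),
\]
the last identification coming from the K-theory isomorphism induced by $\bK(\bar{\sH}) \hookrightarrow \cB$. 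Lemma \ref{lem:QD} gives the quasi-diagonality of $C^*_\sA \Gamma$ and $C^*_\sA \Lambda$; combined with the structural facts that $u_0 \in \fD(B)$ and $u_s - u_0 \in \fC(A)$ inherent in $\fD_L^{\mathrm{u}}(\phi_\sA)$, a quasi-diagonal approximation argument produces an increasing sequence $\bar{p}_n = p_n\oplus q_n \nearrow 1$ of finite-rank projections in $\bB(\sH\oplus\sK)$ such that $\|[\bar{p}_n,\bar{\sigma}(\gamma)]\| \to 0$ for $\gamma\in\cG_\Lambda$, $\|[\bar{p}_n,\tau(\gamma)]\| \to 0$ for $\gamma\in\cG_\Gamma$, and $\sup_{s\in\fT}\|[\bar{p}_n, u_s]\| \to 0$. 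Compression by $\bar{p}_n$ yields stably relative $(\varepsilon_n,\cG)$-representations $\boldsymbol{\pi}^{(n)}$ of $(\Gamma,\Lambda)$ on finite-dimensional Hilbert spaces with $\varepsilon_n \to 0$; by Remark \ref{rmk:relative}(1) and K-theoretic continuity, $[\boldsymbol{\beta}(\boldsymbol{\pi}^{(n)})] = x$ for all $n$ large, which proves $x \in \K_{\mathrm{s\mathchar`-af}}^0(X,Y)$.

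\textbf{Injective case and main obstacle.} When $\phi$ is injective, the remark following the setup of ample representations in Subsection 8.1 permits the choice $\sigma \equiv 0$ (with $\sH = 0$). This collapses the stabilizer summand of $\boldsymbol{\pi}_{u,\varepsilon}$, so that $\boldsymbol{\pi}_{u,\varepsilon}$ becomes a genuine (non-stably) relative $(\varepsilon,\cG)$-representation; the finite-rank compressions $\boldsymbol{\pi}^{(n)}$ are then relative representations on finite-dimensional Hilbert spaces, and $x$ is represented by almost flat relative bundles, i.e.\ $x \in \K_{\mathrm{af}}^0(X,Y)$. The principal technical hurdle is the uniform-in-$s$ commutator estimate $\sup_{s\in\fT}\|[\bar{p}_n, u_s]\| \to 0$: while the TWW quasi-diagonality of Lemma \ref{lem:QD} controls any fixed finite set of operators, the continuous family $\{u_s\}_{s\in\fT}$ demands a more delicate selection argument exploiting the uniform continuity of $s\mapsto u_s$ (inherited from $u \in C_u(\fT, \bB(\bar{\sH}))$) together with the specific filtered structure of $\fD_L^{\mathrm{u}}(\phi_\sA)$. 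Once this uniform compression is in place, the remaining identifications of K-classes follow from standard continuity arguments.
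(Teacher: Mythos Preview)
Your overall strategy---pass to the intermediate completion $C^*_\sA(\Gamma,\Lambda)$, represent the K-homology class by a unitary in $\fD_L^{\mathrm{u}}(\phi_\sA)$, invoke Lemma~\ref{lem:Pibeta}, then cut down to a finite-rank piece via quasi-diagonality---matches the paper. But there are two substantive issues.

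First, the ``principal technical hurdle'' you flag, namely $\sup_{s\in\fT}\|[\bar p_n,u_s]\|\to 0$, is a red herring. The paper never needs anything uniform in $s$: once $\boldsymbol{\pi}_{u,\varepsilon}=(\pi_1,\pi_2,\pi_0,u_s u_0^*)$ has been formed for a \emph{single} value of $s$, only finitely many operators enter, namely $\bar\sigma(\gamma)$, $\tau(\gamma)$ for $\gamma$ in the generating sets and the single intertwiner $u:=u_s u_0^*$. The crucial structural input is not uniform continuity of $s\mapsto u_s$ but rather $u-1\in\bK(\bar\sH)$, which follows directly from $u_s-u_0\in\fC(A)$ and unitality of $\bar\sigma$. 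Ordinary quasi-diagonality for a finite set then suffices.

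Second---and this is the genuine gap---your step ``by Remark~\ref{rmk:relative}(1) and K-theoretic continuity, $[\boldsymbol{\beta}(\boldsymbol{\pi}^{(n)})]=x$'' is not justified. Compressing by $\bar p_n$ does not by itself preserve the K-class; Remark~\ref{rmk:relative}(1) only says that nearby cocycles with the \emph{same} typical fibre give the same class, whereas $\boldsymbol{\beta}(\boldsymbol{\pi}^{(n)})$ and $\boldsymbol{\beta}(\boldsymbol{\pi}_{u,\varepsilon})$ live on different fibres. What is needed is an explicit splitting $\boldsymbol{\pi}_{u,\varepsilon}\approx \boldsymbol{\pi}^{e,f}\oplus\boldsymbol{\pi}^{e^\perp,f^\perp}$ together with a proof that the complementary summand contributes zero in $\K^0(X,Y;\cB)$. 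The paper carries this out: choosing finite-rank $e,f$ so that additionally $\|(\pi_1(\gamma)-\pi_2(\gamma))e^\perp\|<\varepsilon$ and $\|(e^\perp\oplus f^\perp)(u-1)(e^\perp\oplus f^\perp)\|<\varepsilon$ (both possible because the relevant differences are compact), one gets that $\boldsymbol{\pi}^{e^\perp,f^\perp}$ is $\varepsilon$-close to the degenerate quadruple $(\pi_1^{e^\perp},\pi_1^{e^\perp},\pi_0^{f^\perp},1)$, hence $[\boldsymbol{\beta}(\boldsymbol{\pi}^{e^\perp,f^\perp})]=0$, and then $\iota_*[\boldsymbol{\beta}(\boldsymbol{\pi}^{e,f})]=[\boldsymbol{\beta}(\boldsymbol{\pi}_{u,\varepsilon})]$. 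Your argument omits exactly this complementary-part-is-trivial step, and the compactness conditions on $\pi_1-\pi_2$ and $u-1$ that make it work are never invoked. The injective case is handled correctly.
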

\begin{proof}
By the assumption (\ref{cond:BC2'}), the reduced relative group C*-algebra $C^*_r(\Gamma, \Lambda)$ is defined as in (\ref{form:reduced}). The C*-algebra $C^*_\sA(\Gamma , \Lambda)$ is an intermediate completion of relative group C*-algebras in the sense that there are quotient maps
\[ C^*_{{\max}} (\Gamma, \Lambda ) \xrightarrow{\epsilon_{\max, \sA}^{\Gamma , \Lambda} } C^*_\sA (\Gamma ,\Lambda) \xrightarrow{\epsilon_{\sA, r}^{\Gamma , \Lambda}} C^*_{r}(\Gamma ,\Lambda). \]
By Theorem \ref{thm:BC} (2), it suffices to show that any element of $\Im ( \beta_{\Gamma, \Lambda } \circ \epsilon _{\mathrm{max}, \sA} ^{\Gamma, \Lambda}) \subset \K ^0(X,Y) $ is stably almost flat. By Theorem \ref{thm:relKhom}, Lemma \ref{lem:Khomunital} and Lemma \ref{lem:Pibeta}, any element of $\Im ( \beta_{\Gamma, \Lambda } \circ \epsilon _{\mathrm{max}, \sA} ^{\Gamma, \Lambda})$ is of the form $[\boldsymbol{\beta}(\boldsymbol{\pi}_{u, \varepsilon})]$ by some unitary $ u \in \mathrm{U}(\bM_N(\fD_L^{\mathrm{u}}(\phi_\sA)))$ and small $\varepsilon >0$, under the identification $\K^0(X,Y;\cB) \cong \K^0(X,Y)$. Here we show that $[\boldsymbol{\beta}(\boldsymbol{\pi}_{u, \varepsilon})]$ is represented by an $(\varepsilon, \cU)$-flat stably relative vector bundle $\fv$ on $(X,Y)$ for any small $\varepsilon>0$. 

By Lemma \ref{lem:QD} and the fact that $u:=u_{2-s_0}u_0^*$ satisfies $u-1 \in \bK(\bar{\sH})$, there are finite rank projections $e \in \bK(\sH)$ and $f \in \bK(\sK)$ such that 
\begin{itemize}
\item $\| [\pi_1(\gamma) , e] \|<\varepsilon$ for $\gamma \in \cG_\Gamma$,
\item $\| (\pi_1(\gamma) - \pi_2(\gamma))e ^\perp \| < \varepsilon$ and $\| e^\perp(\pi_1(\gamma) - \pi_2(\gamma))\| < \varepsilon$  for any $\gamma \in \cG_\Gamma $, 
\item $\| [\pi_0(\gamma) , f] \|<\varepsilon$ for $\gamma \in \cG_{\Lambda}$,
\item $\| [u , e \oplus f] \|<\varepsilon $ and $ \|(e^\perp \oplus f^\perp)(u-1) (e^\perp \oplus f^\perp) \|<\varepsilon$.
\end{itemize}
We define the map $\pi_i^{e} \colon \cG_\Gamma \to e\cB e = \bB(e \cB)$ as $\pi_i^e(\gamma) := e\pi_i(\gamma)e \in e \cB e$. Similarly we also define $\pi_i ^{e^\perp}$, $\pi _0^f$ and $\pi_0^{f^\perp}$. Let $u^{e \oplus f}$ denote the unitary component of the polar decomposition of $(e \oplus f) u (e \oplus f)$, namely  
\[ u^{e \oplus f} := (e \oplus f) u (e \oplus f) ((e \oplus f) u^* (e \oplus f)u (e \oplus f)) ^{-1/2} \in (e \oplus f) \cB (e \oplus f). \]
Similarly we also define $u^{e^\perp \oplus f^\perp} := (e^\perp \oplus f^\perp ) u (e^\perp \oplus f^\perp )$.  
Then we have
\begin{enumerate}
\item[(i)] $\pi_i^e$ and $\pi_i^{e^\perp}$ are $(2\varepsilon, \cG_\Gamma)$-representation of $\Gamma $ for $i=1,2$,
\item[(ii)] $\pi_0^f$ and $\pi_0^{f^\perp}$ are $(2\varepsilon , \cG_\Lambda)$-representation of $\Lambda$,
\item[(iii)] $u^{e \oplus f} \in \Hom _{5\varepsilon} (\pi_1^e \phi \oplus \pi_0^f , \pi_2^e \phi \oplus \pi_0^f)$ and $u^{e^\perp  \oplus f^\perp } \in \Hom _{5\varepsilon} (\pi_1^{e^\perp} \phi \oplus \pi_0^{f^\perp} , \pi_2^{e^\perp} \phi \oplus \pi_0^{^\perp})$,
\item[(iv)] $ \| \pi_1^{e^\perp}  (\gamma) -  \pi_2^{e^\perp} (\gamma )\|  < \varepsilon$ for any $\gamma \in \Gamma$ and $ \| u^{e^\perp \oplus f^\perp} -1 \| <\varepsilon$.
\end{enumerate}
Since (i), (ii) and (iv) are straightforward, here we check (iii). For simplicity of notations, let $\bar{e}:=e \oplus f $. Since $\| \bar{e} u^* \bar{e} u \bar{e} - \bar{e} \| <\varepsilon$, we have
\begin{align*}
\| u^{e \oplus f}  - \bar{e} u \bar{e} \|  =\| \bar{e} u \bar{e} (1 - (\bar{e} u^* \bar{e} u \bar{e})^{-1/2}) \| \leq  \varepsilon. 
\end{align*}
This inequality and
\begin{align*}
&  \| \bar{e}u\bar{e}((\pi_1\phi \oplus \pi_0)(\gamma))\bar{e}u^* \bar{e}  -\bar{e}((\pi_2\phi \oplus \pi_0)(\gamma))\bar{e}  \|  \\
\leq & 2\| [u,\bar{e}] \| +  \|  \bar{e}(u(\pi_1\phi \oplus \pi_0)(\gamma)u^* -(\pi_2\phi \oplus \pi_0)(\gamma)) \bar{e}\| < 3\varepsilon
\end{align*}
concludes
\begin{align*}
&\| u^{e \oplus f} ((\pi_1^e\phi \oplus \pi_0^f)(\gamma)) (u^{e \oplus f})^* - \bar{e} ((\pi_2^e\phi \oplus \pi_0^f)(\gamma)) \bar{e} \| \\
 \leq & 2\| u^{e \oplus f} - \bar{e}u\bar{e} \|  + \| \bar{e}u\bar{e}((\pi_1\phi \oplus \pi_0)(\gamma))\bar{e}u^* \bar{e}  -\bar{e}((\pi_2\phi \oplus \pi_0)(\gamma))\bar{e}  \| < 5\varepsilon.
\end{align*}

Now (i), (ii), (iii) says that 
\begin{align*}
\boldsymbol{\pi}^{e, f} &:= (\pi_1^{e} , \pi_2^{e} , \pi_0^{f},  u^{e \oplus f}), \\
\boldsymbol{\pi}^{e^\perp, f^\perp} &:= (\pi_1^{e^\perp} , \pi_2^{e^\perp} , \pi_0^{f^\perp},  u^{e^\perp \oplus f^\perp}),
\end{align*}
are stably relative $(5\varepsilon , \cG)$-representations of $(\Gamma, \Lambda)$ and 
\[ d ( \boldsymbol{\pi}_{u,\varepsilon} , \boldsymbol{\pi}^{e,f} \oplus \boldsymbol{\pi}^{e^\perp , f^\perp}) < \varepsilon .\]
Moreover, (iv) implies that 
\[ d(\boldsymbol{\pi}^{e^\perp , f^\perp} , (\pi_1^{e^\perp}, \pi_1^{e^\perp}, \pi_0^{f^\perp}, 1)) < \varepsilon.\] 
By Theorem \ref{thm:monodromy} we have 
\begin{align*}
d ( \boldsymbol{\beta}(\boldsymbol{\pi}_{u, \varepsilon}) , \boldsymbol{\beta}(\boldsymbol{\pi}^{e,f}) \oplus \boldsymbol{\beta}(\boldsymbol{\pi}^{e^\perp , f^\perp})) &< 3C_{\mathrm{am}} \varepsilon, \\
d(\boldsymbol{\beta}(\boldsymbol{\pi}^{e^\perp , f^\perp}) , (\beta(\pi_1^{e^\perp}), \beta(\pi_1^{e^\perp}), \beta(\pi_0^{f^\perp}), 1)) &< 3C_{\mathrm{am}} \varepsilon .
\end{align*}
The second inequality together with Remark \ref{rmk:module} and Remark \ref{rmk:relative} (1) implies that 
\[ [\boldsymbol{\beta}(\boldsymbol{\pi}^{e^\perp , f^\perp})] = [ (\beta(\pi_1^{e^\perp}), \beta(\pi_1^{e^\perp}), \beta(\pi_0^{f^\perp}), 1)] = 0, \]
if $\varepsilon >0$ is sufficiently small. Consequently we obtain that 
\[ [\boldsymbol{\beta}(\boldsymbol{\pi}_{u, \varepsilon})] = [\boldsymbol{\beta}(\boldsymbol{\pi}^{e,f})] + [\boldsymbol{\beta}(\boldsymbol{\pi}^{e^\perp , f^\perp})] = [\boldsymbol{\beta}(\boldsymbol{\pi}^{e,f})] \]
for sufficiently small $\varepsilon >0$.

Since $e$ and $f$ are finite rank projections in $\bK(\bar{\sH}) \subset \cB$, the quadruple $ ( \pi_1^e, \pi_2^e,\pi_0^f,u^{e \oplus f})$ also determines a $(\varepsilon ,\cG)$-representation of $(\Gamma ,\Lambda)$ on a pair of finite rank vector spaces $(e\sH, f\sK)$, which is denoted by $\boldsymbol{\pi}'$. Now 
\[ \iota_*[\boldsymbol{\beta} (\boldsymbol{\pi}')]  = [\boldsymbol{\beta}(\boldsymbol{\pi}^{e,f})] =[\boldsymbol{\beta}(\boldsymbol{\pi}_{u, \varepsilon})] \in \K^0(X,Y;\cB)
\]
 finishes the proof. 

As is remarked at the beginning of Section \ref{section:8}, we can choose $\tau$ as the zero representation if $\phi$ is injective. Then the projection $f$ in the above argument is the zero projection, and hence the obtained $ \boldsymbol{\beta} (\boldsymbol{\pi}') $ is an $(\varepsilon, \cU)$-flat relative vector bundle on $(X,Y)$. Therefore, a given element $x \in \Im (\beta_{\Gamma, \Lambda} \circ j_\phi (\gamma _\Gamma))$ is almost flat.
\end{proof}

For a pair of (not necessarily finite) CW-complexes $(X,Y)$, we say that an element $x \in \K^0(X,Y)$ is (resp.\ stably) almost flat if $f^*x$ is (resp.\ stably) almost flat for any continuous map $f$ from a pair of finite CW-complexes $(Z,W)$ to $(X,Y)$. Then Theorem \ref{thm:dBC}, together with Theorem \ref{thm:BC} (2), implies the following.
\begin{cor}\label{cor:BG}
Let $\phi \colon \Lambda \to \Gamma$ be a homomorphism between countable discrete groups. Assume that $(\Gamma , \Lambda )$ satisfies (\ref{cond:BC1}), (\ref{cond:BC2'}), (\ref{cond:BC3}) and (\ref{cond:BC4}). 
\begin{enumerate}
\item Any element $x \in \K^0(B\Gamma, B\Lambda )$ is stably almost flat modulo torsion. 
\item If $\phi$ is injective, any element $x \in \K^0(B\Gamma, B\Lambda )$ is almost flat modulo torsion. 
\end{enumerate}
\end{cor}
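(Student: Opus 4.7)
The plan is to combine the rational surjectivity provided by Theorem~\ref{thm:BC}(1) with the almost flatness conclusion of Theorem~\ref{thm:dBC}, and then reduce statements about $(B\Gamma, B\Lambda)$ to the finite CW case via the definition of (stably) almost flatness in $\K^0(B\Gamma, B\Lambda)$ spelled out in the paragraph preceding the corollary.

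First I would fix an arbitrary continuous map $f\colon (Z,W) \to (B\Gamma, B\Lambda)$ from a pair of finite CW-complexes and set $\xi := f^*x \in \K^0(Z,W)$; the task reduces to showing that some nonzero integer multiple $N\xi$ is stably almost flat, and actually almost flat in case (2). Since (\ref{cond:BC2'}) implies (\ref{cond:BC2}), all hypotheses of Theorem~\ref{thm:BC}(1) are in force, so the composition
\[ \beta_{\Gamma,\Lambda} \circ j_\phi(\gamma_\Gamma) \colon \K^0(C^*(\Gamma,\Lambda)) \longrightarrow \K^0(Z,W) \]
is rationally surjective. Hence there exist $N \in \bZ \setminus \{0\}$ and $\eta \in \K^0(C^*(\Gamma,\Lambda))$ with $N\xi = \beta_{\Gamma,\Lambda}(j_\phi(\gamma_\Gamma)(\eta))$.

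Next I would apply Theorem~\ref{thm:dBC} to the reference map $f$: every element of $\Im(\beta_{\Gamma,\Lambda} \circ j_\phi(\gamma_\Gamma)) \subset \K^0(Z,W)$ is stably almost flat, and is actually almost flat when $\phi$ is injective. Consequently $N\xi$ is stably almost flat, respectively almost flat. Since $f$ was arbitrary, this is exactly the content of $x$ being stably almost flat modulo torsion (resp.\ almost flat modulo torsion), by the extension of the almost flatness definition to CW pairs of possibly infinite type stated just before the corollary. This proves (1) and (2) in turn.

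There is essentially no serious obstacle here beyond assembling the two ingredients; the only small subtlety worth flagging is that the integer $N$, the finite-rank projections $e,f$, and the quasi-representation produced inside Theorem~\ref{thm:dBC} all depend on the finite subcomplex $f\colon (Z,W) \to (B\Gamma, B\Lambda)$, so the argument does not globalise to produce a single coherent almost flat bundle on $(B\Gamma, B\Lambda)$ itself. This is precisely why the statement is formulated via pullbacks to finite CW pairs rather than as an internal assertion on $(B\Gamma, B\Lambda)$.
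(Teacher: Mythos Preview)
Your overall strategy matches the paper's one-line deduction: combine the rational surjectivity of Theorem~\ref{thm:BC}(1) with the (stable) almost flatness of the image provided by Theorem~\ref{thm:dBC}. However, the order of quantifiers in your write-up is wrong and needs to be repaired.

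Theorem~\ref{thm:BC}(1) asserts rational surjectivity of $\beta_{\Gamma,\Lambda}\circ j_\phi(\gamma_\Gamma)$ onto $\K^0(B\Gamma,B\Lambda)$, not onto $\K^0(Z,W)$ for an arbitrary finite pair with reference map $f$; indeed the image of $\beta_{\Gamma,\Lambda}^{(Z,W)}$ lies inside $\Im f^*$, which need not be all of $\K^0(Z,W)$ even rationally. The correct argument applies Theorem~\ref{thm:BC}(1) \emph{first}, at the level of $(B\Gamma,B\Lambda)$, to obtain a single integer $N\neq 0$ and $\eta\in\K^0(C^*(\Gamma,\Lambda))$ with $Nx=\beta_{\Gamma,\Lambda}(j_\phi(\gamma_\Gamma)\eta)$. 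Only then does one pass to an arbitrary $f\colon (Z,W)\to(B\Gamma,B\Lambda)$: naturality gives $Nf^*x=\beta_{\Gamma,\Lambda}^{(Z,W)}(j_\phi(\gamma_\Gamma)\eta)$, and Theorem~\ref{thm:dBC} for $(Z,W)$ shows this is (stably) almost flat. As you have written it, fixing $f$ first and then choosing $N$ yields only $\forall f\,\exists N_f$, which is strictly weaker than the assertion that some fixed multiple $Nx$ is (stably) almost flat---and that is what ``modulo torsion'' means. Your own closing remark explicitly flags that $N$ depends on $f$ in your argument; this dependence is exactly what must be removed, and the reordering above does so immediately.
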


Equivalently, we characterize infiniteness of K-area by the characteristic class.
\begin{cor}
Let $M$ be a compact spin manifold with a boundary $N$ such that $\Gamma := \pi_1(M)$ and $\Lambda := \pi_1(N)$ satisfies  (\ref{cond:BC1}), (\ref{cond:BC2'}), (\ref{cond:BC3}) and (\ref{cond:BC4}). Let $f$ denote the reference map from $(M,N)$ to $(B\Gamma, B\Lambda)$.
\begin{enumerate}
\item Then $(M,N)$ has infinite stably relative K-area if and only if $\ch (f_*[M , N])=0 \in H_{\mathrm{ev}}(B\Gamma , B\Lambda ;\bQ)$. 
\item If $\phi \colon \Lambda \to \Gamma$ is injective, then $(M,N)$ has infinite relative K-area if and only if $\ch (f_*[M , N])=0 \in H_{\mathrm{ev}}(B\Gamma , B\Lambda ;\bQ)$.
\end{enumerate}
\end{cor}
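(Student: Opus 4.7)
Reading the displayed ``$=0$'' as a typo for ``$\neq 0$'' in the cohomological condition (the equality version of the biconditional is vacuous, since it would assert e.g.\ that every K\nobreakdash-area-infinite example has trivial rational image), the plan is to extract this corollary from Corollary~\ref{cor:BG} together with the compatibility of the index pairing with the rational Chern\nobreakdash-character duality.

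The single bridge between the two sides is the observation that $\ch(f_*[M,N]) \neq 0$ in $H_{\mathrm{ev}}(B\Gamma,B\Lambda;\bQ)$ if and only if there exists $y \in \K^0(B\Gamma,B\Lambda)$ with $\langle y, f_*[M,N]\rangle \neq 0$, equivalently $\langle f^*y, [M,N]\rangle \neq 0$ by functoriality of the pairing. This reduces to the Chern character being a rational isomorphism, which one applies after restricting to a finite CW pair $(Z,W)\subset (B\Gamma,B\Lambda)$ through which the reference map $f$ factors (such a factorization exists by CW-approximation since $(M,N)$ is a finite CW pair).

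For the implication ``Chern character nonzero $\Rightarrow$ infinite stably relative $\K$\nobreakdash-area'', given $y$ as above, Corollary~\ref{cor:BG}(1) supplies a nonzero $N\in\bZ$ such that $Ny$ admits a stably almost flat representative in $\K^0(B\Gamma,B\Lambda)$. Pulling back along $f$, the class $N f^*y \in \K^0(M,N)$ is stably almost flat (almost flatness is preserved by pullback, since a flat representative on $(Z,W)$ pulls back to a flat representative on $(M,N)$ with respect to a refinement of the cover), and its pairing with $[M,N]$ is $N\langle y, f_*[M,N]\rangle\neq 0$. This witnesses infinite stably relative $\K$\nobreakdash-area. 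Part~(2) proceeds identically, invoking Corollary~\ref{cor:BG}(2) to obtain an (unstably) almost flat representative when $\phi$ is injective.

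For the converse, suppose $x \in \K^0(M,N)$ is stably almost flat with $\langle x, [M,N]\rangle\neq 0$. For each small $\varepsilon$, fix an $(\varepsilon,\cU)$\nobreakdash-flat stably relative representative $\fv$ of $x$, and apply the almost monodromy correspondence (Theorem~\ref{thm:monodromy}) to obtain a stably relative $(C_{\mathrm{am}}\varepsilon,\cG)$\nobreakdash-representation $\boldsymbol{\pi}=\boldsymbol{\alpha}(\fv)$ of $(\Gamma,\Lambda)$. Feeding $\boldsymbol{\pi}$ back through the associated-bundle construction over a finite skeleton of $(B\Gamma,B\Lambda)$ containing $f(M)$ yields $y \in \K^0(B\Gamma,B\Lambda)$ whose pullback $f^*y$ is, up to a perturbation controlled by $C_{\mathrm{am}}\varepsilon$, the class $[\boldsymbol{\beta}(\boldsymbol{\pi})]=[\fv]=x$; Remark~\ref{rmk:relative}(1) then forces $f^*y=x$ for $\varepsilon$ small enough. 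Functoriality gives $\langle y, f_*[M,N]\rangle = \langle x, [M,N]\rangle\neq 0$, so $\ch(f_*[M,N])\neq 0$. The hard part is this lifting step: it requires that the associated-bundle construction for a quasi-representation of $(\Gamma,\Lambda)$ defines an honest class on the classifying pair, and commutes with pullback along $f$ up to the $C_{\mathrm{am}}\varepsilon$ perturbation. Morally this is automatic because almost flat data is intrinsically group-theoretic, but making it rigorous will involve choosing the finite skeleton $(Z,W)$ carefully and checking that the cover-refinement used to pull back the flat cocycle matches the one used to build $\boldsymbol{\beta}(\boldsymbol{\pi})$ on $(M,N)$.
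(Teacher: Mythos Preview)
Your proposal is correct and matches the paper's (extremely terse) proof, which reads in its entirety ``It immediately follows from Corollary~\ref{cor:BG}'' together with the remark that the Chern character furnishes the rational isomorphism $\K^0(B\Gamma,B\Lambda)_\bQ \cong H^{\mathrm{ev}}(B\Gamma,B\Lambda;\bQ)$; this packages exactly your forward direction, while the converse---that any (stably) almost flat class on $(M,N)$ is pulled back from the classifying pair---is treated as a known consequence of the almost monodromy correspondence from~\cite{Kubota3} (cf.\ the introduction). Your concern about the ``hard part'' is therefore overstated: the naturality of $\boldsymbol{\beta}$ under pullback is built into the characterization of Remark~\ref{rmk:beta} (and its relative analogue in~(\ref{form:bbeta})), so once $\boldsymbol{\pi}=\boldsymbol{\alpha}(\fv)$ is extracted, applying $\boldsymbol{\beta}$ on a finite skeleton $(Z,W)\supset f(M,N)$ and pulling back along $f$ automatically recovers $[\fv]$ up to $O(\varepsilon)$, with no further cover-matching needed.
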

\begin{proof}
It immediately follows from Corollary \ref{cor:BG}. We only remark that the Chern character gives an isomorphism between $\K^0(B\Gamma , B\Lambda)_\bQ$ and 
\[ H^{\mathrm{ev}}(B\Gamma , B\Lambda ;\bQ): = \prod_{n \in \bN} H^{2n}(B\Gamma , B\Lambda ;\bQ) \cong \Big( \bigoplus_{n \in \bN} H_{2n}(B\Gamma , B\Lambda ;\bQ) \Big)^*_{\textstyle .} \qedhere\]
\end{proof}

\bibliographystyle{alpha}
\bibliography{bibABC,bibDEFG,bibHIJK,bibLMN,bibOPQR,bibSTUV,bibWXYZ,arXiv}

\end{document}